\numberwithin{equation}{section}
\numberwithin{figure}{section}
\numberwithin{table}{section}
\theoremstyle{plain}
\newtheorem{theorem}{Theorem}[section]
\newtheorem{proposition}[theorem]{Proposition}
\newtheorem{lemma}[theorem]{Lemma}
\newtheorem*{proposition*}{Proposition}
\theoremstyle{remark}
\newtheorem{example}[theorem]{Example}
\newtheorem{remark}[theorem]{Remark}
\newtheorem*{comment*}{Comment}
\newtheorem*{remark*}{Remark}
\theoremstyle{definition}
\newtheorem*{rh-pb*}{Basic RH-problem}
\newtheorem*{assumption*}{Assumption}
\newtheorem*{acknowledgements*}{Acknowledgements}
\providecommand{\B}[1]{\mathbf{#1}}
\providecommand{\BS}[1]{\boldsymbol{#1}}
\providecommand{\C}[1]{\mathcal{#1}}
\providecommand{\D}[1]{\mathbb{#1}}
\providecommand{\R}[1]{\mathrm{#1}}
\newcommand{\dd}{\mathrm{d}}
\newcommand{\eul}{\mathrm{e}}
\newcommand{\ii}{\mathrm{i}}
\providecommand{\abs}[1]{\lvert#1\rvert}
\providecommand{\accol}[1]{\lbrace#1\rbrace}
\providecommand{\croch}[1]{\lbrack#1\rbrack}
\providecommand{\norm}[1]{\lVert#1\rVert}
\renewcommand{\Im}{\operatorname{Im}}
\newcommand{\model}{\operatorname{mod}}
\newcommand{\modell}{\operatorname{mod-}}
\newcommand{\cross}{\mathrm{cr}}
\newcommand{\dist}{\mathrm{dist}}
\newcommand{\merge}{\mathrm{merge}}
\newcommand{\new}{\mathrm{new}}
\newcommand{\ord}{\mathrm{O}}
\newcommand{\osmall}{\mathrm{o}}
\renewcommand{\Re}{\operatorname{Re}}
\DeclareMathOperator{\Res}{Res}
\begin{document}
%---------------------------------------------------------%
\title[NLS with step-like oscillating background: long-time asymptotics]{The focusing NLS equation with step-like oscillating background: scenarios of long-time asymptotics}
%---------------------------------------------------------%
\author[A. Boutet de Monvel]{Anne Boutet de Monvel}
\address{AB: Institut de Math\'ematiques de Jussieu-Paris Rive Gauche, Universit\'e de Paris, 75205 Paris Cedex 13, France.}
\email{anne.boutet-de-monvel@imj-prg.fr}
%---------------------------------------------------------%
\author[J. Lenells]{Jonatan Lenells}
\address{JL: Department of Mathematics, KTH Royal Institute of Technology, 100 44 Stockholm, Sweden.}
\email{jlenells@kth.se}
%---------------------------------------------------------%
\author[D. Shepelsky]{Dmitry Shepelsky}
\address{DS: B.~Verkin Institute for Low Temperature Physics and Engineering, 47 Nauky Avenue, 61103 Kharkiv, Ukraine.} 
\email{shepelsky@yahoo.com}
%---------------------------------------------------------%
\date{}
%---------------------------------------------------------%
\begin{abstract}
We consider the Cauchy problem for the focusing nonlinear Schr\"odinger equation with initial data approaching two different plane waves $A_j\eul^{\ii\phi_j}\eul^{-2\ii B_jx}$, $j=1,2$ as $x\to\pm\infty$. Using Riemann--Hilbert techniques and Deift--Zhou steepest descent arguments, we study the long-time asymptotics of the solution. We detect that each of the cases $B_1<B_2$, $B_1>B_2$, and $B_1=B_2$ deserves a separate analysis. Focusing mainly on the first case, the so-called shock case, we show that there is a wide range of possible asymptotic scenarios. We also propose a method for rigorously establishing the existence of certain higher-genus asymptotic sectors. 
\end{abstract}
%---------------------------------------------------------%
\maketitle
\tableofcontents
%---------------------------------------------------------%
%:s.1
%---------------------------------------------------------%
\section{Introduction}  \label{sec:intro}

We consider the Cauchy problem for the focusing nonlinear Schr\"odinger (NLS) equation
\begin{subequations}\label{nlsic}
\begin{alignat}{2}\label{nls}
&\ii q_t+q_{xx}+2\abs{q}^2q=0,&\qquad&x\in\D{R},\quad t\geq 0,\\
\label{ic}
&q(x,0)=q_0(x),&&x\in\D{R},
\end{alignat}
\end{subequations}
with initial data approaching oscillatory waves at plus and minus infinity:
\begin{equation} \label{q0-limits}
q_0(x)\sim 
\begin{cases}
A_1\eul^{\ii\phi_1}\eul^{-2\ii B_1x},&x\to -\infty,\\
A_2\eul^{\ii\phi_2}\eul^{-2\ii B_2x},&x\to +\infty,
\end{cases}
\end{equation}
where $\accol{A_j,B_j,\phi_j}_1^2$ are real constants such that $A_j>0$. 
Our goal is to describe the long-time behavior of the solution $q(x,t)$ for different choices of the parameters $\accol{A_j,B_j,\phi_j}_1^2$. The tools we use are Riemann--Hilbert (RH) techniques and Deift--Zhou steepest descent arguments.

In order for the formulation of the Cauchy problem \eqref{nlsic}-\eqref{q0-limits} to be complete, it has to be supplemented with boundary conditions for $t>0$. These boundary conditions are the natural extensions of \eqref{q0-limits} to $t>0$ and are given by
\begin{subequations}\label{cauchy-asbg}
\begin{equation}   \label{cauchy-as}
\int_0^{(-1)^j\infty}\abs{q(x,t)-q_{0j}(x,t)}\dd x<\infty\quad\text{for all }t\geq 0,\qquad j=1,2,
\end{equation}
where $q_{0j}(x,t)$, $j=1,2$ are the plane wave solutions of the NLS equation satisfying the initial conditions $q_{0j}(x,0)=A_j\eul^{\ii\phi_j}\eul^{-2\ii B_jx}$, that is,
\begin{equation}  \label{bg}
q_{0j}(x,t)=A_j\eul^{\ii\phi_j}\eul^{-2\ii B_jx+2\ii\omega_jt},\quad\omega_j\coloneqq A_j^2-2B_j^2.
\end{equation}
\end{subequations}

The RH formalism, which can be viewed as a version of the inverse scattering transform (IST) method, is well-developed for problems with ``zero boundary conditions'', that is, for problems where the solution is assumed to decay to $0$ as $x\to\pm\infty$ for each $t\geq 0$. In particular, detailed asymptotic formulas can be derived by employing the steepest descent method for RH problems introduced by Deift and Zhou \cite{DZ93}. The adaptation of the RH formalism and the Deift-Zhou approach to problems with  ``nonzero boundary conditions'' has been the subject of more recent works. 
%---------------------------------------------------------%
%:s.1.1
%---------------------------------------------------------%
\subsection{Previous work on the focusing NLS with nonzero boundary conditions}

The first studies of the focusing NLS equation with nonzero boundary conditions by the IST method were presented in \cites{KI78,Ma79}, where initial profiles satisfying \eqref{q0-limits} with $A_1=A_2$, $\phi_1=\phi_2$, and $B_1=B_2=0$ were considered. In particular, the Ma soliton \cite{Ma79} (also discovered in \cite{KI78}) was introduced. It was also mentioned in \cite{Ma79} that a plane wave solution corresponds to a one-band potential in the spectrum of the Zakharov--Shabat scattering equations, whereas the cnoidal wave (elliptic function) and the multicnoidal wave (hyperelliptic function) solutions correspond to two-band and $N$-band potentials, respectively. A perturbation theory for the NLS equation with non-vanishing boundary conditions was put forward in \cite{GK12}, where particular attention was paid to the stability of the Ma soliton. Whitham theory results for the focusing NLS with step-like data can be found in \cite{B1995}.

An IST approach for initial data satisfying \eqref{q0-limits} with $A_1=A_2$, $\phi_1,\phi_2\in\D{R}$, and $B_1=B_2=0$ was presented in \cite{BK14}, and was further developed in \cites{BM16,BM17}. In particular, it was shown in \cites{BM16,BM17} that for such initial data, the long-time behavior is described by three asymptotic sectors in the $(x,t)$ half-plane $t>0$: two sectors adjacent to the half-axes $x<0$, $t=0$ and $x>0$, $t=0$ in which the solution asymptotes to modulated plane waves, and a middle sector in which the solution asymptotes to an elliptic (genus~$1$) modulated wave. An IST formalism for the case of \emph{asymmetric} nonzero boundary conditions ($A_1\neq A_2$, $\phi_1,\phi_2\in\D{R}$, $B_1=B_2=0$) was presented in \cite{D14}. 

In \cite{BV07}, the long-time asymptotics was studied for the \emph{symmetric shock case} of $A_1=A_2$, $\phi_1=\phi_2$, $B_1=-B_2 < 0$. In this case, the asymptotic picture is symmetric under $x\mapsto -x$. Five asymptotic sectors were described in \cite{BV07}: a central sector containing the half-axis $x=0$, $t>0$ in which the solution $q(x,t)$ asymptotes to a modulated elliptic (genus~$1$) wave \cite{BV07}*{Theorem 1.2}, two contiguous sectors (the transition regions) in which the leading asymptotics is described by modulated hyperelliptic (genus~$2$) waves \cite{BV07}*{Theorem 1.3}, and two sectors adjacent to the $x$-axis in which $q(x,t)$ asymptotes to modulated plane (genus~$0$) waves.

The long-time asymptotics in the case when the left background is zero (i.e., when $A_1=0$ and $A_2\neq 0$) was analyzed in \cite{BKS11}. It was shown that the asymptotic picture involves three sectors in this case: a slow decay sector (adjacent to the negative $x$-axis), a modulated plane wave sector (adjacent to the positive $x$-axis), and a modulated elliptic wave sector (between the first two).

%-------------------%
\begin{remark*}
Although we only consider the focusing version of the NLS equation in this paper, it is worth mentioning that the solution of the defocusing NLS equation with asymmetric nonzero boundary conditions was studied by IST methods in \cite{BP1982} and that extensive results on its long-time behavior were presented in \cite{J2015}.
\end{remark*}
%-------------------%

%---------------------------------------------------------%
%:s.1.2
%---------------------------------------------------------%
\subsection{Summary of results}
The main takeaways of the present paper can be summarized as follows: 

(a) 
Whereas earlier studies focused on specific choices of the parameters $A_j,B_j$, and $\phi_j$, we introduce a RH approach for the solution of \eqref{nlsic} with (solitonless) initial data satisfying \eqref{q0-limits} for general values of $\accol{A_j,B_j,\phi_j}_1^2$ with $B_1\neq B_2$. 

(b) 
We show that the panorama of asymptotic scenarios arising from \eqref{nlsic}-\eqref{q0-limits} is surprisingly rich (some of them can be qualitatively caught using the Whitham modulated equations \cite{Bio18}). In fact, we detect several new scenarios even in the symmetric shock case studied in \cite{BV07}. More precisely, our analysis in Section \ref{sec:shock} shows that the scenario presented in \cite{BV07} is only one of five different possible scenarios in this case. Whereas the long-time behavior along the $t$-axis is always described by a genus~$1$ wave, the asymptotics along the lines $x/t=c$, for small values of $c$, can be either a genus~$1$ (as in \cite{BV07}), a genus~$2$, or a genus~$3$ wave depending on the value of $A_j/(B_2-B_1)$. Asymmetric parameter choices may give rise to an even wider range of possibilities.

(c) 
For each scenario we associate to each asymptotic sector a corresponding $g$-function, which is the basic ingredient of a rigorous asymptotic analysis: it determines a sequence of transformations (``deformations'') of the original RH problem leading to an exactly solvable ``model RH problem'', in terms of which the main asymptotic term can be expressed, through the (now standard) procedures of (i) ``making lenses'' and (ii) estimating the solutions of associated local RH problems (``parametrices''). In the present paper, we give some details of the realization of this approach for the ``rarefaction case'' (with $B_1>B_2$) and we give references to the existing literature where particular cases arising within the ``shock wave case'' (with $B_2>B_1$) were treated.

The asymptotics obtained in this way (in particular, \cites{BLS20b,BLS20c} for the case of $B_2>B_1$), similarly to other cases treated in the literature (e.g., \cites{BM16,BM17} for the case of $B_2=B_1$) do not depend on details of the corresponding initial data and thus manifest the \emph{universality} of the asymptotics.

(d) 
We propose an approach for rigorously establishing the existence of certain higher-genus asymptotic sectors. A sector in which the leading asymptotics of the solution can be expressed in terms of theta functions associated with a genus $g$ Riemann surface is referred to as a genus $g$ sector. At a technical level, such sectors arise when the definition of the so-called $g$-function involves a Riemann surface \cite{DVZ94}. 
In order for the $g$-function to be suitable for the asymptotic analysis, certain parameters appearing in its definition need to satisfy a nonlinear system of equations. The relevant asymptotic sector exists only if this system has a solution. For example, the asymptotic analysis for the genus $2$ sector carried out in \cite{BV07} implicitly assumes that the system of equations \cite{BV07}*{Eqs.~(3.29)} has a solution. In Section \ref{implicitfunctiontheorem}, we establish the existence of this genus $2$ sector rigorously. Although we only provide details for this particular genus $2$ sector, we expect that our approach can be used to show existence also of other genus $g$ sectors appearing in this paper and elsewhere. The approach can be described very briefly as follows. We first show that the existence of a solution of the above-mentioned nonlinear system is equivalent to the existence of a branch of the zero set $F=0$ of a certain mapping $\B{x}\mapsto F(\B{x})$ emanating from a point $\B{x}_0$. The existence of such a branch cannot be immediately deduced from the implicit function theorem because some of entries of the Jacobian matrix of $F$ have singularities at $\B{x}_0$. The central idea of the approach is to introduce a suitably renormalized version $\tilde{F}$ of $F$ which is more amenable to analysis. The construction of $\tilde{F}$ can be illustrated by the following simple one-dimensional example. Consider the function $f\colon (0,1)\to\D{R}$ defined by $f(x)=x\ln x$. This function extends continuously to $x_0=0$, but its first derivative $f'(x)=1+\ln x$ does not. However, the function $\tilde{f}\colon(0,1)\to\D{R}$ defined by
$\tilde{f}(x)=f(x/|\ln x|)$ is such that both $\tilde{f}(x)$ and $\tilde{f}'(x)$ extend continuously to $x_0=0$.
%---------------------------------------------------------%
%:s.1.3
%---------------------------------------------------------%
\subsection{Organization of the paper}

Our analysis is based on a RH formalism which is developed in Section~\ref{sec:rhp}. In Sections~\ref{sec:plane}-\ref{sec:shock}, we analyze the long-time behavior of the solution $q(x,t)$ of \eqref{nlsic}--\eqref{q0-limits}. In Section~\ref{sec:plane}, we show, for any choice of the parameters $A_j$, $\phi_j$, and $B_1\neq B_2$, that the leading behavior of $q$ near the negative and positive halves of the $x$-axis is described by the plane waves $q_{01}$ and $q_{02}$, respectively.

Away from the $x$-axis, the asymptotic analysis turns out to be very different in the two cases $B_1>B_2$ and $B_1<B_2$. Section~\ref{sec:rarefaction} is devoted to the case $B_1>B_2$, called the \emph{rarefaction} case. In this case, the asymptotic picture resembles two copies of that found in \cite{BKS11}, namely, the solution is slowly decaying near the $t$-axis and in two transition sectors the asymptotics has the form of elliptic waves. Section~\ref{sec:shock} is devoted to the case $B_1<B_2$, called the \emph{shock} case. Restricting ourselves to the symmetric case of $A_1=A_2$, $\phi_1,\phi_2\in\D{R}$, and $B_1=-B_2$ (the latter actually being no loss of generality), we describe all the possible asymptotic scenarios that can occur. Finally, in Section~\ref{implicitfunctiontheorem}, we establish the existence of the genus $2$ asymptotic sectors featured in \cite{BV07}. Forthcoming papers will be devoted to a detailed analysis of the asymptotics in a genus~$3$ sector \cites{BLS20b,BLS20c}.
%---------------------------------------------------------%
%:s.1.4
%---------------------------------------------------------%
\subsection{Assumptions}   \label{sec:ass}
Our results are subject to a few assumptions. These assumptions will be stated whenever they are introduced, but are also summarized here for convenience.
\begin{enumerate}[(a)]
\item 
Throughout the paper, we assume that the initial data is such that no solitons are present. 
\item 
The case of $B_1=B_2$ has already been studied extensively in the literature, see \cites{BK14,BM16,BM17,D14}. Thus, from Section \ref{sec:jumps-sigma12} and onwards, we will assume that $B_1\neq B_2$ for conciseness.
\item 
From Section \ref{sec:analytic-continuation} and onwards, we will assume that the initial data $q_0(x)$ is identically equal to the backgrounds outside a compact set, i.e., that there exists a $C>0$ such that $q_0(x)=q_{01}(x,0)$ for $x<-C$ and $q_0(x)=q_{02}(x,0)$ for $x>C$. This allows us to avoid the technical work associated with the introduction of analytic approximations or $\bar{\partial}$ extensions of the jump matrices to perform the steepest descent analysis. This assumption is made purely for convenience and can be relaxed without affecting the structure of the final asymptotic formulas. 
\item 
As already mentioned, when treating the shock case in Section \ref{sec:shock}, we will restrict ourselves to the \emph{symmetric} case of $A_1=A_2$ and $B_2=-B_1 >0$. Asymmetric cases in which $A_1\neq A_2$ and/or $B_2\neq-B_1$ can be analyzed by similar methods, but since the symmetric case is already very rich, we restrict ourselves to this case for definiteness. 
\end{enumerate}
%---------------------------------------------------------%
%:s.2
%---------------------------------------------------------%
\section{The Riemann--Hilbert formalism} \label{sec:rhp}
%---------------------------------------------------------%
%:s.2.1
%---------------------------------------------------------%
\subsection{Notation}
As above, we let $\accol{A_j,B_j,\phi_j}_1^2$ denote real constants such that $A_j>0$. We let $\Sigma_j=\croch{\bar E_j,E_j}$, where $E_j\coloneqq B_j+\ii A_j$, denote the vertical segment $\Sigma_j=\accol{B_j+\ii s\mid\abs{s}\leq A_j}$ oriented upwards; see Figure~\ref{fig:basic-contour} in the cases $B_2<B_1$ (rarefaction) and $B_1<B_2$ (shock). 

We let $\D{C}^+=\accol{\Im k>0}$ and $\D{C}^-=\accol{\Im k<0}$ denote the open upper and lower halves of the complex plane. The Riemann sphere will be denoted by $\bar{\D{C}}=\D{C}\cup\accol{\infty}$. We write $\ln k$ for the logarithm with the principal branch, that is, $\ln k=\ln\abs{k}+\ii\arg k$ where $\arg k\in(-\pi,\pi\rbrack$. Unless specified otherwise, all complex powers will be defined using the principal branch, i.e., $z^{\alpha}=\eul^{\alpha\ln z}$. We let $f^*(k)\coloneqq\overline{f(\bar k)}$ denote the Schwarz conjugate of a function $f(k)$.

Given an open subset $D\subset\bar{\D{C}}$ bounded by a piecewise smooth contour $\Sigma$, we let $\dot E^2(D)$ denote the Smirnoff class consisting of all functions $f(k)$ analytic in $D$ with the property that for each connected component $D_j$ of $D$ there exist curves $\accol{C_n}_1^{\infty}$ in $D_j$ such that the $C_n$ eventually surround each compact subset of $D_j$ and $\sup_{n\geq 1}\norm{f}_{L^2(C_n)}<\infty$. All RH problems in the paper are $2\times 2$ matrix-valued and are formulated in the $L^2$-sense as follows (see \cites{Le17,Le18}): 
\begin{equation}   \label{rhp}
\begin{cases}
m\in I+\dot E^2(\D{C}\setminus\Sigma),&\\
m_+(k)=m_-(k)J(k)&\text{for a.e. }k\in\Sigma,
\end{cases}
\end{equation}
where $m_+$ and $m_-$ denote the boundary values of the solution $m$ from the left and right sides of the contour $\Sigma$. All contours will be invariant under complex conjugation and the jump matrix $J\equiv J(k)$ will always satisfy
\begin{equation}  \label{jump-symm}
J=
\begin{cases}
\sigma_2J^*\sigma_2,&k\in\Sigma\setminus\D{R},\\
\sigma_2(J^*)^{-1}\sigma_2,&k\in\Sigma\cap\D{R},
\end{cases}
\qquad\text{where}\quad\sigma_2\coloneqq \begin{pmatrix}0&-\ii\\\ii&0\end{pmatrix}.
\end{equation}
Together with uniqueness of the solution of the RH problem \eqref{rhp}, this implies the symmetry
\begin{equation}  \label{m-symm}
m=\sigma_2m^*\sigma_2,\quad k\in\D{C}\setminus\Sigma.
\end{equation}

%-------------------%
\begin{remark*}
Smirnoff classes were first introduced in the 1930s \cite{S1932} (see also \cite{KL1937}) as generalizations of the Hardy spaces $H^p$, $p>0$. Whereas Hardy spaces consist of functions analytic in the open unit disk, Smirnoff classes involve functions analytic in a more general open subset $D$. Typically, the Smirnoff class $E^p(D)$, $p>0$, is defined whenever $D$ is a simply connected domain $D\subset\D{C}$ with rectifiable Jordan boundary, see \cite{D1970}. In the context of RH problems, the subset $D$ is often unbounded because the contour passes through infinity. The definition of $E^p(D)$ can be naturally extended to include unbounded domains $D$ by imposing invariance under linear fractional transformations. Moreover, in the context of RH problems involving functions normalized at infinity, it is convenient to use a slight modification $\dot E^p(D)$ of the Smirnoff class $E^p(D)$, where $\dot E^p(D)$ consists of all functions $f$ such that both $f(z)$ and $zf(z)$ belong to $E^p(D)$. We think of $\dot E^p(D)$ as the subspace of $E^p(D)$ of functions that vanish at infinity. If $D$ is bounded, then $\dot{E}^p(D)=E^p(D)$. We refer to \cite{Le18} for further information on Smirnoff classes in the context of RH problems.
\end{remark*}
%-------------------%

%---------------------------------------------------------%
%:s.2.2
%---------------------------------------------------------%
\subsection{Reduction}  \label{sec:reduction}

The study of \eqref{nlsic}-\eqref{q0-limits} can be reduced to one of the following three cases, depending on whether $B_1<B_2$, $B_1>B_2$, or $B_1=B_2$:
\begin{enumerate}[(i)]
\item
$B_1=-1$, $B_2=1$, and $\phi_2=0$;
\item
$B_1=1$, $B_2=-1$, and $\phi_2=0$;
\item
$B_1=B_2=\phi_2=0$.
\end{enumerate}
To see this, note that if $q(x,t)$ satisfies \eqref{nls}, then so does the function $\tilde q(x,t)$ defined by
\[
\tilde q(x,t)\coloneqq Aq(A(x+4Bt),A^2t)\eul^{-2\ii B(x+2Bt)},
\]
for any choice of $A>0$ and $B\in\D{R}$. If $q_0$ satisfies \eqref{q0-limits}, then $\tilde q_0$ satisfies
\[
\tilde q_0(x)\sim\begin{cases}
A_1'\eul^{\ii\phi_1}\eul^{-2\ii B_1'x},&x\to -\infty,\\
A_2'\eul^{\ii\phi_2}\eul^{-2\ii B_2'x},&x\to +\infty,
\end{cases}
\]
where
\[
A_1'=AA_1,\qquad A_2'=AA_2,\qquad B_1'=B_1A+B,\qquad B_2'=B_2A+B.
\]
If $B_2>B_1$, then, by choosing 
\[
A=\frac{2}{B_2-B_1}>0,\qquad B=\frac{B_1+B_2}{B_1-B_2},
\]
we can arrange so that $B_2'=-B_1'=1$. Similarly, if $B_2<B_1$, then, by choosing 
\[
A=\frac{2}{B_1-B_2}>0,\qquad B=\frac{B_1+B_2}{B_2-B_1},
\]
we can arrange so that $B_1'=-B_2'=1$. On the other hand, if $B_1=B_2$, then by choosing $A=1$ and $B=-B_1=-B_2$, we can arrange so that $B_1'=B_2'=0$. Furthermore, in either of these cases, due to the invariance of \eqref{nls} under the global symmetry $q\mapsto q\eul^{\ii\phi}$, we may also assume that $\phi_2=0$ (and thus denote $\phi_1=\phi$). Therefore we may, without loss of generality, restrict our attention to solutions whose initial data satisfy one of the following conditions:
\begin{enumerate}[\textbullet]
\item 
If $B_1<B_2$, then
\begin{equation}
\label{b2b1}
q_0(x)\sim \begin{cases}
A_1\eul^{\ii\phi}\eul^{2\ii x},&x\to -\infty,\\
A_2\eul^{-2\ii x},&x\to +\infty.
\end{cases}
\end{equation}
\item
If $B_1>B_2$, then
\begin{equation}
\label{b1b2}
q_0(x)\sim
\begin{cases}
A_1\eul^{\ii\phi}\eul^{-2\ii x},&x\to -\infty,\\
A_2\eul^{2\ii x},&x\to +\infty.
\end{cases}
\end{equation}
\item
If $B_1=B_2$, then
\begin{equation}  \label{b12}
q_0(x)\sim
\begin{cases}
A_1\eul^{\ii\phi_1},&x\to -\infty,\\
A_2\eul^{\ii\phi_2},&x\to +\infty.
\end{cases}
\end{equation}
\end{enumerate}
However, in what follows we often prefer to keep the setting with arbitrary $B_j$ and $\phi_j$.
%---------------------------------------------------------%
%:s.2.3
%---------------------------------------------------------%
\subsection{Background solutions}

The IST formalism in the form of a RH problem requires that the solution $q(x,t)$ can be represented in terms of the solution of a $2\times 2$-matrix RH problem whose formulation (jump conditions and possible residue conditions) involves only spectral functions which are defined in terms of the initial data. In the adaptation of the IST to case of ``nonzero backgrounds'', the first step is to find a convenient description of the \emph{background solutions} of the Lax pair equations (see, e.g., \cite{BKS11}*{Eqs.~(1.4)-(1.5)}), i.e., the solutions $\Phi_{0j}(x,t,k)$, $j=1,2$ of the equations
\begin{subequations} \label{lax}
\begin{alignat}{3}\label{laxx}
\Phi_x(x,t,k)&=U(x,t,k)\Phi(x,t,k),&\ \ &\text{with}&\ \ &U=-\ii k\sigma_3+\begin{pmatrix}
0 & q\\ -\bar q & 0
\end{pmatrix},\\
\label{laxt}
\Phi_t(x,t,k)&=V(x,t,k)\Phi(x,t,k),&&\text{with}&&V=-2\ii k^2\sigma_3 +2k\begin{pmatrix}
0 & q\\ -\bar q & 0
\end{pmatrix}+\ii\begin{pmatrix}\abs{q}^2&q_x\\\bar q_x&-\abs{q}^2\end{pmatrix},
\end{alignat}
\end{subequations}
where $\sigma_3\coloneqq\left(\begin{smallmatrix}1&0\\0&-1\end{smallmatrix}\right)$ and $q(x,t)=q_{0j}(x,t)$ with $q_{0j}$ as in \eqref{bg}. These solutions $\Phi_{0j}(x,t,k)$ of \eqref{lax} will play the role that $\eul^{(-\ii zx-2\ii z^2t)\sigma_3}$ plays in the case of decaying initial data.

In view of the central role of the RH problem in the IST method, it is natural to try to characterize the background solutions in terms of the solutions of appropriate RH problems.

For $j=1,2$, we introduce the functions
\begin{alignat}{2}  \label{x-om}
X_j(k)&=\sqrt{(k-E_j)(k-\bar E_j)},&\qquad&\Omega_j(k)=2(k+B_j)X_j(k),\\
\label{nu-om}
\nu_j(k)&=\left(\frac{k-E_j}{k-\bar E_j}\right)^{\frac{1}{4}},&&\C{E}_j(k)=\frac{1}{2}\begin{pmatrix}
	\nu_j+\nu_j^{-1} & \nu_j-\nu_j^{-1} \\
	\nu_j-\nu_j^{-1} & \nu_j+\nu_j^{-1}
\end{pmatrix}.
\end{alignat}
We choose the branches of the square and fourth roots so that these functions are analytic in $\D{C}\setminus\Sigma_j$ and satisfy the large $k$ asymptotics
\begin{alignat*}{2}
X_j(k)&=k-B_j+\ord(k^{-1}),&\qquad&\Omega_j(k)=2k^2+\omega_j+\ord(k^{-1}),\\
\nu_j(k)&=1+\ord(k^{-1}),&&\C{E}_j(k)=I+\ord(k^{-1}).
\end{alignat*}
We denote by $X_{j\pm}$, $\Omega_{j\pm}$, $\nu_{j\pm}$, and $\C{E}_{j\pm}$ their boundary values from the left and right sides of $\Sigma_j$. Note that $X_j^*=X_j$, $\Omega_j^*=\Omega_j$, $\nu_j^*=\nu_j^{-1}$, and $\C{E}_j^*=\sigma_2\C{E}_j\sigma_2$. The background solutions $\Phi_{0j}(x,t,k)$, $j=1,2$ are defined as follows:
\begin{subequations}  \label{Phi0-N}
\begin{align}  \label{Phi0}
\Phi_{0j}(x,t,k)&\coloneqq\eul^{(-\ii B_jx+\ii\omega_jt)\sigma_3}
N_j(k)\eul^{(-\ii X_j(k)x-\ii\Omega_j(k)t)\sigma_3},\\
\label{N12}
N_j(k)&\coloneqq\eul^{\frac{\ii\phi_j}{2}\sigma_3}\C{E}_j(k)\eul^{-\frac{\ii\phi_j}{2}\sigma_3}.
\end{align}
\end{subequations}
The functions $N_j$ and $\Phi_{0j}$ are analytic in $k\in\D{C}\setminus\Sigma_j$. They satisfy the relations $\det\Phi_{0j}=\det N_j=\det\C{E}_j\equiv 1$ and the symmetry \eqref{m-symm}. Since $\Sigma_j$ is oriented upwards (see Figure~\ref{fig:basic-contour}), $\nu_{j+}(k)=\ii\nu_{j-}(k)$ for $k\in\Sigma_j$ and thus $N_j$, $j=1,2$ satisfies the RH problem
\begin{equation}  \label{Nj}
\begin{cases}
N_j\in I+\dot E^2(\D{C}\setminus\Sigma_j),&\\[1mm]
N_{j+}(k)=N_{j-}(k)\begin{pmatrix}
	0 & \ii\eul^{\ii\phi_j}\\
	\ii\eul^{-\ii\phi_j} & 0
\end{pmatrix},&k\in\Sigma_j.
\end{cases}
\end{equation}
%---------------------------------------------------------%
%:s.2.4
%---------------------------------------------------------%
\subsection{Jost solutions and spectral functions}\label{sec:jost}

Assuming that $q(x,t)$ satisfies the Cauchy problem defined by \eqref{nlsic} and \eqref{cauchy-asbg}, define the Jost solutions $\Phi_j\equiv\Phi_j(x,t,k)$, $j=1,2$ of the Lax pair equations \eqref{lax} by
\begin{equation}  \label{phij}
\Phi_j(x,t,k)\coloneqq\mu_j(x,t,k)\eul^{(-\ii X_j(k)x-\ii\Omega_j(k)t)\sigma_3},
\end{equation}
where $X_j$, $\Omega_j$ are as in \eqref{x-om}, and $\mu_j$, $j=1,2$ solve the Volterra integral equations
\begin{align}\label{mu}
\mu_j(x,t,k)&=\eul^{(-\ii B_jx +\ii\omega_jt)\sigma_3}N_j(k)\notag\\
&\quad+\int_{(-1)^j\infty}^x\!\!\Phi_{0j}(x,t,k)\Phi_{0j}^{-1}(y,t,k)\croch{(Q-Q_{0j})(y,t)}\mu_j(y,t,k)\eul^{-\ii X_j(k)(y-x)\sigma_3}\,\dd y
\end{align}
with $N_j$ as in \eqref{N12} and
\[
Q=\begin{pmatrix}
0 & q \\ -\bar q & 0  
\end{pmatrix},\qquad
Q_{0j}=\begin{pmatrix}
0 & q_{0j} \\ -\bar q_{0j} & 0
\end{pmatrix}.
\]
The symmetry properties of $\Phi_{0j}$, $N_j$, $X_j$, and $\Omega_j$ imply that $\mu_j$ and $\Phi_j$ satisfy the symmetry \eqref{m-symm}. Observe that $\Phi_j$, $j=1,2$ solve the Volterra integral equations
\begin{equation}  \label{Phi}
\Phi_j(x,t,k)=\Phi_{0j}(x,t,k)+\int_{(-1)^j\infty}^x\!\!\Phi_{0j}(x,t,k)\Phi_{0j}^{-1}(y,t,k)\croch{(Q-Q_{0j})(y,t)}\Phi_j(y,t,k)\,\dd y.
\end{equation}
In what follows $\mu^{(i)}$ denotes the $i$-th column of a matrix $\mu$.

%-------------------%
%:prop 2.1
%-------------------%
\begin{proposition}[analyticity]   \label{analyticity}
The column $\mu_1^{(1)}$ is analytic in $\D{C}^+\setminus\Sigma_1$ with a jump across $\Sigma_1$. The column $\mu_2^{(2)}$ is analytic in $\D{C}^+\setminus\Sigma_2$ with a jump across $\Sigma_2$. The column $\mu_1^{(2)}$ is analytic in $\D{C}^-\setminus\Sigma_1$ with a jump across $\Sigma_1$. The column $\mu_2^{(1)}$ is analytic in $\D{C}^-\setminus\Sigma_2$ with a jump across $\Sigma_2$.
\end{proposition}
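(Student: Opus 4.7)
The approach is the standard Neumann-series (Volterra iteration) argument applied to the integral equation \eqref{mu} column by column, the only non-routine step being a sign analysis of the exponential factor that appears once the column has been isolated.

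First I would fix $(x,t)$ with $t\geq 0$ and consider column~1 of \eqref{mu}. Because $e^{-\ii X_j(y-x)\sigma_3}$ is diagonal, extraction of the first column gives a Volterra equation of the form
\begin{equation*}
\mu_j^{(1)}(x,t,k)=\bigl[\eul^{(-\ii B_jx+\ii\omega_jt)\sigma_3}N_j(k)\bigr]^{(1)}+\int_{(-1)^j\infty}^{x}K_j^{(1)}(x,y,t,k)\,\mu_j^{(1)}(y,t,k)\,\dd y,
\end{equation*}
in which the kernel carries a scalar factor $\eul^{\ii X_j(k)(x-y)}$ pulled out of the first column of $\eul^{-\ii X_j(y-x)\sigma_3}$. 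Substituting the factorization
\[
\Phi_{0j}(x)\Phi_{0j}^{-1}(y)=\eul^{(-\ii B_jx+\ii\omega_jt)\sigma_3}N_j(k)\,\eul^{-\ii X_j(k)(x-y)\sigma_3}\,N_j^{-1}(k)\,\eul^{(\ii B_jy-\ii\omega_jt)\sigma_3}
\]
into the kernel and absorbing the scalar $\eul^{\ii X_j(x-y)}$, the $k$-dependence of the diagonal reduces to the matrix $N_j(k)\,\mathrm{diag}(1,\eul^{2\ii X_j(k)(x-y)})N_j^{-1}(k)$. The only entry that can grow is the $(2,2)$-piece, whose modulus equals $\eul^{-2\,\Im X_j(k)\,(x-y)}$. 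An entirely analogous extraction for column~2 produces instead $N_j\,\mathrm{diag}(\eul^{-2\ii X_j(x-y)},1)N_j^{-1}$, with dangerous factor $\eul^{2\,\Im X_j(k)\,(x-y)}$.

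Combining the sign of $x-y$ along each contour ($y\leq x$ for $j=1$, $y\geq x$ for $j=2$) with the sign of $\Im X_j(k)$ yields the four regions stated. Here one uses that $X_j$ is analytic in $\D{C}\setminus\Sigma_j$, real on $\D{R}$ (since $X_j^*=X_j$) and behaves like $k-B_j$ at infinity, so that $\Im X_j(k)$ and $\Im k$ have the same sign throughout $\D{C}\setminus\Sigma_j$. In each of the resulting regions the dangerous exponential is bounded by $1$ along the contour, the other factors $\eul^{\pm\ii B_j\cdot}$ are oscillatory bounded, and $N_j(k),N_j^{-1}(k)$ are holomorphic. The Neumann series of the Volterra operator then converges uniformly on compact subsets of the region, thanks to the $L^1$-integrability of $Q-Q_{0j}$ along the appropriate half-line built into \eqref{cauchy-as}; Morera's theorem yields analyticity of the resulting column in that region. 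The jump across $\Sigma_j$ in the relevant half-plane is inherited from the jump of $N_j$ and $N_j^{-1}$ recorded in \eqref{Nj} (equivalently, from the branch cut of $X_j$).

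The step I expect to require the most care is the sign statement $\operatorname{sign}(\Im X_j(k))=\operatorname{sign}(\Im k)$ on $\D{C}\setminus\Sigma_j$: this is what actually matches the Volterra estimate to the half-plane domains claimed in the proposition. It follows from the fact that $X_j$ is a single-valued holomorphic function on $\D{C}\setminus\Sigma_j$ taking real values exactly on $\D{R}\cup\Sigma_j$, together with the positivity of $\Im X_j$ near infinity in $\D{C}^+$ inherited from the normalization $X_j(k)=k-B_j+\ord(k^{-1})$. The rest of the proof is pure bookkeeping.
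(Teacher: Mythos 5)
Your proposal is correct and follows essentially the same route as the paper: the paper likewise isolates each column of the Volterra equation \eqref{mu}, observes that the combined exponential (the column factor $\eul^{\mp\ii X_j(y-x)}$ together with the $\eul^{-\ii X_j(x-y)\sigma_3}$ inside $\Phi_{0j}(x)\Phi_{0j}^{-1}(y)$) produces the net factor $\eul^{\mp 2\ii X_j(k)(y-x)}$, and concludes that the domain of analyticity of each column is determined by the sign of $\Im X_j$ matched against the orientation of the integration half-line. Your extra bookkeeping — the explicit $N_j\,\mathrm{diag}(1,\eul^{2\ii X_j(x-y)})N_j^{-1}$ structure, the verification that $\operatorname{sign}(\Im X_j)=\operatorname{sign}(\Im k)$ on $\D{C}\setminus\Sigma_j$, and the Neumann-series convergence via the $L^1$ condition \eqref{cauchy-as} — is all sound and simply spells out what the paper leaves implicit.
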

%-------------------%

%-------------------%
\begin{proof}
The first and second columns of \eqref{mu} involve the exponentials $\eul^{-\ii X_j(k)(y-x)}$ and $\eul^{\ii X_j(k)(y-x)}$, respectively. Hence the domains of definition of the columns $\mu_j^{(i)}$ are determined by the sign of $\Im X_j$. For example, since the Volterra equation of $\mu_1^{(1)}$ involves the exponential $\eul^{-2\ii X_1(k)(y-x)}$, $\mu_1^{(1)}$ is defined and analytic in the domain $\D{C}^+\setminus\Sigma_1$ where $\Im X_1(k)>0$.
\end{proof}
%-------------------%

For $k\in\Sigma_1\cup\Sigma_2$, one can define the $2\times 2$ matrices $\mu_{j\pm}$ as solutions of \eqref{mu} with $N_j$, $X_j$, $\Omega_j$, and $\Phi_{0j}$ replaced by $N_{j\pm}$, $X_{j\pm}$, $\Omega_{j\pm}$, and $\Phi_{0j\pm}$, respectively. We also define
\[
\Phi_{j\pm}(x,t,k)\coloneqq\mu_{j\pm}(x,t,k)\eul^{(-\ii X_{j\pm}(k)x-\ii\Omega_{j\pm}(k)t)\sigma_3}.
\]
The symmetry properties of $N_{j\pm}$, $X_{j\pm}$, $\Omega_{j\pm}$, and $\Phi_{0j\pm}$ imply that $\mu_j$ and $\Phi_j$ also satisfy \eqref{m-symm}.

For $k\in\D{R}$, $\Phi_2(x,t,k)$ and $\Phi_1(x,t,k)$ are related by a scattering matrix $S(k)$, which is independent of $(x,t)$ and has determinant $1$. The symmetry \eqref{m-symm} implies that $S(k)$ has the same matrix structure as in the case of zero background:
\begin{equation}  \label{scattering}
\begin{split}
\Phi_2(x,t,k)&=\Phi_1(x,t,k)S(k),\quad k\in\D{R},\quad k\neq B_1,B_2,\\
S(k)&=\begin{pmatrix}
a^*(k) & b(k) \\
-b^*(k) & a(k)
\end{pmatrix}.
\end{split}
\end{equation}
By Proposition~\ref{analyticity}, $a(k)$ and $a^*(k)$ are analytic in $\D{C}^+\setminus(\Sigma_1\cup\Sigma_2)$ and $\D{C}^-\setminus(\Sigma_1\cup\Sigma_2)$, respectively, with jumps across $\Sigma_1\cup\Sigma_2$. Moreover, $a(k)=1+\ord(1/k)$ as $k\to\infty$ in $\D{C}^+$ and $b(k)=\ord(1/k)$ as $k\to\infty$ for $k\in\D{R}$. Setting $t=0$ in \eqref{scattering}, it follows that $a(k)$ and $b(k)$ are determined by $q_0(x)$.
%---------------------------------------------------------%
%:s.2.5
%---------------------------------------------------------%
\subsection{The basic RH problem}  \label{sec:rhp-basic}

As in the case of zero background, the analytic and asymptotic properties of $\Phi_{j\pm}$ suggest that we introduce the $2\times 2$ matrix-valued function $m(x,t,k)$ by
\begin{equation}  \label{m}
m(x,t,k)\coloneqq\begin{cases}
\begin{pmatrix}\frac{\Phi_1^{(1)}}{a}&\Phi_2^{(2)}\end{pmatrix}\eul^{(\ii kx+2\ii k^2t)\sigma_3},& k\in\D{C}^+,\\
\begin{pmatrix}\Phi_2^{(1)}&\frac{\Phi_1^{(2)}}{a^*}\end{pmatrix}\eul^{(\ii kx+2\ii k^2t)\sigma_3}, 
&k\in\D{C}^-,
\end{cases}
\end{equation}
and that we characterize $m(x,t,k)$ as the solution of a RH problem whose data are uniquely determined by $q_0(x)$. Since $\Phi_1$ and $\Phi_2$ satisfy \eqref{m-symm}, so does $m$. 
%---------------------------------------%
%:fig 2.1
%---------------------------------------%
\begin{figure}[ht]
\centering\includegraphics[scale=.68]{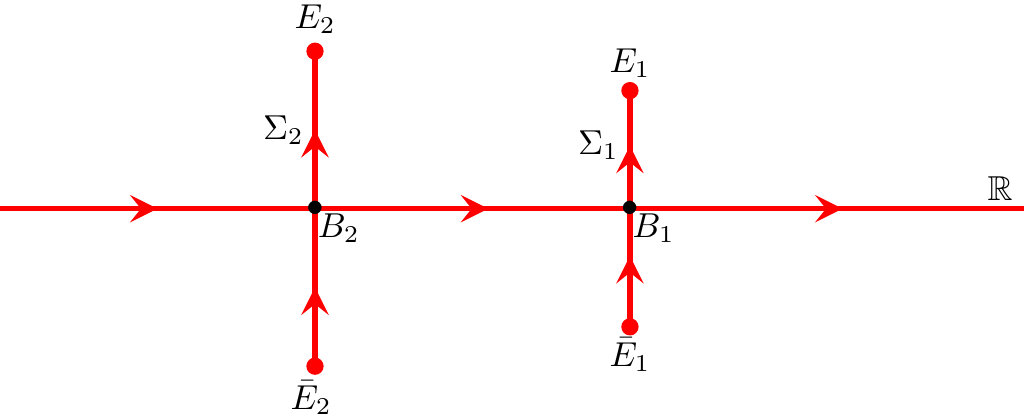}\hspace{2mm}\includegraphics[scale=.68]{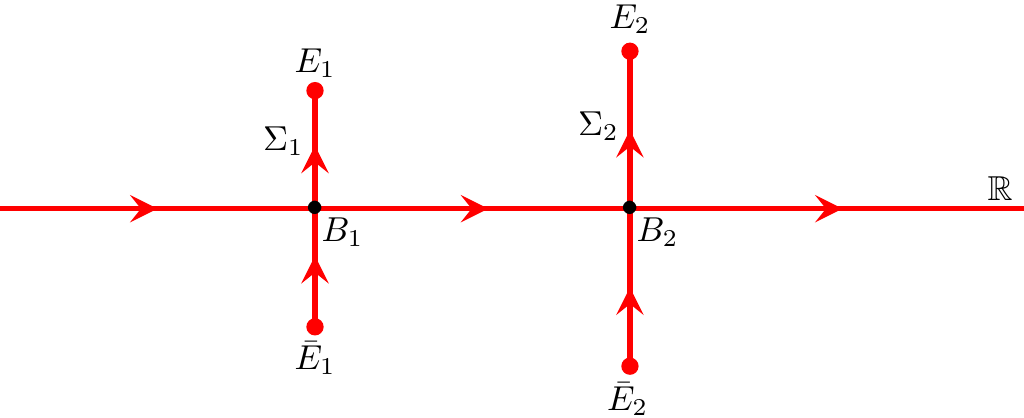}
\caption{The contour $\Sigma=\D{R}\cup\Sigma_1\cup\Sigma_2$ for the basic RH problem in the rarefaction case (left) and shock case (right).} 
\label{fig:basic-contour}
\end{figure}
%---------------------------------------%

For simplicity, we make the following ``no soliton'' assumption:

%-------------------%
\begin{assumption*}
We assume that $a(k)\neq 0$ for $k\in\D{C}^+\cup\D{R}$, $k\neq E_1,E_2$.
\end{assumption*}
%-------------------%

For the behavior of $a(k)$ at the end points of $\Sigma_1$ and $\Sigma_2$, see Section \ref{sec:ends-sigma-12} below.

The function $m$ satisfies the following conditions which will be part of the basic RH problem:
\begin{subequations} \label{rhp-basic-jump}
\begin{equation}   \label{rhp-basic}
\begin{cases}
m(x,t,\,\cdot\,)\in I+\dot E^2(\D{C}\setminus\Sigma),&\\
m_+(x,t,k)=m_-(x,t,k)J(x,t,k)&\text{for a.e. }k\in\Sigma,
\end{cases}
\end{equation}
where $\Sigma\coloneqq\D{R}\cup\Sigma_1\cup\Sigma_2$ and 
\begin{equation}  \label{J-J0}
J(x,t,k)=\eul^{-(\ii kx+2\ii k^2t)\sigma_3}J_0(k)\eul^{(\ii kx+2\ii k^2t)\sigma_3}
\end{equation}
\end{subequations}
for some matrix $J_0(k)$ yet to be specified. Since $m$ obeys \eqref{m-symm}, the matrices $J$ and $J_0$ satisfy the symmetries \eqref{jump-symm}. Our next goal is to determine $J_0(k)$ on each part of the contour $\Sigma$.
%---------------------------------------------------------%
%:s.2.5.1
%---------------------------------------------------------%
\subsubsection{Jump across $\D{R}$}  \label{sec:jump-real-line}

Introduce the reflection coefficient $r(k)$ by
\begin{equation}  \label{reflection}
r(k)\coloneqq\frac{b^*(k)}{a(k)},\quad k\in\D{R},\ \ k\neq B_1,B_2.
\end{equation}
The scattering relation \eqref{scattering} can be rewritten as a jump condition.

%-------------------%
%:lem 2.2
%-------------------%
\begin{lemma}   \label{jump-real-line}
For $k\in\D{R}$, $J_0\equiv J_0(k)$ is given by
\begin{equation}\label{jump-r}
J_0=\begin{pmatrix}
	1+rr^*&r^* \\
	r & 1
\end{pmatrix} =
\begin{pmatrix}
	1 &r^* \\
	0 & 1
\end{pmatrix}
\begin{pmatrix}
	1 & 0 \\
	r & 1
\end{pmatrix},\quad k\in\D{R},\ \ k\neq B_1,B_2.
\end{equation}
\end{lemma}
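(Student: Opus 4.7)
The plan is a direct computation on the real line, where both $\Phi_1$ and $\Phi_2$ are continuous (away from $k = B_1, B_2$), so that the $\pm$ boundary values of $m$ can be compared simply by substituting \eqref{m} and \eqref{scattering}. The first step is to factor out the common exponential: write $m_\pm = P_\pm E$ where $E := \eul^{(\ii kx + 2\ii k^2 t)\sigma_3}$ is continuous across $\D{R}$ and
\[
P_+ = \bigl(\Phi_1^{(1)}/a,\; \Phi_2^{(2)}\bigr), \qquad
P_- = \bigl(\Phi_2^{(1)},\; \Phi_1^{(2)}/a^*\bigr).
\]
The jump condition $m_+ = m_- J$ combined with the desired conjugation structure \eqref{J-J0} then reduces to showing $P_-^{-1} P_+ = J_0$, since automatically $J = E^{-1}(P_-^{-1}P_+) E$.

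Next, I use the column form of \eqref{scattering}, namely $\Phi_2^{(1)} = a^* \Phi_1^{(1)} - b^* \Phi_1^{(2)}$ and $\Phi_2^{(2)} = b \Phi_1^{(1)} + a \Phi_1^{(2)}$, to express both $P_\pm$ as $\Phi_1$ multiplied by a $k$-dependent unimodular matrix:
\[
P_+ = \Phi_1 \begin{pmatrix} 1/a & b \\ 0 & a \end{pmatrix}, \qquad
P_- = \Phi_1 \begin{pmatrix} a^* & 0 \\ -b^* & 1/a^* \end{pmatrix}.
\]
The $\Phi_1$ factor cancels in $P_-^{-1}P_+$ and a straightforward $2\times 2$ multiplication yields
\[
P_-^{-1} P_+ = \begin{pmatrix} 1/(aa^*) & b/a^* \\ b^*/a & aa^* + bb^* \end{pmatrix}.
\]

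The final step is to simplify using $\det S = aa^* + bb^* = 1$ (itself a consequence of the trace-free form of the Lax pair \eqref{lax} together with $\det N_j = 1$). The $(2,2)$ entry becomes $1$; the $(1,1)$ entry becomes $1/(aa^*) = (aa^* + bb^*)/(aa^*) = 1 + bb^*/(aa^*) = 1 + rr^*$; and the off-diagonal entries are $r^*$ and $r$ directly from definition \eqref{reflection} together with $r^* = b/a^*$. This produces exactly the matrix on the right-hand side of \eqref{jump-r}; the advertised upper/lower triangular factorization is then an immediate direct check. There is no genuine obstacle here, as the argument is purely algebraic; the only subtle point worth noting is that $r$ (and hence the resulting $J_0$) is undefined at $k = B_1, B_2$, where the behavior of $a$ at the endpoints of $\Sigma_j$ lying on the real axis must be treated separately (cf.\ Section~\ref{sec:ends-sigma-12}), which explains the exclusion $k\neq B_1, B_2$ in the statement.
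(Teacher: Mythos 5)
Your computation is correct and is exactly the standard argument the paper has in mind (the paper states the lemma without proof, as an immediate rewriting of the scattering relation $\Phi_2=\Phi_1S$ with $\det S=1$); the reduction to $J_0=P_-^{-1}P_+$, the factorization of $P_\pm$ through $\Phi_1$, and the use of $aa^*+bb^*=1$ all check out. One small correction to your closing remark: $B_1,B_2$ are not endpoints of $\Sigma_1,\Sigma_2$ (those are $E_j=B_j+\ii A_j$ and $\bar E_j$, which are off the real axis); rather, $B_j=\Sigma_j\cap\D{R}$ is the point where the vertical segment crosses the real line, so $a$, $b$, and hence $r$ have jump discontinuities there, which is why these points are excluded.
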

%-------------------%

%---------------------------------------------------------%
%:s.2.5.2
%---------------------------------------------------------%
\subsubsection{Jumps across $\Sigma_1$ and $\Sigma_2$}\label{sec:jumps-sigma12}

When determining the jump of $m(x,t,k)$ across $\Sigma_1$ and $\Sigma_2$, two cases are to be distinguished.
\begin{enumerate}[1.]
\item 
$\Sigma_1\cap\Sigma_2\neq\emptyset$, i.e.\ $B_1=B_2$.
\item
$\Sigma_1\cap\Sigma_2=\emptyset$, i.e.\ $B_1\neq B_2$.
\end{enumerate}
As already noticed, the first case has attracted more attention in the literature, see \cites{BK14,BM16,BM17,D14}. Henceforth, we therefore only consider the second case, that is, the case $B_1\neq B_2$.

%-------------------%
%:lem 2.3
%-------------------%
\begin{lemma}   \label{jump-sigma-12}
Suppose $B_1\neq B_2$. Then
\begin{equation} \label{jumps-sig}
J_0=\begin{cases}
\begin{pmatrix}
	1 & 0 \\
\frac{\ii\eul^{-\ii\phi_1}}{a_+a_-} & 1
\end{pmatrix},&k\in\Sigma_1\cap\D{C}^+,\\
\begin{pmatrix}
\frac{a_-}{a_+} & \ii\eul^{\ii\phi_2}\\
0 & \frac{a_+}{a_-}
\end{pmatrix}, & k\in\Sigma_2\cap\D{C}^+,
\end{cases}
\qquad
J_0=\begin{cases}
\begin{pmatrix}1&\frac{\ii\eul^{\ii\phi_1}}{a_+^*a_-^*}\\0&1\end{pmatrix},&k\in\Sigma_1\cap\D{C}^-,\\
\begin{pmatrix}\frac{a_+^*}{a_-^*}&0\\\ii\eul^{-\ii\phi_2}&\frac{a_-^*}{a_+^*}\end{pmatrix},&k\in\Sigma_2\cap\D{C}^-.
\end{cases}
\end{equation}
\end{lemma}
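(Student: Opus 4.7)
The plan is to transfer the jumps successively: from the bare background eigenfunctions $\Phi_{0j}$ across $\Sigma_j$, to the Jost solutions $\Phi_j$, to the scattering matrix $S$, and finally to $m$ through its defining formula \eqref{m}. The two halves of \eqref{jumps-sig} in $\D{C}^-$ will then be extracted from the $\D{C}^+$ halves by invoking the symmetry \eqref{jump-symm}.

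\emph{Background and Jost jumps.} On $\Sigma_j$ the roots $X_j$ and $\Omega_j=2(k+B_j)X_j$ both flip sign ($X_{j+}=-X_{j-}$, $\Omega_{j+}=-\Omega_{j-}$), while $N_j$ jumps by the matrix $R_j\coloneqq\left(\begin{smallmatrix}0&\ii\eul^{\ii\phi_j}\\ \ii\eul^{-\ii\phi_j}&0\end{smallmatrix}\right)$ of \eqref{Nj}. Since $R_j$ anticommutes with $\sigma_3$, the identity $R_j\eul^{\alpha\sigma_3}=\eul^{-\alpha\sigma_3}R_j$ exactly absorbs these sign flips in \eqref{Phi0-N}, giving $\Phi_{0j+}=\Phi_{0j-}R_j$. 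Because $R_j$ is $(x,t)$-independent, $\Phi_{j-}R_j$ satisfies the same Volterra equation \eqref{Phi} as $\Phi_{j+}$, and uniqueness yields $\Phi_{j+}=\Phi_{j-}R_j$ on $\Sigma_j$. The hypothesis $B_1\neq B_2$ gives $\Sigma_1\cap\Sigma_2=\emptyset$, so $\Phi_2$ is analytic across $\Sigma_1$ and $\Phi_1$ across $\Sigma_2$; writing $S=\Phi_1^{-1}\Phi_2$ then yields $S_+=R_1^{-1}S_-$ on $\Sigma_1$ and $S_+=S_-R_2$ on $\Sigma_2$, giving in particular the scalar identities
\[
a_+=-\ii\eul^{-\ii\phi_1}b_-,\quad b_+=-\ii\eul^{\ii\phi_1}a_-\text{ on }\Sigma_1,\qquad a_+=-\ii\eul^{\ii\phi_2}b_-^*,\quad b_+=\ii\eul^{\ii\phi_2}a_-^*\text{ on }\Sigma_2.
\]

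\emph{Jump of $m$.} On $\Sigma_1\cap\D{C}^+$ the second column of $m$ is analytic (since $k\notin\Sigma_2$), so only $\Phi_1^{(1)}/a$ jumps. Reading $\Phi_{1+}^{(1)}=\ii\eul^{-\ii\phi_1}\Phi_{1-}^{(2)}$ off $\Phi_{1+}=\Phi_{1-}R_1$, using the scattering identity $\Phi_2^{(2)}=b_-\Phi_{1-}^{(1)}+a_-\Phi_{1-}^{(2)}$ to eliminate $\Phi_{1-}^{(2)}$, dividing by $a_+$, and inserting $b_-=\ii\eul^{\ii\phi_1}a_+$ from the identities above reduces the result to
\[
\frac{\Phi_{1+}^{(1)}}{a_+}=\frac{\Phi_{1-}^{(1)}}{a_-}+\frac{\ii\eul^{-\ii\phi_1}}{a_+a_-}\Phi_2^{(2)},
\]
which is the claimed lower-triangular $J_0$. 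The computation on $\Sigma_2\cap\D{C}^+$ is parallel, now eliminating $\Phi_1^{(2)}$ and using $\det S=a_-a_-^*+b_-b_-^*=1$ to simplify the coefficient of $\Phi_1^{(1)}$; the upper-triangular $J_0$ of \eqref{jumps-sig} then emerges. For the lower half-plane I would not redo this algebra: the symmetry $J_0=\sigma_2J_0^*\sigma_2$ from \eqref{jump-symm} sends a unit lower-triangular matrix with off-diagonal entry $\ii\eul^{-\ii\phi_j}/(a_+a_-)$ to a unit upper-triangular one with entry $\ii\eul^{\ii\phi_j}/(a_+^*a_-^*)$, and transforms the diagonal $\Sigma_2$ jump analogously, producing exactly the right-hand column of \eqref{jumps-sig}.

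The main obstacle is the bookkeeping in the middle step: confusing $R_j$ with $R_j^{-1}$, or $b_+$ with $b_-$, or $a_+^*$ with $a_-^*$, all produce plausible but incorrect off-diagonal entries, so the substitutions must be ordered so that each scalar identity from the first step is invoked exactly once and at the right moment. A subordinate technical point is that the Volterra uniqueness argument in the first step must be applied directly on $\Sigma_j$, where $X_{j\pm}$ is real and the integral kernel is merely oscillatory rather than decaying; this remains legitimate because \eqref{cauchy-as} forces $q-q_{0j}\in L^1$ on the corresponding half-line, so the Volterra operator is still a contraction on a suitable weighted space.
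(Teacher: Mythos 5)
Your proposal is correct and follows essentially the same route as the paper: the paper transfers the jump of $N_j$ to the Jost solutions via the entire kernel $\Phi_{0j}(x,t,k)\Phi_{0j}^{-1}(y,t,k)$ (phrased there through the auxiliary functions $\Xi_j$, which is your Volterra-uniqueness step $\Phi_{j+}=\Phi_{j-}R_j$ in disguise), derives the same relations $S_-=R_1S_+$ on $\Sigma_1$ and $S_+=S_-R_2$ on $\Sigma_2$ together with $S_{22\pm}=a_\pm$ and $\det S_\pm=1$, and obtains the $\D{C}^-$ entries from the symmetry \eqref{jump-symm}. Your column-by-column computation of $c_1$ and $c_2$ is an equivalent reorganization of the paper's manipulation of $S_{12\pm}/S_{22\pm}$ and $S_{21\pm}/S_{22\pm}$, and your closing remarks on the oscillatory Volterra kernel on $\Sigma_j$ match the paper's definition of $\mu_{j\pm}$, $\Phi_{j\pm}$ as boundary-value solutions of \eqref{mu}.
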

%-------------------%

%-------------------%
\begin{proof}
For $k\in\Sigma_1\cup\Sigma_2$, introduce the solutions $\Xi_j(x,t,k)$, $j=1,2$ of the integral equations 
\begin{align*}
&\Xi_j(x,t,k)=\\
&\quad=I+\int_{(-1)^j\infty}^x\Phi_{0j}(x,t,k)\Phi_{0j}^{-1}(y,t,k)\left\lbrack(Q-Q_{0j})(y,t)\right\rbrack\Xi_j(y,t,k)\Phi_{0j}(y,t,k)\Phi_{0j}^{-1}(x,t,k)\dd y.
\end{align*}
For each fixed $(y,t)$, the function $\Phi_{0j}(x,t,k)\Phi_{0j}^{-1}(y,t,k)$ is a solution of the $x$-part \eqref{laxx} with $q$ replaced by $q_{0j}$. Since this solution equals the identity matrix at $x=y$ and the matrix $U$ in \eqref{laxx} is a polynomial in $k$, we conclude that $\Phi_{0j}(x,t,k)\Phi_{0j}^{-1}(y,t,k)$ is an entire function of $k$, well defined for $k\in\Sigma_1\cup\Sigma_2$. Thus, $\Phi_{j\pm}$ and $\Xi_j\Phi_{0j\pm}$ solve the same integral equation for $k\in\Sigma_j$, and $\Phi_j$ and $\Xi_j\Phi_{0j}$ solve the same integral equation for $k\in\Sigma_{j'}$, $j'\neq j$. Hence, $\Phi_{1\pm}(x,t,k)$ and $\Phi_{2\pm}(x,t,k)$ can be written as follows for $k\in\Sigma_1\cup\Sigma_2$:
\begin{subequations}  \label{Phi-12pm}
\begin{alignat}{4}  \label{Phi-1pm}
&\Phi_{1\pm}=\Xi_1\Phi_{01\pm}&\quad&\text{and}&\quad&\Phi_2=\Xi_2\Phi_{02},&\qquad&k\in\Sigma_1,\\
\label{Phi-2pm}
&\Phi_{2\pm}=\Xi_2\Phi_{02\pm}&&\text{and}&&\Phi_1=\Xi_1\Phi_{01},&&k\in\Sigma_2.
\end{alignat}
\end{subequations}
Next, introduce the scattering matrices $S_\pm(k)$ on $\Sigma_1\cup\Sigma_2$:
\begin{subequations}  \label{scat-12}
\begin{alignat}{2} \label{scat-2}
\Phi_{2\pm}(x,t,k)&=\Phi_1(x,t,k)S_\pm(k),&\qquad&k\in\Sigma_2,\\
\label{scat-1}
\Phi_2(x,t,k)&=\Phi_{1\pm}(x,t,k)S_\pm(k),&&k\in\Sigma_1.
\end{alignat}
\end{subequations}
Notice that $\det S_\pm(k)=1$. Let us consider the two cases $k\in\Sigma_2$ and $k\in\Sigma_1$ separately.
\begin{enumerate}[(1)]
\item
For $k\in\Sigma_2$, we use \eqref{scat-2} and \eqref{Phi-2pm} to write $S_\pm(k)=\Phi_1^{-1}(x,t,k)\Xi_2(x,t,k)\Phi_{02\pm}(x,t,k)$.
\end{enumerate}
Setting $x=t=0$ we have $S_\pm(k)=P_2(k)N_{2\pm}(k)$, with $P_2(k)\coloneqq\Phi_1^{-1}(0,0,k)\Xi_2(0,0,k)$. Hence, using \eqref{Nj},
\begin{equation}
S_+(k)=S_-(k)\begin{pmatrix}
0&\ii\eul^{\ii\phi_2}\\
\ii\eul^{-\ii\phi_2}&0
\end{pmatrix},\qquad k\in\Sigma_2.
\label{scat-si-2}
\end{equation}
In particular, 
\begin{equation}
S_{12+}=\ii\eul^{\ii\phi_2}S_{11-},\qquad S_{22+}=\ii\eul^{\ii\phi_2}S_{21-}.
\label{s-2-pm}
\end{equation}
By \eqref{m} the jump relation across $\Sigma_2\cap\D{C}^+$ reads as follows for $x=t=0$:
\[
\begin{pmatrix}
\frac{\Phi_1^{(1)}}{a_+}&\Phi_{2+}^{(2)}
\end{pmatrix}= 
\begin{pmatrix}
\frac{\Phi_1^{(1)}}{a_-}&\Phi_{2-}^{(2)}\end{pmatrix}
\begin{pmatrix}
\frac{a_-}{a_+}&c_2\\0&\frac{a_+}{a_-}
\end{pmatrix}
\]
for some function $c_2\equiv c_2(k)$. Thus 
\[
\frac{\Phi_{2+}^{(2)}}{a_+}-\frac{\Phi_{2-}^{(2)}}{a_-}=\frac{c_2}{a_+ a_-}\Phi_1^{(1)}.
\]
Let us calculate $c_2$. From the scattering relation \eqref{scat-2} we have
\begin{equation}  \label{scat-phi-s}
\Phi_{2\pm}^{(2)}=S_{12\pm}\Phi_1^{(1)}+S_{22\pm}\Phi_1^{(2)}.
\end{equation}
Since $\det\Phi_1=1$ we thus have $\det\left(\Phi_1^{(1)}\ \ \Phi_{2\pm}^{(2)}\right)=S_{22\pm}$. Since $a\coloneqq(\Phi_1^{-1}\Phi_2)_{22}$ (see \eqref{scattering}), we also have $\det\bigl(\Phi_1^{(1)}\ \ \Phi_{2\pm}^{(2)}\bigr)=a_\pm$. Therefore,
\begin{equation}  \label{apm}
S_{22\pm}=a_{\pm},\quad k\in\Sigma_2\cap\D{C}^+.
\end{equation}
From \eqref{scat-phi-s} and \eqref{apm} we obtain
\[
\frac{\Phi_{2+}^{(2)}}{a_+}-\frac{\Phi_{2-}^{(2)}}{a_-}=\left(\frac{S_{12+}}{S_{22+}}-\frac{S_{12-}}{S_{22-}}\right)\Phi_1^{(1)}.
\]
Using \eqref{s-2-pm} and the fact that $\det S_-=1$ we have
\[
\frac{S_{12+}}{S_{22+}}-\frac{S_{12-}}{S_{22-}}=\ii\eul^{\ii\phi_2}\frac{S_{11-}S_{22-}-S_{12-}S_{21-}}{S_{22+}S_{22-}}=\frac{\ii\eul^{\ii\phi_2}}{a_+ a_-}
\]
and thus $c_2=\ii\eul^{\ii\phi_2}$.
\begin{enumerate}[(1)]
\addtocounter{enumi}{1}
\item
For $k\in\Sigma_1$, we use \eqref{scat-1} and \eqref{Phi-1pm} to write $S_\pm(k)=\Phi_{01\pm}^{-1}(x,t,k)\Xi_1^{-1}(x,t,k)\Phi_2(x,t,k)$.
\end{enumerate}
Setting $x=t=0$, this relation reads $S_\pm(k)=\left(N_{1\pm}(k)\right)^{-1}P_1(k)$ with $P_1(k)\coloneqq\Xi_1^{-1}(0,0,k)\Phi_2(0,0,k)$. Hence, by \eqref{Nj}, 
\[
S_-S_+^{-1}=\left(N_{1-}\right)^{-1}N_{1+}=
\begin{pmatrix}
0 & \ii\eul^{\ii\phi_1}\\\ii\eul^{-\ii\phi_1} & 0
\end{pmatrix},
\]
so we have
\begin{equation} \label{scat-si-1}
S_-(k)=\begin{pmatrix}
0 & \ii\eul^{\ii\phi_1} \\ \ii\eul^{-\ii\phi_1} & 0
\end{pmatrix} S_+(k),\qquad k\in\Sigma_1.
\end{equation}
In particular,
\begin{equation}  \label{sss}
S_{21-} =\ii\eul^{-\ii\phi_1}S_{11+},\qquad S_{22-}=\ii\eul^{-\ii\phi_1}S_{12+},\quad k\in\Sigma_1.
\end{equation}
By \eqref{m} the jump relation across $\Sigma_1\cap\D{C}^+$ has the form 
\[
\begin{pmatrix}\frac{\Phi_{1+}^{(1)}}{a_+}&\Phi_2^{(2)}\end{pmatrix}=\begin{pmatrix}\frac{\Phi_{1-}^{(1)}}{a_-}&\Phi_2^{(2)}\end{pmatrix}
\begin{pmatrix}
1 & 0 \\ c_1 & 1
\end{pmatrix} 
\]
for some function $c_1\equiv c_1(k)$. Thus,
\[
\frac{\Phi_{1+}^{(1)}}{a_+}-\frac{\Phi_{1-}^{(1)}}{a_-}=c_1\Phi_2^{(2)}.
\]
On the other hand, from the scattering relation \eqref{scat-1} and $\det S_\pm=1$, we get
\begin{equation} \label{scat-phi-s1}
\Phi_{1\pm}^{(1)}=S_{22\pm}\Phi_2^{(1)}-S_{21\pm}\Phi_2^{(2)}.
\end{equation}
Since $\det\Phi_2=1$, this relation gives $\det\left(\Phi_{1\pm}^{(1)}\ \ \Phi_2^{(2)}\right)=S_{22\pm}$. Since $\det\left(\Phi_{1\pm}^{(1)}\ \ \Phi_2^{(2)}\right)=a_\pm$ we get
\begin{equation}  \label{apm1}
S_{22\pm}=a_{\pm},\quad k\in\Sigma_1\cap\D{C}^+.
\end{equation}
By \eqref{apm1} and \eqref{scat-phi-s1},
\[
\frac{\Phi_{1+}^{(1)}}{a_+}-\frac{\Phi_{1-}^{(1)}}{a_-}=\left(\frac{S_{21-}}{S_{22-}}-\frac{S_{21+}}{S_{22+}}\right)\Phi_2^{(2)}.
\]
As above, using \eqref{sss} and the fact that $\det S_+\equiv 1$,
we arrive at 
\[
\frac{S_{21-}}{S_{22-}}-\frac{S_{21+}}{S_{22+}}=\frac{\ii\eul^{-\ii\phi_1}}{a_+ a_-}
\]	
and thus $c_1=\frac{\ii\eul^{-\ii\phi_1}}{a_+ a_-}$. The expressions for $k\in\Sigma_j\cap\D{C}^-$ follow from the symmetry \eqref{m-symm}.
\end{proof}
%-------------------%

%---------------------------------------------------------%
%:s.2.5.3
%---------------------------------------------------------%
\subsubsection{Jumps across $\Sigma_1$ and $\Sigma_2$ when $a$ and $b$ have analytic continuation}  \label{sec:analytic-continuation}

The analytic properties of the eigenfunctions and spectral functions
discussed in sections \ref{sec:jost}, \ref{sec:jump-real-line}, and \ref{sec:jumps-sigma12} are satisfied if the initial data $q_0(x)$ approach the backgrounds in such a way that the difference is integrable (in $L^1(\pm\infty,0)$), see \eqref{q0-limits} and \eqref{cauchy-asbg}. However, in the remainder of the paper, we make the following assumption on $q_0$ for simplicity.

%-------------------%
\begin{assumption*}[on $q_0$ and $B_j$]
Henceforth, we will assume that $B_1\neq B_2$, that $q_0$ is smooth and that
\begin{equation}  \label{qnot}
q_0(x)=\begin{cases}
A_1\eul^{\ii\phi_1}\eul^{-2\ii B_1x},&x<-C,\\
A_2\eul^{\ii\phi_2}\eul^{-2\ii B_2x},&x>C,
\end{cases}
\end{equation}
for some $C>0$, i.e., that $q_0(x)=q_{01}(x,0)$ for $x<-C$ and $q_0(x)=q_{02}(x,0)$ for $x>C$.
\end{assumption*}
%-------------------%

Then, $a(k)$ and $b(k)$ are both analytic in $\D{C}\setminus(\Sigma_1\cup\Sigma_2)$, and the scattering matrices $S_\pm\equiv S_\pm(k)$ on $\Sigma_1\cup\Sigma_2$ can be written as
\begin{equation}\label{s-ab}
S_\pm=
\begin{pmatrix}
a_\pm^*&b_\pm\\
-b_\pm^*&a_\pm
\end{pmatrix}.
\end{equation}
Accordingly, the relations \eqref{scat-si-2} and \eqref{scat-si-1} between $S_+$ and $S_-$ imply relations amongst $a_\pm(k)$ and $b_\pm(k)$:
\begin{equation}   \label{ab12}
\begin{cases}
a_+=-\ii\eul^{-\ii\phi_1}b_-,&\\
b_+=-\ii\eul^{\ii\phi_1}a_-,&
\end{cases}\ k\in\Sigma_1,\qquad\quad
\begin{cases}
a_+=-\ii\eul^{\ii\phi_2}b_-^*,&\\
b_+=\ii\eul^{\ii\phi_2}a_-^*,&
\end{cases}\ k\in\Sigma_2.
\end{equation}
Moreover, in this case, using that $\det S_-=1$,
\begin{alignat}{2} \label{r+r-sigma1}
r_+(k)-r_-(k)&=\frac{\ii\eul^{-\ii\phi_1}}{a_+(k) a_-(k)},&\qquad&k\in\Sigma_1,\\
\label{tr+tr-sigma2}
\tilde r_+(k)-\tilde r_-(k)&=\frac{\ii\eul^{\ii\phi_2}}{a_+(k)a_-(k)},&&k\in\Sigma_2,
\end{alignat}
where
\begin{equation}  \label{reflection-anal}
r(k)\coloneqq\frac{b^*(k)}{a(k)},\qquad\tilde r(k)\coloneqq\frac{b(k)}{a(k)},
\end{equation}
so that the jump matrix $J_0\equiv J_0(k)$ can be written as follows for $k\in\Sigma_1\cup\Sigma_2$:
\begin{equation}\label{jump-si-anal+}
J_0=\begin{cases}
\begin{pmatrix}
1&0\\r_+-r_-&1\end{pmatrix},&k\in\Sigma_1\cap\D{C}^+,\\[5mm]
\begin{pmatrix}
\frac{a_-}{a_+}&(\tilde r_+-\tilde r_-)a_+a_-\\
0&\frac{a_+}{a_-}
\end{pmatrix},& k\in\Sigma_2\cap\D{C}^+,
\end{cases}
\end{equation}
and
\begin{equation}\label{jump-si-anal-}
J_0=\begin{cases}
\begin{pmatrix}
1&r_-^*-r_+^*\\0&1\end{pmatrix},&k\in\Sigma_1\cap\D{C}^-,\\[5mm]
\begin{pmatrix}
\frac{a_+^*}{a_-^*}&0\\
(\tilde r_-^*-\tilde r_+^*)a_+^*a_-^*&\frac{a_-^*}{a_+^*}
\end{pmatrix},& k\in\Sigma_2\cap\D{C}^-.
\end{cases}
\end{equation}

%---------------------------------------------------------%
%:s.2.5.4
%---------------------------------------------------------%
\subsubsection{Behavior at infinity}  \label{sec:reflec-infty}
Since $q_0$ is smooth, then, as for the problem with zero background \cite{FT87}*{Part one, Chapter I, \S6},
\begin{alignat}{3}  \label{scatt-at-infty}
a(k)&=1+\ord(k^{-1}),&\qquad&k\in\D{C}^+\cup\D{R},&\quad&k\to\infty,\\
b(k)&=\ord(k^{-1}),&&k\in\D{R},&&k\to\infty.\notag
\intertext{Thus,} 
r(k)&=\ord(k^{-1}),&&k\in\D{R},&&k\to\infty.\notag
\end{alignat}

%-------------------%
%:lem 2.4
%-------------------%
\begin{lemma}  \label{lem:reflec-infty}
Under assumption \eqref{qnot} on $q_0$,
\begin{equation}  \label{ab-infty}
a(k)=1+\ord\left(\tfrac{\eul^{4C\abs{\Im k}}}{k}\right)\ \text{ and }\ \ b(k)=\ord\left(\tfrac{\eul^{4C\abs{\Im k}}}{k}\right),\quad k\in\D{C},\ \ k\to\infty.
\end{equation}
Moreover,
\begin{equation}  \label{reflec-infty}
r(k)=\ord\left(\tfrac{\eul^{4C\Im k}}{k}\right),\quad k\in\D{C}^+\cup\D{R},\ \ k\to\infty.
\end{equation}
\end{lemma}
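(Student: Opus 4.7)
The plan is to evaluate the scattering relation \eqref{scattering} at $x=-C$, where by assumption \eqref{qnot} $\Phi_1(-C,0,k)=\Phi_{01}(-C,0,k)$ is explicit (see \eqref{Phi0-N}), and to express $\Phi_2(-C,0,k)=T(-C,C,k)\Phi_{02}(C,0,k)$, where $T(x,y,k)$ denotes the transition matrix for the $x$-equation \eqref{laxx} with $q=q_0$. Using the Wronskian identifications $a(k)=\det\bigl(\Phi_1^{(1)},\Phi_2^{(2)}\bigr)$ and $b(k)=-\det\bigl(\Phi_1^{(2)},\Phi_2^{(2)}\bigr)$, the estimation of $a$ and $b$ reduces to the large-$k$ estimation of the single matrix $T(-C,C,k)$, which depends only on the restriction of $q_0$ to the compact interval $[-C,C]$.

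The main step is the analysis of this transition matrix. I would factor out the leading oscillation by writing $T(-C,C,k)=\tilde T(C,k)^{-1}\eul^{2\ii kC\sigma_3}$, where $\tilde T(x,k)\coloneqq\eul^{\ii k(x+C)\sigma_3}T(x,-C,k)$ satisfies the Volterra equation
\[
\tilde T(x,k)=I+\int_{-C}^x M(y,k)\tilde T(y,k)\,\dd y,\qquad
M(y,k)=\begin{pmatrix}0&q_0(y)\eul^{2\ii k(y+C)}\\-\overline{q_0(y)}\,\eul^{-2\ii k(y+C)}&0\end{pmatrix}.
\]
On $[-C,C]$ each oscillatory exponential in $M$ is bounded in modulus by $\eul^{4C\abs{\Im k}}$; the smoothness of $q_0$ then permits integration by parts in every Neumann iterate to extract an extra factor $k^{-1}$. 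Summing the (factorially convergent) Neumann series yields $\tilde T(C,k)=I+\ord(\eul^{4C\abs{\Im k}}/k)$ as $k\to\infty$, with the entries whose integrands involve the exponential of bounded modulus satisfying the sharper bound $\ord(k^{-1})$.

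The final step is to insert the expansions $X_j(k)=k-B_j+\ord(k^{-1})$ and $N_j(k)=I+\ord(k^{-1})$ into the explicit columns $\Phi_{0j}^{(j)}(\pm C,0,k)$ and to expand the two Wronskians. The $\eul^{\pm 2\ii kC}$ factors absorbed into $T$ cancel against the $\eul^{\pm\ii X_jC}$ factors from $\Phi_{0j}^{(j)}(\pm C,0,k)$ up to bounded phases and to order $k^{-1}$; crucially, the $\ord(\eul^{4C\abs{\Im k}}/k)$ entries of $\tilde T(C,k)^{-1}$ are always paired, in each term of the resulting determinant, with a component carrying an extra $k^{-1}$ factor or a compensating decaying exponential, so that the two determinants obey \eqref{ab-infty}. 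For the reflection coefficient, the no-soliton assumption together with $a(k)\to 1$ at infinity yields $\abs{a(k)}\geq c>0$ on $\accol{k\in\D{C}^+\cup\D{R}:\abs{k}\geq R}$ for some $R>0$; combined with $\abs{b^*(k)}=\abs{b(\bar k)}$ and $\abs{\Im\bar k}=\Im k$ for such $k$, this gives \eqref{reflec-infty}.

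The main technical obstacle is the bookkeeping of exponential factors in the last two steps: because the two off-diagonal entries of $M$ grow at opposite rates as $\abs{\Im k}\to\infty$, with the roles exchanged between $\D{C}^+$ and $\D{C}^-$, each entry of $\tilde T(C,k)^{\pm 1}$ and of the final matrix product must be estimated separately in the two half-planes, and one must verify in every case that the potentially large factor $\eul^{4C\abs{\Im k}}$ is paired with a suppressing factor so as not to exceed the claimed bound.
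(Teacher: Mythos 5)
Your route is genuinely different from the paper's and is viable. The paper estimates the Jost solutions $\Phi_1,\Phi_2$ directly on the half-lines $x>-C$ and $x<C$ via Volterra equations built from the background propagators $\Phi_{0j}(x,t,k)\Phi_{0j}^{-1}(y,t,k)$, proves by induction the single-exponential bound \eqref{KnPhi} on the Neumann iterates, evaluates the scattering relation at $x=0$, and then upgrades to the $1/k$ decay by integrating by parts in the first iterate (the off-diagonality of $\hat Q_1$ producing the factor $1/X_1\sim 1/k$). You instead evaluate the scattering relation at $x=-C$, where $\Phi_1$ coincides with the explicit background, and push everything into the transition matrix of the Zakharov--Shabat system over the compact interval $\croch{-C,C}$ with the full potential and the free propagator. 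This localizes the analysis cleanly, and the Wronskian identifications $a=\det\bigl(\Phi_1^{(1)},\Phi_2^{(2)}\bigr)$, $b=-\det\bigl(\Phi_1^{(2)},\Phi_2^{(2)}\bigr)$ are correct; the price is the entry-by-entry phase bookkeeping in the two half-planes at the end, which you rightly flag and which does close (the growing entries of $\tilde T^{\pm1}$ are indeed always met by a decaying exponential or an extra $k^{-1}$).

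One step is under-justified as written: the inference from ``each oscillatory exponential in $M$ is bounded in modulus by $\eul^{4C\abs{\Im k}}$'' to ``the (factorially convergent) Neumann series sums to $I+\ord(\eul^{4C\abs{\Im k}}/k)$.'' A per-factor bound of $\eul^{4C\abs{\Im k}}$ compounds to $\eul^{4nC\abs{\Im k}}$ in the $n$-th iterate, and summing would give a double exponential in $\abs{\Im k}$, not $\eul^{4C\abs{\Im k}}$. What actually saves the estimate is that, on the ordered simplex $-C\leq y_1\leq\dots\leq y_n\leq C$, the alternating exponents telescope: the product of exponentials in any entry of the $n$-th iterate equals $\eul^{\pm2\ii k\sigma}$ with a single $\sigma\in\croch{0,2C}$, so each term carries exactly one factor $\eul^{4C\abs{\Im k}}$, and the factorial from the simplex volume then makes the series converge. (Equivalently, invoke the standard transition-matrix estimate $\norm{T(x,y,k)}\leq\eul^{\abs{\Im k}\abs{x-y}}\eul^{\int\norm{Q}}$.) This telescoping is precisely what the paper's inductive estimate \eqref{KnPhi} is engineered to capture, so you should make it explicit; with that repair, and the integration by parts applied to the innermost variable of each iterate, your argument goes through.
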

%-------------------%

%-------------------%
\begin{proof}
We first estimate $\Phi_1(x,0,k)$. Introduce 
\begin{alignat*}{2}
&\hat\Phi_1(x,k)\coloneqq\eul^{\ii B_1x\sigma_3}\Phi_1(x,0,k),&\qquad& 
\hat\Phi_{01}(x,k)\coloneqq\eul^{\ii B_1x\sigma_3}\Phi_{01}(x,0,k),\\
&G_1(\tau,k)\coloneqq N_1(k)\eul^{-\ii X_1(k)\tau\sigma_3}N_1^{-1}(k),&&
\hat Q_1(x)\coloneqq\eul^{\ii B_1x\sigma_3}(Q(x,0)-Q_{01}(x,0))\eul^{-\ii B_1x\sigma_3}.
\end{alignat*}
Then, under assumption \eqref{qnot}, the integral equation \eqref{mu} can be written for $t=0$ as a Volterra integral equation for $\hat\Phi_1$:
\[
\hat\Phi_1(x,k)=\hat\Phi_{01}(x,k)+\int_{-C}^xG_1(x-y,k)\hat Q_1(y) \hat\Phi_1(y,k)\dd y,
\]
or, in operator form,
\begin{equation}\label{phi-ie}
\hat\Phi_1=\hat\Phi_{01}+K_1\hat\Phi_1,
\end{equation}
where $K_1$ is an integral operator acting on $\C{C}(\D{R})$ as follows:
\[
(K_1f)(x)=\begin{cases}
\int_{-C}^xG_1(x-y,k)\hat Q_1(y)f(y)\dd y,&x\geq -C,\\
0,&\text{otherwise}.
\end{cases}
\] 
Let $\norm{\;}$ denote some $2\times2$ matrix norm and $\mu_1\coloneqq\abs{\Im X_1(k)}$. We have the estimate
\[
\norm{G_1(\tau,k)}\leq D\eul^{\mu_1\tau},\qquad\tau\geq 0
\]
for some positive constant $D$. Moreover, from \eqref{Phi0-N}, enlarging $D$ if necessary, we get
\[
\norm{\hat\Phi_{01}(x,k)}\leq D\eul^{\mu_1\abs{x}}\leq\begin{cases}D\eul^{\mu_1 C},&-C\leq x<0,\\
D\eul^{\mu_1x},&x\geq 0,\end{cases}
\]
provided $k$ is far from $E_1$ and $\bar E_1$. Equation \eqref{phi-ie} can be solved by the Neumann series
\begin{equation}\label{neumann}
\hat\Phi_1=\sum_{n=0}^{\infty}K_1^n\hat\Phi_{01}.
\end{equation}
We will now prove the estimate
\begin{equation}  \label{KnPhi}
\norm{K_1^n\hat\Phi_{01}(x,k)}\leq D^{n+1}\eul^{\mu_1(x+2C)}\frac{p_1^n(x)}{n!}\,,\quad x\geq-C,
\end{equation}
where $p_1(x)\coloneqq\int_{-C}^x\norm{\hat Q_1(y)}\dd y$. For $n=1$ and $-C\leq x<0$ we indeed have 
\[
\norm{K_1\hat\Phi_{01}(x,k)}\leq\int_{-C}^xD\eul^{\mu_1(x-y)}\norm{\hat Q_1(y)}D\eul^{\mu_1C}\leq D^2\eul^{\mu_1(x+2C)}p_1(x).
\]
Moreover, for $x\geq 0$,
\begin{align*}
\norm{K_1\hat\Phi_{01}(x,k)}
&\leq\int_{-C}^0D\eul^{\mu_1(x-y)}\norm{\hat Q_1(y)}D\eul^{\mu_1C}\dd y+\int_0^xD\eul^{\mu_1(x-y)}\norm{\hat Q_1(y)}D\eul^{\mu_1y}\dd y\\
&\leq D^2\eul^{\mu_1(x+2C)}p_1(x).
\end{align*}
Thus, we are done for $n=1$. Then, using \eqref{KnPhi} for $n-1$ we get the estimate for $n$:
\begin{align*}
\left\lVert K_1\left(K_1^{n-1}\hat\Phi_{01}(x,k)\right)\right\rVert&\leq\int_{-C}^x\norm{G_1(x-y,k)}\norm{\hat Q_1(y)}\norm{K_1^{n-1}\hat\Phi_{01}(x,k)}\dd y\\
&\leq\int_{-C}^xD\eul^{\mu_1(x-y)}p_1'(y)D^n\eul^{\mu_1(y+2C)}\frac{p_1^{n-1}(y)}{(n-1)!}\dd y\\
&=D^{n+1}\eul^{\mu_1(x+2C)}\frac{p_1^n(x)}{n!}.
\end{align*}
Hence, the solution $\hat\Phi_1$ of \eqref{phi-ie} satisfies $\norm{\hat\Phi_1(x,k)}\leq D\eul^{Dp_1(x)}\eul^{\mu_1(x+2C)}$ for $x>-C$, and thus
\begin{subequations}  \label{phi12-est}
\begin{equation}\label{phi1-est}
\norm{\Phi_1(x,0,k)}\leq D\eul^{Dp_1(x)}\eul^{\mu_1(x+2C)},\quad x>-C.
\end{equation}
Since $\det\Phi_1=1$ we have the same estimate for $\norm{\Phi_1^{-1}(x,0,k)}$. Similarly, we get the estimate
\begin{equation}\label{phi2-est}
\norm{\Phi_2(x,0,k)}\leq D\eul^{Dp_2(x)}\eul^{\mu_2(2C-x)},\quad x<C,
\end{equation}
\end{subequations}
where $p_2(x)\coloneqq\int_x^C\norm{\hat Q_2(y)}\dd y$ and $\mu_2\coloneqq\abs{\Im X_2(k)}$. Now, setting $x=t=0$ in \eqref{scattering} and using \eqref{phi12-est} we arrive at the estimates
\begin{equation}\label{ab-est}
\abs{a(k)}\leq\hat D\eul^{4C\mu},\qquad\abs{b(k)}\leq\hat D\eul^{4C\mu},\quad k\in\D{C},
\end{equation}
where $\mu\coloneqq\max(\mu_1,\mu_2)=\abs{\Im k}+\ord\left(\frac{1}{k}\right)$. Further, taking into account the estimates
\begin{alignat*}{2}
G_1(\tau,k)&=\eul^{-\ii X_1(k)\tau\sigma_3}+\ord\left(\tfrac{\eul^{\mu_1\tau}}{k}\right),&\quad&k\to\infty,\\
\hat\Phi_{01}(x,k)&=\eul^{-\ii X_1(k)x\sigma_3}+\ord\left(\tfrac{\eul^{\mu_1\abs{x}}}{k}\right),&&k\to\infty,
\end{alignat*}
one can estimate $(K_1\hat\Phi_{01})(x,k)$ for $x>-C$ as follows:
\[
(K_1\hat\Phi_{01})(x,k)=\int_{-C}^x\eul^{\ii X_1(k)(y-x)\sigma_3}\hat Q_1(y)\eul^{-\ii X_1(k)y\sigma_3}\dd y+\ord\left(\tfrac{\eul^{\mu(x+2C)}}{k}\right),\quad k\to\infty.
\]
Since $\hat Q_1$ is off-diagonal, integrating by parts in the integral produces a factor $\frac{1}{X_1(k)}\sim\frac{1}{k}$, then, the total estimate for $(K_1\hat\Phi_{01})(x,k)$ takes the form $\ord\left(\frac{\eul^{\mu_1(x+2C)}}{k}\right)$. Hence, writing the series \eqref{neumann} as $\hat\Phi_1=\hat\Phi_{01}+\sum_{n=1}^{\infty}K_1^n\hat\Phi_{01}$, we get 
\[
\Phi_1(x,0,k)=\Phi_{01}(x,0,k)+\ord\left(\tfrac{\eul^{\mu_1(x+2C)}}{k}\right),\quad k\to\infty.
\]
By similar arguments,
\[
\Phi_2(x,0,k)=\Phi_{02}(x,0,k)+\ord\left(\tfrac{\eul^{\mu_2(2C-x)}}{k}\right),\quad k\to\infty.
\]
Using these estimates at $x=0$ we get
\[
\Phi_1^{-1}(0,0,k)\Phi_2(0,0,k)=\left(I+\ord\left(\tfrac{1}{k}\right)+\ord\left(\tfrac{\eul^{2C\mu}}{k}\right)\right)\left(I+\ord\left(\tfrac{1}{k}\right)+\ord\left(\tfrac{\eul^{2C\mu}}{k}\right)\right)=I+\ord\left(\tfrac{\eul^{4C\mu}+1}{k}\right).
\]
Thus, estimates \eqref{ab-est} can be improved to 
\[
a(k)=1+\ord\left(\tfrac{\eul^{4C\mu}+1}{k}\right),\qquad b(k)=\ord\left(\tfrac{\eul^{4C\mu}+1}{k}\right),\quad k\to\infty.
\]
This proves \eqref{ab-infty}. Using \eqref{scatt-at-infty}, the estimate \eqref{reflec-infty} follows.
\end{proof}
%-------------------%

%---------------------------------------------------------%
%:s.2.5.5
%---------------------------------------------------------%
\subsubsection{Behavior at the ends of $\Sigma_1$ and $\Sigma_2$}  \label{sec:ends-sigma-12}

We have shown (see \eqref{Phi-12pm} and \eqref{scat-12} in the proof of Lemma~\ref{jump-sigma-12}) that the scattering matrices on $\Sigma_1$ and $\Sigma_2$ can be represented as follows:
\begin{enumerate}[\textbullet]
\item 
for $k\in\Sigma_2$, $S_\pm(k)=P_2(k)\eul^{\frac{\ii\phi_2}{2}\sigma_3}\C{E}_{2\pm}(k)\eul^{-\frac{\ii\phi_2}{2}\sigma_3}$, where $P_2(k)\coloneqq\Phi_1^{-1}(0,0,k)\Xi_2(0,0,k)$ is non-singular at $k=E_2$ and $k=\bar E_2$ with $\det P_2(k)\equiv 1$;
\item
for $k\in\Sigma_1$, $S_\pm(k)=\eul^{\frac{\ii\phi_1}{2}\sigma_3}\C{E}_{1\pm}^{-1}(k)\eul^{-\frac{\ii\phi_1}{2}\sigma_3}P_1(k)$, where $P_1(k)\coloneqq\Xi_1^{-1}(0,0,k)\Phi_2(0,0,k)$ is non-singular at $k=E_1$ and $k=\bar E_1$ with $\det P_1(k)\equiv 1$.
\end{enumerate}
Under assumption \eqref{qnot}, the integral equations determining $\Phi_j$ and $\Xi_j$, $j=1,2$ involve integration over finite intervals and thus the functions $P_j(k)$ are analytic in $\D{C}\setminus(\Sigma_1\cup\Sigma_2)$ whereas the $\Xi_j$ are entire functions. Moreover, $\Phi_1$, and thus $P_2$, is analytic in a vicinity of $E_2$, and $\Phi_2$, and thus $P_1$, is analytic in a vicinity of $E_1$. Consequently, $S(k)=\left(\begin{smallmatrix}a^*(k) & b(k) \\ -b^*(k) & a(k)\end{smallmatrix}\right)$ is analytic in $\D{C}\setminus(\Sigma_1\cup\Sigma_2)$, and the behavior of its entries near $E_j$ and $\bar E_j$ is determined by the behavior of $\nu_j(k)$ involved in $\C{E}_j(k)$. Namely, for $k$ in a vicinity of $E_2$, the representation $S(k)=P_2(k)\eul^{\frac{\ii\phi_2}{2}\sigma_3}\C{E}_2(k)\eul^{-\frac{\ii\phi_2}{2}\sigma_3}$ implies
\[
a(k)=\frac{1}{2\nu_2(k)}\left(-(P_2(k))_{21}\eul^{\ii\phi_2}+(P_2(k))_{22}\right)+\frac{\nu_2(k)}{2}\left((P_2(k))_{21}\eul^{\ii\phi_2}+(P_2(k))_{22}\right).
\]
Thus we have two possibilities:
\begin{enumerate}[(i)]
\item
(generic case) if $(P_2(E_2))_{22}-(P_2(E_2))_{21}\eul^{\ii\phi_2}\neq 0$ then
\[
a(k)=c_{\R{g}}(k-E_2)^{-\frac{1}{4}}+\ord\bigl((k-E_2)^{\frac{1}{4}}\bigr),
\]
where $c_{\R{g}}=\frac{1}{2}(E_2-\bar E_2)^{\frac{1}{4}}\left((P_2(E_2))_{22}-(P_2(E_2))_{21}\eul^{\ii\phi_2}\right)\neq 0$;
\item
(virtual level case) if $(P_2(E_2))_{22}-(P_2(E_2))_{21}\eul^{\ii\phi_2}=0$ then
\[
a(k)=c_{\R{v}}(k-E_2)^{\frac{1}{4}}+\ord\bigl((k-E_2)^{\frac{3}{4}}\bigr),
\]
where $c_{\R{v}}=(E_2-\bar E_2)^{-\frac{1}{4}}(P_2(E_2))_{22}\neq 0$ (the latter inequality is due to $\det P_2\equiv 1$).
\end{enumerate}
Similarly for $k$ near $E_1$.

In the same way, the Jost solutions $\Phi_j$, $j=1,2$ also inherit from $\nu_j$ their singularities at $k=E_j$ and $k=\bar E_j$, see Proposition~\ref{nearbranchpoints} below. Consequently, the singularities (if any) of the entries of $m(x,t,k)$, defined by \eqref{m}, at $k=E_j$ or $\bar E_j$ are, generically, all of order at most $\abs{k-E_j}^{-\frac{1}{4}}$ or $\abs{k-\bar E_j}^{-\frac{1}{4}}$.
 
In the case with virtual level at $k=E_1$, $m^{(1)}$ can have a stronger singularity, of order $\abs{k-E_1}^{-\frac{1}{2}}$ at $k=E_1$ (then $m^{(2)}$ has a singularity of order $\abs{k-\bar E_1}^{-\frac{1}{2}}$ at $k=\bar E_1$). If this is the case, then introducing $\tilde m\coloneqq m\nu_1^{\sigma_3}$, where $\nu_1(k)$ is defined by \eqref{nu-om}, reduces the order of singularities to $-\frac{1}{4}$ and also makes the jump matrix (for $\tilde m$) bounded at $k=E_1$ and $k=\bar E_1$. Indeed, by \eqref{jumps-sig} the (21) entry of the jump matrix for $\tilde m$ near $k=E_1$ involves $\ii\eul^{-\ii\phi_1}\frac{\nu_{1+}\nu_{1-}}{a_+ a_-}$, which is bounded at $k=E_1$.

%-------------------%
\begin{remark*}
Under our assumptions, the possible singularities of $m(x,t,k)$ (or $\tilde m$, in the virtual level case), constructed from the Jost solutions, at the end points of $\Sigma_1$ and $\Sigma_2$, are sufficiently weak to make it possible to proceed with the $L^2$ setting for the RH problem. This is in contrast with other settings of problems with nonzero boundary conditions, e.g., with the case considered in \cite{BMi19}, where $B_1=B_2\eqqcolon B$ (and $A_1=A_2\eqqcolon A$), where a stronger singularity at $E\coloneqq B+\ii A$ (taking the RH problem out of the $L^2$ setting) may correspond to soliton-like structures like rogue waves.
\end{remark*}
%-------------------%

%-------------------%
\begin{remark*}
It is possible to control the behavior of the Jost solutions and the spectral functions at the end points of $\Sigma_1$ and $\Sigma_2$ under much weaker assumptions on the behavior of $q_0(x)$ than \eqref{qnot}, with the same results concerning the singularities. Actually, this can be done assuming that $x(q_0(x)-q_{0j}(x))$ is in $L^1(0,(-1)^j\infty)$. More precisely, we have the following result whose proof is an easy adaptation to the focusing NLS equation of an argument presented in \cite{FLQ20} for the defocusing NLS equation.
\end{remark*}
%-------------------%

%-------------------%
%:prop 2.5
%-------------------%
\begin{proposition}   \label{nearbranchpoints}
Suppose that $(1+|x|)[(q_0-q_{01})(x)]\in L^1((-\infty,0])$. Fix $x\in\D{R}$ and $\epsilon\in(0,\Im E_1)$. Let $B_\epsilon(E_1)$ and $B_\epsilon(\bar{E}_1)$ be the disks of radius $\epsilon$ centered at $E_1$ and $\bar E_1$, respectively. The Jost function $\Phi_1$ satisfies the following estimates for $k$ near the branch points $E_1$ and $\bar{E}_1$:
\begin{subequations}
\begin{alignat}{2}\label{mu1nearE1}
&|\Phi_1^{(1)}(x,0,k)|\leq C|k-E_1|^{-1/4},&\qquad&k\in B_\epsilon(E_1) \setminus\Sigma_1,\\ 
\label{mu1nearE1bar}
&|\Phi_1^{(1)}(x,0,k)|\leq C|k-\bar{E}_1|^{-1/4},&&k\in B_\epsilon(\bar{E}_1)\setminus\Sigma_1.
\end{alignat}
\end{subequations}
\end{proposition}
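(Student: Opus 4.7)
The approach would be to derive the bound from the Volterra integral equation \eqref{mu} for $\mu_1^{(1)}(x,0,k) \coloneqq \Phi_1^{(1)}(x,0,k)\eul^{\ii X_1(k)x}$ by a Gronwall-type estimate applied to the renormalized column $v(x,k) \coloneqq \nu_1(k)\mu_1^{(1)}(x,0,k)$. The cornerstone observation is that the kernel
\[
\tilde K(x,y,k) \coloneqq \Phi_{01}(x,0,k)\Phi_{01}^{-1}(y,0,k) = \eul^{-\ii B_1 x\sigma_3}H(x-y,k)\eul^{\ii B_1 y\sigma_3},\quad H(\tau,k) \coloneqq N_1(k)\eul^{-\ii X_1(k)\tau\sigma_3}N_1^{-1}(k),
\]
is \emph{entire} in $k$: indeed $\tilde K$ is the propagator for the $x$-part \eqref{laxx} with $q$ replaced by the plane wave $q_{01}$, whose coefficient $U_{01}$ is polynomial in $k$. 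Consequently, although $N_1$ and $N_1^{-1}$ individually exhibit $\nu_1^{\pm 1}$ singularities at $E_1$ and $\bar E_1$, these cancel in the combination $H(\tau,k)$.

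Setting $a=(\nu_1+\nu_1^{-1})/2$ and $b=(\nu_1-\nu_1^{-1})/2$ (cf.\ \eqref{nu-om}), the identities $a^2-b^2=1$, $a^2+b^2=(k-B_1)/X_1$, and $ab=-\ii A_1/(2X_1)$ would be used to rewrite
\[
H(\tau,k)\eul^{\ii X_1\tau} = N_1\begin{pmatrix}1 & 0\\ 0 & \eul^{2\ii X_1\tau}\end{pmatrix}N_1^{-1}
\]
so that every entry is a polynomial in $k$ times either $1$ or the entire function $(\eul^{2\ii X_1\tau}-1)/X_1 = 2\ii\int_0^\tau \eul^{2\ii X_1 s}\,\dd s$. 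The latter satisfies $\abs{(\eul^{2\ii X_1\tau}-1)/X_1}\leq 2\tau$ whenever $\Im X_1\geq 0$, yielding the uniform estimate
\[
\norm{H(\tau,k)\eul^{\ii X_1(k)\tau}}\leq C(1+|\tau|),\qquad k\in B_\epsilon(E_1)\setminus\Sigma_1,\ \tau\geq 0.
\]

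Taking the first column of \eqref{mu} at $t=0$ produces a closed Volterra equation for $\mu_1^{(1)}$; multiplying it by $\nu_1(k)$ turns it into
\[
v(x,k) = v_0(x,k) + \int_{-\infty}^x \tilde K(x,y,k)(Q-Q_{01})(y,0)\eul^{\ii X_1(k)(x-y)}v(y,k)\,\dd y,
\]
with source $v_0(x,k)=\nu_1\eul^{-\ii B_1 x}N_1^{(1)}=\tfrac12\eul^{-\ii B_1 x}\bigl(\begin{smallmatrix}1+\nu_1^2\\(\nu_1^2-1)\eul^{-\ii\phi_1}\end{smallmatrix}\bigr)$, which is manifestly bounded as $k\to E_1$ (where $\nu_1\to 0$). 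The kernel estimate above translates into $\norm{\tilde K(x,y,k)\eul^{\ii X_1(x-y)}}\leq C(1+|x-y|)$, and combined with the hypothesis $(1+|x|)(q_0-q_{01})(x)\in L^1((-\infty,0])$ this makes the kernel integrable in $y$ on $(-\infty,x]$. A standard Gronwall argument then yields $|v(x,k)|\leq C(x)$ locally uniformly in $k\in B_\epsilon(E_1)\setminus\Sigma_1$, so
\[
\abs{\Phi_1^{(1)}(x,0,k)}=\abs{\mu_1^{(1)}(x,0,k)}\abs{\eul^{-\ii X_1(k)x}}\leq C\abs{\nu_1(k)}^{-1}\leq C\abs{k-E_1}^{-1/4},
\]
which is \eqref{mu1nearE1}.

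The bound \eqref{mu1nearE1bar} near $\bar E_1$ would be obtained from the analogous argument applied to $\mu_1^{(2)}$ (analytic in $\D{C}^-\setminus\Sigma_1$) with the renormalization $\nu_1^{-1}\mu_1^{(2)}$, so that the source $\nu_1^{-1}N_1^{(2)}$ remains bounded as $\nu_1\to\infty$; the resulting estimate is transferred to $\Phi_1^{(1)}$ via the Schwarz symmetry $\mu_1(k)=\sigma_2\overline{\mu_1(\bar k)}\sigma_2$ of the Jost matrices, which gives $\abs{\mu_1^{(1)}(k)}=\abs{\mu_1^{(2)}(\bar k)}$. The main obstacle in the plan is the uniform kernel bound near the branch point, i.e.\ tracking the cancellation of the $\nu_1^{\pm 1}$ singularities in $H(\tau,k)\eul^{\ii X_1\tau}$ uniformly as $X_1\to 0$; once this is in hand, the remaining Volterra/Gronwall step is routine.
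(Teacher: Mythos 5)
Your argument for \eqref{mu1nearE1} is essentially the paper's: the same Volterra equation for $\mu_1^{(1)}$ at $t=0$, the same key kernel bound $\norm{E(x,y,k)}\leq C(1+\abs{x-y})$ obtained from the entirety of the background propagator (the paper derives it by differentiating $f(x,k)=\C{E}_1(k)\eul^{\ii xX_1(k)(I-\sigma_3)}\C{E}_1(k)^{-1}$ in $x$ and integrating, which is the same computation as your identity involving $(\eul^{2\ii X_1\tau}-1)/X_1$), and the same Neumann-series/Gronwall step against the $(1+\abs{\,\cdot\,})$-weighted $L^1$ norm of $Q-Q_{01}$. The only cosmetic difference is that you renormalize the source by $\nu_1$ so that it stays bounded and reinstate the factor $\abs{k-E_1}^{-1/4}$ at the end, whereas the paper keeps the source $\eta_0=\ord((k-E_1)^{-1/4})$ and carries this prefactor through the series. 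Either bookkeeping is fine.

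The step for \eqref{mu1nearE1bar} is where your plan has a genuine gap: the symmetry $\abs{\mu_1^{(1)}(k)}=\abs{\mu_1^{(2)}(\bar k)}$ transfers the estimate in the wrong direction. If you run the Volterra argument for $\mu_1^{(2)}$ where its Neumann series converges, namely for $k\in B_\epsilon(\bar E_1)\setminus\Sigma_1\subset\D{C}^-$ (where the relevant exponential $\eul^{\ii X_1(y-x)}$ is bounded), then applying the symmetry converts the resulting bound into a bound on $\mu_1^{(1)}(\bar k)$ for $\bar k\in B_\epsilon(E_1)$ --- i.e.\ it reproduces \eqref{mu1nearE1}, not \eqref{mu1nearE1bar}. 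To obtain \eqref{mu1nearE1bar} from the symmetry you would instead need to control $\mu_1^{(2)}$ at points of $B_\epsilon(E_1)\subset\D{C}^+$, which is exactly as delicate as controlling $\mu_1^{(1)}$ in $\D{C}^-$, since the exponential in the second column's kernel grows there. The paper's route is direct: it proves \eqref{mu1nearE1bar} by rerunning the same argument for $\eta=\mu_1^{(1)}$ with $k$ near $\bar E_1$, the only change being that the inhomogeneous term $\eta_0=\eul^{-\ii B_1x\sigma_3}N_1^{(1)}(k)$ is now $\ord((k-\bar E_1)^{-1/4})$ because $\nu_1\to\infty$ as $k\to\bar E_1$. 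In your normalization this amounts to multiplying the first column by $\nu_1^{-1}$ near $\bar E_1$ instead of by $\nu_1$; note that one must then also justify the kernel bound $C(1+\abs{x-y})$ for $k$ on the lower half of the disk, where $\Im X_1$ has the opposite sign --- a point your write-up (and, admittedly, the paper's one-line remark) leaves implicit.
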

%-------------------%

%-------------------%
\begin{proof}
Let $x\in\D{R}$ be arbitrary and $\eta(x,k)\coloneqq\mu_1^{(1)}(x,0,k)$. The first column of the Volterra equation \eqref{mu} for $\mu_1$ evaluated at $t = 0$ reads
\begin{equation}\label{etaVolterra}
\eta(x,k)=\eta_0(x,k)+\int_{-\infty}^xE(x,y,k)[(Q-Q_{01})(y,0)]\eta(y,k) \dd y,
\end{equation}
where $\eta_0(x,k)\coloneqq\eul^{-\ii B_1x\sigma_3}N_1^{(1)}(k)$ and
\[
E(x,y,k)\coloneqq\eul^{\ii X_1(k)(x-y)}\Phi_{01}(x,0,k)\Phi_{01}^{-1}(y,0,k).
\]
Using \eqref{Phi0-N}, we get that
\[
E(x,y,k)=\eul^{\frac{\ii\phi_1}{2}\sigma_3}\eul^{-\ii B_1x\sigma_3}F(x,y,k)\eul^{\ii B_1y\sigma_3}\eul^{-\frac{\ii\phi_1}{2}\sigma_3}
\]
where
\[
F(x,y,k)\coloneqq\eul^{\ii(x-y)X_1(k)}\C{E}_1(k)\eul^{-\ii(x-y)X_1(k)\sigma_3}\C{E}_1(k)^{-1}.
\]
Fix $\epsilon\in(0,\Im E_1)$. We will show that
\begin{equation}\label{EboundonB}
|E(x,y,k)|\leq C(1+|x-y|),\quad -\infty<y\leq x,\quad k\in B_\epsilon(E_1)\setminus\Sigma_1.
\end{equation}
Since we have $F(x,y,k)=f(x-y,k)$ with
\[
f(x,k)\coloneqq\C{E}_1(k)\eul^{\ii xX_1(k)(I-\sigma_3)}\C{E}_1(k)^{-1},
\]
the estimate \eqref{EboundonB} will follow if we can show that
\begin{equation}\label{fboundonB}
|f(x,k)|\leq C(1+x),\qquad x\geq 0,\quad k\in B_\epsilon(E_1)\setminus\Sigma_1.
\end{equation}
Differentiating $f$ with respect to $x$, we obtain
\begin{equation}\label{fx}
f_x(x,k)=\ii\eul^{2\ii xX_1(k)}\begin{pmatrix}X_1(k)-k+B_1&-\ii A_1\\
\ii A_1&X_1(k)+k-B_1
\end{pmatrix}.
\end{equation}
Hence, using that $|\eul^{2\ii xX_1(k)}|\leq 1$ for $x\geq 0$ and $k\in \D{C}^+\setminus\Sigma_1$, we get
\begin{align}\label{fxbound}
|f_x(x,k)|\leq C,\quad x\geq 0,\quad k\in B_\epsilon(E_1)\setminus\Sigma_1.
\end{align}
Since $f(x,k)=I+\int_0^xf_x(y,k)\dd y$, the estimate \eqref{fboundonB} follows.

Using the estimate \eqref{EboundonB} of $E$, the solution of the Volterra equation \eqref{etaVolterra} can be constructed in the standard way. Let $K(x,y,k)\coloneqq E(x,y,k)[(Q-Q_{01})(y,0)]$ and define $\eta_n(x,k)$ for any integer $n\geq 1$ by
\begin{equation}\label{Psil}
\eta_n(x,k)\coloneqq\int_{-\infty<x_1\leq\dots\leq x_n\leq x_{n+1}=x}\Bigl(\prod_{i=1}^nK(x_{i+1},x_i,k)\Bigr)\eta_0(x_1,k)\dd x_1\cdots\dd x_n.
\end{equation}

Let $x\in\D{R}$ be fixed. Since $\eta_0(x_1,k)=\eul^{-\ii\frac{\phi_1}{2}(I-\sigma_3)}\eul^{-\ii x_1B_1\sigma_3}\C{E}_1^{(1)}(k)=\ord((k-E_1)^{-1/4})$ as $k\to E_1$ uniformly for $x_1\in\D{R}$, we find, using \eqref{EboundonB},
\begin{align*}
|\eta_n(x,k)| 
&\leq\int_{-\infty<x_1\leq\dots\leq x_n\leq x_{n+1}=x}\Bigl(\prod_{i=1}^n|K(x_{i+1},x_i,k)|\Bigr)|\eta_0(x_1,k)|\dd x_1\cdots\dd x_n\\
&\leq\frac{C}{|k-E_1|^{1/4}}\int_{-\infty<x_1\leq\dots\leq x_n\leq x_{n+1}=x}\Bigl(\prod_{i=1}^nC(1+|x_{i+1}-x_i|)|(Q-Q_{01})(x_i,0)|\Bigr)\dd x_1\cdots\dd x_n\\
&\leq\frac{C}{|k-E_1|^{1/4}}\int_{-\infty<x_1\leq\dots\leq x_n\leq x_{n+1}=x}\Bigl(\prod_{i=1}^nC(1+|x-x_i|)|(Q-Q_{01})(x_i,0)|\Bigr)\dd x_1\cdots\dd x_n\\
&\leq\frac{C}{|k-E_1|^{1/4}}\frac{C^n\|(1+|\,\cdot\,|)(Q-Q_{01})(\,\cdot\,,0)\|_{L^1((-\infty,x])}^n}{n!},\qquad k\in B_\epsilon(E_1)\setminus\Sigma_1.
\end{align*}
Hence the Neumann series
\[
\eta(x,k)=\sum_{n=0}^{\infty}\eta_n(x,k)
\]
converges, and its sum, which solves the Volterra equation \eqref{etaVolterra}, can be estimated as follows:
\begin{align*}
|\eta(x,k)| 
&\leq\sum_{n=0}^\infty|\eta_n(x,k)|\leq\frac{C}{|k-E_1|^{1/4}}\sum_{n=0}^\infty\frac{C^n\|(1+|\,\cdot\,|)(Q-Q_{01})(\,\cdot\,,0)\|_{L^1((-\infty,x])}^n}{n!}\\
&= \frac{C}{|k-E_1|^{1/4}}\eul^{C\|(1+|\,\cdot\,|)(Q-Q_{01})(\,\cdot\,,0)\|_{L^1((-\infty,x])}}\leq\frac{C}{|k-E_1|^{1/4}}
\end{align*}
uniformly for $k\in B_\epsilon(E_1)\setminus\Sigma_1$. Recalling that $\eta=\mu_1^{(1)}$, where $\mu_1$ is related to $\Phi_1$ via \eqref{phij}, we have $|\Phi_1^{(1)}(x,0,k)|=\eul^{x\Im X_1(k)}\abs{\eta(x,k)}$, and this proves \eqref{mu1nearE1}. The estimate \eqref{mu1nearE1bar} follows in the same way using that $\eta_0(x_1,k)=\ord((k-\bar{E}_1)^{-1/4})$ as $k\to\bar{E}_1$.
\end{proof}
%-------------------%

%---------------------------------------------------------%
%:s.2.5.6
%---------------------------------------------------------%
\subsubsection{Spectral functions for pure step initial conditions.}  \label{sec:pure-step}

For pure step initial conditions, i.e.,
\begin{equation} \label{ic-exp}
q_0(x)\coloneqq\begin{cases}
A_1\eul^{\ii\phi_1}\eul^{-2\ii B_1 x},&x<0,\\
A_2\eul^{\ii\phi_2}\eul^{-2\ii B_2 x},&x>0,
\end{cases}
\end{equation}
the spectral functions can be calculated explicitly. In this case, \eqref{scattering} evaluated at $x=t=0$ gives
\[
S(k)\coloneqq\begin{pmatrix}
a^*(k) & b(k) \\ 
-b^*(k) & a(k)\end{pmatrix} 
=N_1^{-1}(k)N_2(k)=\eul^{\frac{\ii\phi_1}{2}\sigma_3}\C{E}_1^{-1}(k)\eul^{-\frac{\ii\phi}{2}\sigma_3}\C{E}_2(k)\eul^{-\frac{\ii\phi_2}{2}\sigma_3},
\]
where $\phi\coloneqq\phi_1-\phi_2$. Thus $a\equiv a(k)$ and $b\equiv b(k)$ are explicitly given by
\begin{align*}
a&=\frac{1}{4}\left\lbrack-\eul^{-\ii\phi}\left(\nu_1-\nu_1^{-1}\right)\left(\nu_2-\nu_2^{-1}\right)+\left(\nu_1+\nu_1^{-1}\right)\left(\nu_2+\nu_2^{-1}\right)\right\rbrack,\\
b&=\frac{1}{4}\left\lbrack\eul^{\ii\phi_2}\left(\nu_1+\nu_1^{-1}\right)\left(\nu_2-\nu_2^{-1}\right)-\eul^{\ii\phi_1}\left(\nu_1-\nu_1^{-1}\right)\left(\nu_2+\nu_2^{-1}\right)\right\rbrack,
\end{align*}
where $\nu_j\equiv\nu_j(k)$, $j=1,2$ are given by \eqref{nu-om}.
%---------------------------------------------------------%
%:s.2.5.7
%---------------------------------------------------------%
\subsubsection{Summary.}  \label{sec:rh-summary}

The basic RH problem is the RH problem defined by \eqref{rhp-basic-jump} with jump $J_0$ given by \eqref{jump-r} and \eqref{jumps-sig}, and complemented by the condition that the possible singularities at the end points of $\Sigma_1$ and $\Sigma_2$ are of order at most $\abs{k-E_j}^{-\frac{1}{4}}$ or $\abs{k-\bar E_j}^{-\frac{1}{4}}$. The latter condition implies that the $L^2$-theory is applicable for the underlying RH problem. In particular, since we assumed that $a(k)\neq 0$ for all $k\in\D{C}^+$ (except, possibly, for $k=E_j$, see Section~\ref{sec:ends-sigma-12}), the solution of this problem is unique.

Recall that the scattering data $a(k)$, $b(k)$, and $r(k)$ are uniquely determined by $q_0(x)$.
 
%-------------------%
\begin{rh-pb*}    \label{basic-rhp}
Given $r(k)$ for $k\in\D{R}$, $a_+(k)$ and $a_-(k)$ for $k\in(\Sigma_1\cup\Sigma_2)\cap\D{C}^+$, find $m(x,t,k)$ analytic in $k\in\D{C}\setminus\Sigma$ that satisfies 
\begin{enumerate}[(i)]
\item
the jump condition \eqref{rhp-basic-jump} completed by \eqref{jump-r} and \eqref{jumps-sig},
\item
the normalization condition $m(x,t,k)\to I$ as $k\to\infty$,
\item
the condition that possible singularities at the end points of $\Sigma_1$ and $\Sigma_2$ are of order at most $\abs{k-E_j}^{-\frac{1}{4}}$ or $\abs{k-\bar E_j}^{-\frac{1}{4}}$.
\end{enumerate}
\end{rh-pb*}
%-------------------%

%-------------------%
%:prop 2.6
%-------------------%
\begin{proposition}  \label{prop:basic-rhp}
Let $m(x,t,k)$ be the solution of the basic RH problem. Then, the solution $q(x,t)$ of the Cauchy problem \eqref{nlsic}-\eqref{q0-limits} is given by
\[
q(x,t)=2\ii\lim_{k\to\infty}km_{12}(x,t,k).
\]
\end{proposition}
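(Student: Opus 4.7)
The plan is the classical dressing-type computation: recover $q(x,t)$ from the $k\to\infty$ behavior of the solution $m(x,t,k)$ of the basic RH problem, exploiting the fact that $m$ is built from Jost solutions of the Lax pair \eqref{lax}. First I would establish the asymptotic expansion
\[
m(x,t,k)=I+\frac{m_1(x,t)}{k}+\ord(k^{-2}),\qquad k\to\infty,
\]
with $k$ bounded away from $\Sigma$ and from the branch points $E_j,\bar E_j$. This can be read off from the Volterra integral equations \eqref{mu}--\eqref{Phi} by iterating the kernel and using the large-$k$ expansions $N_j(k)=I+\ord(1/k)$, $X_j(k)=k-B_j+\ord(1/k)$, $\Omega_j(k)=2k^2+\omega_j+\ord(1/k)$ recorded just after \eqref{nu-om}, combined with $a(k)=1+\ord(1/k)$ from Lemma~\ref{lem:reflec-infty}. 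Uniqueness of the basic RH problem, noted in Section~\ref{sec:rh-summary}, then guarantees that the $m$ produced by \eqref{m} from the Jost solutions coincides with the RH solution.

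Next I would observe that $m$ inherits from the Jost solutions an equation in $x$. Since $\Phi_1^{(1)}/a$ and $\Phi_2^{(2)}$ each solve the $x$-part \eqref{laxx} (the factor $1/a(k)$ is $x$-independent), the matrix $\tilde\Phi(x,t,k)=m(x,t,k)\,\eul^{-(\ii kx+2\ii k^2t)\sigma_3}$ satisfies $\tilde\Phi_x=U\tilde\Phi$ column by column on $\D{C}^+$. A direct computation then gives
\[
m_x+\ii k[\sigma_3,m]=Q\,m.
\]
Inserting the expansion from the first step and matching the $k^0$ coefficient yields $\ii[\sigma_3,m_1]=Q$. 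Since $[\sigma_3,m_1]_{12}=2(m_1)_{12}$ and $Q_{12}=q$, we conclude $q(x,t)=2\ii(m_1)_{12}(x,t)$, which is exactly
\[
q(x,t)=2\ii\lim_{k\to\infty}k\,m_{12}(x,t,k).
\]
The same argument on $\D{C}^-$, or alternatively the symmetry \eqref{m-symm}, yields a consistent value for the limit.

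The main technical point I anticipate is justifying the uniformity of the asymptotic expansion: one needs it with $k\to\infty$ along a non-tangential direction in $\D{C}\setminus\Sigma$, uniformly for $(x,t)$ in compact sets, so that the differentiation in $x$ and the order-by-order matching above are legitimate. The mild $\abs{k-E_j}^{-1/4}$ singularities from Section~\ref{sec:ends-sigma-12} are localized near the branch points and do not spoil the expansion at infinity, so this reduces to propagating standard $k\to\infty$ Volterra estimates through the definition \eqref{m}, together with the corresponding control on $a(k)$ obtained in the proof of Lemma~\ref{lem:reflec-infty}.
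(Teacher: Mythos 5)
The paper states Proposition \ref{prop:basic-rhp} without proof (it is the standard reconstruction formula of the RH/IST formalism), so there is no in-paper argument to compare against; your proposal supplies the usual and correct derivation. The two-step structure is right: (1) identify the RH solution with the Jost-built function \eqref{m} — note this needs both that the Jost-built $m$ actually satisfies the basic RH problem (which is what Sections~\ref{sec:jump-real-line}--\ref{sec:ends-sigma-12} establish) \emph{and} uniqueness, and you invoke both; (2) the dressing computation $m_x+\ii k[\sigma_3,m]=Qm$, whose $\ord(1)$ coefficient gives $\ii[\sigma_3,m_1]=Q$ and hence $q=2\ii(m_1)_{12}$. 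Two minor points: for $k\to\infty$ in $\D{C}^+$ the relevant estimate $a(k)=1+\ord(k^{-1})$ is \eqref{scatt-at-infty} rather than Lemma~\ref{lem:reflec-infty} (whose bound carries a factor $\eul^{4C\abs{\Im k}}$ and is only useful near the real axis or for the continuation into $\D{C}^-$); and, as you note, the differentiation in $x$ of the expansion must be justified — alternatively one can bypass it entirely by reading off the $\ord(k^{-1})$ coefficient of $\mu_j$ directly from one iteration of the Volterra equation \eqref{mu}, which yields the same identity $q=2\ii(m_1)_{12}$ without differentiating an asymptotic series.
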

%-------------------%

%---------------------------------------------------------%
%:s.3
%---------------------------------------------------------%
\section{Asymptotics: the plane wave region} \label{sec:plane}
%---------------------------------------------------------%
%:s.3.1
%---------------------------------------------------------%
\subsection{Preliminaries}  

The representation of the solution of the Cauchy problem for a nonlinear integrable equation in terms of the solution of an associated RH problem makes it possible to analyze the long-time asymptotics via the Deift--Zhou steepest descent method. Originally, this method was proposed for problems with zero background \cite{DZ93}. Its adaptation to problems with nonzero background has required the development of the so-called $g$-function mechanism \cite{DVZ94}. This mechanism is relevant when some entries of the jump matrix grow exponentially or oscillate as $t\to+\infty$. 
%---------------------------------------%
%:fig 3.1
%---------------------------------------%
\begin{figure}[ht]
\centering\includegraphics[scale=.73]{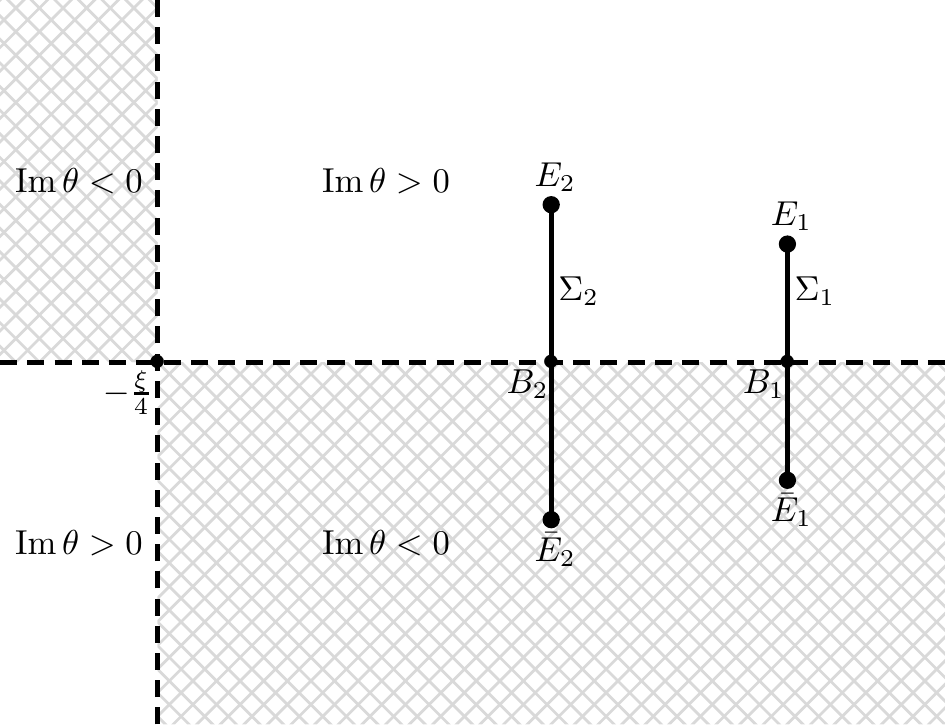}\hspace{4mm}\includegraphics[scale=.73]{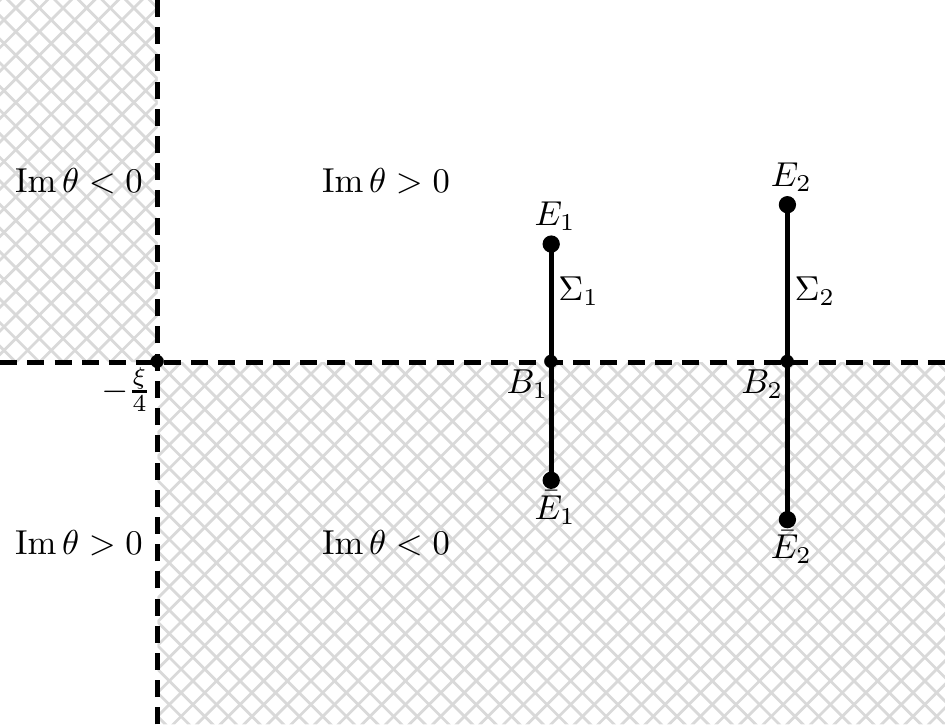}
\caption{Signature table of $\Im\theta(\xi,k)$ for $\xi\gg 0$: rarefaction (left), shock (right)} 
\label{fig:theta-ab}
\end{figure}
%---------------------------------------%

The general idea consists in replacing the original ``phase function''
\begin{equation}   \label{phase}
\theta(\xi,k)\coloneqq 2k^2+\xi k,\qquad\xi\coloneqq\frac{x}{t}
\end{equation}
in the jump matrix (see~\eqref{J-J0})
\[
J(x,t,k)=\eul^{-\ii t\theta(\xi,k)\sigma_3}J_0(k)\eul^{\ii t\theta(\xi,k)\sigma_3}
\]
by another analytic (up to jumps across certain arcs) function $g(\xi,k)$ chosen in such a way that, after appropriate triangular factorizations of the jump matrices and associated redefinitions (``deformations'') of the original RH problem, the jumps containing, originally, exponentially growing entries, become (piecewise) constant matrices (independent of $k$, but dependent, in general, on $x$ and $t$) of special structure whereas the other jumps decay exponentially to the identity matrix. The structure of the ``limiting'' RH problem is such that the problem can be solved explicitly in terms of Riemann theta functions and Abel integrals on Riemann surfaces associated with the limiting RH problem. For different ranges of the parameter $\xi=x/t$, different Riemann surfaces (with different genera) may appear \cites{BM17,BKS11,BV07}.

According to the values of the parameters $A_j$, $B_j$, there are different scenarios. Each of them is characterized by the set of appropriate $g$-functions that we are led to introduce to perform the asymptotic analysis. All these $g$-functions have two properties in common:
\begin{enumerate}[(i)]
\item
the symmetry $g=g^*$,
\item 
the asymptotics
\begin{equation} \label{g-deriv-asy}
g'(\xi,k)=\theta'(\xi,k)+\ord(k^{-2})=4k+\xi+\ord(k^{-2}),\quad k\to\infty,
\end{equation}
where $g'$ and $\theta'$ denote the derivatives of $g$ and $\theta$ with respect to~$k$.
\end{enumerate}
These properties imply that the level set $\Im g(\xi,k)=0$ has two infinite branches: the real axis and another branch which asymptotes to the vertical line $\Re k=-\xi/4$. In what follows the term ``infinite branch'' always refers to this last branch and we call the intersection points of the real axis with the other branches of the level set $\Im g=0$ ``real zeros'' of $\Im g$.

%-------------------%
\begin{remark*}
There are different conventions in the literature for the definition of a $g$-function. In many references, it is the function $\tilde{g}= \frac{1}{2}(\theta-g)$ that is referred to as the $g$-function.
\end{remark*}
%-------------------%

%---------------------------------------------------------%
%:s.3.2
%---------------------------------------------------------%
\subsection{Asymptotics for large $\BS{\abs{\xi}}$: Plane waves} \label{sec:large-xi}

A common fact concerning the long-time asymptotics (that holds for any relationships amongst $B_j$ and $A_j$) for problems with backgrounds satisfying \eqref{q0-limits} is that for $\xi<C_1$ and for $\xi>C_2$, with some $C_j$ that can be expressed in terms of $B_j$ and $A_j$, the solution asymptotes to the corresponding plane waves, with additional phase factors depending on $\xi$. See Figure~\ref{fig:large-xi}.

%---------------------------------------%
%:fig 3.2
%---------------------------------------%
\begin{figure}[ht]
\centering\includegraphics[scale=.73]{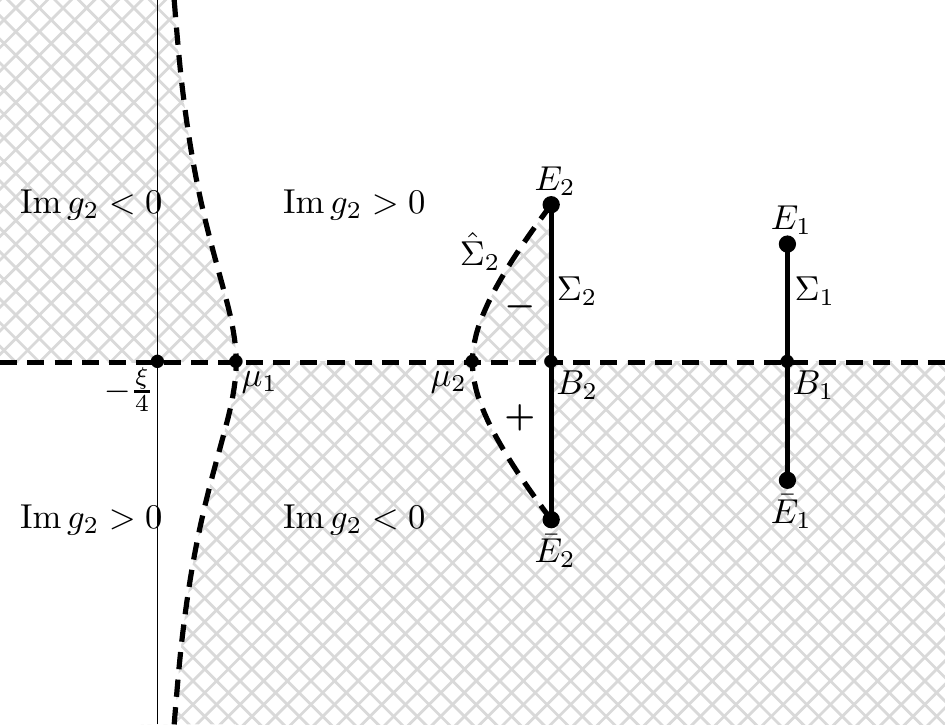}\hspace{4mm}\includegraphics[scale=.73]{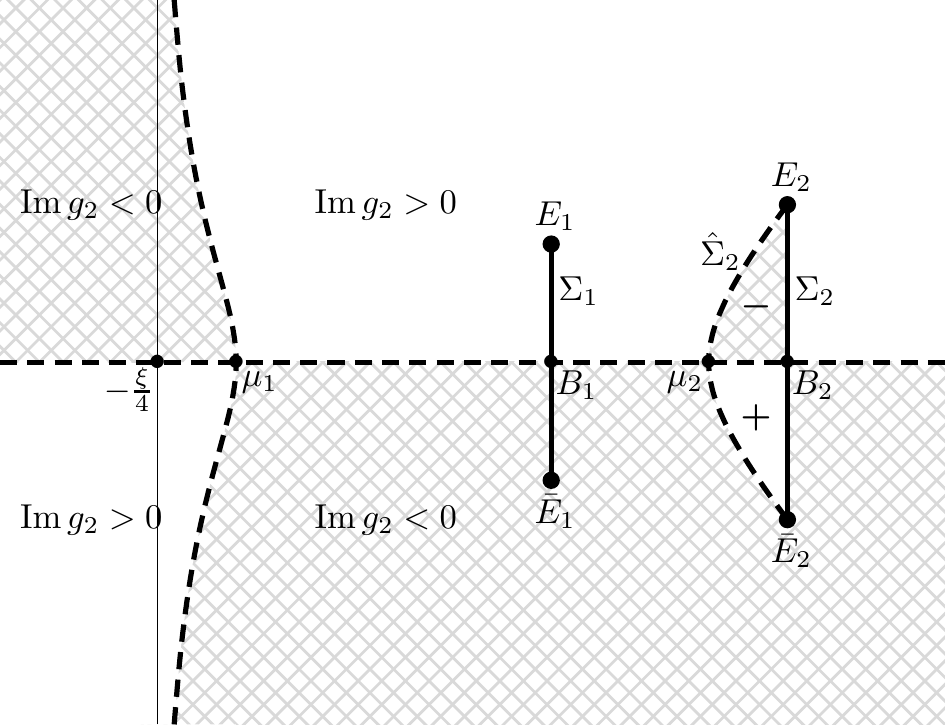}
\caption{Signature table of $\Im g_2(\xi,k)$ for $\xi\gg 0$: rarefaction (left), shock (right)} 
\label{fig:large-xi-ab}
\end{figure}
%---------------------------------------%

Indeed, the ``signature table'' (the distribution of signs of $\Im\theta(\xi,k)$ in the $k$-plane) shows that $J(\xi,k)$ contains exponentially growing entries if $\abs{\xi}\gg 0$. More precisely, for $\xi\ll 0$, the jump across $\Sigma_1$ is growing whereas the jump across the complementary arc $\Sigma_2$ is bounded, and for $\xi\gg 0$, the jump across $\Sigma_2$ is growing whereas the jump across the complementary arc $\Sigma_1$ is bounded (see Figure~\ref{fig:theta-ab}). For such values of $\xi$, we introduce the $g$-functions
\begin{equation}   \label{gj}
g_j(\xi,k)\coloneqq\Omega_j(k)+\xi X_j(k),
\end{equation}
with $j=1$ for $\xi\ll 0$ and $j=2$ for $\xi\gg 0$. These $g$-functions satisfy the above properties $g=g^*$ and \eqref{g-deriv-asy}. Thus, besides $\D{R}$, the level set $\Im g_j=0$ has another infinite branch asymptotic to the line $\Re k=-\frac{\xi}{4}$. It also has a finite branch $\hat\Sigma_j$ connecting $E_j$ and $\bar E_j$ (see Figure~\ref{fig:large-xi-ab}). 

%-------------------%
\begin{remark*}
Here and below, the division of the complex $k$-plane into the regions where $\Im g>0$ and $\Im g<0$ depends on the chosen branch cuts for the square roots involved in the definition of the corresponding $g$-function. In particular, here the cut for $g_j$ (i.e., for $\Omega_j$ and $X_j$) connecting $E_j$ and $\bar E_j$ is the line segment $(E_j,\bar E_j)$.
\end{remark*}
%-------------------%

We consider $m^{(1)}$ defined by
\[
m^{(1)}(x,t,k)\coloneqq\eul^{-\ii tg_j^{(0)}(\xi)\sigma_3}m(x,t,k) \eul^{\ii t(g_j(\xi,k)-\theta(\xi,k))\sigma_3},
\] 
where $j$ is as above and $g_j^{(0)}(\xi)\coloneqq\omega_j-\xi B_j=A_j^2-2B_j^2-\xi B_j$ is defined in such a way that
\begin{equation}
g_j(\xi,k)=2k^2+\xi k+g_j^{(0)}(\xi)+\ord(k^{-1}),\quad k\to\infty.
\label{gj-as}
\end{equation}
In terms of $m^{(1)}$, the jump relation becomes 
\[
m_+^{(1)}(x,t,k)=m_-^{(1)}(x,t,k)J^{(1)}(x,t,k),\quad k\in\Sigma.
\]
For $\xi\gg 0$, the jump $J^{(1)}(x,t,k)$ decays to the identity matrix $I$ as $t\to+\infty$ for $k\in\Sigma_1$, whereas for $k\in\Sigma_2\cap\D{C}^+$ we have (taking into account \eqref{ab12} and \eqref{reflection-anal})
\begin{align*}
J^{(1)}(x,t,k)
&=\begin{pmatrix}
\frac{a_-(k)}{a_+(k)}\eul^{\ii t(g_{2+}(\xi,k)-g_{2-}(\xi,k))}&\ii\eul^{\ii\phi_2}\\
0&\frac{a_+(k)}{a_-(k)}\eul^{-\ii t(g_{2+}(\xi,k)-g_{2-}(\xi,k))}
\end{pmatrix}\\
&=\begin{pmatrix}1&0\\-r_-(k)\eul^{2\ii tg_{2-}(\xi,k)}&1\end{pmatrix}\begin{pmatrix}0&\ii\eul^{\ii\phi_2}\\\ii\eul^{-\ii\phi_2}&0\end{pmatrix}\begin{pmatrix}1&0\\r_+(k)\eul^{2\ii tg_{2+}(\xi,k)}&1\end{pmatrix},
\end{align*}
and similarly for $k\in\Sigma_2\cap\D{C}^-$. The triangular factors above can be absorbed into a transformed RH problem when ``making lenses'' (see \cites{BM17,BKS11,BV07} for details), which finally leads to two model RH problems ($j=1,2$) of the form~\eqref{Nj}:
\begin{equation}  \label{model}
\begin{cases}
m^{\modell j}\in I+\dot E^2(\D{C}\setminus\Sigma_j),&\\[1mm]
m_+^{\modell j}(k)=m_-^{\modell j}(k)\begin{pmatrix}
	0 & \ii\eul^{\ii\phi_j}\\
	\ii\eul^{-\ii\phi_j} & 0
\end{pmatrix},&k\in\Sigma_j,
\end{cases}
\end{equation}
which apply for $(-1)^j\xi\gg 0$ and are explicitly solvable. Returning to $m(x,t,k)$, one obtains the large $t$ asymptotics for 
\[
q(x,t)=2\ii\lim_{k\to\infty}km_{12}(x,t,k)
\]
in the form
\begin{equation}\label{q-as-plane}
q(x,t)=A_j\eul^{-2\ii B_jx+2\ii\omega_jt+\ii\psi_j(\xi)}+\ord(t^{-\frac{1}{2}}),\quad (-1)^j\xi\gg 0,\ j=1,2,
\end{equation}
where $\psi_1(-\infty)=\phi_1$ and $\psi_2(+\infty)=\phi_2$.

%---------------------------------------%
%:fig 3.3
%---------------------------------------%
\begin{figure}[ht]
\centering\includegraphics[scale=.9]{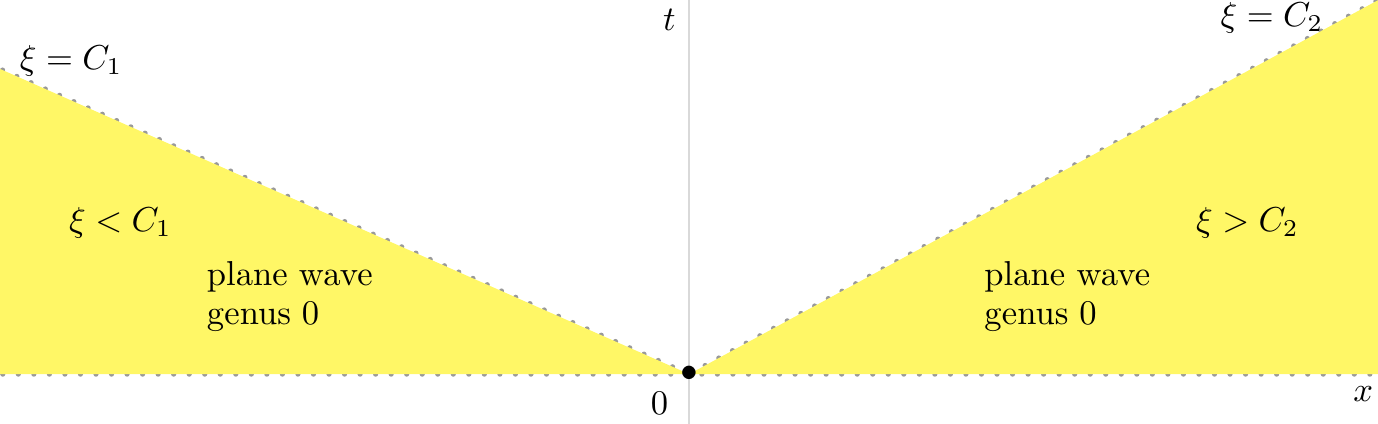}
\caption{The large $\abs{\xi}$ sectors} 
\label{fig:large-xi}
\end{figure}
%---------------------------------------%

%---------------------------------------------------------%
%:s.3.3
%---------------------------------------------------------%
\subsection{Asymptotics in other domains}

The $g$-function presented above is inappropriate in the region between the plane wave sectors $\xi<C_1$ and $\xi>C_2$. The asymptotic picture in this region is sharply different for the two cases 
\begin{itemize}
\item
$B_1>B_2$, rarefaction case,
\item
$B_1<B_2$, shock case.
\end{itemize}
In the following two sections, we study these two cases separately.

%---------------------------------------------------------%
%:s.4
%---------------------------------------------------------%
\section{Asymptotics: the rarefaction case} \label{sec:rarefaction}

In the rarefaction case $B_1>B_2$, the asymptotic picture does not qualitatively depend on the values of the amplitudes $A_1$ and $A_2$ and is actually a doubling of that found in the case where one of the backgrounds is zero, see \cite{BKS11}. The asymptotic picture in the half-plane $t>0$ consists of five sectors: two modulated plane wave sectors, a slow decay sector, and two modulated elliptic wave sectors (also known as transition regions). See Figure~\ref{fig:rarefaction}.
%---------------------------------------%
%:fig 4.1
%---------------------------------------%
\begin{figure}[ht]
\centering\includegraphics[scale=.9]{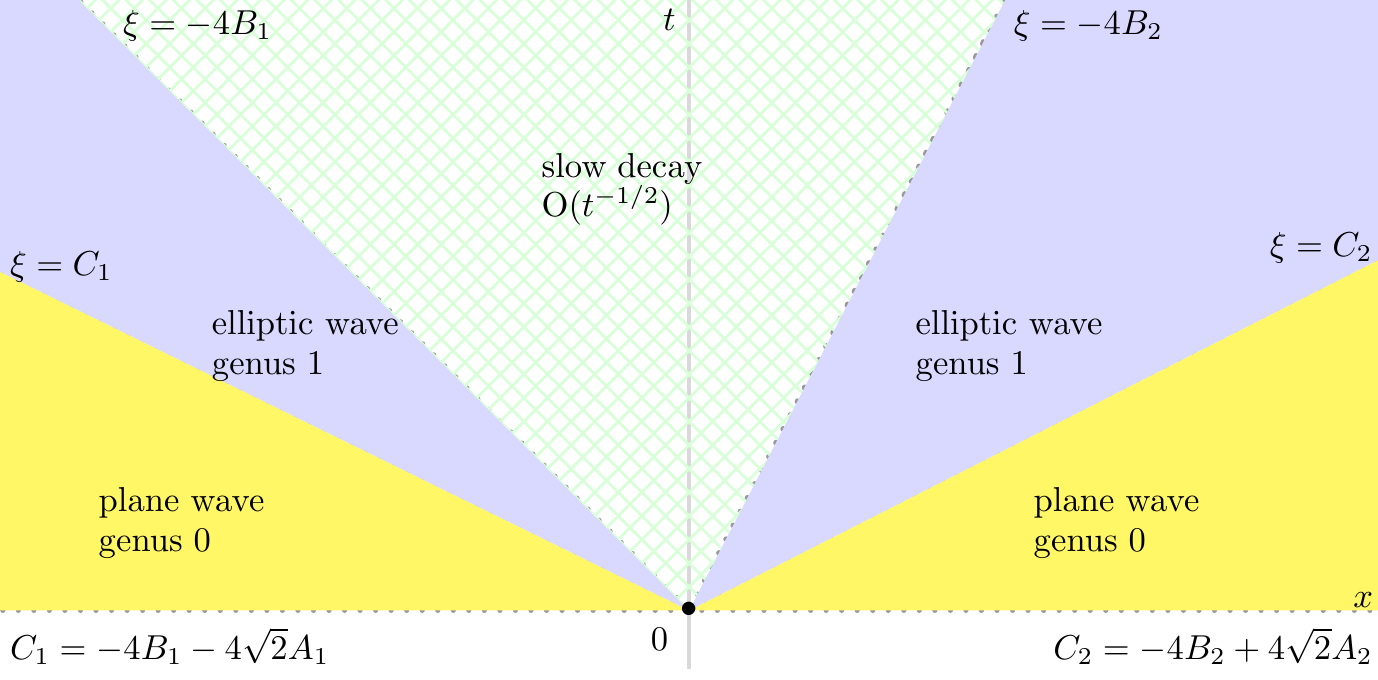}
\caption{The different sectors in the rarefaction case} 
\label{fig:rarefaction}
\end{figure}
%---------------------------------------%

%---------------------------------------------------------%
%:s.4.1
%---------------------------------------------------------%
\subsection{Plane waves: $\BS{\xi<-4B_1-4\sqrt{2}A_1}$ and $\BS{\xi>-4B_2+4\sqrt{2}A_2}$} \label{sec:rare-plane}

We already know that the asymptotics has the form of plane waves for $\xi<C_1$ and $\xi>C_2$, see section~\ref{sec:large-xi}. Here $C_1$ and $C_2$ are given by the same expressions as when one of the backgrounds is zero \cite{BKS11}:
\[
C_1=-4B_1-4\sqrt{2}A_1,\qquad C_2=-4B_2+4\sqrt{2}A_2.
\]
Indeed, suppose first that $\xi \gg 0$. Let $g\equiv g_2(\xi,k)$ be the plane wave $g$-function given by \eqref{gj} and let $g'$ be its derivative with respect to~$k$. In this case,
\begin{equation}   \label{g2}
g'(\xi,k)=4\frac{(k-\mu_1(\xi))(k-\mu_2(\xi))}{\sqrt{(k-E_2)(k-\bar E_2)}}\,,
\end{equation}
where $\mu_j\equiv\mu_j(\xi)$, $j=1,2$, are the two self-intersections of the curve $\Im g_2(\xi,k)=0$:
\begin{equation}   \label{mu12}
\mu_1=\frac{B_2}{2}-\frac{\xi}{8}-\frac{1}{8}\sqrt{(\xi+4B_2)^2-32A_2^2},\qquad\mu_2=\frac{B_2}{2}-\frac{\xi}{8}+\frac{1}{8}\sqrt{(\xi+4B_2)^2-32A_2^2}.
\end{equation}
Therefore, $-\frac{\xi}{4}<\mu_1<\mu_2<B_2$. As $\xi$ decreases, the infinite branch of $\Im g$ moves to the right and $g$ remains an appropriate $g$-function until the infinite branch hits the finite branch, i.e., until the zeros $\mu_1$ and $\mu_2$ merge, which happens at $\xi=\xi_{\merge}=-4B_2+4\sqrt{2}A_2=C_2$ (see Figure~\ref{fig:rarefact}). This indicates the end of the right plane wave sector and that a new $g$-function is required for the asymptotic analysis when $\xi<C_2$. A similar analysis for $\xi\ll 0$ shows that $C_1=-4B_1-4\sqrt{2}A_1$.
%---------------------------------------%
%:fig 4.2
%---------------------------------------%
\begin{figure}[ht]
\centering\includegraphics[scale=.74]{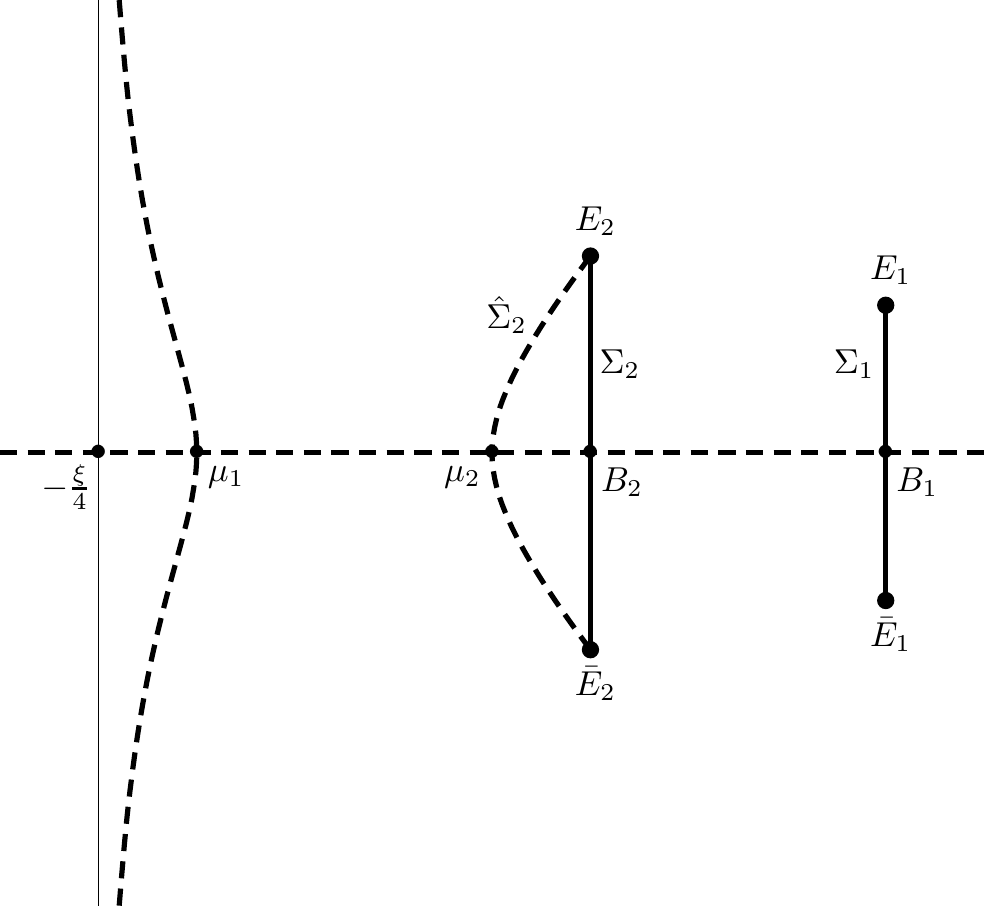}\hspace{4mm}\includegraphics[scale=.74]{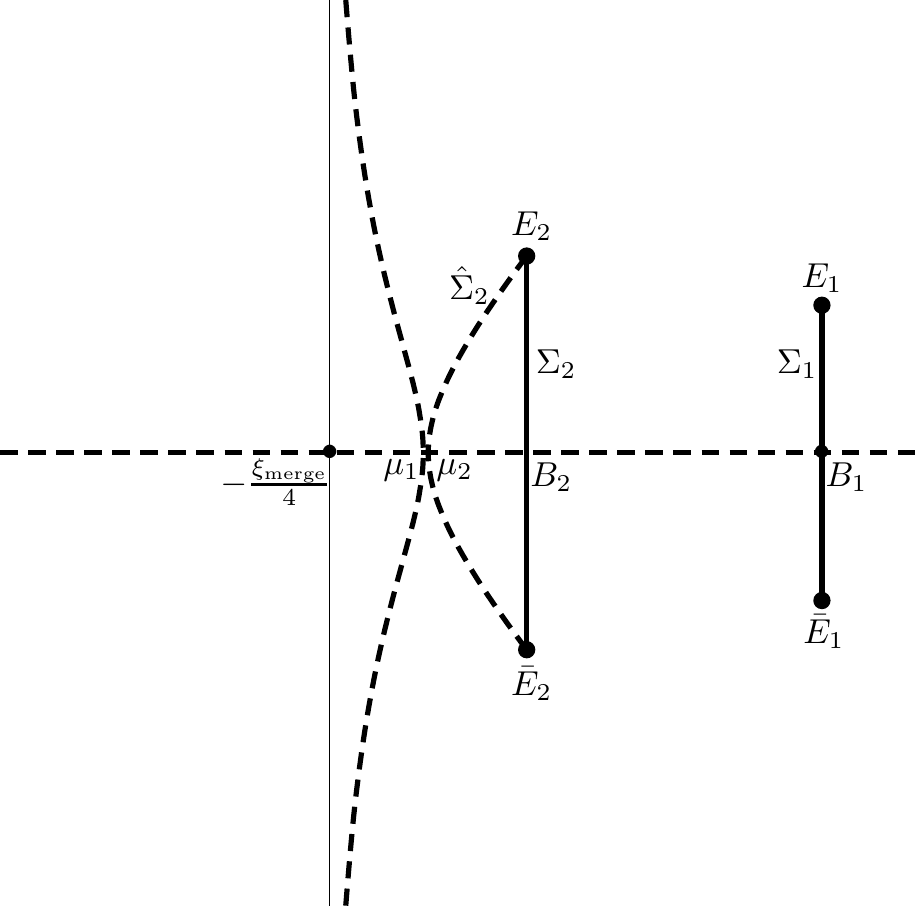}
\caption{Rarefaction: $\xi>\xi_{\merge}$ (left), $\xi=\xi_{\merge}$ (right)} 
\label{fig:rarefact}
\end{figure}
%---------------------------------------%

%---------------------------------------------------------%
%:s.4.2
%---------------------------------------------------------%
\subsection{Elliptic waves: $\BS{-4B_1-4\sqrt{2}A_1<\xi<-4B_1}$ and $\BS{-4B_2<\xi<-4B_2+4\sqrt{2}A_2}$} \label{sec:rarefac-elliptic}

As $\xi$ decreases from $C_2$, a new $g$-function $g\equiv\tilde g_2$ is needed. The transition from the right plane wave sector to the contiguous sector is reflected in the derivative $g'$ by the emergence of two complex conjugate zeros $\beta$ and $\bar\beta$, and the merging of the two real zeros $\mu_1$ and $\mu_2$ into a single real zero $\mu$:
\begin{equation}  \label{rare-g-ellip}
g'(\xi,k)=4\frac{(k-\mu(\xi))(k-\beta(\xi))(k-\bar\beta(\xi))}{\sqrt{(k-E_2)(k-\bar E_2)(k-\beta(\xi))(k-\bar\beta(\xi))}},
\end{equation}
where the parameters $\mu(\xi)$ and $\beta(\xi)$ are subject to the conditions:
\begin{enumerate}[(i)]
\item
Behavior at $k=\infty$:
\begin{equation} \label{g-deriv-as}
g'(\xi,k)=\theta'(\xi,k)+\ord(k^{-2})=4k+\xi+\ord(k^{-2}),\quad k\to\infty.
\end{equation}
\item
Normalization:
\begin{equation}\label{g-norm}
\int_{E_2}^{\bar E_2}\dd g=0.
\end{equation}
\end{enumerate}
The existence of such a $g$-function can be proved using the arguments in \cite{BKS11}*{Section 4.3.1}. This new $g$-function is appropriate for the analysis of the long-time asymptotics in the sector $\xi\in(-4B_2,-4B_2+4\sqrt{2}A_2)$. Further deformations of the RH problem (see \cite{BKS11}*{Section 4.3}) lead to the model RH problem:
\begin{equation}  \label{model-1}
\begin{cases}
m^{\model}\in I+\dot E^2(\D{C}\setminus(\Sigma_1\cup\Sigma_2)),&\\[1mm]
m_+^{\model}(k)=m_-^{\model}(k)\begin{pmatrix}
0&\ii\eul^{\ii D_lx+\ii G_lt+\phi_l}\\\ii\eul^{-\ii D_lx-\ii G_lt-\phi_l}&0
\end{pmatrix},&k\in\Sigma_l,\quad l=1,2.
\end{cases}
\end{equation}
Thus, the leading term of the asymptotics is given in terms of modulated elliptic waves attached to the genus $1$ Riemann surface $w^2=(k-E_2)(k-\bar E_2)(k-\beta(\xi))(k-\bar\beta(\xi))$ (see \cite{BKS11}*{Theorem 3}):
\[
q(x,t)=\hat A_2\frac{\Theta(\beta_2t+\gamma_2)}{\Theta(\beta_2t+\tilde\gamma_2)}\eul^{\ii\nu_2t}+\ord(t^{-1/2}).
\]
A similar analysis applies to the transition from the left plane wave sector to the contiguous sector $-4B_1-4\sqrt{2}A_1<\xi<-4B_1$.
%---------------------------------------------------------%
%:s.4.3
%---------------------------------------------------------%
\subsection{Slow decay: $\BS{-4B_1<\xi<-4B_2}$} \label{sec:rare-decay}

As $\xi\downarrow-4B_2$, the zero $\beta(\xi)$ approaches $E_2$ and $\mu(\xi)$ approaches $-\xi$. As a result, at $\xi=-4B_2$ the derivative of the $g$-function $g\equiv\tilde g_2$ takes the form
\[
g'(\xi,k)=4k+\xi=\theta'(\xi,k).
\]
This is consistent with the fact that for $-4B_1<\xi<-4B_2$, the original phase function $\theta(\xi,k)$ is such that the off-diagonal entries of the jump matrices in the original RH problem \eqref{rhp-basic-jump} across both arcs $\Sigma_1$ and $\Sigma_2$ decay (exponentially) to $0$ as $t\to+\infty$. 
%---------------------------------------%
%:fig 4.3
%---------------------------------------%
\begin{figure}[ht]
\centering\includegraphics[scale=.85]{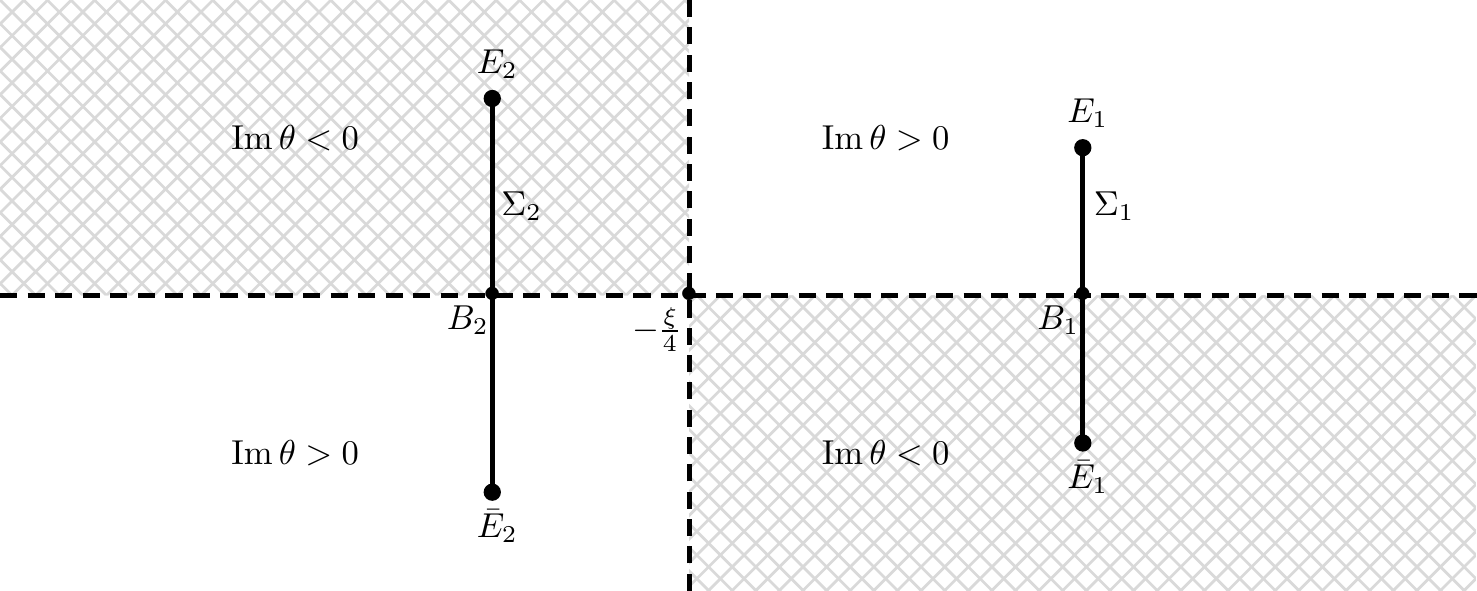}
\caption{Signature table of $\Im\theta(\xi,k)$ for $-4B_1<\xi<-4B_2$} 
\label{fig:theta-c}
\end{figure}
%---------------------------------------%
This suggests keeping $g(\xi,k)=\theta(\xi,k)$ for this range (see Figure~\ref{fig:theta-c}), which implies that the asymptotics for $\xi \in (-4B_1, -4B_2)$ is essentially the same as in the case of zero background, i.e., $q(x,t)=\ord(t^{-1/2})$ and this estimate can be made more precise by detailing the main contribution from the critical point $k=-\xi/4 \in \D{R}$ (see~\cite{DIZ}).
%-------------------%
%:prop 4.1
%-------------------%
\begin{proposition}[slow decay] \label{prop:slowdecay}
For $-4B_1<\xi<-4B_2$, the long time asymptotics of $q(x,t)$ has the form of slow decaying oscillations of Zakharov--Manakov type:
\begin{equation}  \label{slowdecay}
q(x,t)=\frac{c_0(\xi)}{\sqrt{t}}\eul^{\ii(c_1(\xi)t+c_2(\xi)\log t+c_3(\xi))}+\osmall(t^{-1/2}),
\end{equation}
where the coefficients $c_j(\xi)$ are determined in terms of the spectral functions $a(k)$ and $b(k)$ associated with the initial data $q(x,0)$, see \eqref{rarefact-cj}.
\end{proposition}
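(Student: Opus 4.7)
The plan is to apply the standard Deift--Zhou steepest descent method directly to the basic RH problem of Section~\ref{sec:rhp-basic}, using $g(\xi,k)=\theta(\xi,k)$ as suggested by the excerpt. The argument parallels the decaying-background case treated in \cite{DIZ}, with the only new feature being the presence of the exponentially small jumps across $\Sigma_1$ and $\Sigma_2$.

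First, I would verify the signature table. In this regime $B_2 < -\xi/4 < B_1$, so the unique real stationary point $k_0 \coloneqq -\xi/4$ of $\theta(\xi,\,\cdot\,)$ lies strictly between the two vertical cuts. A direct inspection of the sign of $\Im\theta(\xi,k) = (4\Re k + \xi)\Im k$ shows that $\Im\theta>0$ in neighborhoods of $\Sigma_1\cap\D{C}^+$ and $\Sigma_2\cap\D{C}^-$, and $\Im\theta<0$ in neighborhoods of $\Sigma_1\cap\D{C}^-$ and $\Sigma_2\cap\D{C}^+$; see Figure~\ref{fig:theta-c}. Combined with the explicit form of $J_0$ in \eqref{jump-si-anal+}--\eqref{jump-si-anal-}, this implies that on both $\Sigma_1$ and $\Sigma_2$ the jump matrix $J(x,t,k)$ in \eqref{J-J0} converges to $I$ exponentially fast as $t\to+\infty$, uniformly away from the branch points $E_j,\bar E_j$. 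The mild singularities allowed at these branch points, controlled by Proposition~\ref{nearbranchpoints}, are harmless in the $L^2$ framework.

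Next, on the real line I would factorize the jump \eqref{jump-r} in the standard two ways,
\[
J_0 = \begin{pmatrix} 1 & r^*\\ 0 & 1\end{pmatrix}\begin{pmatrix} 1 & 0\\ r & 1\end{pmatrix} = \begin{pmatrix} 1 & 0\\ \tfrac{r}{1+rr^*} & 1\end{pmatrix}(1+rr^*)^{\sigma_3}\begin{pmatrix} 1 & \tfrac{r^*}{1+rr^*}\\ 0 & 1\end{pmatrix},
\]
choose the appropriate factorization on each side of $k_0$, and open lenses off $\D{R}$. The analytic continuations of $r$ and $r/(1+rr^*)$ into $\D{C}^\pm$ exist globally thanks to the compact-support assumption \eqref{qnot} (and the no-soliton assumption), with exponential estimates of the type \eqref{reflec-infty}. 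A Deift--Zhou conjugation by $\delta(k)^{\sigma_3}$, where $\delta$ solves the scalar RH problem with jump $1+rr^*$ on $(-\infty,k_0)$ and trivial jump on $(k_0,\infty)$, reduces the remaining jump on $\D{R}$ to the pure oscillatory form concentrated near $k_0$, while the lensed jumps decay exponentially except in a shrinking neighborhood of $k_0$.

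The main work is the local analysis at $k_0$. After the standard rescaling $z=\sqrt{8t}\,(k-k_0)$, the localized RH problem converges to the parabolic cylinder model problem, whose explicit solution yields
\[
m_{12}(x,t,k) = \frac{1}{k}\cdot\frac{c_0(\xi)}{\sqrt{t}}\,\eul^{\ii(c_1(\xi)t+c_2(\xi)\log t+c_3(\xi))} + \osmall(t^{-1/2}),\qquad k\to\infty,
\]
with $c_2(\xi)=-\frac{1}{2\pi}\log(1+|r(k_0)|^2)$, $c_1(\xi)=2k_0^2+\xi k_0$, and $c_0(\xi),c_3(\xi)$ expressed through $|r(k_0)|$, $\arg r(k_0)$, the delta-function integral $\int_{-\infty}^{k_0}\log(1+|r(s)|^2)\dd s/(s-k_0)$, and the Gamma function; this is the same formula as in \cite{DIZ}, with $r$ being our reflection coefficient. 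Inserting this into Proposition~\ref{prop:basic-rhp} gives \eqref{slowdecay}. The main obstacle—though a routine one in this framework—is justifying that the exponentially small contributions from $\Sigma_1,\Sigma_2$ and from the lens boundaries combine with the parametrix error into a genuine $\osmall(t^{-1/2})$ remainder. This is handled via the standard small-norm RH estimates, using that $\Sigma_1,\Sigma_2$ and the lens edges stay a positive distance from $k_0$ for $\xi$ in any compact subinterval of $(-4B_1,-4B_2)$.
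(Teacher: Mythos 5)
Your overall strategy (keep $g=\theta$, conjugate by a scalar function, open lenses, and run the parabolic-cylinder local analysis at $k_0=-\xi/4$) is the same as the paper's, but there is a genuine gap in how you dispose of the jumps on $\Sigma_1$ and $\Sigma_2$. Your signature computation $\Im\theta=v(4u+\xi)$ is correct, and it does make the \emph{off-diagonal} entries of $J$ on $\Sigma_1\cup\Sigma_2$ exponentially small. But on $\Sigma_2\cap\D{C}^\pm$ the jump matrix \eqref{jumps-sig} is \emph{not} unipotent: its diagonal entries are $a_-/a_+$ and $a_+/a_-$, which are untouched by the conjugation $\eul^{-\ii t\theta\sigma_3}(\cdot)\eul^{\ii t\theta\sigma_3}$ and are independent of $t$. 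So the jump across $\Sigma_2$ does \emph{not} converge to $I$, and your claim that both cuts can be discarded as exponentially small contributions fails. This is precisely why the paper's scalar function $d$ solves its RH problem on the contour $\Sigma_d=(-\infty,-\xi/4)\cup\Sigma_2$, with the extra jump $a_-/a_+$ on $\Sigma_2\cap\D{C}^+$ in \eqref{rarefact-Jd}: the conjugation $m\mapsto m\,d^{-\sigma_3}$ is what removes the non-decaying diagonal on $\Sigma_2$. Because your $\delta$-function carries only the jump $1+rr^*$ on $(-\infty,k_0)$, your final constant $c_3(\xi)$ would also be wrong — it misses the term $\frac{\ii}{\pi}\Im\int_0^{+\infty}\frac{\log\frac{a_-(\ii\tau)}{a_+(\ii\tau)}}{\ii\tau+\xi/4}\,\dd\tau$ entering $\tilde\chi(-\xi/4)$ in \eqref{rarefact-cj}.

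A second, smaller point: after opening lenses, the analytically continued functions $r$ and $\tilde r$ themselves have jumps across $\Sigma_1$ and $\Sigma_2$ (relations \eqref{r+r-sigma1}--\eqref{tr+tr-sigma2}), so the lens factors $G_\pm$ do not match across the cuts and produce new jumps there. These must be shown to cancel exactly against the residual $\Sigma_1,\Sigma_2$ jumps (the computations $\#=0$ and $\sharp=0$ in the paper's proof), rather than being estimated as small. With the corrected $d$-function and this cancellation verified, the rest of your argument — the second factorization to the right of $k_0$, the rescaling, and the small-norm estimates away from $k_0$ — goes through as you describe.
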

%-------------------%

%-------------------%
\begin{proof}
The proof is similar to the analogous proof in the case of zero background \cite{DIZ}; it is based on deformations of the original RH problem by ``opening lenses'' (from $-\infty$ to $-\frac{\xi}{4}$ and from $-\frac{\xi}{4}$ to $+\infty$), which leads to a RH problem on a cross centered at $k=-\frac{\xi}{4}$ with jump matrices decaying to the identity matrix uniformly outside any vicinity of $-\frac{\xi}{4}$. A specific feature of the present case of nonzero background is that one also needs to take care of the jumps across $\Sigma_1$ and $\Sigma_2$.
To deal with these jumps, we first introduce the function $d(k)\equiv d(\xi,k)$ which solves the scalar RH problem relative to the contour $\Sigma_d\coloneqq(-\infty,-\frac{\xi}{4})\cup\Sigma_2$ with the jump condition $d_+(k)=d_-(k)J_d(k)$, where
\begin{equation} \label{rarefact-Jd}
J_d=\begin{cases}
1+\abs{r}^2,&k\in(-\infty,-\frac{\xi}{4}),\\
\frac{a_-}{a_+},&k\in\Sigma_2\cap\D{C}^+,\\[.5mm]
\frac{a_+^*}{a_-^*},&k\in\Sigma_2\cap\D{C}^-,
\end{cases}
\end{equation}
and the normalization condition $d(k)\to 1$ as $k\to\infty$. Its solution is given by the Cauchy integral
\[
d(k)=\exp\left\lbrace\frac{1}{2\pi\ii}\int_{\Sigma_d}\frac{\log J_d(s)}{s-k}\dd s\right\rbrace=d_0(k)d_1(k)d_2(k),
\]
with
\begin{alignat*}{2}
d_0(k)&=\exp\left\lbrace\frac{1}{2\pi\ii}\int_{-\infty}^{-\frac{\xi}{4}}\frac{\log(1+\abs{r(s)}^2)}{s-k}\dd s\right\rbrace,&&\\
d_1(k)&=\exp\left\lbrace\frac{1}{2\pi\ii}\int_0^{\ii\infty}\frac{\log\frac{a_-(s)}{a_+(s)}}{s-k}\dd s\right\rbrace,&\quad&
d_2(k)=\exp\left\lbrace\frac{1}{2\pi\ii}\int_{-\ii\infty}^0\frac{\log\frac{a_+^*(s)}{a_-^*(s)}}{s-k}\dd s\right\rbrace.
\end{alignat*}
The behavior of $d_0(k)$ as $k\to-\frac{\xi}{4}$ is the same as in the case of zero background:
\[
d_0(k)=\left(k+\frac{\xi}{4}\right)^{\ii\nu(-\xi/4)}\eul^{\chi(k)},
\]
where
\begin{align*}
\nu(-\xi/4)&=-\frac{1}{2\pi}\log\left(1+\abs{r(-\xi/4)}^2\right)\in\D{R},\\
\chi(k)&=-\frac{1}{2\pi\ii}\int_{-\infty}^{-\xi/4}\log(k-s)\,\dd(1+\abs{r(s)}^2)\in\ii\D{R}.
\end{align*}
On the other hand,
\[
d_1\left(-\frac{\xi}{4}\right)d_2\left(-\frac{\xi}{4}\right)=\exp\left\lbrace\frac{\ii}{\pi}\Im\int_0^{\infty}\frac{\log\frac{a_-(\ii\tau)}{a_+(\ii\tau)}}{\ii\tau+\frac{\xi}{4}}\dd\tau\right\rbrace.
\]
Then, introducing $m^{(1)}(x,t,k)\coloneqq m(x,t,k)d(\xi,k)^{-\sigma_3}$, $k\in\D{C}\setminus\Sigma$ we have
\begin{equation}
m_+^{(1)}(x,t,k)=m_-^{(1)}(x,t,k)\eul^{-\ii t\theta(\xi,k)\sigma_3}J_0^{(1)}(k)\eul^{\ii t\theta(\xi,k)\sigma_3},\quad k\in\Sigma,
\end{equation}
where the jump $J_0^{(1)}=d_-^{\,\sigma_3}J_0d_+^{-\sigma_3}$ has the form of either triangular matrices whose diagonal part is the identity matrix (for $k\in\Sigma_1\cup\Sigma_2$), or products of such matrices (for $k\in\D{R}$):
\begin{equation}  \label{rarefact-J01}
J_0^{(1)}=\begin{cases}
\begin{pmatrix}
1&r^*d^2\\0&1\end{pmatrix}
\begin{pmatrix}
1&0\\rd^{-2}&1\end{pmatrix},&k\in(-\xi/4,+\infty),\\
\begin{pmatrix}
1&0\\\frac{r}{d_+d_-}&1\end{pmatrix}
\begin{pmatrix}
1&r^*d_+d_-\\0&1\end{pmatrix},&k\in(-\infty,-\xi/4),\\
\begin{pmatrix}
1&0\\\frac{\ii\eul^{-\ii\phi_1}}{a_+a_-}d^{-2}&1\end{pmatrix},&k\in\Sigma_1\cap\D{C}^+,\\
\begin{pmatrix}
1&\ii\eul^{\ii\phi_2}d_+d_-\\0&1\end{pmatrix},&k\in\Sigma_2\cap\D{C}^+,\\
\sigma_2J_0^{(1)*}\sigma_2,&k\in(\Sigma_1\cup\Sigma_2)\cap\D{C}^-.
\end{cases}
\end{equation}

The second transformation reduces the jump to the cross $\Sigma_{\cross}=\cup_{j=1}^4L_j$ centered at $k=-\frac{\xi}{4}$, see Figure~\ref{fig:rarefact-cross}. 
%---------------------------------------%
%:fig 4.4
%---------------------------------------%
\begin{figure}[ht]
\centering\includegraphics[scale=.85]{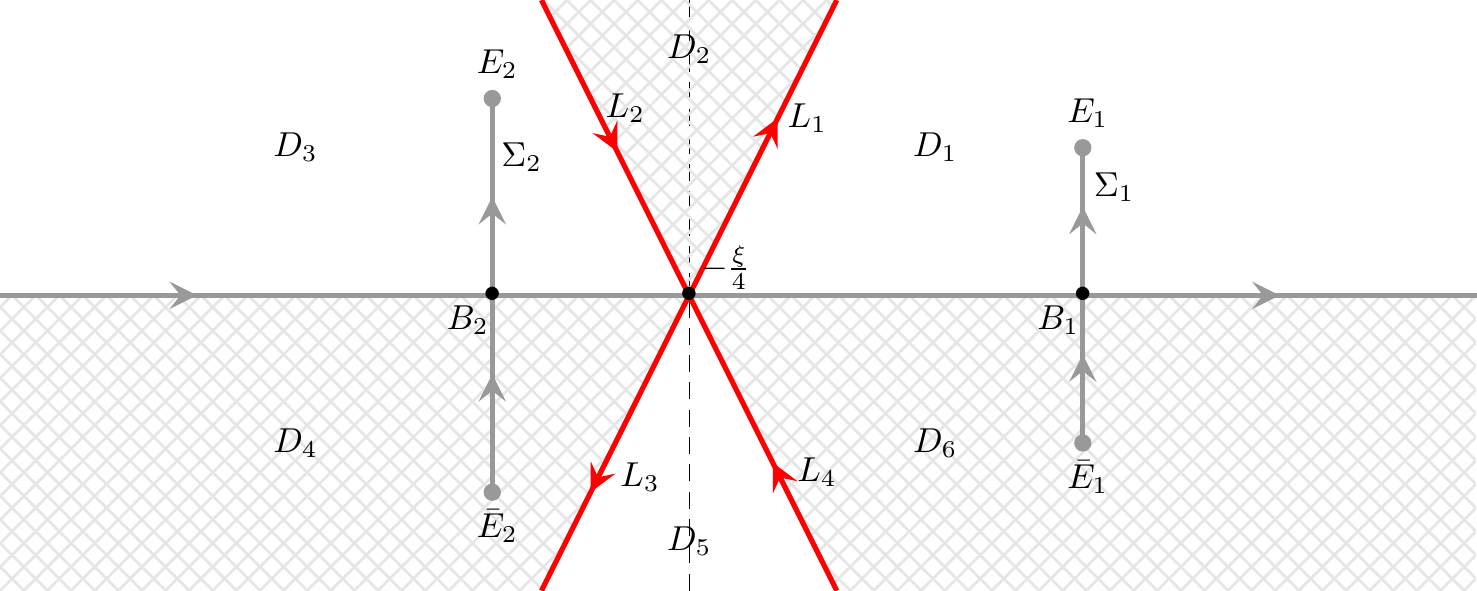}
\caption{Contour deformation for $-4B_1<\xi<-4B_2$} 
\label{fig:rarefact-cross}
\end{figure}
%---------------------------------------%
Introduce
\begin{equation}   \label{rarefact-m2}
m^{(2)}(x,t,k)\coloneqq m^{(1)}(x,t,k)\eul^{-\ii t\theta(\xi,k)\sigma_3}G(k)\eul^{\ii t\theta(\xi,k)\sigma_3},
\end{equation}
where $G\equiv G(k)$ is chosen as follows:
\begin{equation} \label{rarefact-G}
G=\begin{cases}
\begin{pmatrix}
1&0\\-rd^{-2}&1\end{pmatrix},&k\in D_1,\\
\begin{pmatrix}
1&-\tilde ra^2d^2\\0&1\end{pmatrix},&k\in D_3,\\
\begin{pmatrix}
1&0\\\tilde r^*a^{*2}d^{-2}&1\end{pmatrix},&k\in D_4,\\
\begin{pmatrix}
1&r^*d^2\\0&1\end{pmatrix},&k\in D_6,\\
I,&k\in D_2\cup D_5.
\end{cases}
\end{equation}
Recall that $\tilde r(k)\coloneqq\frac{b(k)}{a(k)}$. Then
\begin{equation}
m_+^{(2)}(x,t,k)=m_-^{(2)}(x,t,k)\eul^{-\ii t\theta(\xi,k)\sigma_3}J_0^{(2)}(k)\eul^{\ii t\theta(\xi,k)\sigma_3},\quad k\in\Sigma\cup\Sigma_{\cross},
\end{equation}
where $J_0^{(2)}=G_-^{-1}J_0^{(1)}G_+$ is as follows:
\begin{enumerate}[(1)]
\item
For $k\in\D{R}$, $J_0^{(2)}=I$ by the very construction of $G$.
\item
For $k\in\Sigma_1\cup\Sigma_2$, we also have $J_0^{(2)}=I$. Indeed, it follows from \eqref{rarefact-J01} and \eqref{rarefact-G} that for $k\in\Sigma_1\cap\D{C}^+$, $J_0^{(2)}=\left(\begin{smallmatrix}1&0\\\#&1\end{smallmatrix}\right)$ with
\[
\#\coloneqq d^{-2}\left(r_--r_++\frac{\ii\eul^{-\ii\phi_1}}{a_+a_-}\right)=0,
\]
in view of \eqref{r+r-sigma1}. Similarly, it follows from \eqref{rarefact-Jd} and \eqref{tr+tr-sigma2} that for $k\in\Sigma_2\cap\D{C}^+$, $J_0^{(2)}=\left(\begin{smallmatrix}1&\sharp\\0&1\end{smallmatrix}\right)$ with
\[
\sharp\coloneqq\tilde r_-a_-^2d_-^2-\tilde r_+a_+^2d_+^2+\ii\eul^{\ii\phi_2}d_+d_-=a_+a_-d_+d_-\left(\tilde r_--\tilde r_++\frac{\ii\eul^{\ii\phi_2}}{a_+a_-}\right)=0.
\]
Then, by symmetry, $J_0^{(2)}=I$ also for $k\in(\Sigma_1\cup\Sigma_2)\cap\D{C}^-$.
\item
For $k\in\Sigma_{\cross}$, 
\[
J_0^{(2)}=\begin{cases}
\begin{pmatrix}
1&0\\rd^{-2}&1\end{pmatrix},&k\in L_1,\\
\begin{pmatrix}
1&\tilde ra^2d^2\\0&1\end{pmatrix},&k\in L_2,\\
\begin{pmatrix}
1&0\\-\tilde r^*a^{*2}d^{-2}&1\end{pmatrix},&k\in L_3,\\
\begin{pmatrix}
1&-r^*d^2\\0&1\end{pmatrix},&k\in L_4.
\end{cases}
\]
\end{enumerate}
The RH problem for $m^{(2)}$ is the same as in the case of zero background (see \cite{DIZ}), the only difference being an additional factor (depending on $\xi$ only) in the approximation
\[
d(k)\sim\left(k+\frac{\xi}{4}\right)^{\ii\nu(-\xi/4)}\eul^{\tilde\chi(-\xi/4)},\quad k\to-\frac{\xi}{4},
\]
where
\[
\tilde\chi(-\xi/4)=\chi(-\xi/4)+\frac{\ii}{\pi}\Im\int_0^{+\infty}\frac{\log\frac{a_-(\ii\tau)}{a_+(\ii\tau)}}{\ii\tau+\frac{\xi}{4}}\dd\tau.
\]
It follows that the asymptotics of $q(x,t)$ has the form \eqref{slowdecay} with $c_0$, $c_1$, $c_2$, and $c_3$ given by
\begin{equation}\label{rarefact-cj}
\begin{split}
c_0(\xi)&=\left(\frac{1}{4\pi}\log(1+\abs{r(-\xi/4)}^2)\right)^{1/2},\\
c_1(\xi)&=\frac{\xi^2}{4},\\
c_2(\xi)&=-\nu(-\xi/4),\\
c_3(\xi)&=-3\log 2\,\nu(-\xi/4)+\frac{\pi}{4}+\arg\Gamma(\ii\nu(-\xi/4))-\arg r(-\xi/4)-2\ii\tilde\chi(-\xi/4).
\end{split}\qedhere
\end{equation}
\end{proof}
%-------------------%

%---------------------------------------------------------%
%:s.4.4
%---------------------------------------------------------%
\subsection{Summary} \label{sec:rarefaction-summary}

In the rarefaction case there is only \emph{one} asymptotic scenario.

%-------------------%
%:thm 4.2
%-------------------%
\begin{theorem}[rarefaction] \label{thm:rarefaction}
Suppose $B_1>B_2$. The long-time asymptotics is then as follows.
\begin{enumerate}[\rm(i)]
\item
\emph{Plane wave region:} $\xi<-4B_1+4\sqrt2\,A_1$ ($j=1$) and $\xi>-4B_2+4\sqrt2\,A_2$ ($j=2$). The leading term is a plane wave of constant amplitude:
\[
q(x,t)=A_j\eul^{-2\ii B_jx+2\ii\omega_jt+\ii\psi_j(\xi)}+\ord(t^{-\frac{1}{2}}),\quad j=1,2.
\]
\item
\emph{Elliptic wave region:} $-4B_1-4\sqrt2\,A_1<\xi<-4B_1$ ($j=1$) and $-4B_2<\xi<-4B_2+4\sqrt2\,A_2$ ($j=2$). The leading term is a modulated elliptic wave:
\[
q(x,t)=\hat A_j\frac{\Theta(\beta_jt+\gamma_j)}{\Theta(\beta_jt+\tilde\gamma_j)}\eul^{\ii\nu_jt}+\ord(t^{-1/2}),
\]
where all coefficients depend on $\xi$. Moreover, $\Theta(z)\coloneqq\sum_{m\in\D{Z}}\eul^{2\ii\pi(\frac{1}{2}\tau m^2+mz)}$ is the Jacobi theta function with modular invariant $\tau\equiv\tau(\xi)$ and $\hat A_j$ is of the order of $A_j$.
\item
\emph{Slow decay region:} $-4B_1<\xi<-4B_2$. The leading term is a modulated plane wave whose amplitude is slowly decaying:
\[
q(x,t)=\frac{c_0(\xi)}{\sqrt{t}}\,\eul^{\ii(c_1(\xi)t+c_2(\xi)\log t+c_3(\xi))}+\osmall(t^{-1/2})=\ord(t^{-1/2}).
\]
\end{enumerate}
\end{theorem}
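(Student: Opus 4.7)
The plan is to prove the three regimes of Theorem~\ref{thm:rarefaction} by carrying out the Deift--Zhou steepest descent analysis for the basic RH problem of Section~\ref{sec:rhp-basic} with a tailored $g$-function for each sector, as sketched in Sections~\ref{sec:rare-plane}--\ref{sec:rare-decay}. In all three cases, the overall strategy is the same: (a) choose a scalar function $g(\xi,k)$ satisfying $g=g^*$, $g(\xi,k)=\theta(\xi,k)+\ord(k^{-1})$ as $k\to\infty$, and with a signature table such that the jump matrices of the conjugated RH problem for $m^{(1)}(x,t,k)\coloneqq\eul^{-\ii tg^{(0)}(\xi)\sigma_3}m(x,t,k)\eul^{\ii t(g(\xi,k)-\theta(\xi,k))\sigma_3}$ become amenable to triangular factorization; (b) absorb the lower/upper-triangular factors by ``opening lenses'', which localizes the non-negligible part of the jump onto a small set; (c) solve an exactly solvable \emph{model} RH problem on this set; and (d) control the error by small-norm estimates together with local parametrices at the critical points.

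For part (i), I would take $g=g_j$ from \eqref{gj}. The analysis of \S\ref{sec:large-xi} already produces the model problem \eqref{model}, which is of the same type as \eqref{Nj} and is solved in closed form in terms of $\nu_j$. Pulling back through the transformations gives $q(x,t)=2\ii\lim_{k\to\infty}km_{12}(x,t,k)$ in the form $A_j\eul^{-2\ii B_jx+2\ii\omega_jt+\ii\psi_j(\xi)}+\ord(t^{-1/2})$, where $\psi_j(\xi)$ is determined by the sub-leading term of the model solution together with the scalar factors $d(\xi,k)$ needed to remove the jumps on $\D{R}$. The verification that $\psi_1(-\infty)=\phi_1$ and $\psi_2(+\infty)=\phi_2$ follows from the fact that as $\abs{\xi}\to\infty$ the relevant Cauchy integrals vanish.

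For part (ii), the crucial step is to establish the existence of the genus-$1$ $g$-function with derivative \eqref{rare-g-ellip} subject to \eqref{g-deriv-as}--\eqref{g-norm}. I would follow the argument of \cite{BKS11}*{\S4.3.1}: condition \eqref{g-deriv-as} fixes $\mu(\xi)$ and $\beta(\xi)$ implicitly through two real equations on the real and imaginary parts, and monotonicity in $\xi$ together with a continuity argument from the endpoint $\xi=-4B_2+4\sqrt{2}A_2$ (where $\beta=E_2$ and $\mu_1=\mu_2=\mu$) yields a branch on the whole sector. Once the $g$-function is in hand, the signature table permits opening lenses around $\Sigma_2$, and the remaining jumps on $\Sigma_1\cup\Sigma_2$ lead to the model problem \eqref{model-1}, which is solved by Baker--Akhiezer/theta functions on the genus-$1$ curve $w^2=(k-E_2)(k-\bar E_2)(k-\beta)(k-\bar\beta)$. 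The formula for $q(x,t)$ then follows as in \cite{BKS11}*{Theorem 3}. The analogous argument on the left side handles $-4B_1-4\sqrt{2}A_1<\xi<-4B_1$.

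For part (iii), I would keep $g=\theta$, because the signature table in Figure~\ref{fig:theta-c} shows the jumps on $\Sigma_1\cup\Sigma_2$ already decay, and carry out the standard Zakharov--Manakov/DIZ analysis centered at the stationary point $k=-\xi/4\in(-B_1,-B_2)$. The extra work compared with the zero-background case of \cite{DIZ} is the construction of the scalar function $d(\xi,k)$ in \eqref{rarefact-Jd} that simultaneously removes the jumps on $\Sigma_1\cup\Sigma_2$ and splits $1+\abs r^2$ on $\D{R}$; this is already done in the proof of Proposition~\ref{prop:slowdecay} above, and yields \eqref{slowdecay} with the constants \eqref{rarefact-cj}. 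The residual RH problem is then exactly the one solved in \cite{DIZ} by a parabolic cylinder parametrix, and the only modification is the explicit correction $\tilde{\chi}(-\xi/4)$ entering $c_3(\xi)$.

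The main obstacle is the rigorous construction of the elliptic-sector $g$-function: verifying that the algebraic system fixing $\mu(\xi)$ and $\beta(\xi)$ admits a continuous solution on the open interval, and that the corresponding signature table has the right topology to allow the lens opening. Everything else (model problem solvability, parametrix construction at the endpoints $E_2,\bar E_2$, and small-norm error estimate) reduces, via the $\dot E^2$-framework of \S\ref{sec:rhp}, to arguments that are essentially those of \cite{BKS11} and \cite{DIZ}.
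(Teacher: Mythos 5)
Your proposal follows essentially the same route as the paper: the plane wave region via the $g$-functions \eqref{gj} and the model problem \eqref{model}, the elliptic region via the genus-$1$ $g$-function \eqref{rare-g-ellip} with conditions \eqref{g-deriv-as}--\eqref{g-norm} and the model problem \eqref{model-1} (with existence delegated to the arguments of \cite{BKS11}*{Section 4.3.1}), and the slow decay region via $g=\theta$, the scalar conjugation by $d$ from \eqref{rarefact-Jd}, and the Deift--Its--Zhou cross analysis exactly as in the proof of Proposition~\ref{prop:slowdecay}. The decomposition into sectors, the choice of $g$-functions, and the reduction to known model problems all coincide with the paper's treatment.
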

%-------------------%

%---------------------------------------------------------%
%:s.5
%---------------------------------------------------------%
\section{Asymptotics: the shock case}   \label{sec:shock}

The shock case $B_1<B_2$ turns out to be much richer than the rarefaction case. There are several asymptotic scenarios depending on the values of $A_1/(B_2-B_1)$ and $A_2/(B_2-B_1)$, see Section~\ref{sec:reduction}. 

%-------------------%
\begin{assumption*}
Henceforth, for simplicity, we assume we are in the \emph{symmetric shock case}, i.e.,
\begin{equation}  \label{symmetriccase}
A_1=A_2=A>0\quad\text{and}\quad B_2=-B_1=B>0.
\end{equation}
Asymptotic scenarios then depend only on the ratio $A/B$.
\end{assumption*}
%-------------------%

%---------------------------------------------------------%
%:s.5.1
%---------------------------------------------------------%
\subsection{Plane waves: $\BS{\abs{\xi}\gg 0}$} \label{sec:shock-plane}

As already seen in Section~\ref{sec:plane}, and as in the rarefaction case, appropriate $g$-functions for $\abs{\xi}\gg 0$ are still $g_j$, $j=1,2$ given by \eqref{gj}, and the asymptotics are plane waves of type \eqref{q-as-plane}. The asymptotics is characterized by two properties:
\begin{enumerate}[(i)] 
\item
the infinite and finite branches of $\Im g_j(\xi,k)=0$ defined by \eqref{gj} cross the real axis at two distinct points, $\mu_1(\xi)$ and $\mu_2(\xi)$, respectively;
\item
the points $E_1$ and $E_2$ are located on the same side from the  infinite branch of $\Im g_j(\xi,k)=0$. 
\end{enumerate}
As $\abs{\xi}$ decreases, the end of the plane wave asymptotic region is associated with the violation of either (i) or (ii).

For large positive values of $\xi$, let $g\equiv g_2(\xi,k)$ be the plane wave $g$-function given by \eqref{gj}. The two real zeros $\mu_j\equiv\mu_j(\xi)$, $j=1,2$ of $\Im g(\xi,k)$ are given by (see \eqref{mu12})
\begin{equation}   \label{mu12bis}
\mu_1=\frac{B}{2}-\frac{\xi}{8}-\frac{1}{8}\sqrt{(\xi+4B)^2-32A^2},\quad\mu_2=\frac{B}{2}-\frac{\xi}{8}+\frac{1}{8}\sqrt{(\xi+4B)^2-32A^2},
\end{equation}
and
\begin{equation}   \label{g2bis}
g'(\xi,k)=4\frac{(k-\mu_1(\xi))(k-\mu_2(\xi))}{\sqrt{(k-E_2)(k-\bar E_2)}}.
\end{equation}
%---------------------------------------%
%:fig 5.1
%---------------------------------------%
\begin{figure}[ht]
\centering\includegraphics[scale=.7]{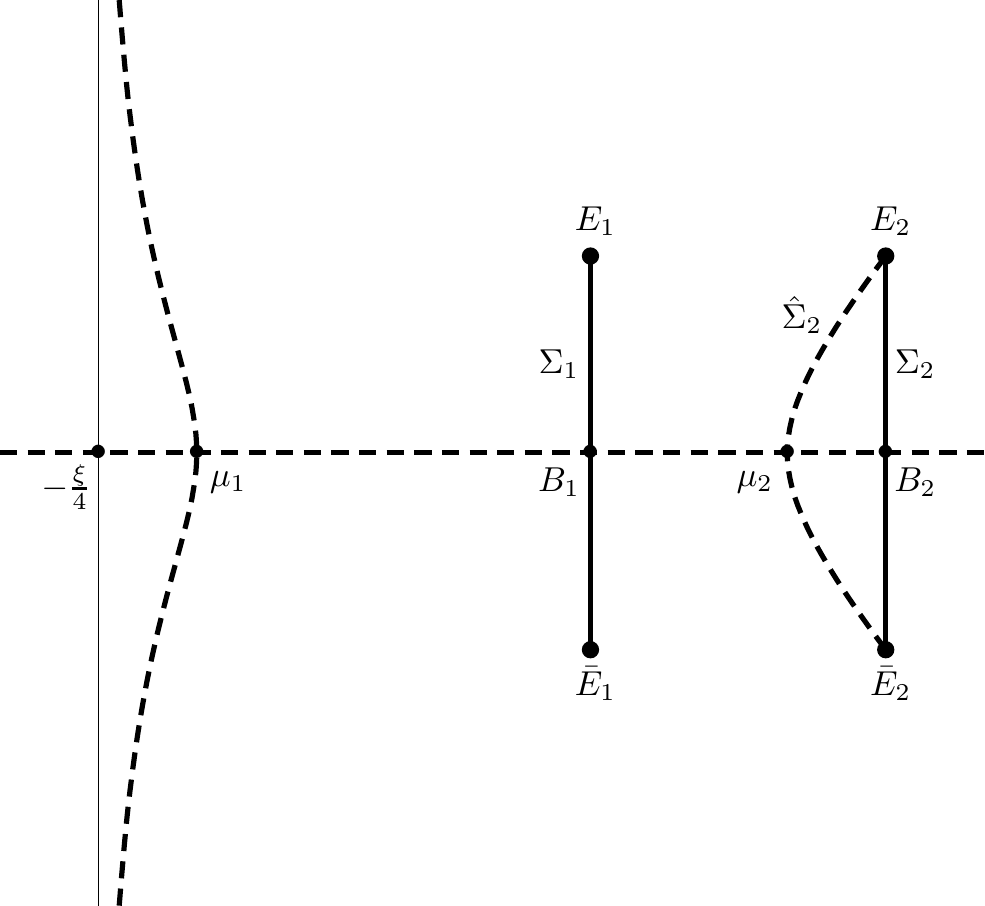}\hspace{8mm}\includegraphics[scale=.7]{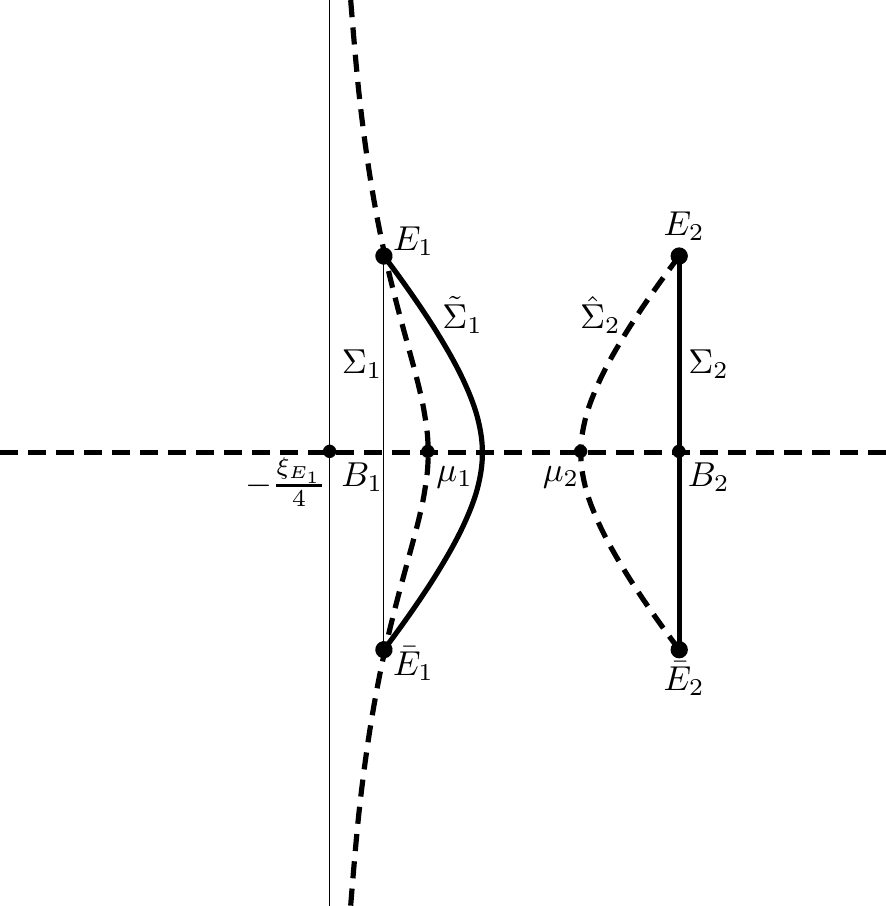}
\caption{Shock, case 1, $\xi_{E_1}>\xi_{\merge}$: $\xi>\xi_{E_1}$ (left), $\xi=\xi_{E_1}$ (right)} 
\label{fig:shock-1}
\end{figure}
%---------------------------------------%
As $\xi$ decreases, the infinite branch of the curve $\Im g=0$ moves to the right. In contrast with the rarefaction case where there was only one possibility, there are now three possibilities (see Figures~\ref{fig:shock-1} and \ref{fig:shock-2}):
\begin{enumerate}[{Case} 1.]
\item 
The infinite branch hits $E_1$ and $\bar E_1$ before the two real zeros $\mu_1$ and $\mu_2$ merge.
\item
The two real zeros $\mu_1$ and $\mu_2$ merge before the infinite branch hits $E_1$ and $\bar E_1$.
\item
The infinite branch hits $E_1$ and $\bar E_1$ at the same time as the two real zeros $\mu_1$ and $\mu_2$ merge.
\end{enumerate}

%-------------------%
\begin{remark*}
To clearly see that the events listed in Cases 1 to 3 are the only events that signify the ending of the plane wave sector, it is better to first deform the part $\Sigma_1$ of the contour of the RH problem which connects $E_1$ with $\bar E_1$ into an arc $\tilde\Sigma_1=(E_1,\bar E_1)$ which is located to the right of the infinite branch of $\Im g=0$. Under assumption \eqref{qnot}, this deformation can be made in a particularly simple way, replacing the branch cut $\Sigma_1$ by $\tilde\Sigma_1$ in the definitions of $X_1(k)$, $\Omega_1(k)$, and $N_1(k)$, see \eqref{x-om}--\eqref{Phi0-N}. Then in all cases, the jump matrix on $\tilde\Sigma_1$ decays to the identity matrix as $t\to+\infty$ and thus does not contribute to the main asymptotic term. Consequently, the ending of the plane wave sector related to the interaction of the infinite branch with the jump contour connecting the branch points $E_1$ and $\bar E_1$ is as described in Cases 1 and 3 (but not with the moment when the infinite branch touches the line segment $\Sigma_1$).
\end{remark*}
%-------------------%

The infinite branch hits $E_1$ and $\bar E_1$ for $\xi=\xi_{E_1}$ where
\begin{equation}\label{xi-E-1}
\xi_{E_1}=2(B+\sqrt{A^2+B^2}).
\end{equation}
On the other hand, the two real zeros of $g'$ merge for $\xi=\xi_{\merge}$ where
\begin{equation}\label{xi-merge}
\xi_{\merge}=4(-B+\sqrt{2}A).
\end{equation}
Hence the infinite branch of $\Im g=0$ hits $E_1$ and $\bar E_1$ before the zeros merge if $\xi_{E_1}>\xi_{\merge}$, i.e., if
\[
\frac{A}{B}<\frac{2}{7}(2+3\sqrt{2})\approx 1.7836.
\]
Thus: 
\begin{itemize}
\item
Case 1 occurs if $\frac{A}{B}<\frac{2}{7}(2+3\sqrt{2})$, 
\item
Case 2 occurs if $\frac{A}{B}>\frac{2}{7}(2+3\sqrt{2})$, 
\item
Case 3 occurs for $\frac{A}{B}=\frac{2}{7}(2+3\sqrt{2})$.
\end{itemize} 
%---------------------------------------%
%:fig 5.2
%---------------------------------------%
\begin{figure}[ht]
\centering\includegraphics[scale=.7]{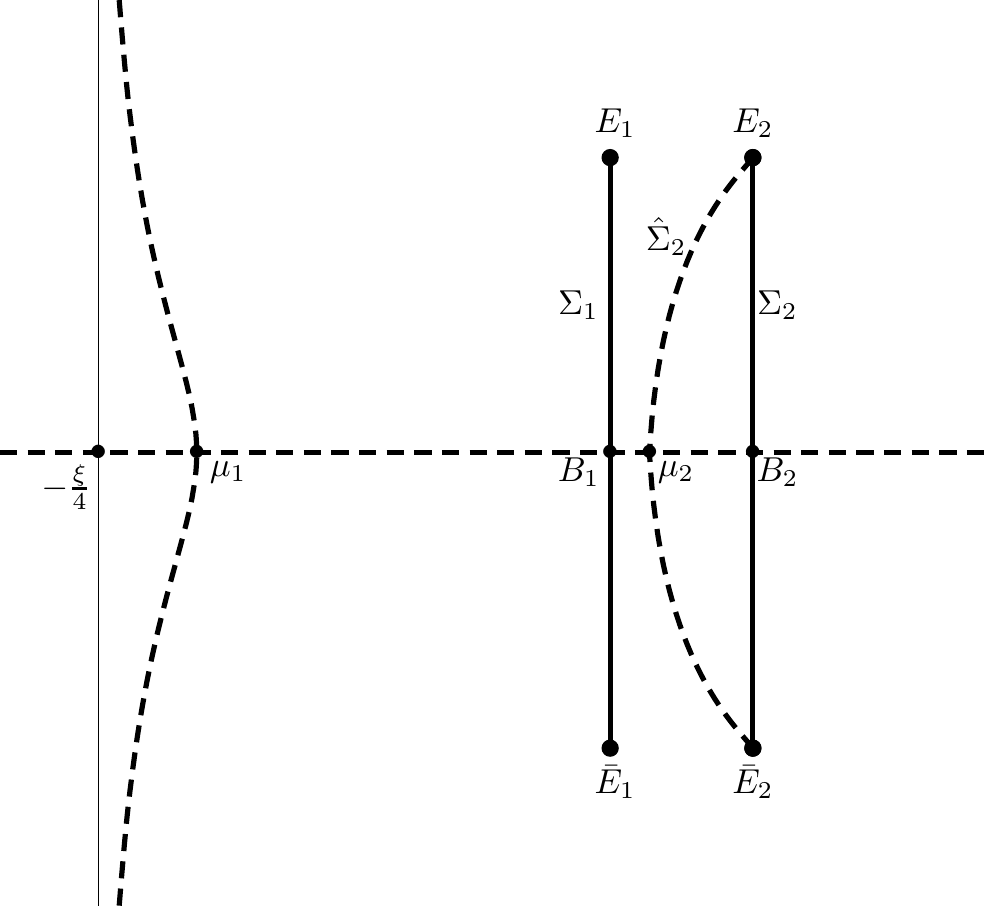}\hspace{8mm}\includegraphics[scale=.7]{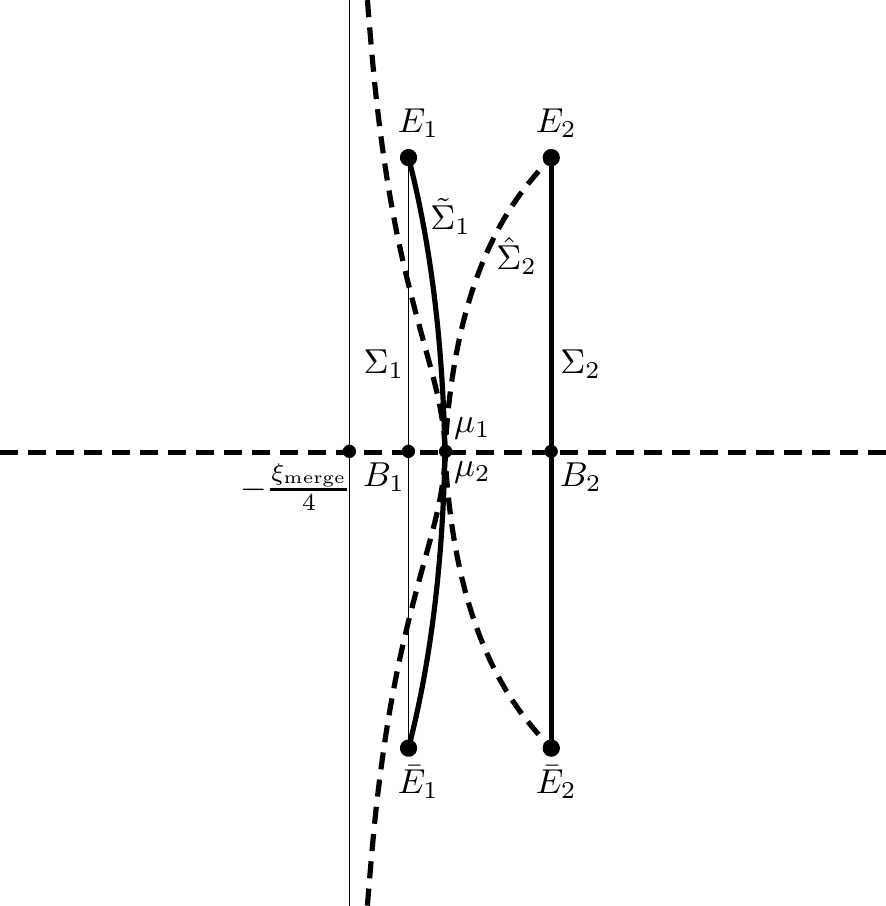}
\caption{Shock, case 2, $\xi_{\merge}>\xi_{E_1}$: $\xi>\xi_{\merge}$ (left), $\xi=\xi_{\merge}$ (right)} 
\label{fig:shock-2}
\end{figure}
%---------------------------------------%

Each of these cases signifies the ending of the plane wave sector, because the $g$-function $g_2(\xi,k)$ from \eqref{gj} stops to provide a signature table appropriate for subsequent deformations (see, e.g., \cites{BKS11,BV07} for details) and thus a more complicated $g$-function is required. In particular, Case 1 was addressed in \cite{BV07}*{Section 4}, where a genus~$2$ region adjacent to the plane wave region was specified. In \cite{BV07}, this region was characterized as the values of $\xi$ for which a system of nonlinear equations \cite{BV07}*{Eqs.~(4.12)--(4.15)} is solvable, giving the parameters of the asymptotics in this region. The solvability issue for this system was not addressed in \cite{BV07}. The value of $\xi$ separating the plane wave sector from the genus~$2$ sector was given, in our notation, as the value for which $\mu_1(\xi)=-B$, i.e., the value at which the infinite branch of $\Im g_2=0$ touches the vertical segment $(E_1,\bar E_1)$. This value, which in our notation is $4B+\frac{A^2}{B}$, is strictly greater than the correct value $\xi_{E_1}$ given by \eqref{xi-E-1}. Also notice that the other two possibilities were not considered in \cite{BV07}. One can show that in Case 2, the asymptotics in the adjacent sector is given in terms of a genus~$1$ elliptic wave (here the transition is similar to that occurring in the rarefaction case, see \cite{BKS11}), whereas in Case 3, the asymptotics in the adjacent sector is given in terms of a genus~$3$ hyperelliptic wave.

A similar analysis applies to the left plane wave sector.

%---------------------------------------------------------%
%:s.5.2
%---------------------------------------------------------%
\subsection{Asymptotics for small $\BS{\abs{\xi}}$} \label{sec:smallxi}

We next analyze the possible asymptotic scenarios in the ``middle'' domain. The distribution of the asymptotic sectors is expected to be symmetric under $\xi\mapsto-\xi$, and thus special attention will be paid to the case $\xi=0$, i.e., to the asymptotics along the $t$-axis.
%---------------------------------------%
%:fig 5.3
%---------------------------------------%
\begin{figure}[ht]
\centering\includegraphics[scale=.73]{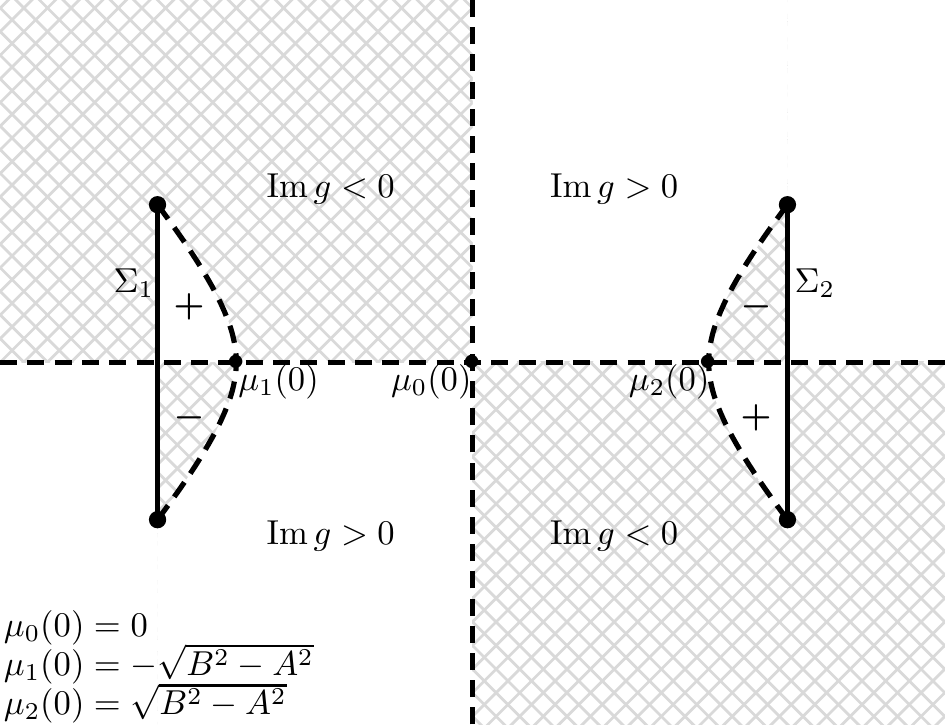}\hspace{4mm}\includegraphics[scale=.73]{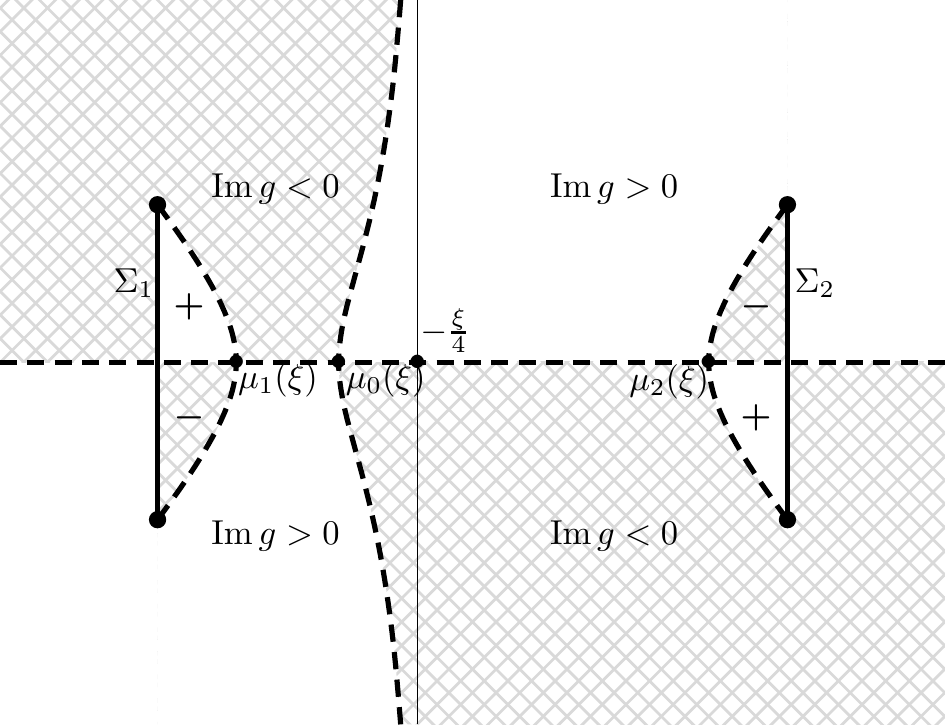}
\caption{Signature table of $\Im g(\xi,k)$ in case $A<B$: $\xi=0$ (left), $0<\xi<\xi_0$ (right)}
\label{fig:signature-ab}
\end{figure}
%---------------------------------------%

%---------------------------------------------------------%
%:s.5.2.1
%---------------------------------------------------------%
\subsubsection{Case $\frac{A}{B}<1$ and $\abs{\xi}<\xi_0$} \label{sec:smallxi-a}

In \cite{BV07}*{Section 3}, the asymptotics in the sector $\accol{\xi:\abs{\xi}<\xi_0}$ (for some $\xi_0>0$) was actually discussed \emph{under the assumption} that the signature table of $\Im g$ for the associated $g$-function was as in \cite{BV07}*{Figure 3.3 (a)}; see also Figure~\ref{fig:signature-ab} (right). In terms of the derivative $g'=\dd g/\dd k$ of the associated $g$-function, it means that $g'(\xi,k)$ has the form
\begin{equation} \label{g-small-bv}
g'(\xi,k)=4\frac{(k-\mu_1(\xi))(k-\mu_0(\xi))(k-\mu_2(\xi))}{\sqrt{(k-E_1)(k-\bar E_1)(k-E_2)(k-\bar E_2)}},
\end{equation}
where the branch cuts for $g'$ are $\Sigma_1$ and $\Sigma_2$ and $\mu_1(\xi)<\mu_0(\xi)<\mu_2(\xi)$ are all real: they are the self-intersection points of the curve $\Im g(\xi,k)=0$. In \cite{BV07}*{Formula (3.27)} the associated $g$-function is of the form $f+2G$, with $f(\xi,k)=g_2(\xi/2,k)$, and $G=\ord(1)$ as $k\to\infty$.

Let us check the validity of this assumption considering $\xi=0$. In this case, the symmetry implies that $\mu_0(0)=0$ whereas $\mu_2(0)=-\mu_1(0)>0$, and the signature table has the form indicated in Figure~\ref{fig:signature-ab} (left). Then, as $k\to\infty$, from \eqref{g-small-bv} we have
\begin{equation}  \label{g-0-bv-as}
g'(0,k)=4k\left(1+\frac{1}{k^2}[-\mu_2^2(0)+B^2-A^2]+\ord(k^{-3})\right).
\end{equation}
Comparing this with 
\begin{equation}\label{g-ass}
g'(\xi,k)=4k+\xi+\ord(k^{-2}),
\end{equation}
which follows from \eqref{gj-as} (we indeed have $g=g_2+\ord(1)$ as $k\to\infty$), we obtain that
\[
\mu_2^2(0)=B^2-A^2,
\]
which can only be valid in the case $A<B$ (recall that $\mu_2(0)$ is real and nonzero).

The signature table for small enough $\abs{\xi}$ has a similar structure, see Figure~\ref{fig:signature-ab} (right), and, as it was shown in \cite{BV07}*{Section 3}, a $g$-function with derivative of the form \eqref{g-small-bv} is indeed suitable for the asymptotic analysis in the sector $\abs{\xi}<\xi_0$, leading to genus~$1$ asymptotics in this sector.

On the other hand, in the case $A\geq B$, the situation is different.
%---------------------------------------------------------%
%:s.5.2.2
%---------------------------------------------------------%
\subsubsection{Case $\frac{A}{B}\geq 1$ and $\xi=0$} \label{sec:smallxi-b}
%-------------------%
%:prop 5.1
%-------------------%
\begin{proposition}  \label{prop-0}
Assume that \eqref{symmetriccase} holds with $\frac{A}{B}\geq 1$. Then, for $\xi=0$ an appropriate $g$-function has a derivative of the form
\begin{equation}  \label{g-0}
g'(0,k)=4\frac{k(k^2+\alpha_0^2)}{\sqrt{(k-E_1)(k-\bar E_1)(k-E_2)(k-\bar E_2)}},
\end{equation}
where $\alpha_0=\sqrt{A^2-B^2}$, generating genus~$1$ asymptotics for $x=0$.
\end{proposition}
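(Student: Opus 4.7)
The strategy mirrors the analysis in Section~\ref{sec:smallxi-a} for the case $A/B<1$: start from the general ansatz for a genus~$1$ $g$-function over the branch points $\{E_1,\bar E_1,E_2,\bar E_2\}$, impose the large-$k$ behavior and the symmetry of the problem, and verify the remaining normalization and signature-table conditions. The formula \eqref{g-0} will emerge as the ``analytic continuation'' of \eqref{g-small-bv} past the critical value $A=B$, where the two outer real zeros $\mu_1,\mu_2$ of \eqref{g-small-bv} coalesce and re-emerge as a complex-conjugate pair on the imaginary axis.

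Concretely, I would start with
\[
g'(0,k)=\frac{4(k-\mu_1)(k-\mu_0)(k-\mu_2)}{R(k)},\qquad R(k)=\sqrt{(k-E_1)(k-\bar E_1)(k-E_2)(k-\bar E_2)},
\]
where $R$ is cut along $\Sigma_1\cup\Sigma_2$ and $R(k)\sim k^2$ at infinity. A direct expansion using $E_{1,2}=\mp B+\ii A$ gives $R(k)^2=(k^2+A^2+B^2)^2-4B^2k^2=k^4+2(A^2-B^2)k^2+(A^2+B^2)^2$, hence $R(k)=k^2+(A^2-B^2)+\ord(k^{-2})$. Matching $g'(0,k)=4k+\ord(k^{-2})$ (from \eqref{g-deriv-asy} specialized to $\xi=0$) yields the two relations $\mu_0+\mu_1+\mu_2=0$ and $\mu_0\mu_1+\mu_0\mu_2+\mu_1\mu_2=A^2-B^2$.

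The next step is to impose the $k\mapsto-k$ symmetry. The set of branch points is invariant under this involution (since $-E_j=\bar E_{3-j}$), and together with the Schwarz reflection symmetry $(g')^{*}=g'$ this forces the zeros of $g'(0,k)$ to be either real or purely imaginary and symmetric under $k\mapsto-k$. The unique admissible configuration is $\mu_0=0$ and $\mu_1=-\mu_2$, whence $\mu_2^2=B^2-A^2$. For $A<B$ one recovers the real triple of \eqref{g-small-bv}; for $A\geq B$ the only admissible solution is $\mu_2=\ii\alpha_0$ with $\alpha_0=\sqrt{A^2-B^2}\geq 0$, so $(k-\mu_1)(k-\mu_2)=k^2+\alpha_0^2$ and the formula \eqref{g-0} drops out.

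Finally, one must verify that the resulting $g$-function actually generates genus~$1$ asymptotics. This has two components. First, the normalization $\oint_a dg=0$ on the genus~$1$ Riemann surface $w^2=R(k)^2$: the involution $(k,w)\mapsto(-k,w)$ swaps the $a$-cycles around $\Sigma_1$ and $\Sigma_2$ while leaving $dg$ invariant (since $4k(k^2+\alpha_0^2)/w$ is odd in $k$ and $dk$ contributes a compensating sign), and combining this with the absence of residues of $dg$ on the compact surface forces the $a$-period to vanish. Second, the signature table of $\Im g(0,k)$: its zero set must contain $\D{R}$, $\ii\D{R}$ (by the $k\mapsto-k$ symmetry) and two smooth arcs through $\pm\ii\alpha_0$ connecting pairs of branch points, in a configuration that makes the standard lensing factorization applicable and produces a piecewise-constant model jump on $\Sigma_1\cup\Sigma_2$. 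The main obstacle lies here: certifying the global topology of $\Im g=0$ (absence of spurious components, correct ordering of the sign regions) and that each cut $\Sigma_j$ can be deformed onto a contour with $\Re g$ constant. This is accomplished by a local sign analysis at each of the critical points $\{0,\pm\ii\alpha_0,E_1,\bar E_1,E_2,\bar E_2\}$ combined with a count of the number of unbounded branches at infinity, in the spirit of \cite{BKS11}*{Section~4.3} and \cite{BV07}*{Section~3}.
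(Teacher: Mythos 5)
Your derivation of \eqref{g-0} is essentially the paper's own: the large-$k$ matching that yields $\mu_0+\mu_1+\mu_2=0$ and $\mu_1\mu_2=-(B^2-A^2)$ is exactly the computation \eqref{g-0-bv-as}--\eqref{g-ass} of Section~\ref{sec:smallxi-a}, continued past $A=B$ so that the pair $\pm\mu_2$ migrates to $\pm\ii\alpha_0$, and the symmetry argument forcing $\mu_0=0$, $\mu_1=-\mu_2$ is the one the paper invokes. For the remaining claim that this $g$-function ``generates genus~$1$ asymptotics,'' the paper offers no proof either (it explicitly defers the steepest-descent verification to a separate publication), so your outline of the period condition via the $k\mapsto-k$ involution and of the signature-table analysis goes, if anything, slightly beyond what is recorded here, while correctly identifying the global topology of $\Im g=0$ as the part that still requires a genuine argument.
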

%-------------------%

%-------------------%
\begin{comment*}
The proof of Proposition \ref{prop-0} consists in performing the asymptotic analysis for $\xi=0$ using the $g$-function \eqref{g-0} and showing that it leads to genus~$1$ asymptotics, expressed in terms of elliptic functions attached to the Riemann surface $w^2=(k-E_1)(k-\bar E_1)(k-E_2)(k-\bar E_2)$. Details will be given elsewhere.
\end{comment*}
%-------------------%

%---------------------------------------%
%:fig 5.4
%---------------------------------------%
\begin{figure}[ht]
\centering\includegraphics[scale=.73]{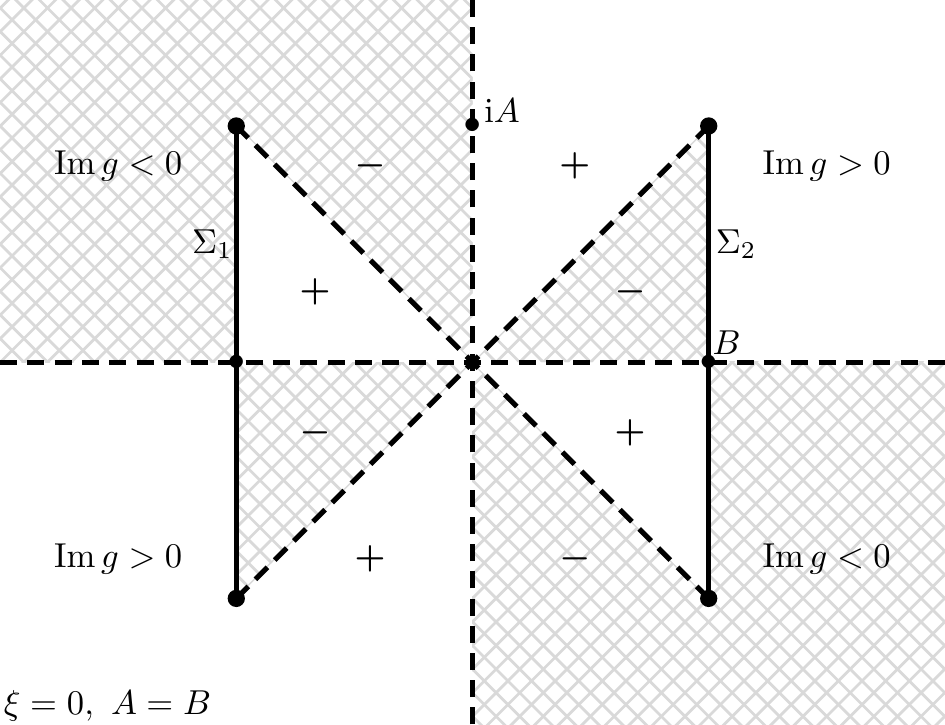}\hspace{4mm}\includegraphics[scale=.73]{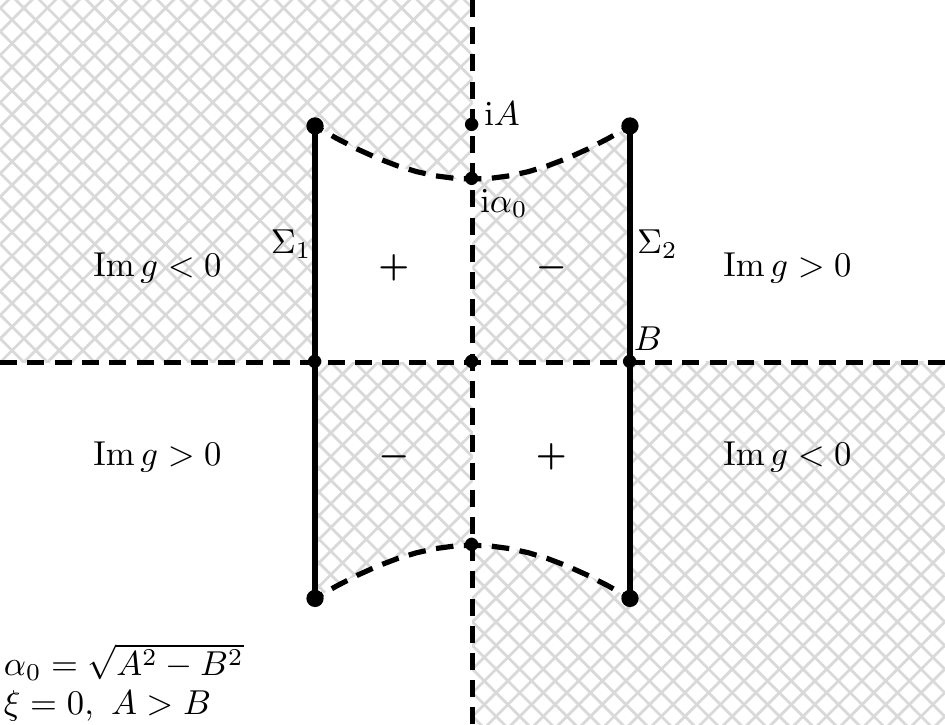}
\caption{Signature table of $\Im g(0,k)$ in cases $A=B$ (left) and $A>B$ (right)} 
\label{fig:signature-cd}
\end{figure}
%---------------------------------------%

The signature tables are shown in Figure~\ref{fig:signature-cd} in the cases $\frac{A}{B}=1$ (left) and $\frac{A}{B}>1$ (right).

%---------------------------------------%
%:fig 5.5
%---------------------------------------%
\begin{figure}[ht]
\centering\includegraphics[scale=.73]{nls-signature-dd}\hspace{4mm}\includegraphics[scale=.73]{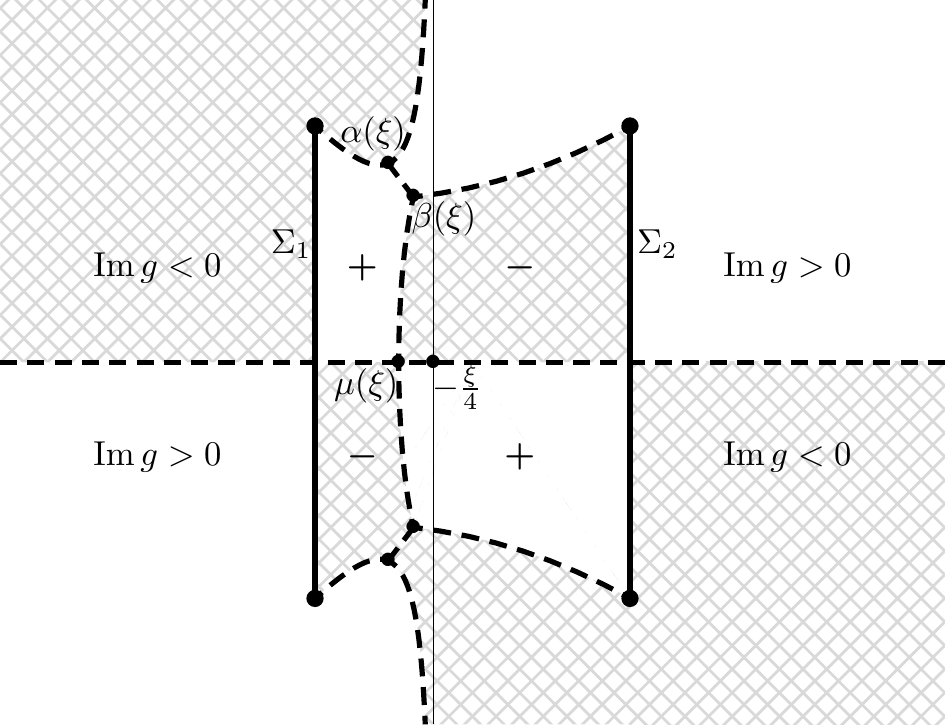}
\caption{Signature table of $\Im g(\xi,k)$ in case $A>B$: $\xi=0$ (left), $0<\xi<\xi_0$ (right)} 
\label{fig:signature-de}
\end{figure}
%---------------------------------------%

%---------------------------------------------------------%
%:s.5.2.3
%---------------------------------------------------------%
\subsubsection{Case $\frac{A}{B}>1$ and $0<\varepsilon<\abs{\xi}<\xi_0$} \label{sec:smallxi-c}

The form of the derivative of a $g$-function given by \eqref{g-0} is unstable with respect to $\xi$. In particular, in the case $A>B$ we have the following.

%-------------------%
%:prop 5.2
%-------------------%
\begin{proposition}\label{gen3}
Assume that \eqref{symmetriccase} holds with $\frac{A}{B}>1$. Then, for all $\xi$ with $\varepsilon<\abs{\xi}<\xi_0$, for some $\xi_0>0$ and any $\varepsilon\in(0,\xi_0)$, an appropriate $g$-function has a derivative of the following form, generating genus~$3$ asymptotics (see Figure~\ref{fig:signature-de} (right)):
\begin{equation}  \label{g-xi}
g'(\xi,k)=4\frac{(k-\mu(\xi))(k-\alpha(\xi))(k-\bar\alpha(\xi))(k-\beta(\xi))(k-\bar\beta(\xi))}{w(\xi,k)},
\end{equation}
where $w^2=(k-E_1)(k-\bar E_1)(k-E_2)(k-\bar E_2)(k-\alpha(\xi))(k-\bar\alpha(\xi))(k-\beta(\xi))(k-\bar\beta(\xi))$. As $\xi\to 0$, $\alpha(\xi)$ and $\beta(\xi)$ approach $\alpha(0)=\beta(0)=\ii\alpha_0\equiv\ii\sqrt{A^2-B^2}$. Here the branch cuts for $g'$ are $\Sigma_1$, $\Sigma_2$, $(\alpha,\beta)$, and $(\bar\alpha,\bar\beta)$.
\end{proposition}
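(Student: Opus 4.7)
The plan is to set up a nonlinear system $F(\B{x};\xi)=0$ encoding the five real parameters $\mu,\Re\alpha,\Im\alpha,\Re\beta,\Im\beta$ that determine \eqref{g-xi}, verify that it has a degenerate solution at $\xi=0$, and then invoke a renormalized implicit function theorem of the kind outlined in item (d) of Section~\ref{sec:intro} and carried out in detail in Section~\ref{implicitfunctiontheorem} for the analogous genus~$2$ sector of \cite{BV07}.

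The system $F=0$ encodes two types of requirements: the large-$k$ behavior \eqref{g-deriv-asy}, which gives two real conditions obtained by expanding \eqref{g-xi} in inverse powers of $k$; and period conditions needed to make the antiderivative $g(\xi,k):=\int g'(\xi,k')\,\dd k'$ single-valued modulo constants on each cut and real on $\D{R}$. On the genus-$3$ Riemann surface defined by $w$, these period conditions amount to vanishing of three suitably chosen independent $a$-periods of $\dd g$. Using the involution $k\mapsto -\bar k$ present in the symmetric shock case, one verifies that at $\xi=0$ the system admits the degenerate solution $\B{x}_0=(0,0,\alpha_0,0,\alpha_0)$, at which the four branch points $\alpha,\bar\alpha,\beta,\bar\beta$ collapse to $\pm\ii\alpha_0$, the polynomial under $w^2$ acquires the double factor $(k^2+\alpha_0^2)^2$, and formula \eqref{g-xi} collapses to \eqref{g-0} of Proposition~\ref{prop-0}.

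The main obstacle is that the classical implicit function theorem cannot be applied at this base point: when $\alpha$ and $\beta$ coalesce, the period of $\dd g$ over the cycle that gets pinched between them develops a logarithmic singularity of the form $\log|\beta-\alpha|$, so that several entries of $\partial F/\partial\B{x}$ blow up at $\B{x}_0$. Following the strategy of Section~\ref{implicitfunctiontheorem}, I would change variables to $\sigma:=(\alpha+\beta)/2$ and $\delta:=\beta-\alpha$ in place of $(\alpha,\beta)$ and then renormalize $\delta$ by the substitution $\delta=\tilde\delta/|\log\tilde\delta|$, the complex analogue of the toy model $\tilde f(x)=f(x/|\ln x|)$ from Section~\ref{sec:intro}. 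The renormalized mapping $\tilde F(\mu,\sigma,\tilde\delta;\xi)$ should extend as a $C^1$ function through $\tilde\delta=0$, $\xi=0$ with invertible partial derivative with respect to $(\mu,\sigma,\tilde\delta)$ at the base point, at which stage the standard implicit function theorem produces a $C^1$ branch $\B{x}(\xi)$ for small $|\xi|$.

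The final step is to verify that the $g$-function built from these parameters actually has the signature table of Figure~\ref{fig:signature-de} (right), so that the configuration of branch cuts $\Sigma_1,\Sigma_2,(\alpha,\beta),(\bar\alpha,\bar\beta)$ and the sign distribution of $\Im g$ are consistent with the subsequent steepest-descent deformations. This is an open condition which holds by construction on either side of $\xi=0$ (away from the degenerate confluence) and therefore persists on $\varepsilon<|\xi|<\xi_0$ once $\xi_0$ is small enough; the reason the proposition excludes a neighborhood of $\xi=0$ is exactly the breakdown of the genus-$3$ signature table at the confluence, where it is replaced by the genus-$1$ picture of Proposition~\ref{prop-0}. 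The hardest part is unambiguously computing the leading-order asymptotics of the singular period integrals near $\tilde\delta=0$ and verifying that the renormalized Jacobian is invertible there; everything else is a careful adaptation of Section~\ref{implicitfunctiontheorem} from genus~$2$ to genus~$3$.
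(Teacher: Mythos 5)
Your plan coincides with the paper's: Proposition \ref{gen3} is proved here only in outline, by reducing it to the solvability of the five-real-equation system \eqref{dg-conditions} (three vanishing $a$-periods plus the two normalization conditions at $k=\infty$) via an implicit-function-theorem argument whose details are deferred to \cite{BLS20b}, and your identification of the degenerate base configuration $\alpha(0)=\beta(0)=\ii\alpha_0$ matching \eqref{g-0}, of the logarithmic blow-up of the Jacobian caused by the pinched cycle at the confluence, and of the cure via the renormalization $\delta\mapsto\delta/|\ln|\delta||$ is precisely the genus-$3$ analogue of the scheme carried out for genus $2$ in Section \ref{implicitfunctiontheorem}. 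The one substantive step you leave open --- the explicit expansion of the singular period integrals near the confluence and the verification that the renormalized Jacobian is invertible there --- is also the step this paper does not perform, delegating it to \cites{BLS20b,BLS20c}.
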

%-------------------%

Thus, the long-time asymptotics of $q$ is given in terms of hyperelliptic functions attached to the genus $3$ Riemann surface $M\equiv M(\xi)$ defined by $w^2=(k-E_1)(k-\bar E_1)(k-E_2)(k-\bar E_2)(k-\alpha(\xi))(k-\bar\alpha(\xi))(k-\beta(\xi))(k-\bar\beta(\xi))$.

%-------------------%
\begin{comment*}
The proof of Proposition \ref{gen3} relies on the solvability of a system of equations which characterize genus~$3$ asymptotics (see \cite{BLS20b}):
\begin{subequations}  \label{dg-conditions}
\begin{align}  \label{dg-first-conditions}
&\int_{a_1}\widehat{\dd g}=\int_{a_2}\widehat{\dd g}=\int_{a_3}\widehat{\dd g}=0,\\
\label{dg-last-conditions}
&\lim_{k\to\infty}\left(\frac{\dd g}{\dd k}-4k\right)=\xi,\quad       
\lim_{k\to\infty}k\left(\frac{\dd g}{\dd k}-4k-\xi\right)=0,
\end{align}
\end{subequations}
where $\widehat{\dd g}$ denotes the differential on $M$ given by $\dd g$
on the upper sheet and $-\dd g$ on the lower sheet, and $a_1$, $a_2$, $a_3$ are certain paths on $M$. The definition \eqref{g-xi} of $g'$ depends on five real parameters $\alpha_1=\Re\alpha$, $\alpha_2=\Im\alpha$, $\beta_1=\Re\beta$, $\beta_2=\Im\beta$, and $\mu$, and \eqref{dg-conditions} is actually a system of five equations. The proof of solvability reduces to the application of the implicit function theorem for the vector function $\BS{g}(\xi)=\accol{\alpha_1(\xi),\alpha_2(\xi),\beta_1(\xi),\beta_2(\xi),\mu(\xi)}$. Details are given in \cite{BLS20b}.
\end{comment*}
%-------------------%

Proposition \ref{gen3} justifies the importance of studying the genus~$3$ sector as well as the merging of $\alpha(\xi)$ and $\beta(\xi)$ characterizing a transition zone (smaller than any sector $\abs{\xi}<\varepsilon$ for any $\varepsilon>0$) connecting the axis $\xi=0$, where the asymptotics is genus~$1$, to the genus~$3$ sector $\varepsilon<\xi<\xi_0$ (similarly for the negative values of $\xi$). Details are given in \cite{BLS20c}.

%---------------------------------------------------------%
%:s.5.3
%---------------------------------------------------------%
\subsection{Overview of scenarios in the symmetric shock case} \label{sec:overview}

In this subsection, we describe the five possible asymptotic scenarios that may arise in the symmetric shock case. The first three scenarios correspond to Case 1, the fourth to Case 3, and the fifth to Case 2. There are two ``bifurcation values'' of $\frac{A}{B}$: the first, $\frac{A}{B}=\frac{2}{7}(2+3\sqrt{2})$, determines the three cases $1$, $2$, and $3$, the second, $\frac{A}{B}=1$, determines the three subcases of Case 1. By symmetry, it is enough to consider $\xi\geq 0$.

%---------------------------------------------------------%
%:s.5.3.1
%---------------------------------------------------------%
\subsubsection{1st Scenario}  \label{sec:scenario-1}
This is the scenario developed by Buckingham and Venakides in \cite{BV07}.

%-------------------%
%:table 5.1
%-------------------%
\begin{table}[ht]
\begin{tabular}{|c|c|c|c|c|}
\hline
$0\leq\xi<\xi_{\alpha}$&$\xi=\xi_{\alpha}$&$\xi_{\alpha}<\xi<\xi_{E_1}$&$\xi=\xi_{E_1}$&$\xi>\xi_{E_1}$\\
\hline
genus $1$&&genus $2$&&genus $0$\\ 
\hline
residual region&$\alpha$, $\bar\alpha$ merge into&transition region&infinite branch&plane wave\\ 
&a third real zero&&hits $E_1$, $\bar E_1$&region\\ 
\hline
\multicolumn{5}{c}{}\\
\end{tabular}
\caption{1st scenario: $0<\frac{A}{B}<1$.}
\label{1stscenariotable}
\end{table}
%-------------------%

%---------------------------------------%
%:fig 5.6
%---------------------------------------%
\begin{figure}[ht]
 \subcaptionbox{$\xi > \xi_{E_1}$}{
 \begin{overpic}[width=.3\textwidth]{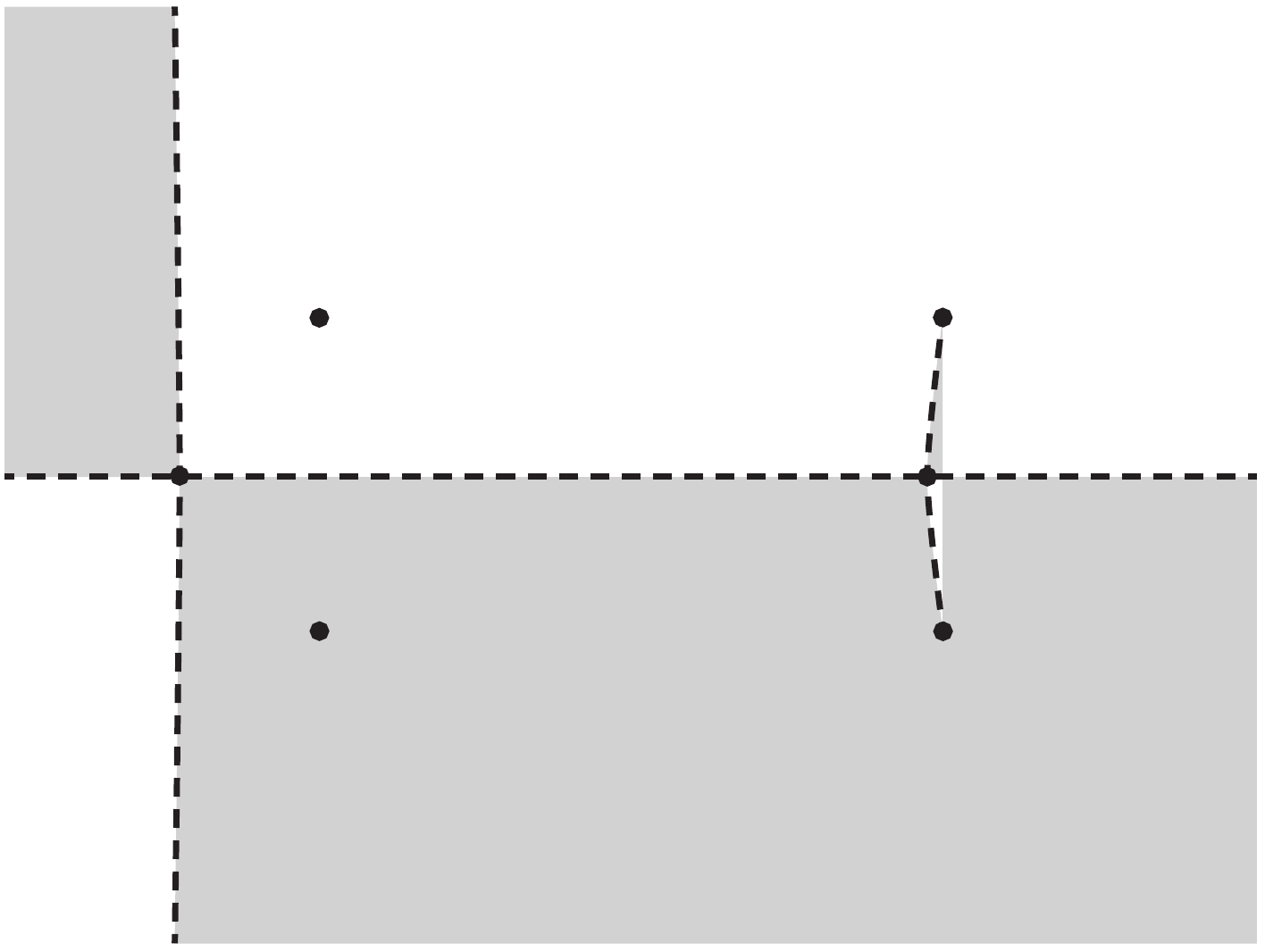}
       \put(60,62){\small $\Im g>0$}
   %    \put(79,40){\small $\Im g=0$}
       \put(60,10){\small $\Im g<0$}
     \put(16,49){\small $E_1$}
      \put(16,24){\small $\bar{E}_1$}
      \put(76,49){\small $E_2$}
      \put(76,24){\small $\bar{E}_2$}
      \put(16,40){\small $\mu_1$}
      \put(65,40){\small $\mu_2$}
\end{overpic}}
\hspace{2cm}
 \subcaptionbox{$\xi = \xi_{E_1}$}{
\begin{overpic}[width=.3\textwidth]{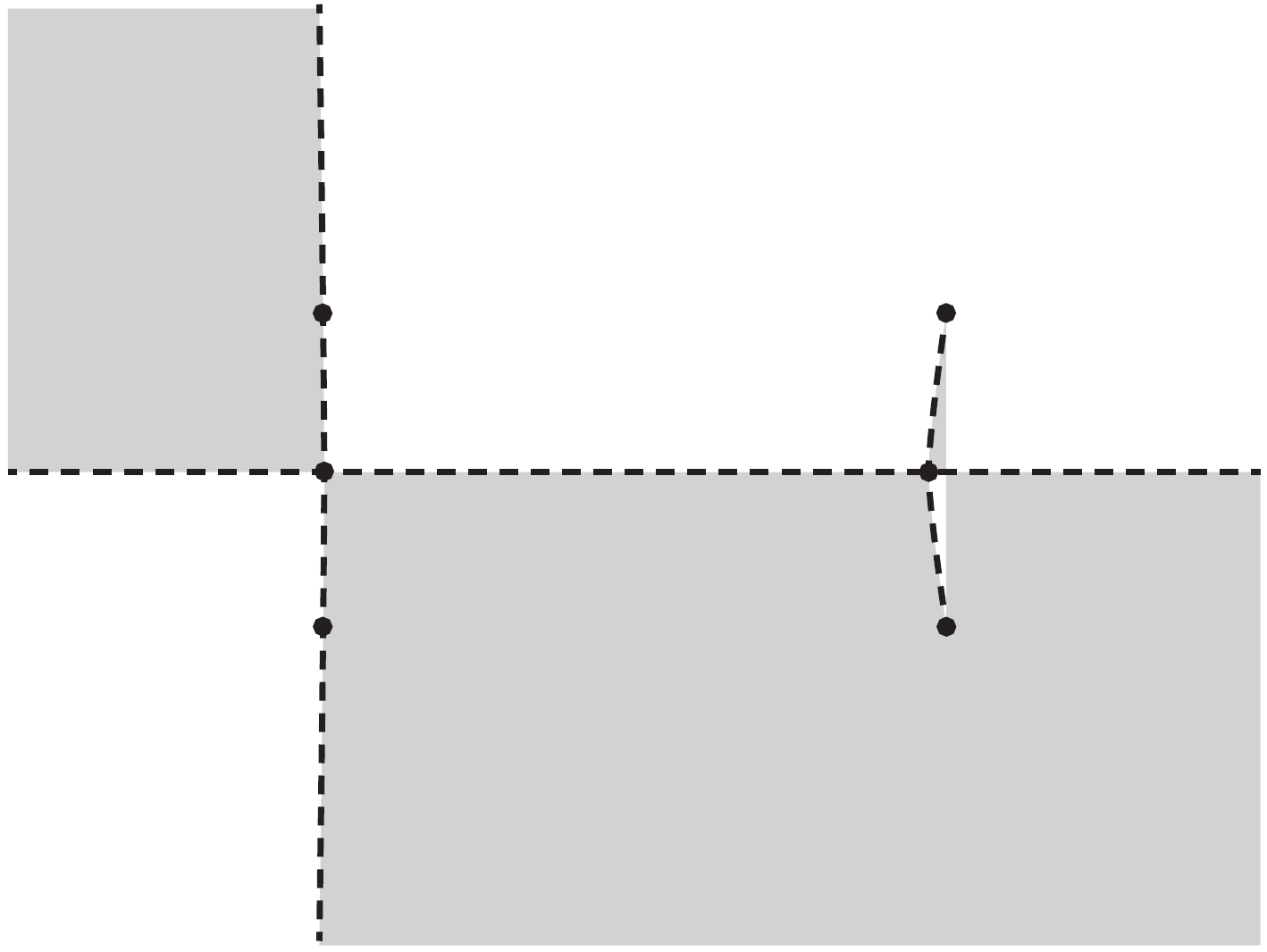}
       \put(60,62){\small $\Im g>0$}
   %    \put(79,40){\small $\Im g=0$}
       \put(60,10){\small $\Im g<0$}
     \put(16,49){\small $E_1$}
      \put(16,24){\small $\bar{E}_1$}
      \put(76,49){\small $E_2$}
      \put(76,24){\small $\bar{E}_2$}
      \put(27,40){\small $\mu_1$}
      \put(65,40){\small $\mu_2$}
\end{overpic}}
	\\ \vspace{.2cm}
 \subcaptionbox{$\xi_\alpha < \xi < \xi_{E_1}$}{
\begin{overpic}[width=.3\textwidth]{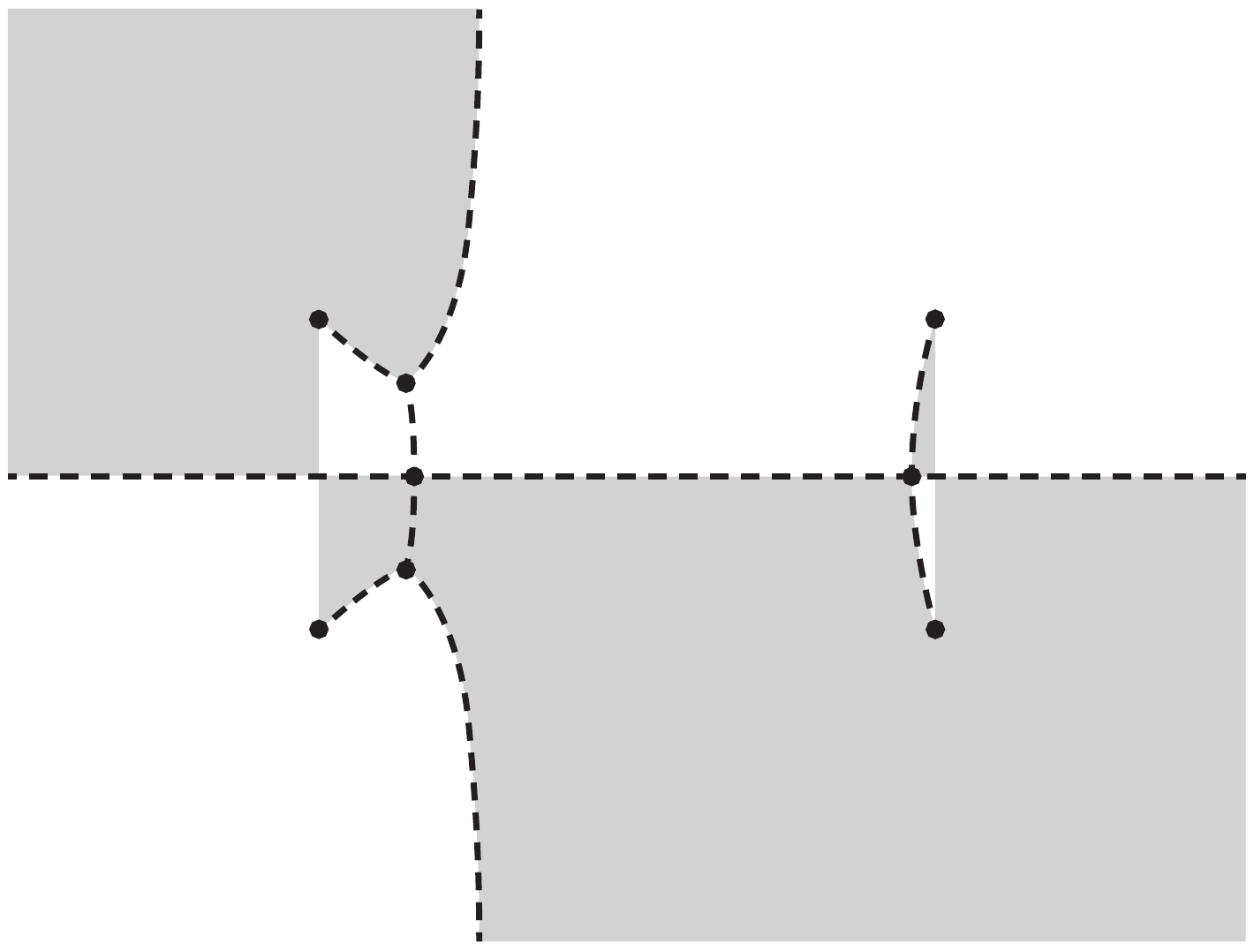}
       \put(60,62){\small $\Im g>0$}
   %    \put(79,40){\small $\Im g=0$}
       \put(60,10){\small $\Im g<0$}
     \put(16,49){\small $E_1$}
      \put(16,24){\small $\bar{E}_1$}
      \put(76,49){\small $E_2$}
      \put(76,24){\small $\bar{E}_2$}
      \put(35,40){\small $\mu_1$}
      \put(64,40){\small $\mu_2$}
      \put(30,48){\small $\alpha$}
      \put(30,25){\small $\bar{\alpha}$}
\end{overpic}}
\hspace{2cm}
 \subcaptionbox{$\xi = \xi_\alpha$}{
\begin{overpic}[width=.3\textwidth]{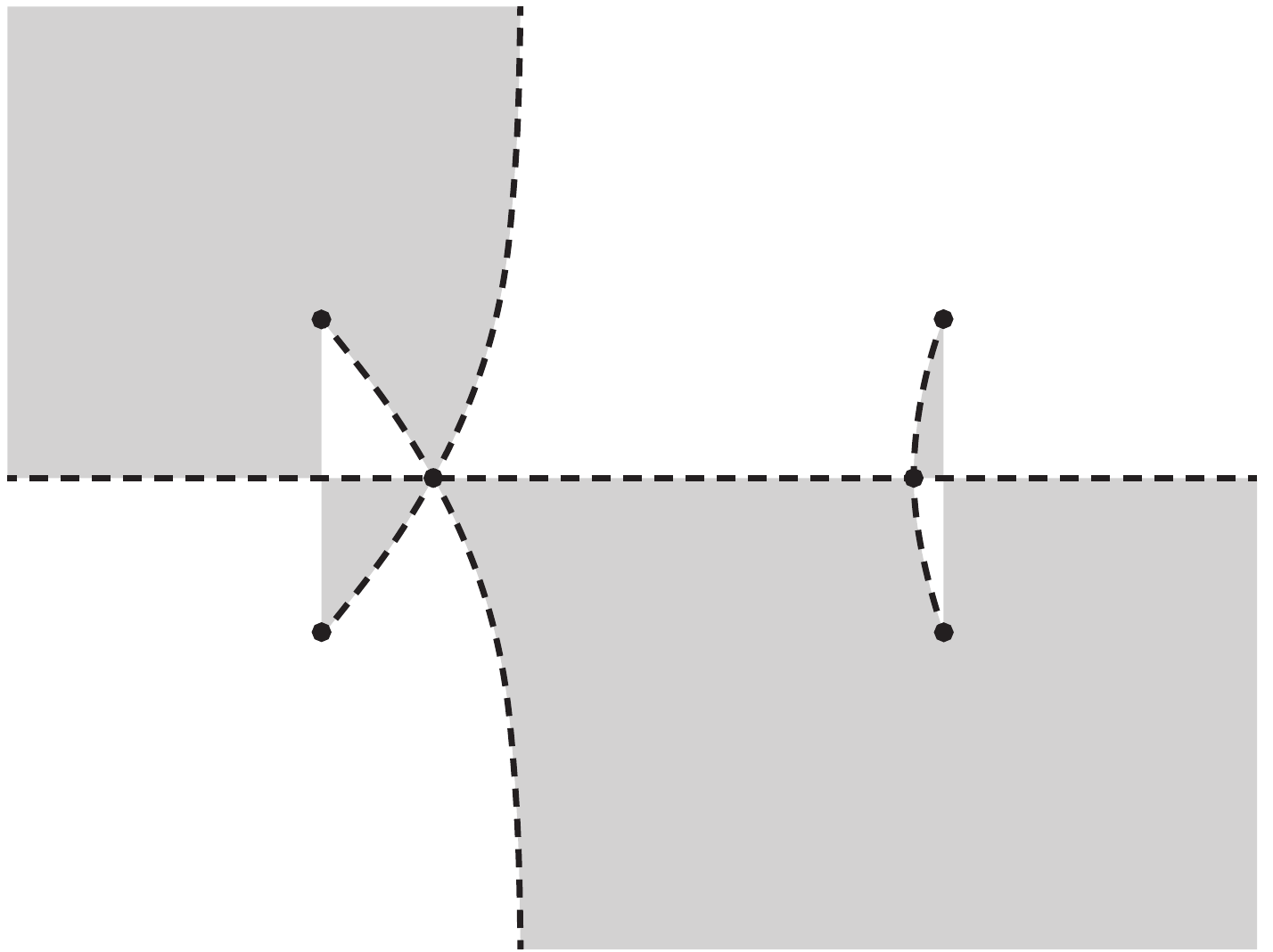}
       \put(60,62){\small $\Im g>0$}
   %    \put(79,40){\small $\Im g=0$}
       \put(60,10){\small $\Im g<0$}
     \put(16,49){\small $E_1$}
      \put(16,24){\small $\bar{E}_1$}
      \put(76,49){\small $E_2$}
      \put(76,24){\small $\bar{E}_2$}
      \put(37,39.5){\small $\mu_0 = \mu_1$}
      \put(63,39.5){\small $\mu_2$}
\end{overpic}}
	\\ \vspace{.2cm}
 \subcaptionbox{$0 < \xi < \xi_\alpha$}{
\begin{overpic}[width=.3\textwidth]{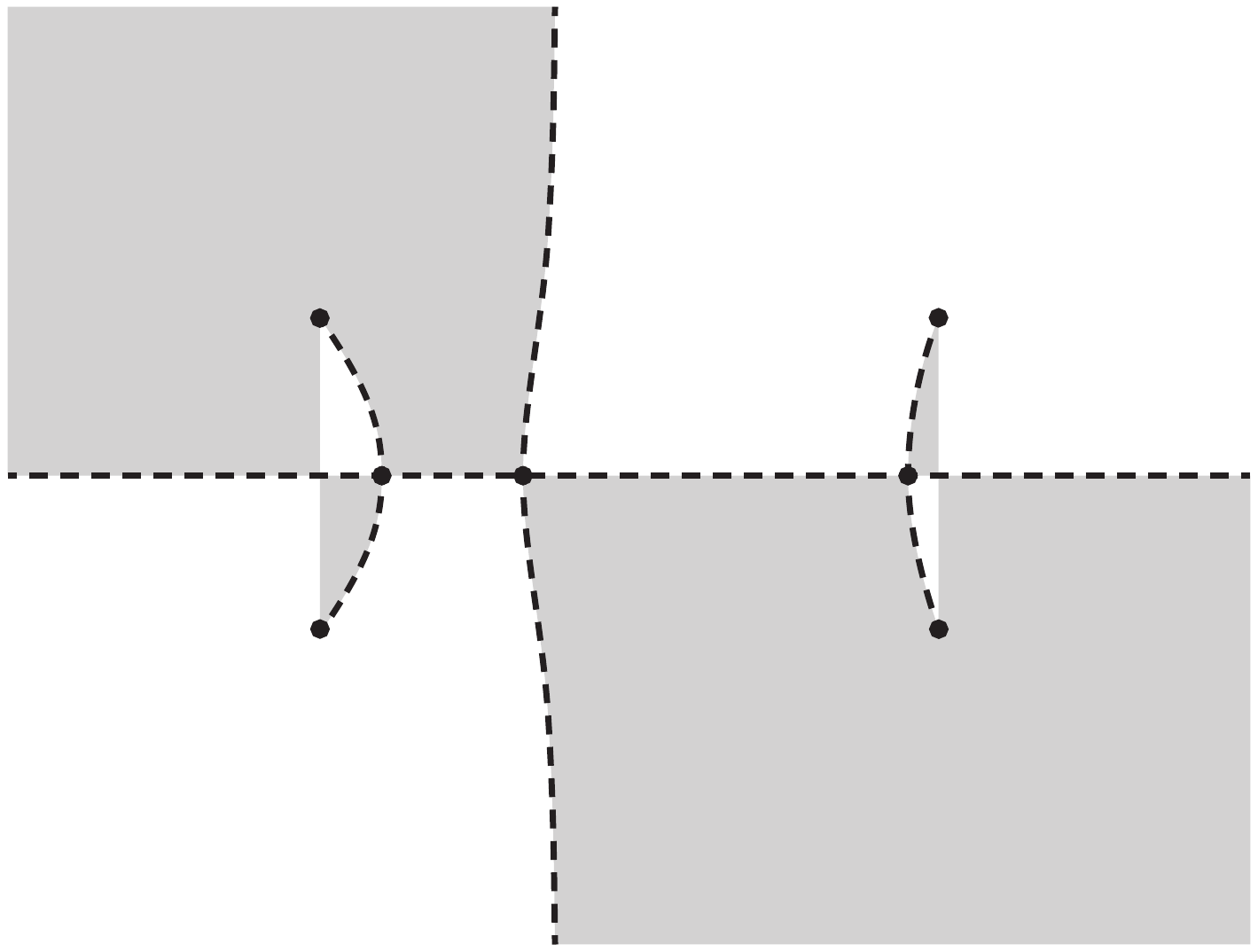}
       \put(60,62){\small $\Im g>0$}
   %    \put(79,40){\small $\Im g=0$}
       \put(60,10){\small $\Im g<0$}
     \put(16,49){\small $E_1$}
      \put(16,24){\small $\bar{E}_1$}
      \put(76,49){\small $E_2$}
      \put(76,24){\small $\bar{E}_2$}
      \put(32,40.5){\small $\mu_0$}
      \put(44,40.5){\small $\mu_1$}
      \put(64,40.5){\small $\mu_2$}
\end{overpic}}
\caption{Signature tables of $\Im g(\xi,k)$ corresponding to the five columns of Table \ref{1stscenariotable} of the 1st scenario. Each figure shows the zero level set $\Im g=0$ (dashed) and the regions where $\Im g<0$ (shaded) and $\Im g>0$ (white) in the complex $k$-plane for $\xi$ as indicated.} 
\label{fig:1stscenarioimg}
\end{figure}
%---------------------------------------%

%---------------------------------------%
%:fig 5.7
%---------------------------------------%
\begin{figure}[ht]
\centering\includegraphics[scale=.55]{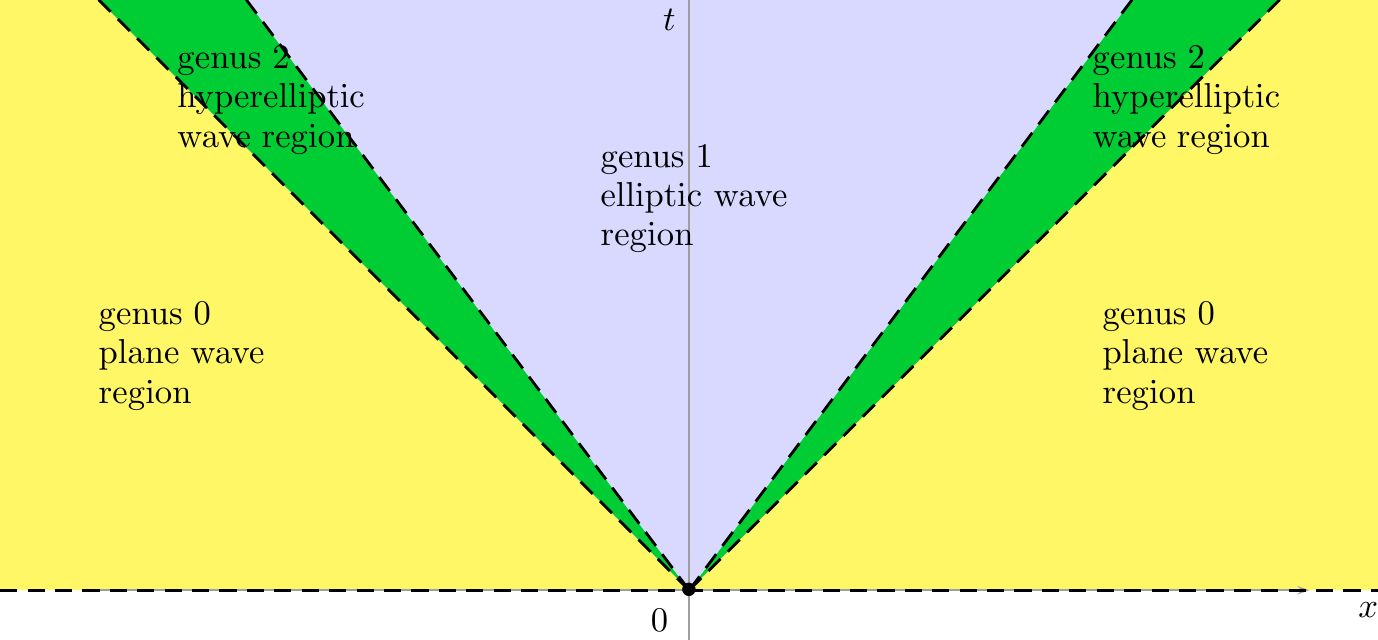}
\caption{1st scenario (symmetric shock case): $\frac{A}{B}<1$} 
\label{fig:shock-scenario-1}
\end{figure}
%---------------------------------------%

We are in Case 1. As $\xi$ decreases from $+\infty$, the $g$-function $g_2$ can be used to carry out the asymptotic analysis until the infinite branch of $\Im g_2=0$ hits $E_1$ and $\bar E_1$, i.e., as long as $\xi>\xi_{E_1}$. For $\xi<\xi_{E_1}$, a new $g$-function is needed, whose existence is established in Section \ref{implicitfunctiontheorem}. The derivative of this $g$-function has two real zeros $\mu_1$ and $\mu_2$, and two nonreal zeros $\alpha$ and $\bar\alpha$ which emerge from $E_1$ and $\bar E_1$ at $\xi=\xi_{E_1}$:
\begin{equation} \label{g-sce1}
g'(\xi,k)=4\frac{(k-\mu_1(\xi))(k-\mu_2(\xi))(k-\alpha(\xi))(k-\bar\alpha(\xi))}{\sqrt{(k-E_1)(k-\bar E_1)(k-E_2)(k-\bar E_2)(k-\alpha(\xi))(k-\bar\alpha(\xi))}}.
\end{equation}
The asymptotic analysis associated with \eqref{g-sce1} is developed in \cite{BV07}, assuming implicitly that the system of associated equations \cite{BV07}*{Eqs.~(3.29)} determining the parameters involved in \eqref{g-sce1} has a solution. It leads to genus~$2$ asymptotics for $q(x,t)$, in terms of functions attached to the hyperelliptic Riemann surface $M(\xi)$ defined by $w^2=(k-E_1)(k-\bar E_1)(k-E_2)(k-\bar E_2)(k-\alpha(\xi))(k-\bar\alpha(\xi))$. This new $g$-function remains appropriate until the nonreal zeros $\alpha(\xi)$ and $\bar\alpha(\xi)$ merge into a third real zero $\mu_0(\xi)$, which happens for $\xi=\xi_{\alpha}$. The real zeros $\mu_0$ and $\mu_1$ coincide for $\xi=\xi_\alpha$, but they move away from each other as $\xi$ decreases further. A numerically generated sequence of snapshots showing the zero level set $\Im g=0$ for different choices of $\xi$ corresponding to the five columns of Table \ref{1stscenariotable} are displayed in Figure~\ref{fig:1stscenarioimg}. The structure of the associated asymptotic sectors in the $(x,t)$-plane are shown in Figure~\ref{fig:shock-scenario-1}.

For $0\leq\xi\leq\xi_{\alpha}$, the asymptotic analysis can be carried out as in \cite{BV07}*{Section 3}, using a $g$-function whose derivative is as in \eqref{g-small-bv}:
\begin{equation}  \label{g-1-2}
g'(\xi,k)=4\frac{(k-\mu_1(\xi))(k-\mu_2(\xi))(k-\mu_0(\xi))}{\sqrt{(k-E_1)(k-\bar E_1)(k-E_2)(k-\bar E_2))}}\,.
\end{equation}
It is this scenario, with the $g$-functions \eqref{g-sce1} and \eqref{g-1-2}, that is presented in detail in \cite{BV07}.

%---------------------------------------------------------%
%:s.5.3.2
%---------------------------------------------------------%
\subsubsection{2nd Scenario}  \label{sec:scenario-2}

This is a limit case of the first scenario. In this case, $\xi_{\alpha}$ becomes $0$ and thus the genus $1$ range from the previous case shrinks to the single value $\xi=0$, with $g'(0,k)$ given by \eqref{g-0} and $\alpha(0)=\bar\alpha(0)=\mu_1(0)=0$.

%-------------------%
%:table 5.2
%-------------------%
\begin{table}[ht]
\begin{tabular}{|c|c|c|c|}
\hline
$\xi=0$&$0<\xi<\xi_{E_1}$&$\xi=\xi_{E_1}$&$\xi>\xi_{E_1}$\\
\hline
genus $1$&genus $2$&&genus $0$\\ 
\hline
$\alpha$, $\bar\alpha$, $\mu_1$ all&&the infinite branch&\\ 
merge at the origin&&hits $E_1$, $\bar E_1$&\\ 
\hline
\multicolumn{4}{c}{}\\
\end{tabular}
\caption{2nd scenario: $\frac{A}{B}=1$.}
\label{2ndscenariotable}
\end{table}
%-------------------%

%---------------------------------------%
%:fig 5.8
%---------------------------------------%
\begin{figure}[ht]
 \subcaptionbox{$\xi>\xi_{E_1}$}{
 \begin{overpic}[width=.28\textwidth]{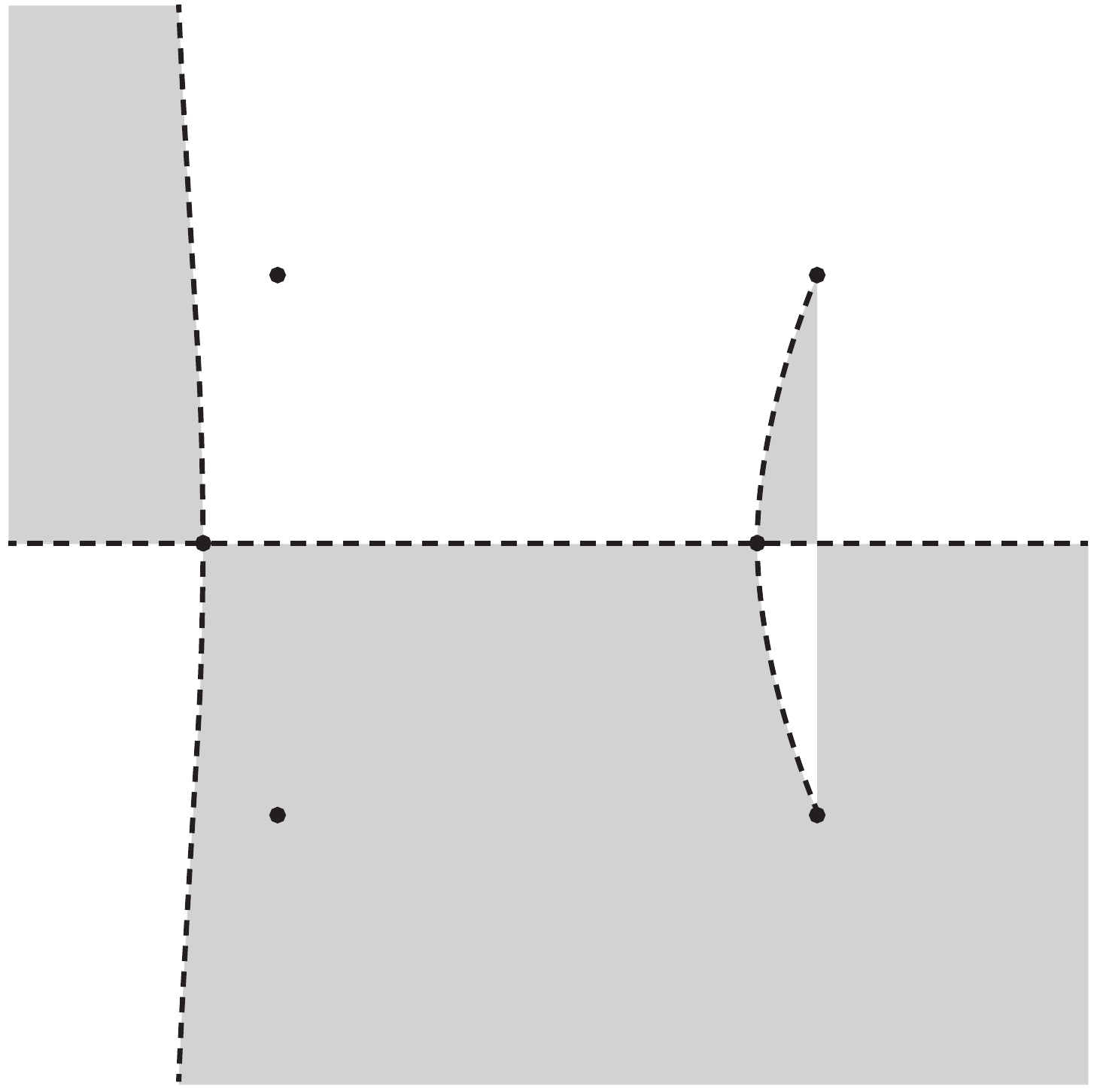}
       \put(60,86){\small $\Im g>0$}
       \put(60,10){\small $\Im g<0$}
     \put(27,72){\small $E_1$}
      \put(27,23){\small $\bar{E}_1$}
      \put(77,72){\small $E_2$}
      \put(77,23){\small $\bar{E}_2$}
      \put(20,53){\small $\mu_1$}
      \put(59,53){\small $\mu_2$}
\end{overpic}}
\hspace{2cm}
 \subcaptionbox{$\xi = \xi_{E_1}$}{
\begin{overpic}[width=.28\textwidth]{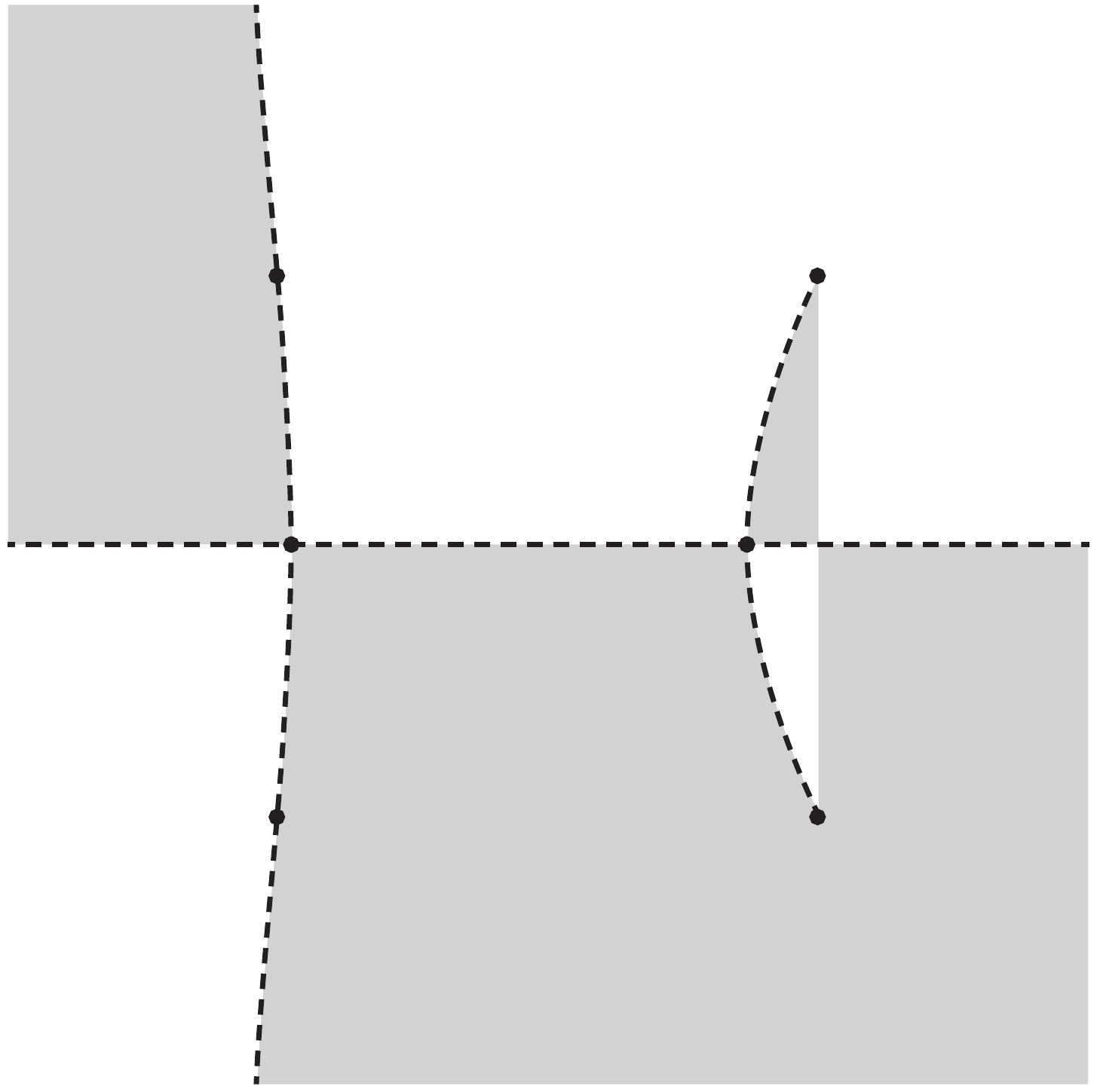}
       \put(60,86){\small $\Im g>0$}
       \put(60,10){\small $\Im g<0$}
    \put(14,72){\small $E_1$}
      \put(14,23){\small $\bar{E}_1$}
      \put(77,72){\small $E_2$}
      \put(77,23){\small $\bar{E}_2$}
      \put(28,53){\small $\mu_1$}
      \put(58,53){\small $\mu_2$}
\end{overpic}}
	\\ \vspace{.2cm}
 \subcaptionbox{$\xi_\alpha < \xi < \xi_{E_1}$}{
\begin{overpic}[width=.28\textwidth]{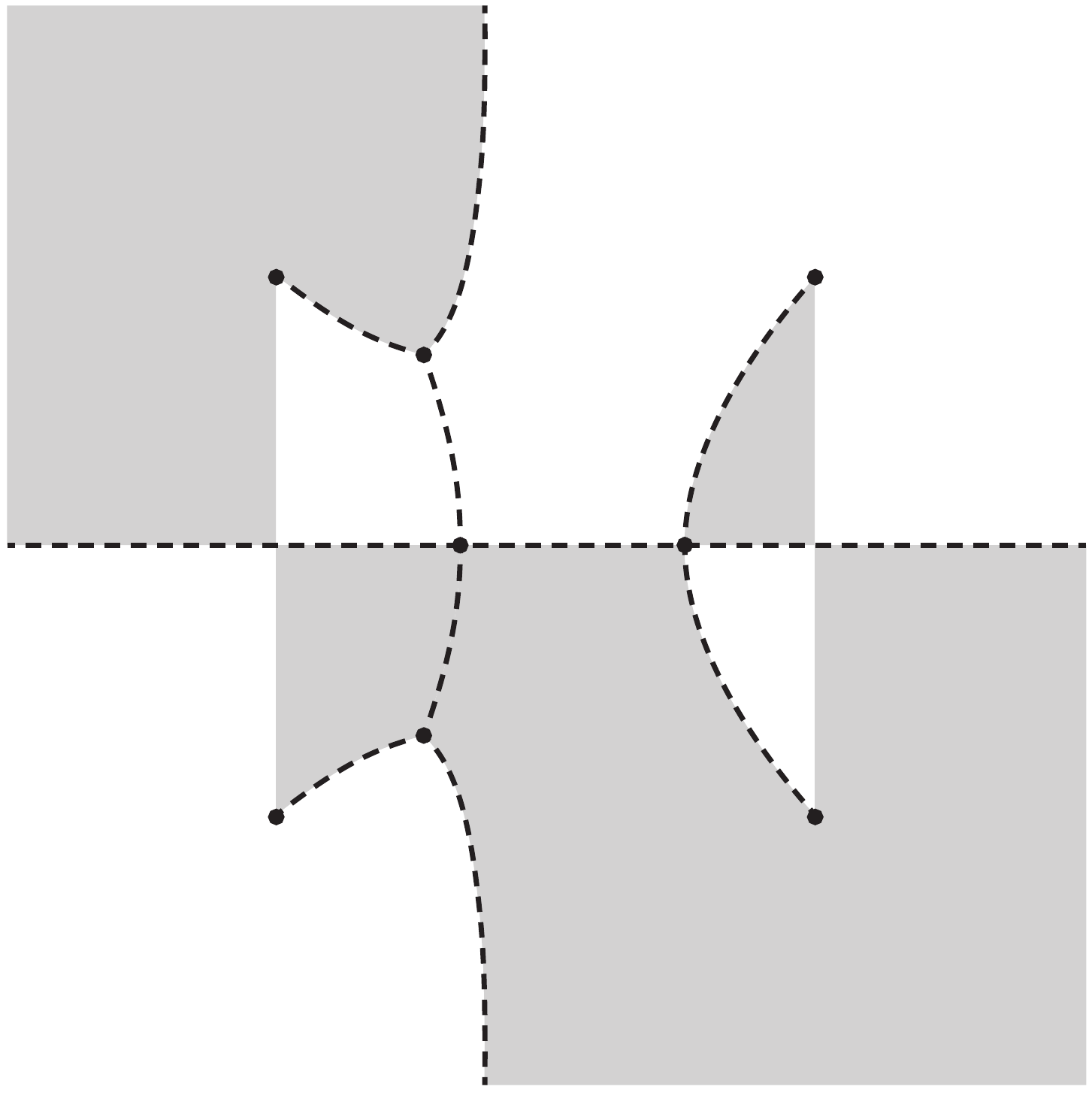}
       \put(60,86){\small $\Im g>0$}
       \put(60,10){\small $\Im g<0$}
     \put(14,72){\small $E_1$}
      \put(14,23){\small $\bar{E}_1$}
      \put(77,72){\small $E_2$}
      \put(77,23){\small $\bar{E}_2$}
      \put(32,53){\small $\mu_1$}
      \put(53,53){\small $\mu_2$}
     \put(41,66){\small $\alpha$}
      \put(41,32){\small $\bar{\alpha}$}
\end{overpic}}
\hspace{2cm}
 \subcaptionbox{$\xi = 0$}{
\begin{overpic}[width=.28\textwidth]{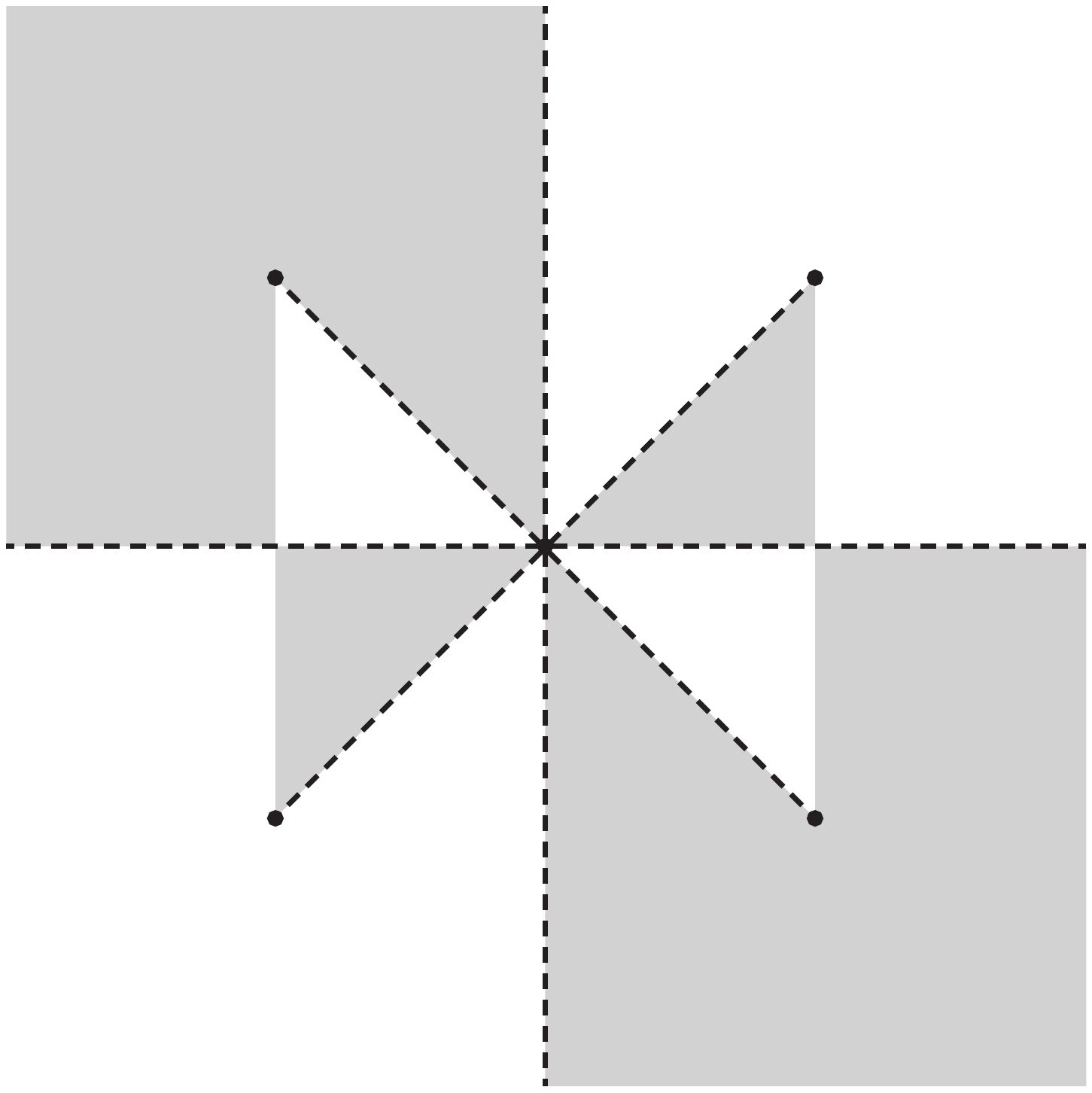}
      \put(60,86){\small $\Im g>0$}
       \put(60,10){\small $\Im g<0$}
     \put(14,72){\small $E_1$}
      \put(14,23){\small $\bar{E}_1$}
      \put(77,72){\small $E_2$}
      \put(77,23){\small $\bar{E}_2$}
     \put(57,53){\small $\mu_1 = \mu_2$}
\end{overpic}}
\caption{Signature tables of $\Im g(\xi,k)$ corresponding to the four columns of Table \ref{2ndscenariotable} of the 2nd scenario. Each figure shows the zero level set $\Im g=0$ (dashed) and the regions where $\Im g<0$ (shaded) and $\Im g>0$ (white) in the complex $k$-plane for $\xi$ as indicated.}
\label{fig:2ndscenarioimg}
\end{figure}
%---------------------------------------%

%---------------------------------------%
%:fig 5.9
%---------------------------------------%
\begin{figure}[ht]
\centering\includegraphics[scale=.55]{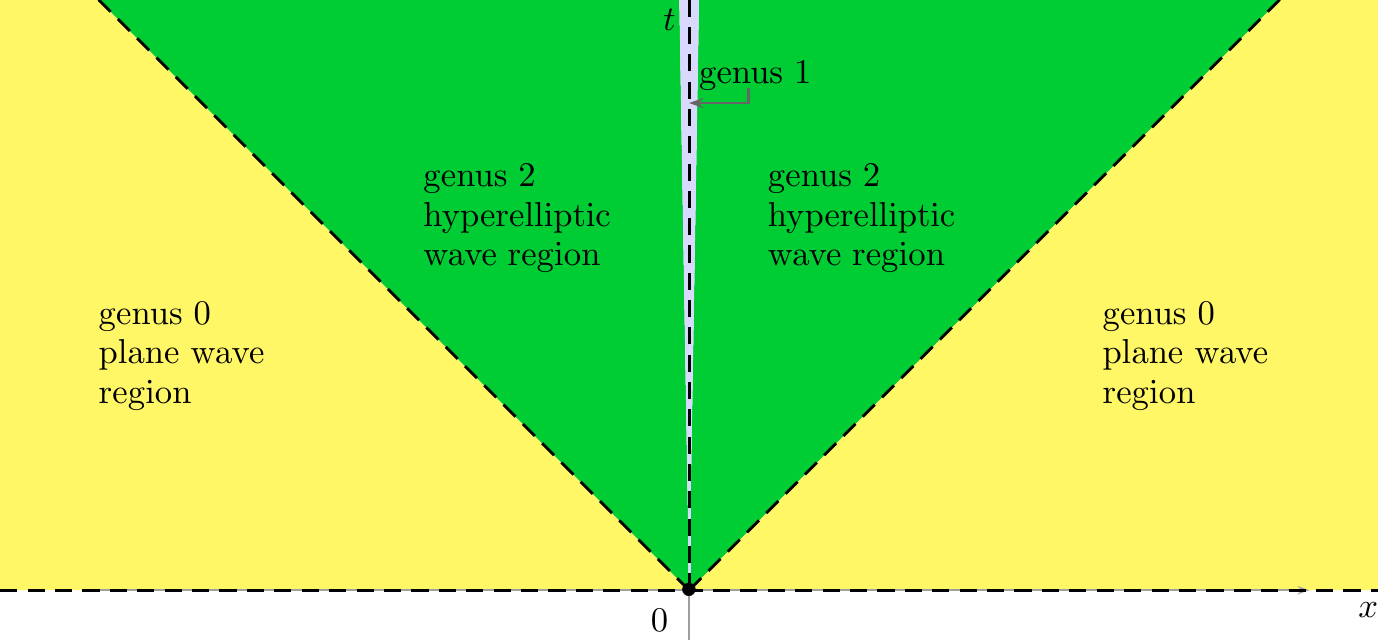}
\caption{2nd scenario (symmetric shock case): $\frac{A}{B}=1$} 
\label{fig:shock-scenario-2}
\end{figure}
%---------------------------------------%

%---------------------------------------------------------%
%:s.5.3.3
%---------------------------------------------------------%
\subsubsection{3rd Scenario}  \label{sec:scenario-3}

We are still in Case 1. As $\xi$ decreases from $+\infty$, the $g$-function $g_2$ is appropriate as long as $\xi>\xi_{E_1}$. Then, a new $g$-function is required whose derivative $g'$ has the form \eqref{g-sce1} and thus the asymptotics can be computed as in \cite{BV07}*{Section 4}. This $g$-function remains appropriate until the two real zeros $\mu_1(\xi)$ and $\mu_2(\xi)$ of $g'$ merge, which happens for $\xi=\xi_{\mu}$. Finally, for $0<\xi<\xi_{\mu}$, a third $g$-function is to be considered with derivative of the form \eqref{g-xi}, that is,
\[
g'(\xi,k)=4\frac{(k-\mu(\xi))(k-\alpha(\xi))(k-\bar\alpha(\xi))(k-\beta(\xi))(k-\bar\beta(\xi))}{w(\xi,k)},
\]
with
\begin{equation} \label{w3}
w^2=(k-E_1)(k-\bar E_1)(k-E_2)(k-\bar E_2)(k-\alpha(\xi))(k-\bar\alpha(\xi))(k-\beta(\xi))(k-\bar\beta(\xi)).
\end{equation}
This leads to a genus $3$ asymptotic formula, expressed in terms of hyperelliptic functions attached to the Riemann surface defined by \eqref{w3}. All details are given in \cite{BLS20b}. Results on the asymptotics in the transition zone near $\xi=0$ where the Riemann surface degenerates from genus $3$ to genus $1$ can be found in \cite{BLS20c}.

%-------------------%
%:table 5.3
%-------------------%
\begin{table}[ht]
\begin{tabular}{|c|c|c|c|c|c|}
\hline
$\xi=0$&$0<\xi<\xi_{\mu}$&$\xi=\xi_{\mu}$&$\xi_{\mu}<\xi<\xi_{E_1}$&$\xi=\xi_{E_1}$&$\xi>\xi_{E_1}$\\
\hline
genus $1$&genus $3$&&genus $2$&&genus $0$\\ 
\hline
$\alpha$, $\beta$ merge&&the real zeros&&the infinite branch&\\ 
&&$\mu_1$, $\mu_2$ merge&&hits $E_1$, $\bar E_1$&\\ 
\hline
\multicolumn{6}{c}{}\\
\end{tabular}
\caption{3rd scenario: $1<\frac{A}{B}<\frac{2}{7}(2+3\sqrt{2})$.}\label{3rdscenariotable}
\end{table}
%-------------------%

%---------------------------------------%
%:fig 5.10
%---------------------------------------%
\begin{figure}[ht]
 \subcaptionbox{$\xi>\xi_{E_1}$}{
 \begin{overpic}[width=.28\textwidth]{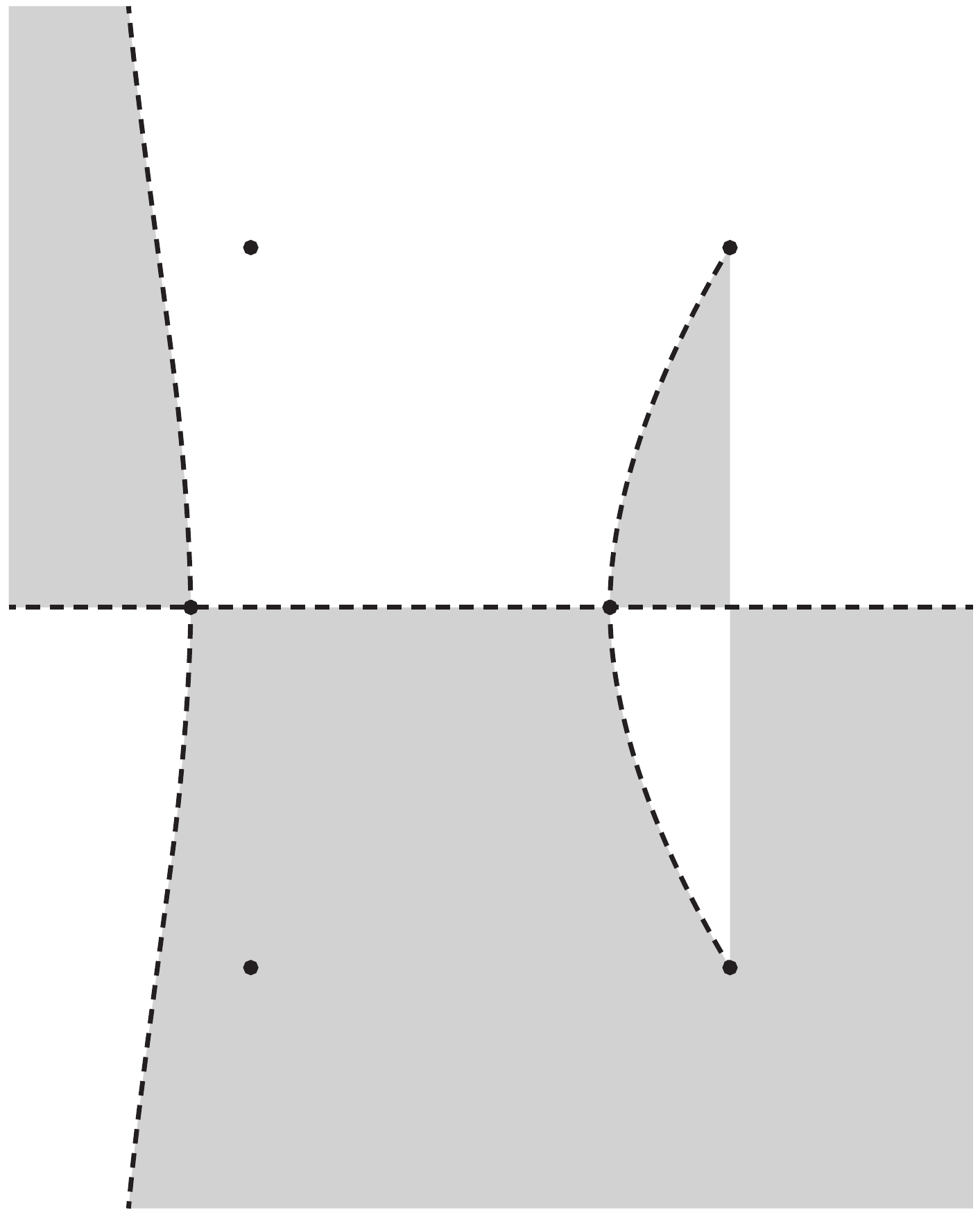}
       \put(40,87){\small $\Im g>0$}
       \put(40,8){\small $\Im g<0$}
     \put(22,77){\small $E_1$}
      \put(22,18){\small $\bar{E}_1$}
      \put(62,77){\small $E_2$}
      \put(62,18){\small $\bar{E}_2$}
      \put(17,53){\small $\mu_1$}
      \put(42,53){\small $\mu_2$}
\end{overpic}}
\hspace{.5cm}
 \subcaptionbox{$\xi=\xi_{E_1}$}{
\begin{overpic}[width=.28\textwidth]{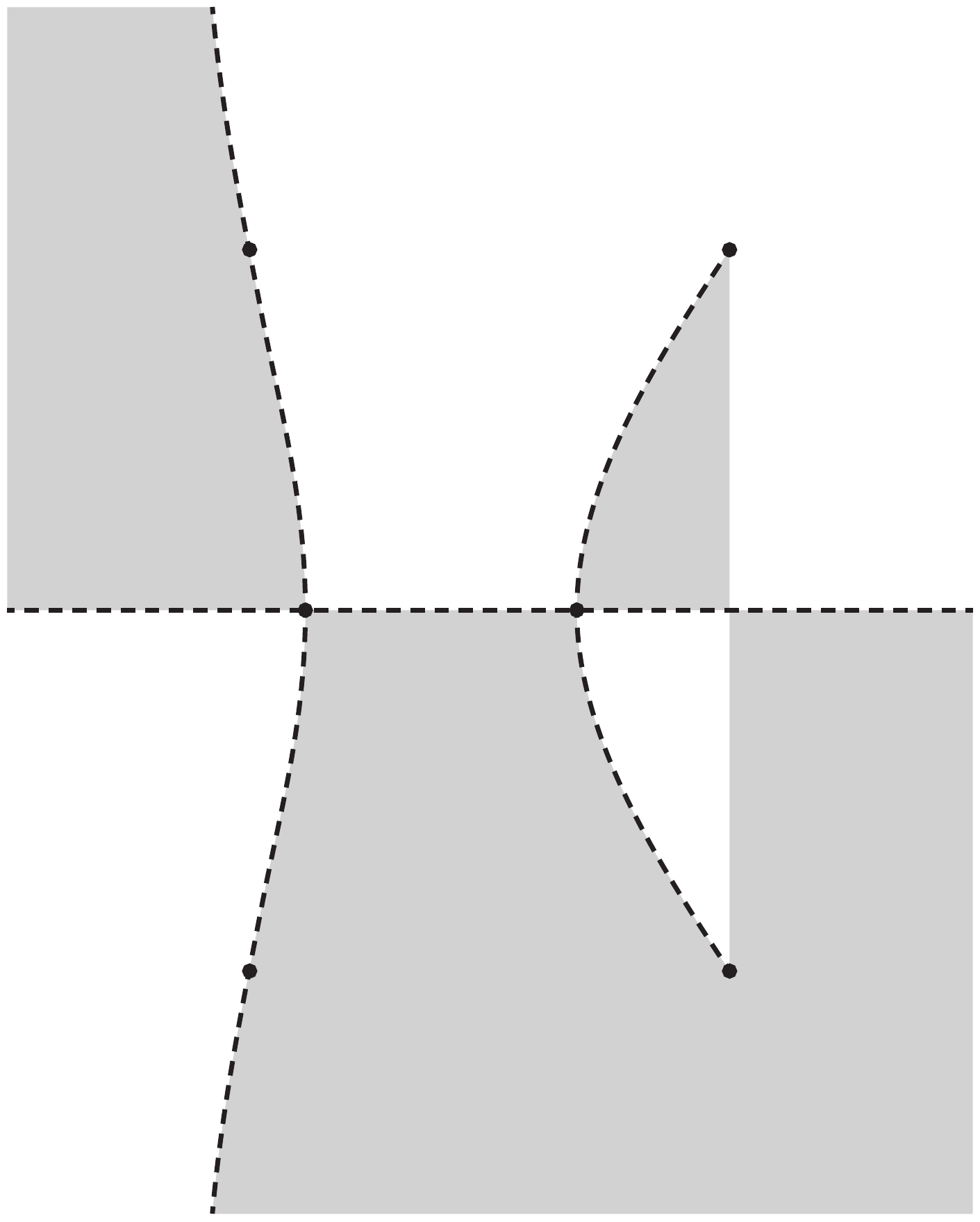}
       \put(40,87){\small $\Im g>0$}
       \put(40,8){\small $\Im g<0$}
     \put(11,77){\small $E_1$}
      \put(11,18){\small $\bar{E}_1$}
      \put(62,77){\small $E_2$}
      \put(62,18){\small $\bar{E}_2$}
      \put(17,53){\small $\mu_1$}
      \put(39,53){\small $\mu_2$}
\end{overpic}}
\hspace{.5cm}
 \subcaptionbox{$\xi_\mu<\xi<\xi_{E_1}$}{
\begin{overpic}[width=.28\textwidth]{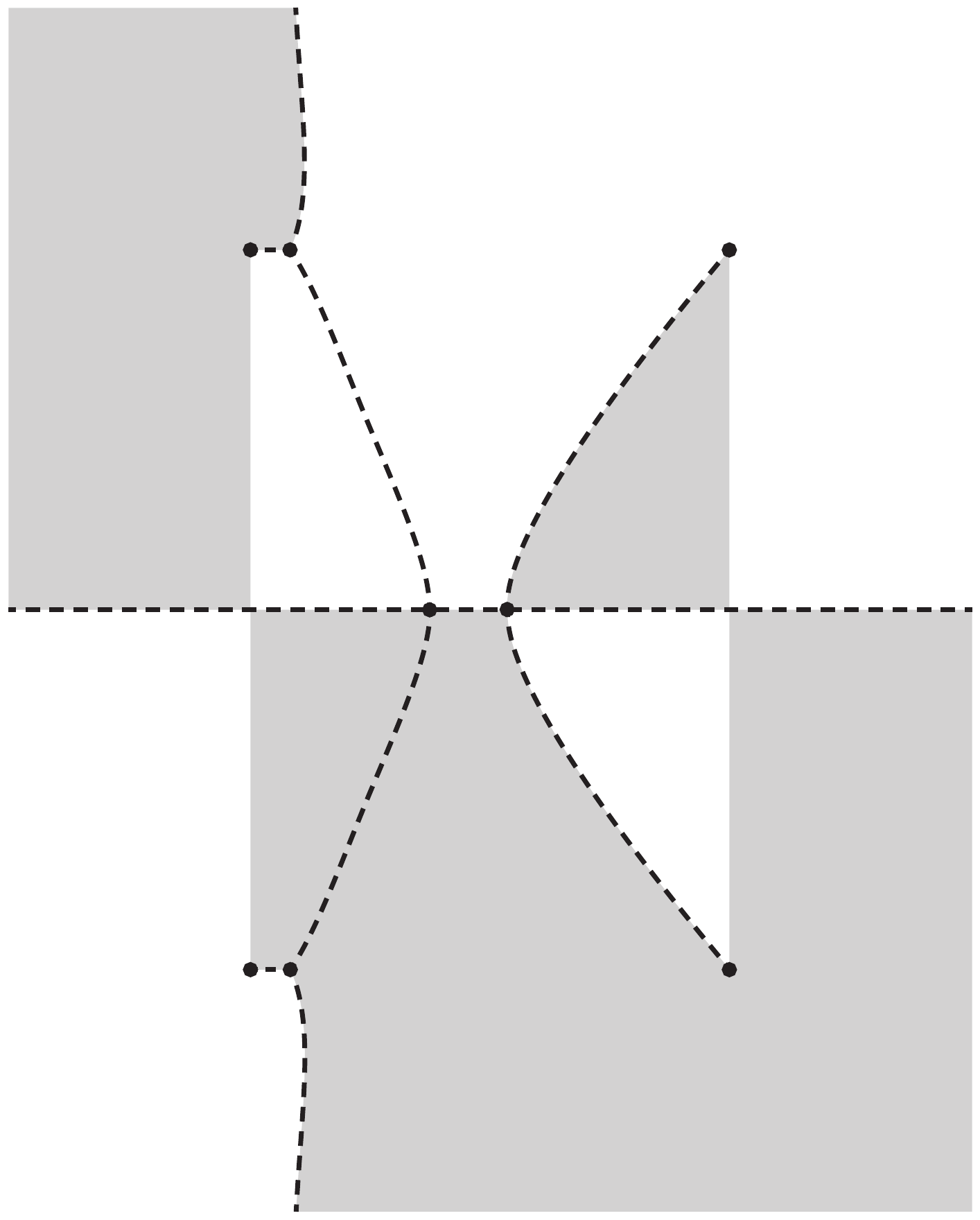}
        \put(40,87){\small $\Im g>0$}
       \put(40,8){\small $\Im g<0$}
    \put(11,77){\small $E_1$}
      \put(11,18){\small $\bar{E}_1$}
       \put(62,77){\small $E_2$}
      \put(62,18){\small $\bar{E}_2$}
      \put(27,53){\small $\mu_1$}
      \put(44,53){\small $\mu_2$}
   \put(26,78){\small $\alpha$}
      \put(26,19){\small $\bar{\alpha}$}
 \end{overpic}}
	\\ \vspace{.2cm}
 \subcaptionbox{$\xi=\xi_\mu$}{
\begin{overpic}[width=.28\textwidth]{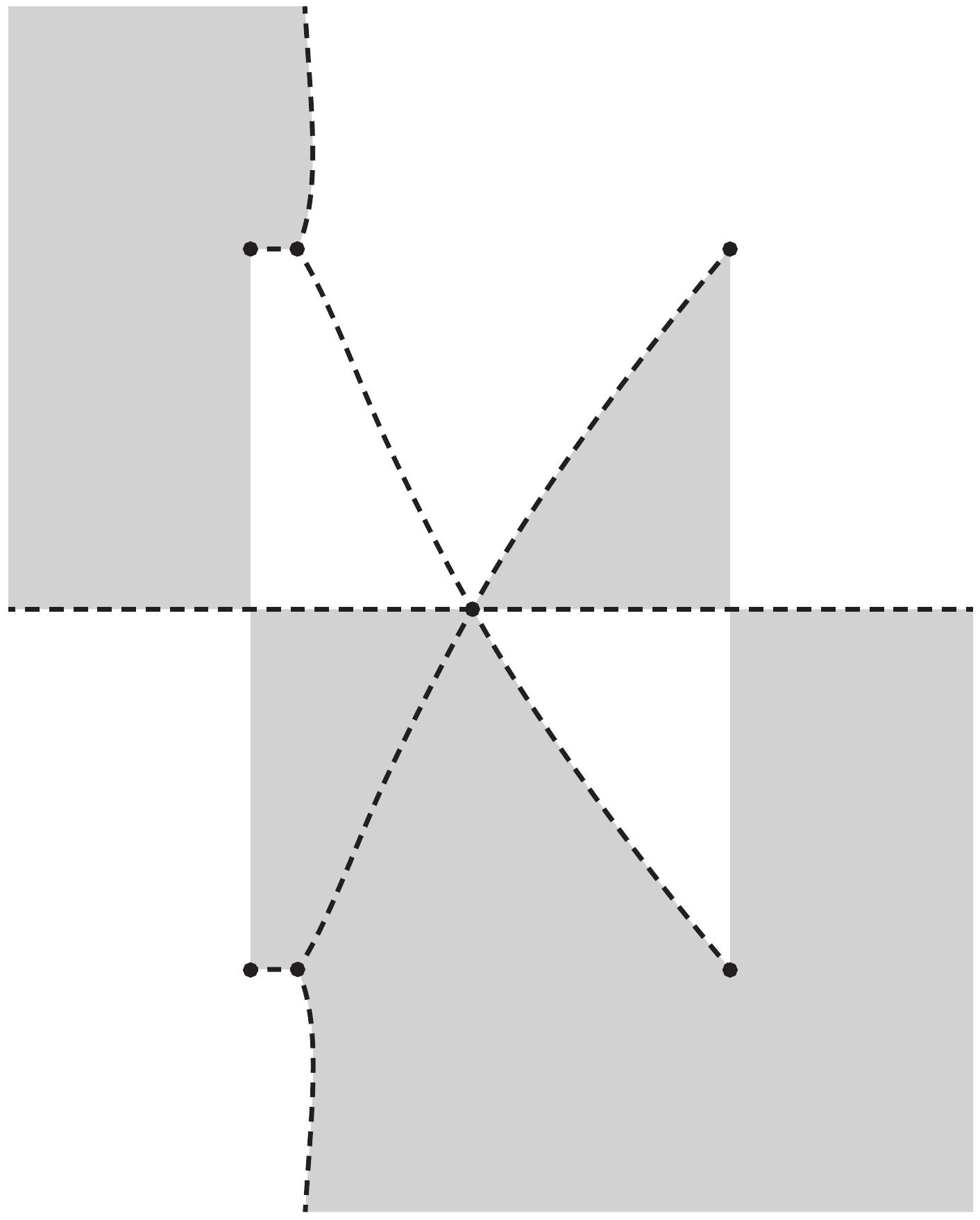}
        \put(40,87){\small $\Im g>0$}
       \put(40,8){\small $\Im g<0$}
     \put(11,77){\small $E_1$}
      \put(11,18){\small $\bar{E}_1$}
      \put(62,77){\small $E_2$}
      \put(62,18){\small $\bar{E}_2$}
      \put(43,53){\small $\mu_1=\mu_2$}
   \put(26,78){\small $\alpha$}
      \put(26,19){\small $\bar{\alpha}$}
\end{overpic}}
\hspace{.5cm}
 \subcaptionbox{$0 < \xi < \xi_\mu$}{
\begin{overpic}[width=.28\textwidth]{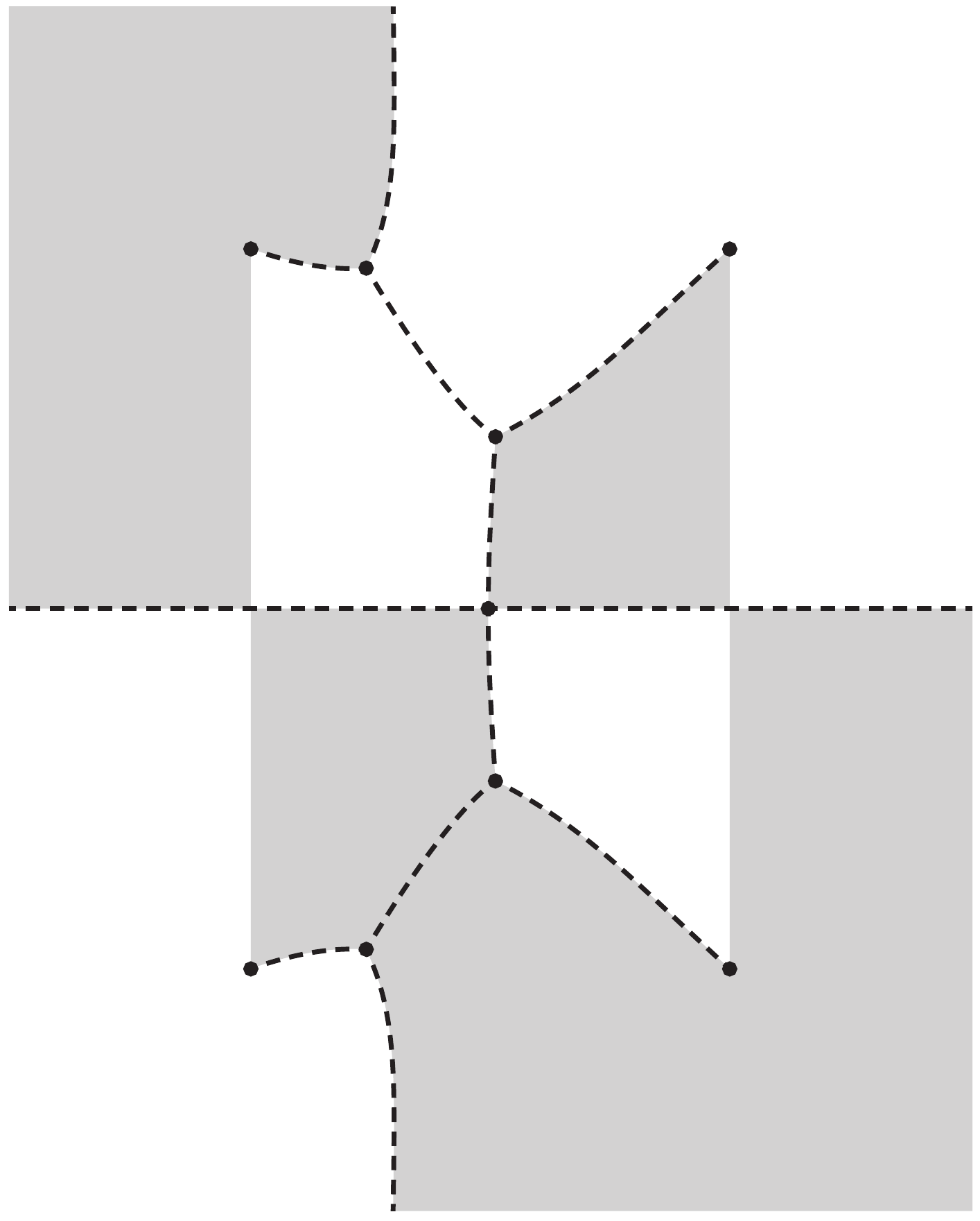}
       \put(40,87){\small $\Im g>0$}
       \put(40,8){\small $\Im g<0$}
     \put(11,77){\small $E_1$}
      \put(11,18){\small $\bar{E}_1$}
      \put(62,77){\small $E_2$}
      \put(62,18){\small $\bar{E}_2$}
      \put(35,52.5){\small $\mu$}
   \put(32,76){\small $\alpha$}
      \put(32,21){\small $\bar{\alpha}$}
   \put(42,60){\small $\beta$}
      \put(42,37){\small $\bar{\beta}$}
\end{overpic}}
\hspace{.5cm}
 \subcaptionbox{$\xi = 0$}{
\begin{overpic}[width=.28\textwidth]{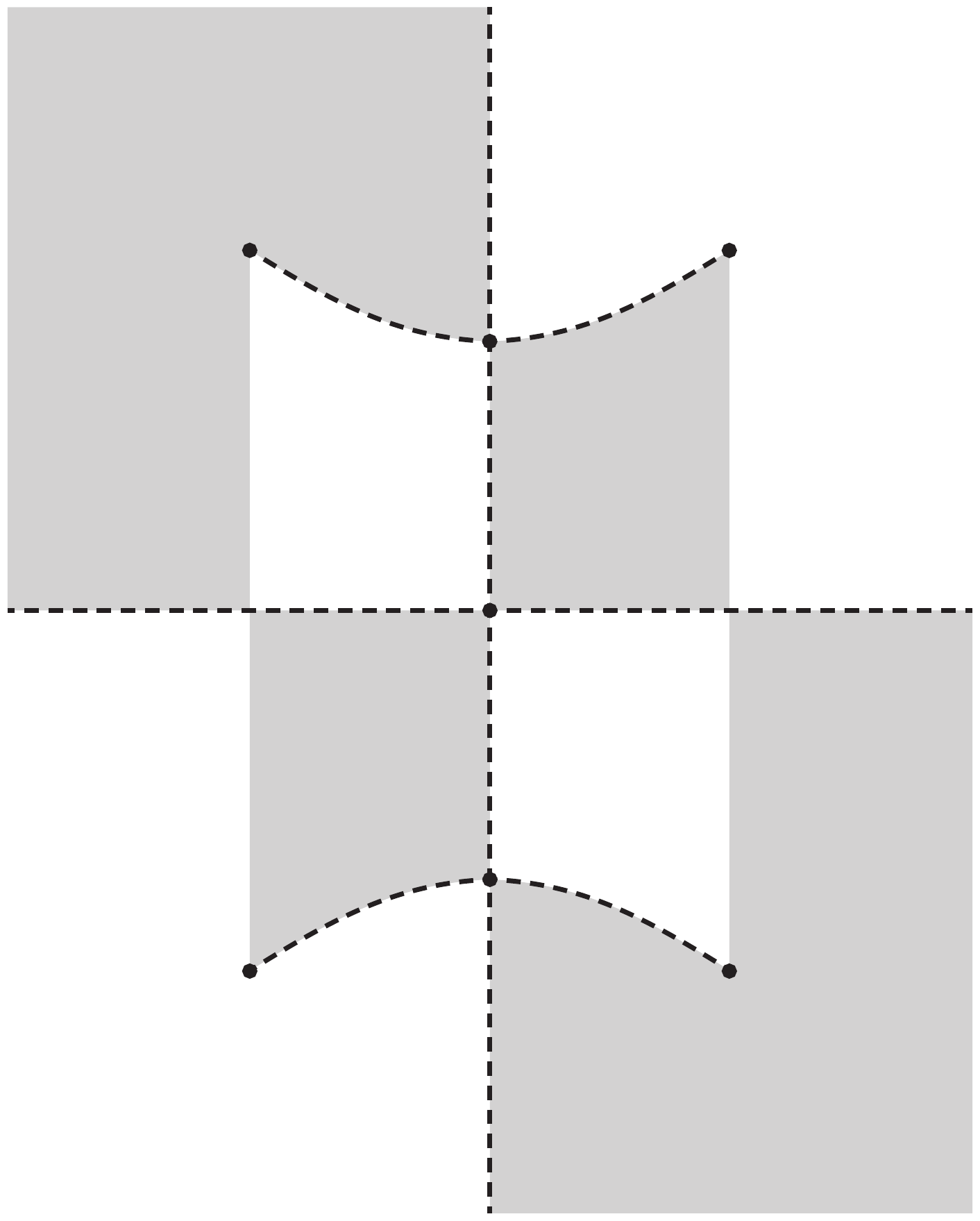}
        \put(50,87){\small $\Im g>0$}
       \put(50,8){\small $\Im g<0$}
     \put(11,77){\small $E_1$}
      \put(11,18){\small $\bar{E}_1$}
      \put(62,77){\small $E_2$}
      \put(62,18){\small $\bar{E}_2$}
      \put(35,52.5){\small $\mu$}
   \put(41,68){\small $\alpha = \beta$}
      \put(41,29.5){\small $\bar{\alpha}= \bar{\beta}$}
\end{overpic}}
\caption{Signature tables of $\Im g(\xi,k)$ corresponding to the six columns of Table \ref{3rdscenariotable} of the 3rd scenario. Each figure shows the zero level set $\Im g=0$ (dashed) and the regions where $\Im g<0$ (shaded) and $\Im g>0$ (white) in the complex $k$-plane for $\xi$ as indicated.}
\label{fig:3rdscenarioimg}
\end{figure}
%---------------------------------------%

%---------------------------------------%
%:fig 5.11
%---------------------------------------%
\begin{figure}[ht]
\centering\includegraphics[scale=.55]{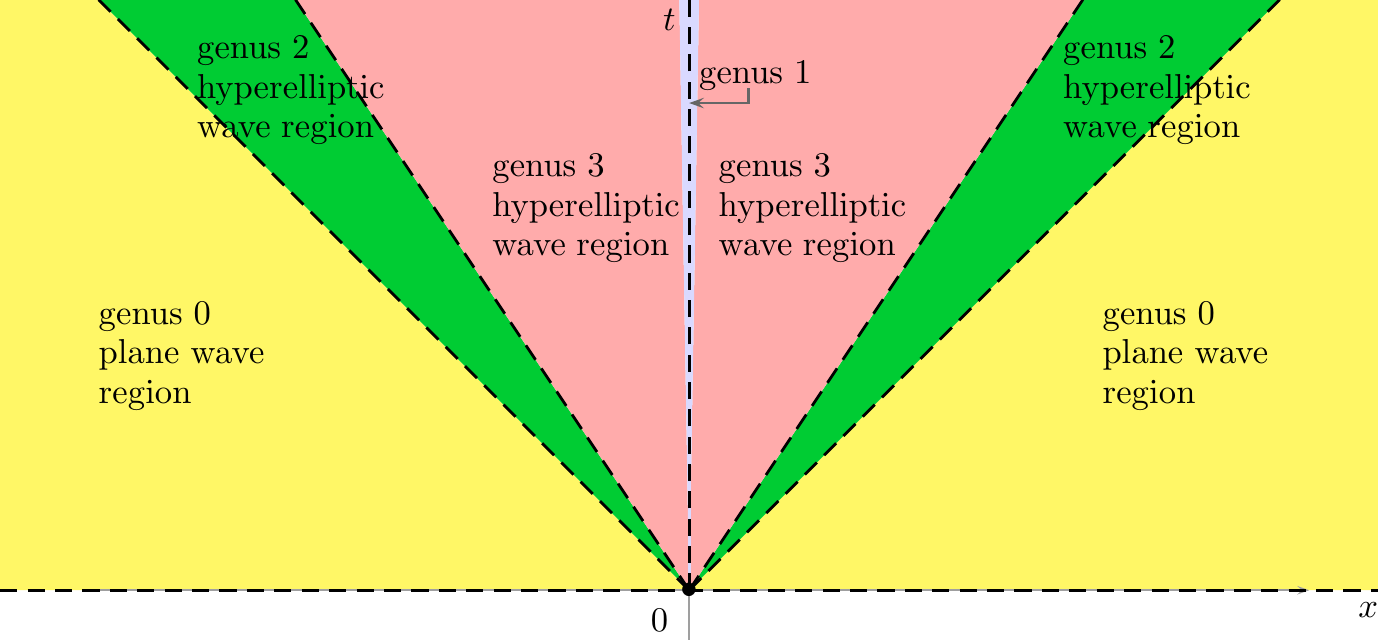}
\caption{3rd scenario (symmetric shock case): $1<\frac{A}{B}<\frac{2}{7}(2+3\sqrt{2})$} 
\label{fig:shock-scenario-3}
\end{figure}
%---------------------------------------%

%---------------------------------------------------------%
%:s.5.3.4
%---------------------------------------------------------%
\subsubsection{4th Scenario}  \label{sec:scenario-4}

%-------------------%
%:table 5.4
%-------------------%
\begin{table}[ht]
\begin{tabular}{|c|c|c|c|}
\hline
$\xi=0$&$0<\xi<\xi_{E_1}$&$\xi=\xi_{E_1}=\xi_{\merge}$&$\xi>\xi_{\merge}$\\
\hline
genus $1$&genus $3$&&genus $0$\\ 
\hline
$\alpha$, $\beta$ merge&&the infinite branch hits $E_1$, $\bar E_1$&\\ 
&&and the real zeros $\mu_1$, $\mu_2$ merge&\\ 
\hline
\multicolumn{4}{c}{}\\
\end{tabular}
\caption{4th scenario: $\frac{A}{B}=\frac{2}{7}(2+3\sqrt{2})$.}\label{4thscenariotable}
\end{table}
%-------------------%

We are in Case 3, where $\xi_{E_1}=\xi_{\merge}$. This is a limiting case of the third scenario when $\xi_{\mu}=\xi_{E_1}$. Thus, the genus $2$ sector collapses and the genus $3$ sector $0<\xi<\xi_{\mu}$ becomes directly adjacent to the plane wave sector.

%---------------------------------------%
%:fig 5.12
%---------------------------------------%
\begin{figure}[ht]
 \subcaptionbox{$\xi > \xi_{\merge}$}{
 \begin{overpic}[width=.28\textwidth]{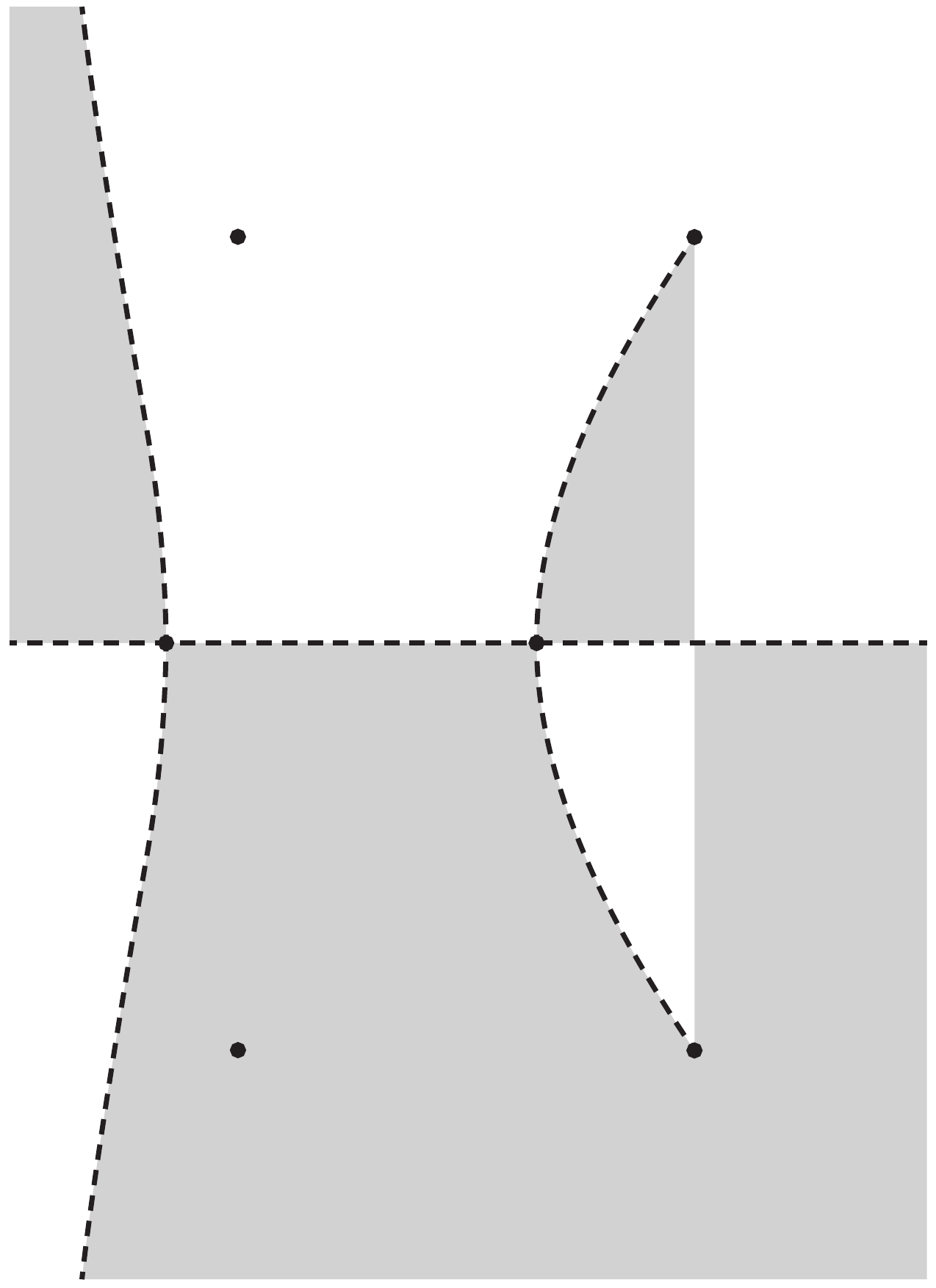}
       \put(30,90){\small $\Im g>0$}
       \put(30,8){\small $\Im g<0$}
     \put(11,80){\small $E_1$}
      \put(11,17){\small $\bar{E}_1$}
      \put(55.5,80){\small $E_2$}
      \put(55.5,17){\small $\bar{E}_2$}
      \put(14.5,52.5){\small $\mu_1$}
      \put(34.5,52.5){\small $\mu_2$}
\end{overpic}}
\hspace{.5cm}
 \subcaptionbox{$\xi = \xi_{E_1} = \xi_\merge$}{
\begin{overpic}[width=.28\textwidth]{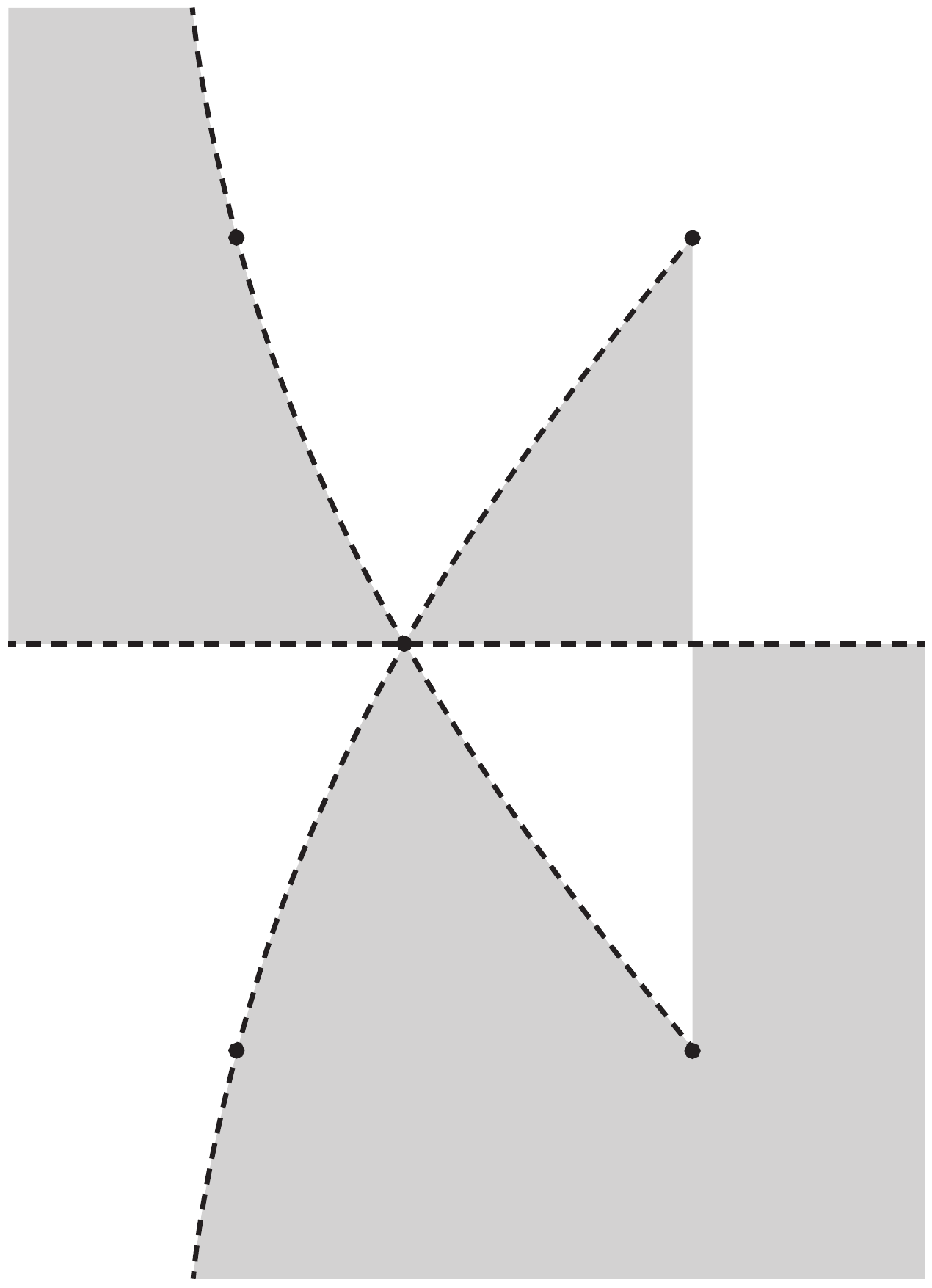}
       \put(30,90){\small $\Im g>0$}
       \put(30,8){\small $\Im g<0$}
     \put(11,80){\small $E_1$}
      \put(11,17){\small $\bar{E}_1$}
      \put(55.5,80){\small $E_2$}
      \put(55.5,17){\small $\bar{E}_2$}
      \put(34.5,52.5){\small $\mu_1 = \mu_2$}
\end{overpic}}
	\\ \vspace{.2cm}
 \subcaptionbox{$0 < \xi < \xi_{E_1}$}{
\begin{overpic}[width=.28\textwidth]{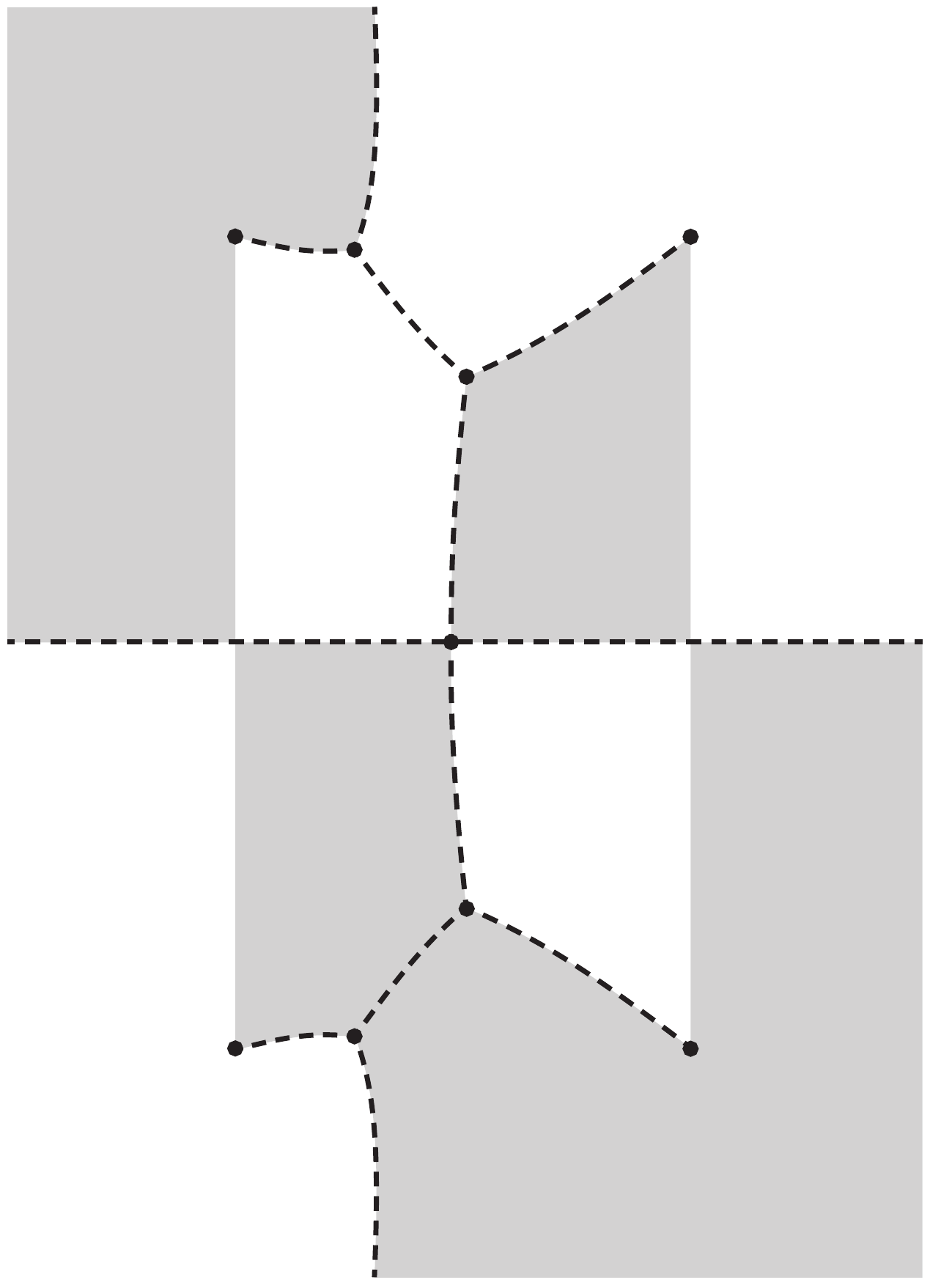}
      \put(40,90){\small $\Im g>0$}
       \put(40,8){\small $\Im g<0$}
     \put(11,80){\small $E_1$}
      \put(11,17){\small $\bar{E}_1$}
      \put(55.5,80){\small $E_2$}
      \put(55.5,17){\small $\bar{E}_2$}
      \put(36.5,52.5){\small $\mu$}
   \put(29.5,80){\small $\alpha$}
      \put(29.5,18){\small $\bar{\alpha}$}
   \put(37,67){\small $\beta$}
      \put(37,31){\small $\bar{\beta}$}
 \end{overpic}}
\hspace{.5cm}
 \subcaptionbox{$\xi = 0$}{
\begin{overpic}[width=.28\textwidth]{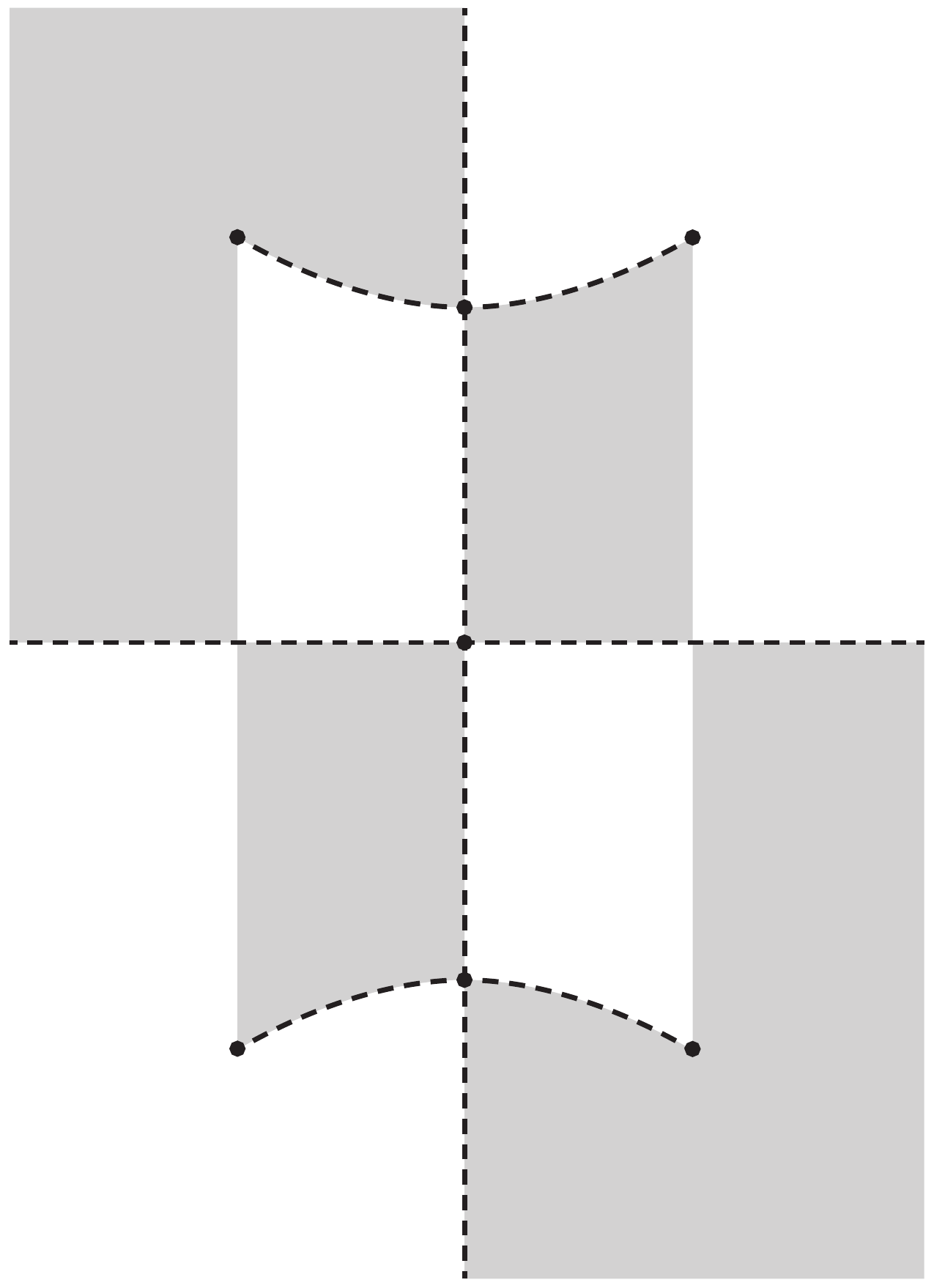}
      \put(45,90){\small $\Im g>0$}
       \put(45,8){\small $\Im g<0$}
     \put(11,80){\small $E_1$}
      \put(11,17){\small $\bar{E}_1$}
      \put(55.5,80){\small $E_2$}
      \put(55.5,17){\small $\bar{E}_2$}
      \put(37.5,52.5){\small $\mu$}
         \put(37,72){\small $\alpha = \beta$}
      \put(37,25){\small $\bar{\alpha}= \bar{\beta}$}
\end{overpic}}
\caption{Signature tables of $\Im g(\xi, k)$ corresponding to the four columns of Table \ref{4thscenariotable} of the 4th scenario. Each figure shows the zero level set $\Im g=0$ (dashed) and the regions where $\Im g<0$ (shaded) and $\Im g>0$ (white) in the complex $k$-plane for $\xi$ as indicated.}
\label{fig:4thscenarioimg}
\end{figure}
%---------------------------------------%

%---------------------------------------%
%:fig 5.13
%---------------------------------------%
\begin{figure}[ht]
\centering\includegraphics[scale=.55]{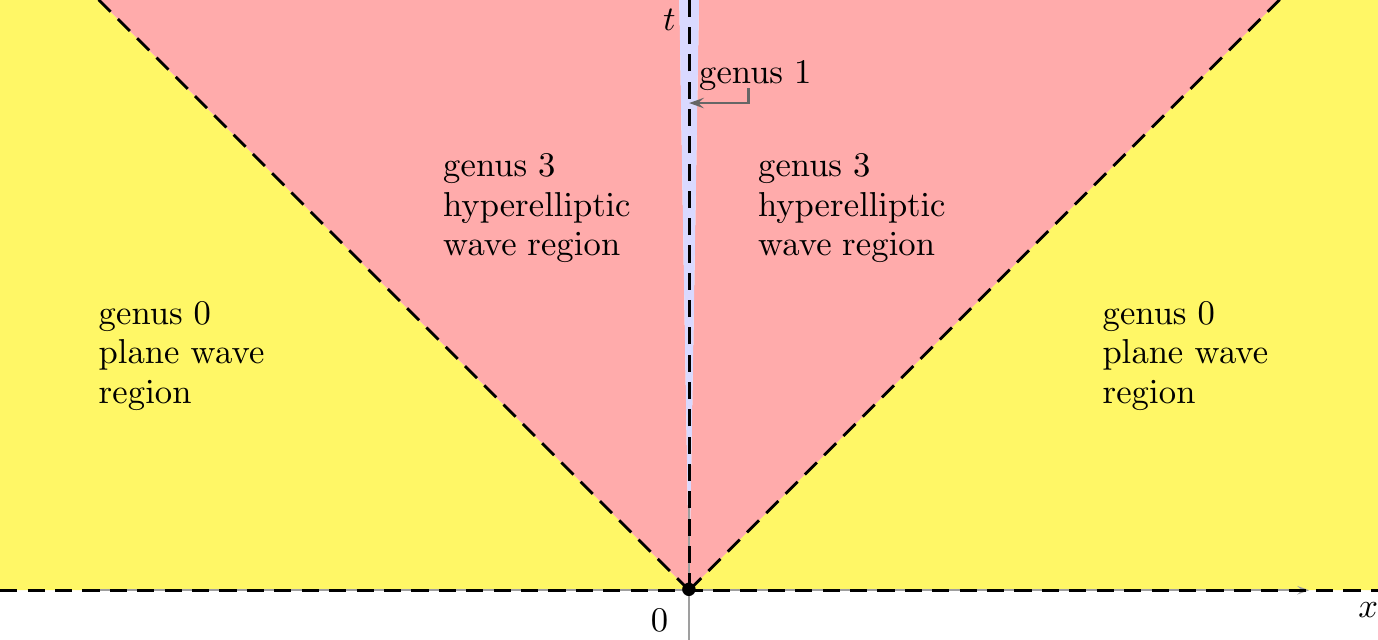}
\caption{4th scenario (symmetric shock case): $\frac{A}{B}=\frac{2}{7}(2+3\sqrt{2})$} 
\label{fig:shock-scenario-4}
\end{figure}
%---------------------------------------%

%---------------------------------------------------------%
%:s.5.3.5
%---------------------------------------------------------%
\subsubsection{5th Scenario}  \label{sec:scenario-5}

%-------------------%
%:table 5.5
%-------------------%
\begin{table}[ht]
\begin{tabular}{|c|c|c|c|c|c|}
\hline
$\xi=0$&$0<\xi<\xi_{E_1}^{\new}$&$\xi=\xi_{E_1}^{\new}$&$\xi_{E_1}^{\new}<\xi<\xi_{\merge}$&$\xi=\xi_{\merge}$&$\xi>\xi_{\merge}$\\
\hline
genus $1$&genus $3$&&genus $1$&&genus $0$\\ 
\hline
$\alpha$, $\beta$ merge&&the infinite branch&&the real zeros&\\ 
&&hits $E_1$, $\bar E_1$&&$\mu_1$, $\mu_2$ merge&\\ 
\hline
\multicolumn{6}{c}{}\\
\end{tabular}
\caption{5th scenario: $\frac{A}{B}>\frac{2}{7}(2+3\sqrt{2})$.}\label{5thscenariotable}
\end{table}
%---------------------------------------%

%---------------------------------------%
%:fig 5.14
%---------------------------------------%
\begin{figure}[ht]
 \subcaptionbox{$\xi > \xi_{\merge}$}{
 \begin{overpic}[width=.28\textwidth]{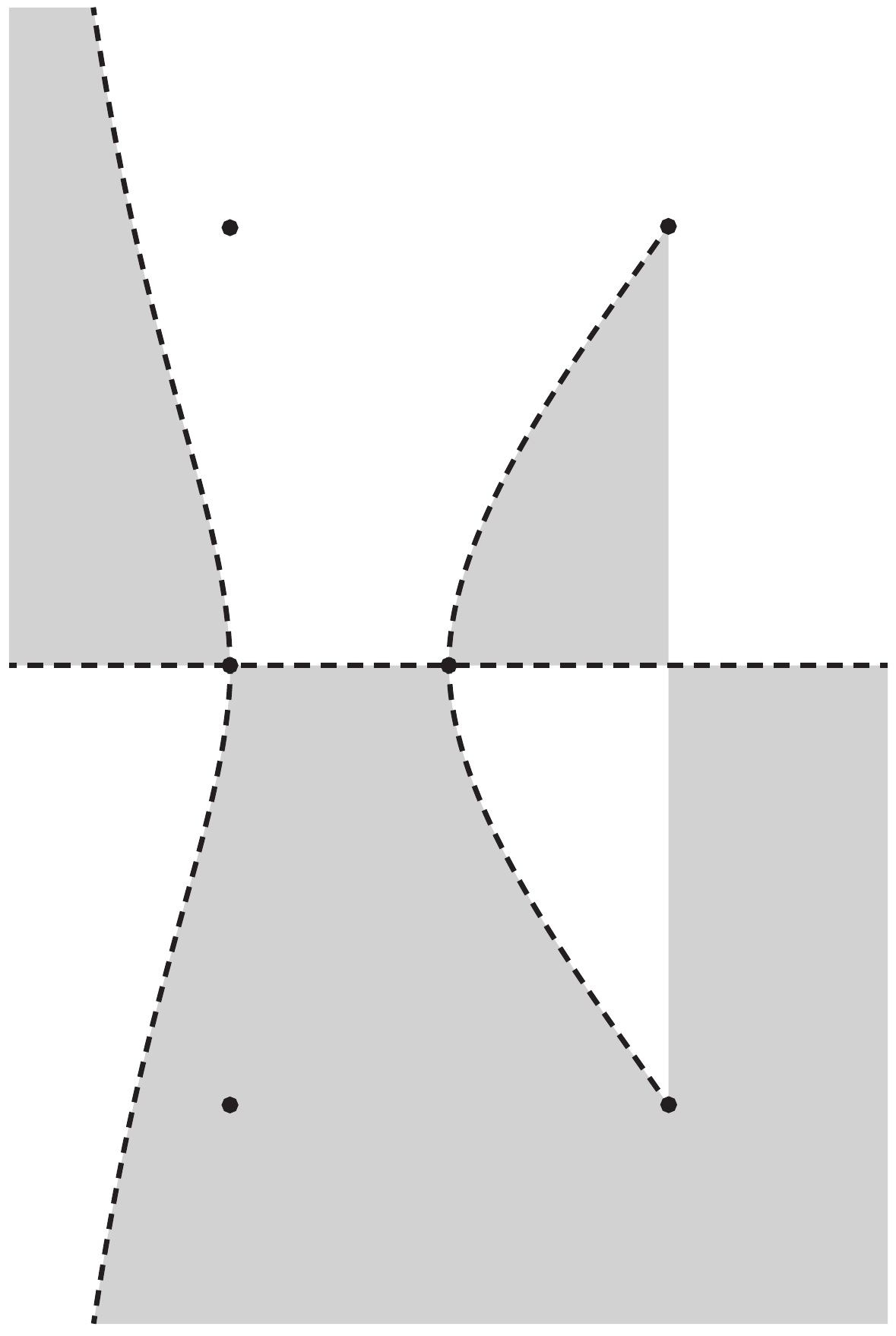}
       \put(30,90){\small $\Im g>0$}
       \put(30,7){\small $\Im g<0$}
     \put(18.5,81.5){\small $E_1$}
      \put(18.5,15.5){\small $\bar{E}_1$}
      \put(51.5,81.5){\small $E_2$}
      \put(51.5,15.5){\small $\bar{E}_2$}
      \put(11,52.5){\small $\mu_1$}
      \put(35,52.5){\small $\mu_2$}
\end{overpic}}
\hspace{.5cm}
 \subcaptionbox{$\xi = \xi_{\merge}$}{
\begin{overpic}[width=.28\textwidth]{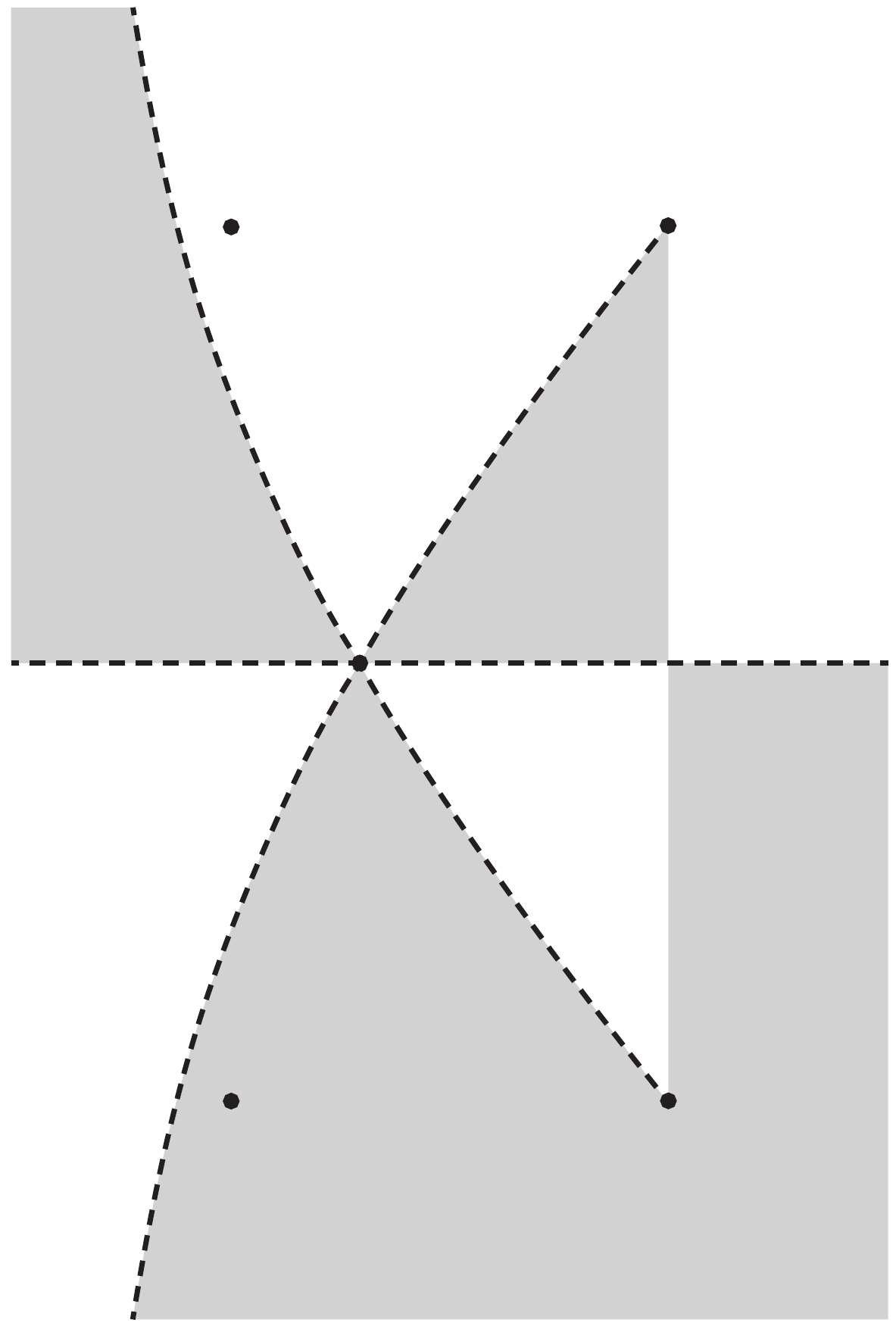}
      \put(30,90){\small $\Im g>0$}
       \put(30,7){\small $\Im g<0$}
     \put(18.5,81.5){\small $E_1$}
      \put(18.5,15.5){\small $\bar{E}_1$}
      \put(51.5,81.5){\small $E_2$}
      \put(51.5,15.5){\small $\bar{E}_2$}
      \put(30.5,52.5){\small $\mu_1=\mu_2$}
\end{overpic}}
\hspace{.5cm}
 \subcaptionbox{$\xi_{E_1}^\new < \xi < \xi_{\merge}$}{
\begin{overpic}[width=.28\textwidth]{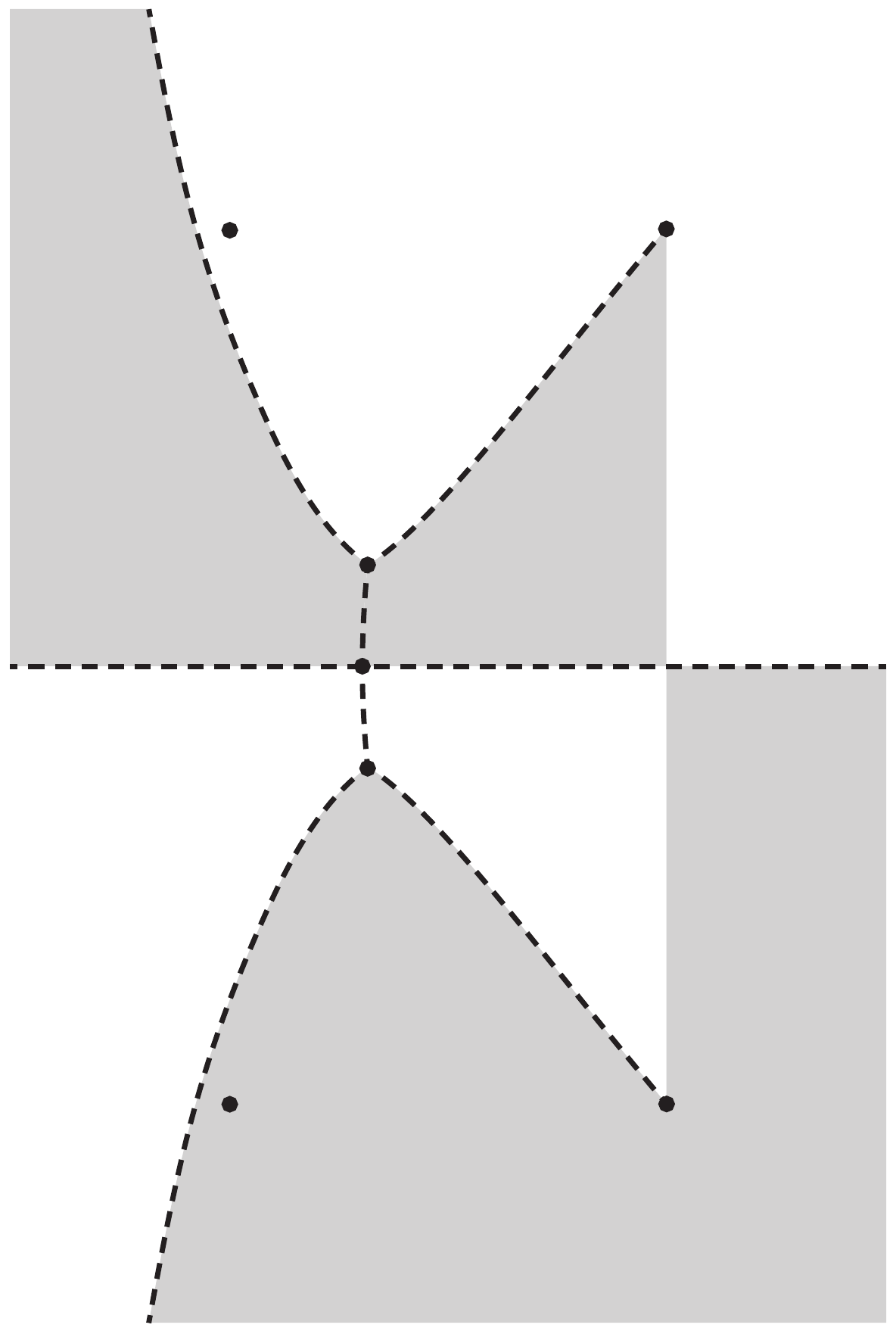}
       \put(30,90){\small $\Im g>0$}
       \put(30,7){\small $\Im g<0$}
     \put(18.5,81.5){\small $E_1$}
      \put(18.5,15.5){\small $\bar{E}_1$}
      \put(51.5,81.5){\small $E_2$}
      \put(51.5,15.5){\small $\bar{E}_2$}
      \put(28.5,52){\small $\mu$}
  \put(26.5,60.5){\small $\beta$}
      \put(26,36){\small $\bar{\beta}$}
 \end{overpic}}
	\\ \vspace{.2cm}
 \subcaptionbox{$\xi = \xi_{E_1}^\new$}{
\begin{overpic}[width=.28\textwidth]{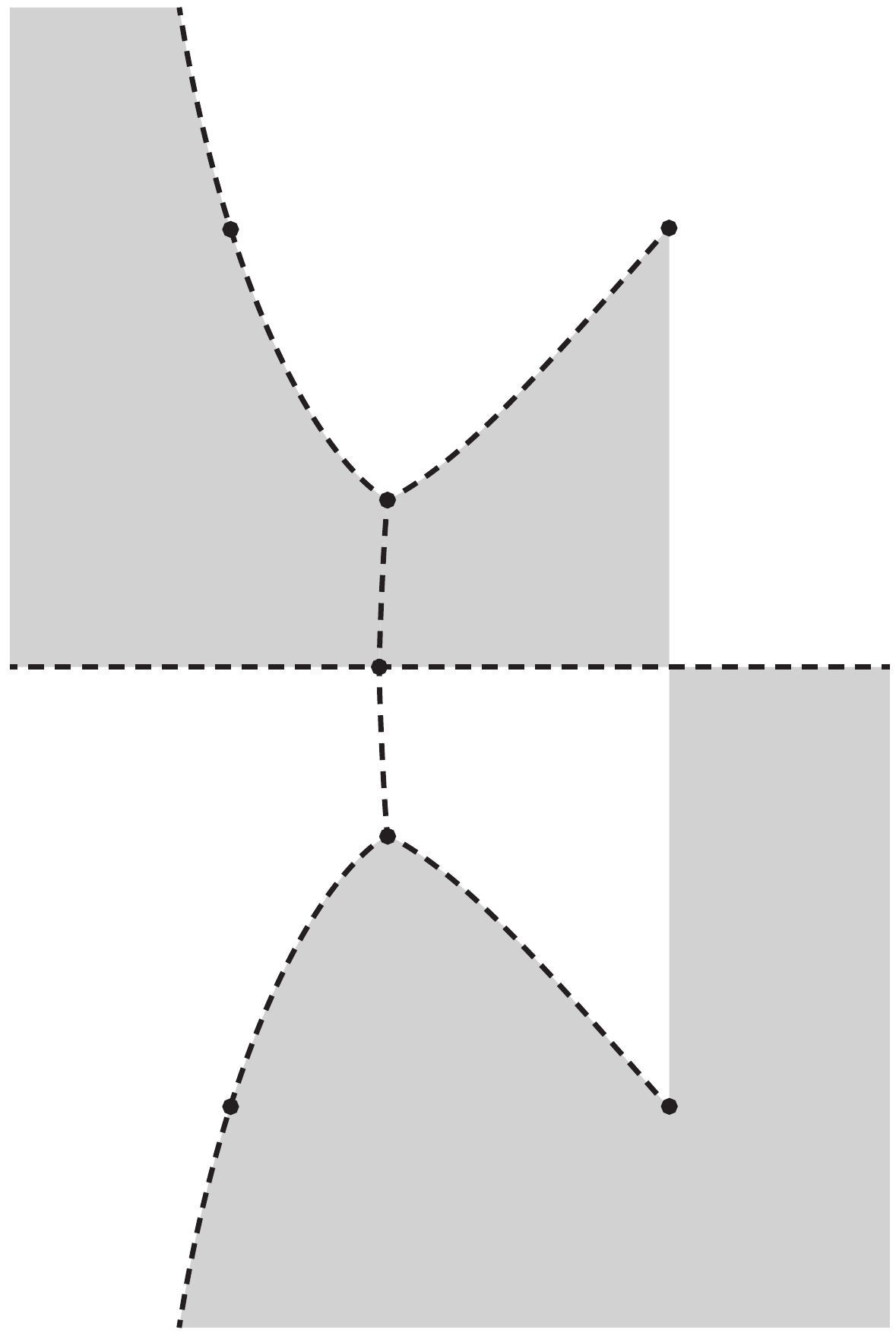}
       \put(30,90){\small $\Im g>0$}
       \put(30,7){\small $\Im g<0$}
     \put(10,81.5){\small $E_1$}
      \put(10,15.5){\small $\bar{E}_1$}
      \put(51.5,81.5){\small $E_2$}
      \put(51.5,15.5){\small $\bar{E}_2$}
      \put(30,52){\small $\mu$}
  \put(28,65.5){\small $\beta$}
      \put(27,31){\small $\bar{\beta}$}
\end{overpic}}
\hspace{.5cm}
 \subcaptionbox{$0 < \xi < \xi_{E_1}^\new$}{
\begin{overpic}[width=.28\textwidth]{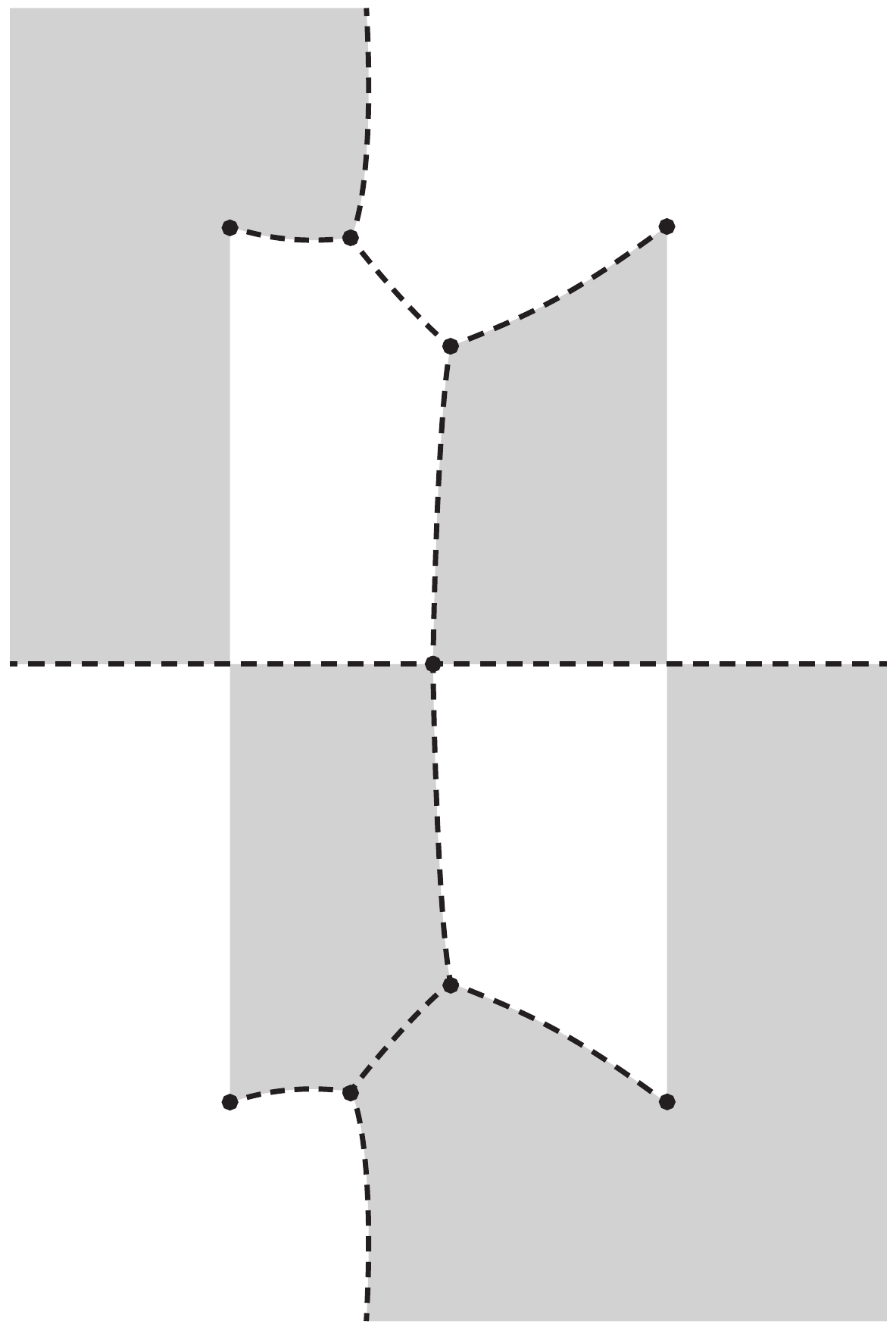}
      \put(40,90){\small $\Im g>0$}
       \put(40,7){\small $\Im g<0$}
     \put(10,81.5){\small $E_1$}
      \put(10,15.5){\small $\bar{E}_1$}
      \put(51.5,81.5){\small $E_2$}
      \put(51.5,15.5){\small $\bar{E}_2$}
      \put(34,52){\small $\mu$}
   \put(28,82){\small $\alpha$}
      \put(28,16){\small $\bar{\alpha}$}
   \put(35,70){\small $\beta$}
      \put(35,27){\small $\bar{\beta}$}
\end{overpic}}
\hspace{.5cm}
 \subcaptionbox{$\xi = 0$}{
\begin{overpic}[width=.28\textwidth]{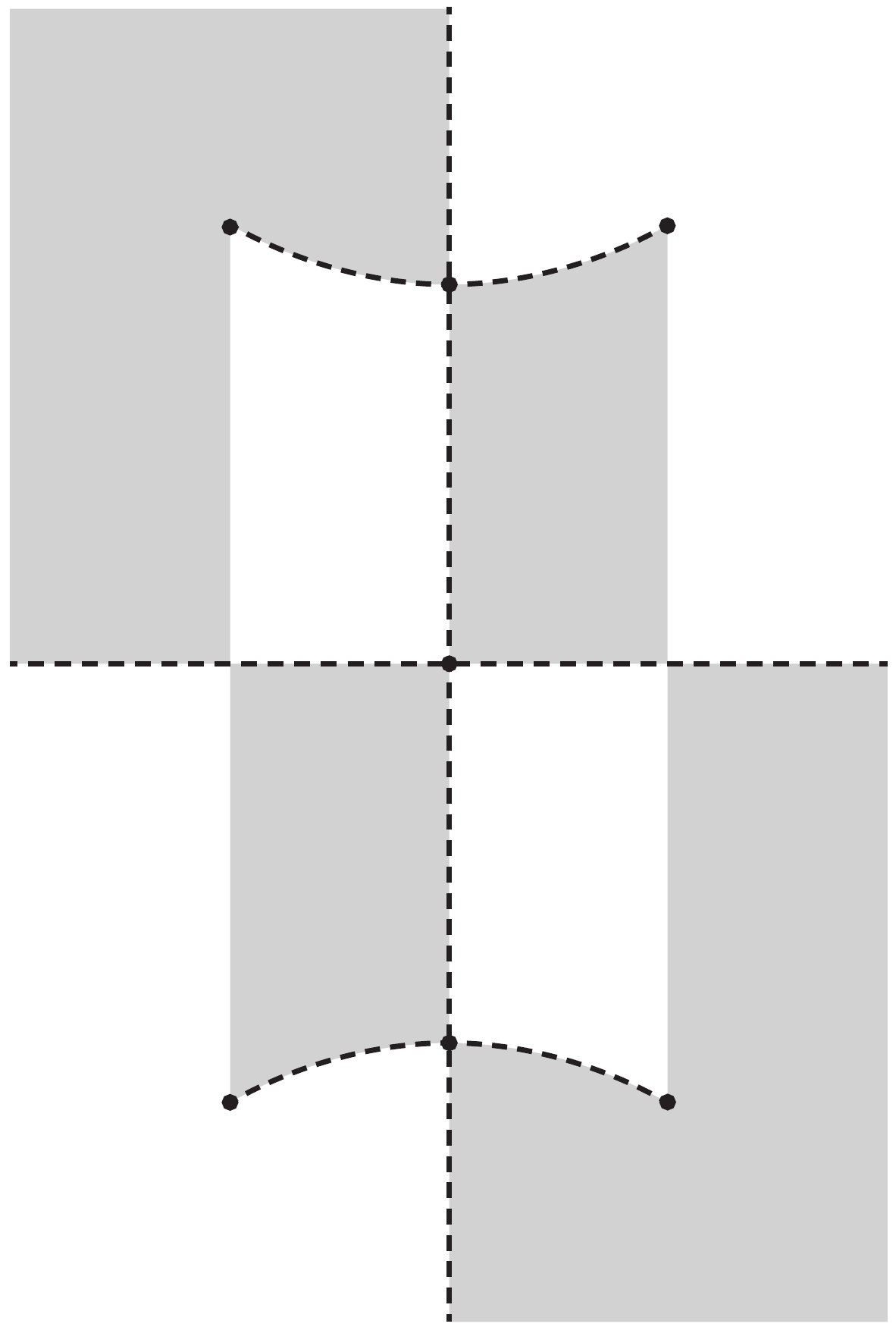}
       \put(42,90){\small $\Im g>0$}
       \put(42,7){\small $\Im g<0$}
     \put(10,81.5){\small $E_1$}
      \put(10,15.5){\small $\bar{E}_1$}
      \put(51.5,81.5){\small $E_2$}
      \put(51.5,15.5){\small $\bar{E}_2$}
      \put(35,52.5){\small $\mu$}
   \put(35,75){\small $\alpha = \beta$}
      \put(35,23){\small $\bar{\alpha}= \bar{\beta}$}
\end{overpic}}
\caption{Signature tables of $\Im g(\xi,k)$ corresponding to the six columns of Table \ref{5thscenariotable} of the 5th scenario. Each figure shows the zero level set $\Im g=0$ (dashed) and the regions where $\Im g<0$ (shaded) and $\Im g>0$ (white) in the complex $k$-plane for $\xi$ as indicated.}
\label{fig:5thscenarioimg}
\end{figure}
%---------------------------------------%

%---------------------------------------%
%:fig 5.15
%---------------------------------------%
\begin{figure}[ht]
\centering\includegraphics[scale=.55]{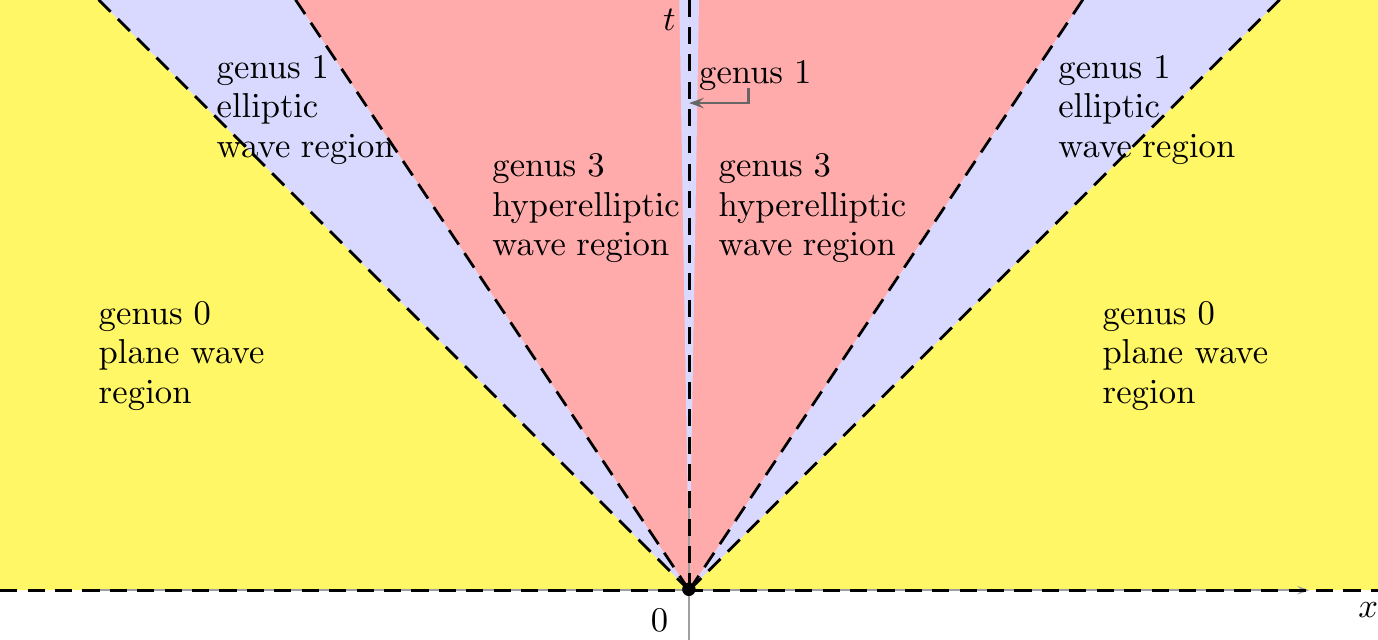}
\caption{5th scenario (symmetric shock case): $\frac{A}{B}>\frac{2}{7}(2+3\sqrt{2})$} 
\label{fig:shock-scenario-5}
\end{figure}
%---------------------------------------%

We are in Case 2. As $\xi$ goes down from $+\infty$, the $g$-function $g_2$ is appropriate until the two real zeros $\mu_1$ and $\mu_2$ of $g_2'$ (see \eqref{g2}) merge, that is, as long as $\xi>\xi_{\merge}$. Then, a new $g$-function $g\equiv g_2^{\new}$ is required whose derivative $g'$ has the same form as in the rarefaction case:
\begin{equation} \label{g-sce5}
g'(\xi,k)=4\frac{(k-\mu(\xi))(k-\beta(\xi))(k-\bar\beta(\xi))}{\sqrt{(k-E_2)(k-\bar E_2)(k-\beta(\xi))(k-\bar\beta(\xi))}},
\end{equation}
and thus the asymptotics is given in terms of elliptic functions, as in \cite{BKS11}. This new $g$-function remains appropriate until the infinite branch of $\Im g_2^{\new}=0$ hits $E_1$ and $\bar E_1$, which happens for $\xi=\xi_{E_1}^{\new}$. Finally, for $0<\xi<\xi_{E_1}^{\new}$, a third $g$-function is to be considered with derivative of the form \eqref{g-xi}:
\begin{equation}  \label{g-3}
g'(\xi,k)=4\frac{(k-\mu(\xi))(k-\alpha(\xi))(k-\bar\alpha(\xi))(k-\beta(\xi))(k-\bar\beta(\xi))}{w(\xi,k)}\,,
\end{equation}
where
\[
w^2=(k-E_1)(k-\bar E_1)(k-E_2)(k-\bar E_2)(k-\alpha(\xi))(k-\bar\alpha(\xi))(k-\beta(\xi))(k-\bar\beta(\xi)),
\]
and where $\alpha(\xi)$ emerges from $E_1$ at $\xi=\xi_{E_1}^{\new}$. As above, the  parameters $\mu(\xi)$, $\alpha(\xi)$, and $\beta(\xi)$ of this genus~$3$ sector are determined by the system of equations \eqref{dg-conditions}. The left end of the range characterized by \eqref{g-3} is $\xi=0$. As $\xi\to 0$, $\alpha(\xi)$ and $\beta(\xi)$ both approach a single point $\ii\alpha_0$ with $\alpha_0=\sqrt{A^2-B^2}$ whereas $\mu(\xi)\to 0$. At $\xi=0$ the $g$-function takes the genus $1$ form \eqref{g-0}:
\[
g'(0,k)=4\frac{k(k^2+\alpha_0^2)}{\sqrt{(k-E_1)(k-\bar E_1)(k-E_2)(k-\bar E_2)}}.
\]
%---------------------------------------------------------%
%:s.6 
%---------------------------------------------------------%
\section{Existence of a genus 2 sector}\label{implicitfunctiontheorem}

The first three scenarios in the symmetric shock case presented in the previous section include genus $2$ sectors. We arrived upon these sectors by studying the dependence of the $g$-function on $\xi$, and their existence is clearly confirmed by numerical computations. However, to actually prove that these sectors exist, it is necessary to show that the system of equations characterizing the parameters (see \eqref{dg-conditions-2}) has a solution. In this section, we show that these genus $2$ sectors actually exist by establishing solvability of this system. Even though we restrict attention to these particular sectors for definiteness, it seems clear that our approach can be used to show existence also of other similar higher-genus sectors. A key point in the approach is the introduction of an appropriate local diffeomorphism (see \eqref{varphidef}) which makes it possible to apply the implicit function theorem. 

Our approach can be compared with an approach of \cite{TV2010}, where a determinantal formula for the $g$-function is exploited to prove a similar result, and the approach developed in \cite{KMM2003}*{Section 7.2}, where a normal form method is used to show existence in a different way.

%---------------------------------------------------------%
%:s.6.1 
%---------------------------------------------------------%
\subsection{Genus 2 Riemann surface and associated $\BS{g}$-function}

We consider the Cauchy problem for NLS defined by \eqref{nlsic} and \eqref{qnot} for parameters satisfying \eqref{symmetriccase} and $\frac{A}{B}<\frac{2}{7}(2+3\sqrt{2})$. In particular, we have $E_1=-B+\ii A$ and $E_2=B+\ii A$ with $B>0$ and $A>0$. These assumptions correspond to the first three scenarios of the symmetric shock case. 

Let $\Sigma_\alpha$ be the genus $2$ hyperelliptic Riemann surface with branch points at $E_1$, $\bar{E}_1$, $E_2$, $\bar{E}_2$, $\alpha$, $\bar{\alpha}$ for some nonreal complex number $\alpha$ with $\Im\alpha>0$. Let $\C{C}\subset\D{C}$ be the union of the cuts $\croch{E_1,\bar E_1}$, $\croch{E_2,\bar E_2}$, and $\croch{\alpha,\bar\alpha}$ (see Figure~\ref{fig:genus2ab}):
\[
\C{C}\coloneqq\Sigma_1\cup\Sigma_2\cup\croch{\alpha,\bar\alpha}.
\]
Define the meromorphic differential $\dd g$ on $\Sigma_\alpha$ as follows:
\[
\dd g(k)\coloneqq\frac{4(k-\mu_1)(k-\mu_2)(k-\alpha)(k-\bar\alpha)}{w(k)}\dd k,
\]
where $\mu_1,\mu_2\in\D{R}$, $\mu_1<\mu_2$, and
\[
w(k)\coloneqq\sqrt{(k-E_1)(k-\bar E_1)(k-E_2)(k-\bar E_2)(k-\alpha)(k-\bar\alpha)}.
\]
We view $\Sigma_\alpha$ as a two-sheeted cover of the complex plane such that $w(k^+)\sim k^3$ as $k\to\infty$, where $k^\pm$ denote the points on the upper and lower sheets which project onto $k$.

The definition of $\dd g$ depends on the four real numbers $\mu_1$, $\mu_2$, $\alpha_1$, $\alpha_2$, where $\alpha_1$ and $\alpha_2$ denote the real and imaginary parts of $\alpha$:
\[
\alpha=\alpha_1+\ii\alpha_2,\quad\alpha_2>0.
\]
These four real numbers are determined by the four conditions 
\begin{subequations}   \label{dg-conditions-2}
\begin{align}  \label{dg-first-conditions-2}
&\int_{a_1}\dd g=\int_{a_2}\dd g=0,\\
\label{dg-last-conditions-2}
&\lim_{k\to\infty}\left(\frac{\dd g}{\dd k}-4k\right)=\xi,\quad       
\lim_{k\to\infty}k\left(\frac{\dd g}{\dd k}-4k-\xi\right)=0,
\end{align}
\end{subequations}
where we let $a_j$, $j=1,2$, be a counterclockwise loop on the upper sheet enclosing $[\bar{E}_j, E_j]$ and no other branch points, see Figure~\ref{fig:genus2ab}. We let $\zeta=(\zeta_1,\zeta_2)$ be the normalized basis of $H^1(\Sigma_\alpha)$ which is dual to the canonical homology basis $\accol{a_j,b_j}_1^2$ in the sense that $\accol{\zeta_j}_1^2$ are holomorphic differentials such that
\[
\int_{a_i}\zeta_j=\delta_{ij},\quad i,j=1,2.
\]
The basis $\accol{\zeta_j}_1^2$ is explicitly given by $\zeta_j=\sum_{l=1}^2\C{A}_{jl}\hat\zeta_l$, where
\begin{equation}\label{hatzetadef}
\hat\zeta_l=\frac{k^{l-1}}{w}\dd k
\end{equation}
and the invertible matrix $\C{A}$ is given by
\begin{equation}\label{calAdef}
\left(\C{A}^{-1}\right)_{jl}=\int_{a_j}\hat\zeta_l.
\end{equation}
Note that $\C{A}$, $\zeta$, and $\hat{\zeta}$ depend on $\alpha$.
%---------------------------------------%
%:fig 6.1
%---------------------------------------%
\begin{figure}[ht]
\begin{center}
\begin{overpic}[width=.5\textwidth]{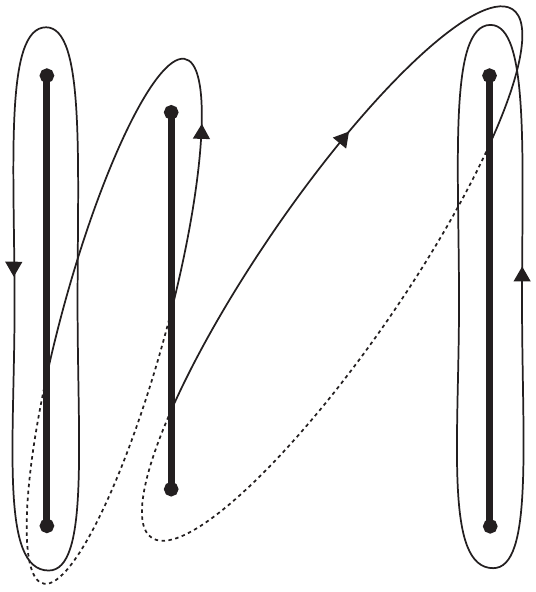}
      \put(-4,54){\small $a_1$}
      \put(91,53){\small $a_2$}
      \put(36,76){\small $b_1$}
      \put(60,74){\small $b_2$}
      \put(5.5,90){\small $E_1$}
      \put(5.5,5.5){\small $\bar{E}_1$}
     \put(27.5,83.5){\small $\alpha$}
      \put(27.5,12.5){\small $\bar{\alpha}$}
      \put(80,90){\small $E_2$}
      \put(80,5.5){\small $\bar{E}_2$}
\end{overpic}
\end{center}
\caption{The homology basis $\accol{a_j,b_j}_1^2$ on the genus $2$ Riemann surface $\Sigma_\alpha$.} 
\label{fig:genus2ab}
\end{figure}
%---------------------------------------%

The conditions in \eqref{dg-last-conditions-2} can be formulated as
\begin{align}\label{dgdkasymptotics}
\frac{\dd g}{\dd k}(k^+) = 4k +\xi +\ord(k^{-2}),\qquad k \to\infty.
\end{align}
The solvability of the system of equations \eqref{dg-conditions-2} characterizes the genus $2$ sector. Since
\begin{equation}  \label{dg-symmetry}
\frac{\dd g}{\dd k}(k)=\overline{\frac{\dd g}{\dd k}(\bar k)},\quad k\in\D{C}\setminus\C{C},
\end{equation}
we have $\int_A^B\dd g=\overline{\int_{\bar A}^{\bar B}\dd g}$ where the contour in the second integral is the complex conjugate of the contour in the first integral. This implies that
\begin{equation} \label{intdg}
\int_{a_j}\dd g \in\ii\,\D{R},\quad j=1,2,
\end{equation}
so the conditions in \eqref{dg-first-conditions-2} are two real conditions. 

As $\xi$ decreases from $+\infty$, the infinite branch hits $E_1$ and $\bar E_1$ when $\xi =\xi_{E_1}$, where
\begin{align}\label{xiE1def}
\xi_{E_1}=2(B+|E_1|).
\end{align}
For $\xi>\xi_{E_1}$, we are in the genus $0$ sector and the $g$-function is given by (see \eqref{g2bis})
\[
\dd g =\frac{4(k-\mu_1)(k-\mu_2)}{\sqrt{(k-E_2)(k-\bar{E}_2)}}\dd k,
\]
where $\mu_1<\mu_2$ are given by \eqref{mu12bis}. For $\xi=\xi_{E_1}$, we have
\begin{equation}\label{mu1mu2atxiE1}
\begin{split}
\mu_1(\xi_{E_1}) =\frac{B-|E_1| - \sqrt{2 B \left(3 |E_1|+5 B\right)-7 A^2}}{4},\\  
\mu_2(\xi_{E_1}) =\frac{B-|E_1| +\sqrt{2 B \left(3 |E_1|+5 B\right)-7 A^2}}{4}.
\end{split}
\end{equation}

As $\xi$ decreases below $\xi_{E_1}$, we expect to see a genus $2$ sector. We will show that the system \eqref{dg-conditions-2} indeed has a unique solution for $\xi\in (\xi_{E_1}-\delta,\xi_{E_1})$ for some $\delta>0$ and that this solution can be extended until the qualitative structure of the $g$-function changes (see item (f) below). 

%-------------------%
%:thm 6.1
%-------------------%
\begin{theorem}[Existence of genus $2$ sector]\label{genus2existenceth}
Suppose $0<\frac{A}{B}<\frac{2}{7}(2+3\sqrt{2})$. Then there exists a $\xi_m < \xi_{E_1}$ and a smooth curve 
\[
\xi\mapsto(\alpha_1(\xi),\alpha_2(\xi),\mu_1(\xi),\mu_2(\xi))\in\D{R}^4
\]
defined for $\xi\in(\xi_m,\xi_{E_1})$ such that the following hold:
\begin{enumerate}[\rm(a)]
\item 
For each $\xi\in(\xi_m,\xi_{E_1})$, $(\xi,\alpha_1(\xi),\alpha_2(\xi),\mu_1(\xi),\mu_2(\xi))$ is a solution of the system of equations \eqref{dg-conditions-2}.
\item 
The curve $\xi\mapsto(\mu_1(\xi),\mu_2(\xi))$ is a smooth map $(\xi_m,\xi_{E_1})\to\D{R}^2$ such that
\[
\mu_1(\xi)<\mu_2(\xi)\quad\text{for}\quad\xi\in(\xi_m,\xi_{E_1}).
\]
\item 
The curve $\xi\mapsto\alpha(\xi)=\alpha_1(\xi)+\ii\alpha_2(\xi)$ is a smooth map $(\xi_m,\xi_{E_1})\to\D{C}^+\setminus\{E_1,E_2\}$.
\item 
As $\xi\uparrow\xi_{E_1}$, we have
\begin{align}\label{behavioratxiE1}
\alpha(\xi) \to E_1,\quad \mu_1(\xi) \to\mu_1(\xi_{E_1}),\quad \mu_2(\xi) \to\mu_2(\xi_{E_1}),
\end{align}
where $\mu_1(\xi_{E_1})$ and $\mu_2(\xi_{E_1})$ are given by \eqref{mu1mu2atxiE1}, i.e., there is a continuous transition from the genus $0$ sector $\xi>\xi_{E_1}$ to the genus $2$ sector at $\xi =\xi_{E_1}$. 
\item 
For all $\xi\in(\xi_m,\xi_{E_1})$ sufficiently close to $\xi_{E_1}$, we have $\alpha_1(\xi)>\Re E_1$ so that the branch cut $[\bar{\alpha},\alpha]$ lies to the right of the cut $[\bar{E}_1,E_1]$. In fact, as $\xi\uparrow\xi_{E_1}$,
\begin{align}\label{alphaexpansionE1}
\alpha(\xi) = E_1 + c_1 \frac{\xi_{E_1} - \xi}{|\ln(\xi_{E_1}- \xi)|} +\osmall\bigg(\frac{\xi_{E_1} - \xi}{|\ln(\xi_{E_1} - \xi)|}\bigg),
\end{align}
where
\[
c_1\coloneqq\frac{2BE_1}{A^2+4\ii AB-3B^2-B|E_2|}
\]
has strictly positive real and imaginary parts.
\item 
As $\xi\downarrow\xi_m$, at least one of the following occurs: 
\begin{enumerate}[\rm(i)]
\item 
the zeros $\mu_1$ and $\mu_2$ merge,
\item 
$\alpha(\xi)$ and $\overline{\alpha(\xi)}$ merge at a point on the real axis, i.e., $\alpha_2(\xi)\downarrow 0$,
\item 
$\alpha(\xi)$ approaches $E_1$ or $E_2$. 
\item 
$\xi_m=-\infty$.
\end{enumerate}
\item 
$\alpha(\xi)=\alpha_1(\xi)+\ii\alpha_2(\xi)$ satisfies the following nonlinear ODE for $\xi\in(\xi_m,\xi_{E_1})$:
\begin{align}\label{alphaODE}
\begin{pmatrix}\alpha_1'(\xi)\\ \alpha_2'(\xi)\end{pmatrix}
= - P^{-1}G - P^{-1} \C{A} 
\begin{pmatrix}\int_{a_1}\frac{k^2(k-\alpha_1)}{w(k)}\dd k\\
\int_{a_2}\frac{k^2(k-\alpha_1)}{w(k)}\dd k\end{pmatrix},
\end{align}
where 
\begin{itemize}
\item 
The matrix $P(\xi,\alpha_1,\alpha_2)$ and the vector $G(\alpha_1,\alpha_2)$ are defined by
\begin{align}\label{Pdef}
P =\begin{pmatrix} P_{11} & P_{12}\\ P_{21} & P_{22} \end{pmatrix},\qquad G =\begin{pmatrix}G_1 \\G_2 \end{pmatrix},
\end{align}
where the entries $\accol{P_{ij}(\xi,\alpha_1,\alpha_2)}_{i,j=1}^2$ and $\accol{G_j(\alpha_1,\alpha_2)}_1^2$ are polynomials given by
\begin{align}
& P_{11} = 12 \alpha_1^3+2 \alpha_1^2 \xi +6 \alpha_1 \alpha_2^2+4 \alpha_1 A^2+\alpha_2^2 \xi -4 \alpha_1  B^2,\notag\\
& P_{21} = -12 \alpha_1^2-2 \alpha_1 \xi -4 A^2+6 \alpha_2^2+4 B^2,\notag\\
& P_{12} =\alpha_2 \left(\alpha_1 \xi +4 A^2-6 \alpha_2^2-4 B^2\right),\notag\\ 
\label{Pijdef}
& P_{22} =\alpha_2 (12 \alpha_1+\xi ),
\end{align}
and
\begin{equation}\label{G1G2def}
G_1(\alpha_1,\alpha_2) =\alpha_1(\alpha_1^2 +\alpha_2^2),\qquad G_2(\alpha_1,\alpha_2) =\alpha_2^2 - \alpha_1^2.
\end{equation}
\item 
The zeros $\mu_j =\mu_j(\xi)$ are expressed in terms of $\xi$ and $\alpha_j =\alpha_j(\xi)$ by
\begin{subequations}\label{mu1mu2}
\begin{align}
\mu_1 =\frac{1}{8} \left(-4 \alpha_1 -\xi - \sqrt{-48 \alpha_1^2-8 \alpha_1 \xi -64 A^2+32 \alpha_2^2+64 B^2+\xi ^2} \right),
	\\
\mu_2 =\frac{1}{8} \left(-4 \alpha_1 -\xi +\sqrt{-48 \alpha_1^2-8 \alpha_1 \xi -64 A^2+32 \alpha_2^2+64 B^2+\xi ^2} \right).
\end{align}	
\end{subequations}
\end{itemize}
\end{enumerate}
\end{theorem}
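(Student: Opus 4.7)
The plan is to eliminate $\mu_1,\mu_2$ from \eqref{dg-conditions-2}, reduce to a two-by-two real system $F=0$, and apply the implicit function theorem after a renormalization that both resolves a logarithmic singularity of the Jacobian and separates the desired nontrivial branch from a trivial one.

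First, expanding $\dd g/\dd k$ at infinity converts the normalization conditions \eqref{dg-last-conditions-2} into two polynomial relations among $\mu_1,\mu_2,\xi,\alpha_1,\alpha_2$ whose unique solution is precisely \eqref{mu1mu2}. After this elimination $\mu_1,\mu_2$ are smooth functions of $(\xi,\alpha_1,\alpha_2)$, real and distinct as long as the discriminant in \eqref{mu1mu2} is positive, and the symmetry \eqref{dg-symmetry} combined with \eqref{intdg} shows that \eqref{dg-first-conditions-2} amounts to two real equations. The problem therefore reduces to locating a suitable branch of the zero set of
\[
F(\xi,\alpha_1,\alpha_2)\coloneqq\bigl(\tfrac{1}{\ii}\int_{a_1}\dd g,\;\tfrac{1}{\ii}\int_{a_2}\dd g\bigr)\in\D{R}^2.
\]
A first observation is that $F$ vanishes identically on the line $\accol{\alpha=E_1}$: when $\alpha=E_1$, the factors $(k-\alpha)(k-\bar\alpha)$ in the numerator of $\dd g$ cancel the square root $\sqrt{(k-\alpha)(k-\bar\alpha)}$ in $w(k)$, so $\dd g$ degenerates to the genus $0$ differential $4(k-\mu_1)(k-\mu_2)\dd k/\sqrt{(k-E_2)(k-\bar E_2)}$. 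The contour $a_1$ then encloses no branch cut, giving $\int_{a_1}\dd g=0$ by Cauchy's theorem, while the $a_2$-period, computed as a residue at infinity, vanishes automatically from the normalizations already imposed through \eqref{mu1mu2}. The sought-after genus $2$ sector must therefore correspond to a nontrivial branch of $F=0$ emanating from the line $\accol{\alpha=E_1}$ at the specific value $\xi=\xi_{E_1}$, not to an isolated zero.

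The central obstacle is that $(\xi_{E_1},\Re E_1,\Im E_1)$ is a bifurcation point at which the partial derivatives of $F$ with respect to $(\alpha_1,\alpha_2)$ diverge logarithmically: deforming the $a_1$-contour to expose the near-collision of the branch points $\alpha$ and $E_1$ produces a local contribution of the form $\int_{|\alpha-E_1|}^{1}\dd s/s\sim\ln|\alpha-E_1|$, exactly the singularity pattern illustrated by $f(x)=x\ln x$ in the introduction to Section~\ref{implicitfunctiontheorem}. I bypass this by introducing, on a neighborhood of the origin in the auxiliary variable $\eta=(\eta_1,\eta_2)$, the local homeomorphism
\[
\varphi\colon\eta\mapsto(\alpha_1,\alpha_2)=(\Re E_1,\Im E_1)+\frac{\eta}{\bigl\lvert\ln\lvert\eta\rvert\bigr\rvert},
\]
in direct analogy with the one-variable renormalization $\tilde f(x)=f(x/|\ln x|)$ from Section~\ref{implicitfunctiontheorem}. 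A careful asymptotic expansion of the period integrals near $\alpha=E_1$ should show that the renormalized map $\tilde F(\xi,\eta)\coloneqq F(\xi,\varphi(\eta))$ extends to a $C^1$ function in a neighborhood of $(\xi_{E_1},0)$, with Jacobian with respect to $\eta$ at the base point equal to a nonzero scalar multiple of the explicit coefficient matrix of the leading logarithmic contribution. Verifying invertibility of that matrix by direct computation of the associated elementary period integrals is the technical heart of the proof and simultaneously yields the constant $c_1$ in \eqref{alphaexpansionE1} together with the positivity of $\Re c_1$ and $\Im c_1$.

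The implicit function theorem applied to $\tilde F$ then produces a $C^1$ curve $\xi\mapsto\eta(\xi)$ on some $(\xi_{E_1}-\delta,\xi_{E_1})$, whose image under $\varphi$ furnishes the desired $(\alpha_1(\xi),\alpha_2(\xi))$ along with the asymptotic \eqref{alphaexpansionE1}. To extend to a maximal interval I would derive the ODE \eqref{alphaODE} by implicit differentiation of $\int_{a_j}\dd g=0$ with respect to $\xi$; after reexpressing the resulting period integrals via the change of basis \eqref{calAdef} from $\accol{\hat\zeta_l}$ to the $a$-dual basis $\accol{\zeta_j}$, solving a $2\times 2$ linear system with coefficient matrix $P$ produces the stated formula. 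Standard ODE theory then extends the solution as long as $P$ remains invertible and the right-hand side stays smooth, and inspection shows that the only possible failure modes correspond exactly to the geometric degenerations (i)--(iv) in item~(f): the discriminant in \eqref{mu1mu2} vanishing, $\alpha_2\downarrow 0$, $\alpha$ colliding with $E_1$ or $E_2$, or escape to $-\infty$. The principal difficulty in this plan is the asymptotic analysis needed to verify that $\tilde F$ is $C^1$ with invertible Jacobian at the base point, i.e., the careful extraction of the log-divergent and finite parts of the period integrals near the degenerate Riemann surface.
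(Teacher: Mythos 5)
Your overall strategy — eliminate $\mu_1,\mu_2$ via \eqref{mu1mu2}, reduce \eqref{dg-conditions-2} to a planar system $F=0$, renormalize by $\alpha\mapsto E_1+(\alpha-E_1)/|\ln|\alpha-E_1||$ to tame the logarithmic singularity, apply the implicit function theorem at $(\xi_{E_1},E_1)$, and continue globally via the ODE \eqref{alphaODE} — is indeed the paper's strategy. However, two of your central claims are incorrect, and the second one breaks the argument as written.

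First, $F$ does \emph{not} vanish identically on the line $L=\{\alpha=E_1\}$. The cycle $a_1$ separates $E_1$ from $\alpha$, so it is pinched as $\alpha\to E_1$; the limit of its period is therefore not the period of the naively substituted genus-zero differential, and the appeal to Cauchy's theorem interchanges a limit with an integral over a degenerating cycle. Deforming $a_1$ onto a path through the branch points and passing to the limit, one finds that $F_1$ tends to $f_0(\xi)=-8\sqrt{B}\,(\Im\sqrt{E_2})(\xi-\xi_{E_1})$, i.e.\ to the integral of the limiting genus-zero differential along the segment $[\bar E_1,E_1]$, which vanishes only at $\xi=\xi_{E_1}$; only the second component satisfies $F_2=\ord(|\alpha-E_1|)$. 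Consequently the correct local picture is not a bifurcation off a trivial branch (which would require a Lyapunov–Schmidt type analysis), but a single boundary zero $\mathbf{x}_0=(\xi_{E_1},\Re E_1,\Im E_1)$ of the continuous extension of $F$; the nonvanishing of $f_0'(\xi_{E_1})$ is precisely what makes the implicit function theorem work.

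Second, the renormalization $\tilde F(\xi,\eta)=F(\xi,\varphi(\eta))$ alone does not yield an invertible Jacobian in $\eta$. Only $F_1$ has log-divergent $\alpha$-derivatives; $F_2$ is smooth up to $\alpha=E_1$ with bounded derivatives $\partial_{\alpha_j}F_2\to q_j(\xi)$. Since $D\varphi=\ord(1/|\ln|\eta||)$, the first row of $D_\eta\tilde F$ tends to $(-\Im d_{00},-\Re d_{00})$ while the second row tends to $(0,0)$: the matrix is singular at the base point, and the implicit function theorem in the form ``solve for $\eta$ as a function of $\xi$'' cannot be applied. The paper repairs this by additionally multiplying the second component by $|\ln|\alpha-E_1||$ (see \eqref{tildeFdef}) and then solves for $(\xi,\alpha_2)$ as functions of $\alpha_1$, using that the minor $\left(\begin{smallmatrix}f_0'(\xi_{E_1})&-\Re d_{00}\\0&q_2(\xi_{E_1})\end{smallmatrix}\right)$ is invertible because $f_0'(\xi_{E_1})\neq0$ and $q_2(\xi_{E_1})\neq0$; parametrizing by $\alpha_1$ also keeps the analysis inside the half-ball $\alpha_1\geq\Re E_1$ where the branch expansions are established. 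Two smaller omissions: for the continuation down to $\xi_m$ you must also exclude $\alpha\to\infty$ at finite $\xi$, which requires a separate lower bound on $|F|$ for large $\alpha$; and invertibility along the curve need not be monitored, since $\det P=16\alpha_2|\alpha-\mu_1|^2|\alpha-\mu_2|^2>0$ holds automatically on the admissible set.
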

%-------------------%

%-------------------%
%:rem 6.2
%-------------------%
\begin{remark}
Numerical simulations strongly suggest that as $\xi\downarrow\xi_m$ (see item (f)) 
\begin{itemize}
\item
case (i) (the zeros $\mu_1$ and $\mu_2$ merge) occurs if $1<\frac{A}{B}<\frac{2}{7}(2+3\sqrt{2})$, 
\item
case (ii) ($\alpha(\xi)$ and $\overline{\alpha(\xi)}$ merge at a point on the real axis) occurs if $0<\frac{A}{B}<1$,
\item
whereas we expect both (i) and (ii) to occur for $\frac{A}{B}=1$.
\end{itemize}
\end{remark}
%-------------------%

%---------------------------------------------------------%
%:s.6.2
%---------------------------------------------------------%
\subsection{Proof of Theorem \ref{genus2existenceth}}\label{proofsec}
The conditions in \eqref{dg-last-conditions-2} can be written more explicitly as
\begin{align*}
& 4(\alpha_1 +\mu_1 +\mu_2) = -\xi,\\
& 2 \mu_2 (\alpha_1+\mu_1)+2 \alpha_1 \mu_1-2 A^2+\alpha_2^2+2 B^2 = 0.
\end{align*}
Solving these two equations for $\mu_1$ and $\mu_2$, we find \eqref{mu1mu2}.

We write $\alpha=\alpha_1+\ii\alpha_2$ and let $\mathbf{x}=(\xi,\alpha_1,\alpha_2)\in\D{R}^3$ denote the vector with coordinates $(\xi,\alpha_1,\alpha_2)$. Let $\C{W}$ denote the open subset of $\D{R}^3$ consisting of all points $\mathbf{x}=(\xi,\alpha_1,\alpha_2) \in\D{R}^3$ such that $\alpha_2 > 0$, $\alpha \notin \{E_1,E_2\}$, and the expression under the square roots in \eqref{mu1mu2} is strictly positive. If we want to emphasize the dependence on $\mathbf{x}=(\xi,\alpha_1,\alpha_2)$, we will write $\dd g\equiv\dd g(k;\mathbf{x})$, $\mu_1\equiv\mu_1(\mathbf{x})$, and $\mu_2\equiv\mu_2(\mathbf{x})$, where $\dd g(k;\mathbf{x})$ is evaluated with $\mu_1,\mu_2$ given by \eqref{mu1mu2}.

We define the map $F\colon\C{W}\to\D{R}^2$ by (see \eqref{intdg})
\[
F(\mathbf{x}) =\frac{1}{\ii} \begin{pmatrix} \int_{a_1} \dd g(k;\mathbf{x}) \\\int_{a_2} \dd g(k;\mathbf{x}) \end{pmatrix}
\]
and let $D_\alpha F$ denote the Jacobian matrix
\begin{align}\label{D2Fdef}
D_\alpha F(\mathbf{x}) 
=\begin{pmatrix}\partial_{\alpha_1}F_1&\partial_{\alpha_2}F_1\\\partial_{\alpha_1}F_2&\partial_{\alpha_2}F_2\end{pmatrix} 
=\frac{1}{\ii} \begin{pmatrix} \int_{a_1} \partial_{\alpha_1}\dd g & \int_{a_1} \partial_{\alpha_2}\dd g \\
\int_{a_2} \partial_{\alpha_1}\dd g &\int_{a_2} \partial_{\alpha_2}\dd g \end{pmatrix},
\end{align}
where $\partial_{\alpha_j}\coloneqq\frac{\partial}{\partial\alpha_j}$, $j=1,2$.

%-------------------%
%:rem
%-------------------%
\begin{remark*}
The function $F$ is in general multivalued on $\C{W}$, because of a monodromy as $\alpha$ encircles $E_1$ or $E_2$. Strictly speaking, we should therefore define $F\colon\hat{\C{W}}\to\D{R}^2$, where $\hat{\C{W}}$ denotes the universal cover of $\C{W}$. However, it can be proved using \eqref{dgdkasymptotics} that $F\mapsto\mathcal{M}F$ for some matrix $\mathcal{M}$ under such a monodromy transformation. In particular, the zero locus of $F$ is a well-defined subset of $\C{W}$. Thus, this distinction is of no consequence for us and will be suppressed from the notation. 
\end{remark*}
%-------------------%

%-------------------%
%:lem 6.3
%-------------------%
\begin{lemma}\label{Flemma}
$F\colon\C{W}\to\D{R}^2$ is a smooth map such that $\det D_\alpha F\neq 0$ at each point of $\C{W}$.
\end{lemma}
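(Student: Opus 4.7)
The plan is to establish smoothness and the nonvanishing of $\det D_\alpha F$ separately.

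Smoothness is routine. For any $\mathbf{x}_0\in\C{W}$, the six branch points $E_1,\bar E_1,E_2,\bar E_2,\alpha,\bar\alpha$ stay distinct in a neighborhood $U\subset\C{W}$ of $\mathbf{x}_0$, and the cycles $a_1,a_2$ can be represented by fixed smooth closed curves in $\D{C}$ bounded away from those six points for all $\mathbf{x}\in U$. Since $\mu_1(\mathbf{x}),\mu_2(\mathbf{x})$ depend smoothly on $\mathbf{x}$ by \eqref{mu1mu2}, the integrand $\dd g(k;\mathbf{x})$ is smooth in $\mathbf{x}$ uniformly in $k$ on each cycle, and differentiation under the integral sign yields smoothness of $F$ together with \eqref{D2Fdef}.

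The main step is the nonvanishing of $\det D_\alpha F$. The key observation is that $\partial_{\alpha_j}\dd g$ is a \emph{holomorphic} differential on $\Sigma_\alpha$. Indeed, from $\dd g/\dd k = 4N/w$ with $N = (k-\mu_1)(k-\mu_2)(k-\alpha)(k-\bar\alpha)$, a direct computation using $\partial_{\alpha_1}w = -M(k-\alpha_1)/w$ (where $M = (k-E_1)(k-\bar E_1)(k-E_2)(k-\bar E_2)$) yields
\[
\partial_{\alpha_1}\!\left(\frac{\dd g}{\dd k}\right) = \frac{4}{w}\bigl[\partial_{\alpha_1}N + (k-\mu_1)(k-\mu_2)(k-\alpha_1)\bigr],
\]
and an analogous expression for $j=2$. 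The bracketed polynomial has degree at most $3$ in $k$, but the $k^3$ and $k^2$ coefficients cancel: these cancellations are precisely what one obtains by differentiating the two asymptotic conditions \eqref{dg-last-conditions-2} with respect to $\alpha_j$ at fixed $\xi$, and they can be verified directly using \eqref{mu1mu2}. Therefore $\partial_{\alpha_j}\dd g = (a^{(j)}+b^{(j)}k)\,\dd k/w$ is a holomorphic differential on the genus-$2$ surface $\Sigma_\alpha$. Expanding in the basis $\hat\zeta_l = k^{l-1}\,\dd k/w$ of holomorphic differentials and using \eqref{calAdef} gives the factorization
\[
D_\alpha F = \frac{1}{\ii}\,\C{A}^{-1}\,C,\qquad C =\begin{pmatrix} a^{(1)} & a^{(2)} \\ b^{(1)} & b^{(2)} \end{pmatrix},
\]
so that $\det D_\alpha F = -\det(\C{A}^{-1})\det(C)$. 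The factor $\det\C{A}^{-1}$ is nonzero by the standard fact that the $a$-periods of a basis of holomorphic differentials on a compact Riemann surface form an invertible matrix. To compute $C$ explicitly, I would match the expansion $\dd g/\dd k = 4k+\xi+4A_{-2}/k^2+4A_{-3}/k^3+\ord(k^{-4})$ at $k=\infty$; using $w = k^3-\alpha_1 k^2 + \ord(k)$ one finds $b^{(j)} = 4\,\partial_{\alpha_j}A_{-2}$ and $a^{(j)} = 4\,\partial_{\alpha_j}A_{-3}-4\alpha_1\,\partial_{\alpha_j}A_{-2}$. Substituting the closed forms \eqref{mu1mu2} and simplifying identifies $C$ with the matrix $P$ defined in \eqref{Pdef}-\eqref{Pijdef}.

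The main obstacle is then to show $\det P\neq 0$ on $\C{W}$. Direct expansion produces the sum-of-squares identity
\[
\det P = \alpha_2\bigl[\alpha_2^{2}(12\alpha_1+\xi)^2 + X^2\bigr],\qquad X\coloneqq 12\alpha_1^2+2\alpha_1\xi+4A^2-6\alpha_2^2-4B^2.
\]
Since $\alpha_2>0$ on $\C{W}$, this can vanish only if $\xi=-12\alpha_1$ \emph{and} $X=0$ simultaneously, in which case $\alpha_2^2 = [2(A^2-B^2)-6\alpha_1^2]/3$ and in particular $\alpha_1^2<(A^2-B^2)/3$. But substituting these two relations into the discriminant $\Delta\coloneqq-48\alpha_1^2-8\alpha_1\xi-64A^2+32\alpha_2^2+64B^2+\xi^2$ appearing under the square roots in \eqref{mu1mu2} yields $\Delta = \tfrac{128}{3}[3\alpha_1^2-(A^2-B^2)]<0$, contradicting the definition of $\C{W}$, on which $\Delta>0$ so that $\mu_1,\mu_2$ are distinct reals. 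Hence $\det P>0$ throughout $\C{W}$, and $\det D_\alpha F\neq 0$ follows.
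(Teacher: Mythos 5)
Your proposal is correct and follows essentially the same route as the paper: the key step in both is that $\partial_{\alpha_j}\dd g$ is a holomorphic differential, yielding the factorization $D_\alpha F=-\ii\,\C{A}^{-1}P$, after which everything reduces to $\det P\neq 0$. The only (cosmetic) difference is in the last step: the paper factors $\det P=16\alpha_2\,\abs{\alpha-\mu_1}^2\abs{\alpha-\mu_2}^2$, which is manifestly positive on $\C{W}$, whereas your sum-of-squares form $\alpha_2\bigl[\alpha_2^2(12\alpha_1+\xi)^2+X^2\bigr]$ requires the extra (correct) observation that simultaneous vanishing of $12\alpha_1+\xi$ and $X$ would force the discriminant in \eqref{mu1mu2} to be negative, contradicting the definition of $\C{W}$.
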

%-------------------%

%-------------------%
\begin{proof}
Smoothness follows directly from the definitions. We will prove that $\det D_\alpha F\neq 0$.
For each $\mathbf{x}\in\C{W}$, $\dd g(k;\mathbf{x})$ is a meromorphic differential on $\Sigma_\alpha$ whose only poles lie at $\infty^\pm$ and whose singular behavior at $\infty^\pm$ (which is prescribed by \eqref{dgdkasymptotics}) is independent of $\alpha_1$ and $\alpha_2$. It follows that $\partial_{\alpha_1}\dd g$ and $\partial_{\alpha_2}\dd g$ are holomorphic differentials on $\Sigma_\alpha$. More precisely, a direct computation gives
\begin{equation}\label{ddgdalpha}
\partial_{\alpha_1}\dd g =\frac{P_{11} + k P_{21}}{w(k)}\dd k,\qquad\partial_{\alpha_2}\dd g =\frac{P_{12} + k P_{22}}{w(k)}\dd k,
\end{equation}
where $\accol{P_{ij}(\xi,\alpha_1,\alpha_2)}_{i,j=1}^2$ are the polynomials defined in \eqref{Pijdef}.

In terms of $P_{ij}$, $i,j=1,2$ and $\hat\zeta_l$, $l=1,2$ (defined in \eqref{hatzetadef}), we can write \eqref{ddgdalpha} as
\begin{align*}
\begin{pmatrix}\partial_{\alpha_1}\dd g \\ \partial_{\alpha_2}\dd g \end{pmatrix}
 =\begin{pmatrix} P_{11} \hat{\zeta}_1 + P_{21} \hat{\zeta}_2 \\ P_{12} \hat{\zeta}_1 + P_{22} \hat{\zeta}_2 \end{pmatrix}.
\end{align*}
Substitution into \eqref{D2Fdef} yields
\begin{align}
D_\alpha F(\mathbf{x}) 
& = 
\frac{1}{\ii}\begin{pmatrix} P_{11} \int_{a_1}  \hat{\zeta}_1 + P_{21} \int_{a_1} \hat{\zeta}_2 & P_{12} \int_{a_1} \hat{\zeta}_1 + P_{22} \int_{a_1} \hat{\zeta}_2\\
 P_{11} \int_{a_2} \hat{\zeta}_1 + P_{21} \int_{a_2} \hat{\zeta}_2 &P_{12} \int_{a_2}  \hat{\zeta}_1 + P_{22} \int_{a_2} \hat{\zeta}_2 \end{pmatrix}\notag\\
& =\frac{1}{\ii} \begin{pmatrix} P_{11} \left(\C{A}^{-1}\right)_{11} + P_{21} \left(\C{A}^{-1}\right)_{12} & P_{12} \left(\C{A}^{-1}\right)_{11} + P_{22} \left(\C{A}^{-1}\right)_{12}\notag\\
 P_{11} \left(\C{A}^{-1}\right)_{21} + P_{21} \left(\C{A}^{-1}\right)_{22} &P_{12} \left(\C{A}^{-1}\right)_{21} + P_{22} \left(\C{A}^{-1}\right)_{22} \end{pmatrix}\\ 
\label{D2FAinvP}
& = -\ii\C{A}^{-1}P.
\end{align}
We conclude that $D_\alpha F$ is invertible if and only if the matrix $P$ is invertible. A straightforward computation using \eqref{Pijdef} gives
\begin{align*}
\det P 
&=\alpha_2 \Big[16 A^4+16 A^2 \left(\alpha_1 (6 \alpha_1+\xi)-3 \alpha_2^2-2 B^2\right)+36 \left(4 \alpha_1^4+\alpha_2^4\right)+48 \alpha_1^3 \xi\\
&\qquad+\xi ^2 \left(4 \alpha_1^2+\alpha_2^2\right)+16 B^4-16 B^2 \left(\alpha_1 (6 \alpha_1+\xi )-3 \alpha_2^2\right)\Big].
\end{align*}
Recalling the expressions \eqref{mu1mu2} for $\mu_1,\mu_2$, this can be rewritten more concisely as
\begin{align*}
\det P & = 16 \alpha_2 \left((\alpha_1-\mu_1)^2+\alpha_2^2\right) \left((\alpha_1-\mu_2)^2+\alpha_2^2\right)\\
& = 16 \alpha_2 |\alpha - \mu_1|^2 |\alpha - \mu_2|^2.
\end{align*}
In particular, $\det P>0$ on $\C{W}$ (on which $\alpha_2>0$). 
\end{proof}
%-------------------%

If $\mathbf{x}=(\xi,\alpha_1,\alpha_2) \in\C{W}$ is a solution of $F(\mathbf{x}) = 0$, then Lemma \ref{Flemma} and the implicit function theorem implies that the level set $F = 0$ locally near $\mathbf{x}$ can be parametrized by a smooth curve $\xi \mapsto (\xi,\alpha_1(\xi),\alpha_2(\xi))$ such that
\begin{align}\label{dalphadxi}
\begin{pmatrix} \alpha_1'(\xi) \\ \alpha_2'(\xi) \end{pmatrix}
= - D_\alpha F(\xi,\alpha_1(\xi),\alpha_2(\xi))^{-1} \begin{pmatrix} \partial_{\xi}F_1 \\ \partial_{\xi}F_2 \end{pmatrix}\bigg|_{(\xi,\alpha_1(\xi),\alpha_2(\xi))},
\end{align}
where $\partial_{\xi}\coloneqq\frac{\partial}{\partial\xi}$. A computation shows that
\[
\partial_{\xi}\dd g =\frac{(k+\alpha_1)((k-\alpha_1)^2 +\alpha_2^2)}{w(k)} = G_1 \hat{\zeta}_1 + G_2 \hat{\zeta}_2 +\frac{k^2(k-\alpha_1)}{w(k)}\dd k,
\]
where the polynomials $\accol{G_j(\alpha_1,\alpha_2)}_1^2$ are given by \eqref{G1G2def}.
Thus
\begin{align}
\begin{pmatrix} \partial_{\xi}F_1 \\ \partial_{\xi}F_2 \end{pmatrix}
& =\frac{1}{\ii} \begin{pmatrix} \int_{a_1} \partial_{\xi} \dd g \\\int_{a_2} \partial_{\xi}  \dd g \end{pmatrix}
=\frac{1}{\ii} \begin{pmatrix} G_1 \int_{a_1} \hat{\zeta}_1 + G_2 \int_{a_1} \hat{\zeta}_2 +\int_{a_1} \frac{k^2(k-\alpha_1)}{w(k)}\dd k
\\G_1 \int_{a_2} \hat{\zeta}_1 + G_2 \int_{a_2} \hat{\zeta}_2 \dd g +\int_{a_2} \frac{k^2(k-\alpha_1)}{w(k)}\dd k \end{pmatrix}\notag\\
& =\frac{1}{\ii} \begin{pmatrix} G_1 (\C{A}^{-1})_{11} + G_2 (\C{A}^{-1})_{12} +\int_{a_1} \frac{k^2(k-\alpha_1)}{w(k)}\dd k
\\G_1 (\C{A}^{-1})_{21} + G_2 (\C{A}^{-1})_{22} +\int_{a_2} \frac{k^2(k-\alpha_1)}{w(k)}\dd k \end{pmatrix}\notag\\
\label{dFdxi}
& = -\ii\C{A}^{-1} G  - \ii\begin{pmatrix} \int_{a_1} \frac{k^2(k-\alpha_1)}{w(k)}\dd k \\
\int_{a_2} \frac{k^2(k-\alpha_1)}{w(k)}\dd k \end{pmatrix},
\end{align}
where
\[
G=\begin{pmatrix}G_1 \\G_2 \end{pmatrix}.
\]
Note that $\frac{k^2(k-\alpha_1)}{w(k)}\dd k$ is a meromorphic differential on $\Sigma_\alpha$ of the second kind (i.e., all residues are zero) which is holomorphic except for two double poles at $\infty^\pm$ such that
\[
\frac{k^2(k-\alpha_1)}{w(k^\pm)} =\pm1 +\ord(k^{-2}),\qquad k \to\infty.
\]
Substituting \eqref{D2FAinvP} and \eqref{dFdxi} into \eqref{dalphadxi}, we find
\begin{align*}
\begin{pmatrix} \alpha_1'(\xi) \\ \alpha_2'(\xi) \end{pmatrix}
& = - \ii P^{-1} \C{A}\left(-\ii\C{A}^{-1} G  - \ii\begin{pmatrix} \int_{a_1} \frac{k^2(k-\alpha_1)}{w(k)}\dd k \\
 \int_{a_2} \frac{k^2(k-\alpha_1)}{w(k)}\dd k \end{pmatrix}\right)\\
& = - P^{-1} G - P^{-1} \C{A} \begin{pmatrix} \int_{a_1} \frac{k^2(k-\alpha_1)}{w(k)}\dd k \\
\int_{a_2} \frac{k^2(k-\alpha_1)}{w(k)}\dd k \end{pmatrix},
\end{align*}
which is the ODE in \eqref{alphaODE}. 

We have shown that the nonlinear ODE \eqref{alphaODE} describes the solution curves of $F = 0$ whenever such curves exist. By Lemma \ref{Flemma}, each solution curve can be continued as long as it stays in $\C{W}$ and the zeros $\{\mu_j\}_1^2$ remain bounded. We will show in the next lemma that $\alpha,\mu_1,\mu_2$ remain bounded on the zero set of $F$ unless $|\xi|\to\infty$. Therefore, the solution curve can either be extended indefinitely to all $\xi \in (-\infty,\xi_{E_1})$ or it ends at a point $\xi =\xi_m$ where at least one of the following must occur: 
\begin{enumerate}[(i)]
\item 
the zeros $\mu_1$ and $\mu_2$ merge,
\item
$\alpha_2\downarrow 0$ (i.e., $\alpha$ and $\bar{\alpha}$ merge), 
\item
$\alpha$ hits one of the branch points $E_1$ or $E_2$.
\end{enumerate}

%-------------------%
%:lem 6.4
%-------------------%
\begin{lemma}
As $\alpha=\alpha_1+\ii\alpha_2\to\infty$, the function $F$ satisfies
\[
|F(\xi,\alpha_1,\alpha_2)|\to\infty,
\] 
uniformly for $\xi$ in bounded subsets of $\D{R}$ and $\arg\alpha\in [0,\pi]$. In particular, if $F(\xi,\alpha_1(\xi),\alpha_2(\xi))=0$, then $\alpha(\xi)$, $\mu_1(\xi)$, and $\mu_2(\xi)$ remain bounded whenever $\xi$ does.
\end{lemma}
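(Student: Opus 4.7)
The plan is to derive an asymptotic expansion of $F_j=(1/\ii)\int_{a_j}\dd g$ as $\rho:=|\alpha|\to\infty$ with $\alpha=\rho e^{\ii\theta}$, and show that the leading behavior forces $|F|\to\infty$ uniformly for $\xi$ in bounded sets. On each loop $a_j$ the variable $k$ stays in a fixed compact set, so for $\rho$ large the square root admits a convergent expansion
\[
\sqrt{(k-\alpha)(k-\bar\alpha)}=s_j\bigl[\rho-\alpha_1k/\rho+O(\rho^{-1})\bigr],
\]
where $s_j\in\{\pm1\}$ is the branch sign on $a_j$ (for $\rho$ large and $\cos\theta\neq 0$ both loops lie on the same side of the cut $[\bar\alpha,\alpha]$, so $s_1=s_2$). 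Multiplying by $(k-\mu_1)(k-\mu_2)=k^2+(\alpha_1+\xi/4)k+P_\alpha$ with $P_\alpha=\alpha_1^2-\alpha_2^2/2+O(\rho)$ read from \eqref{mu1mu2}, and integrating term by term against $dk/R(k)$, I obtain
\[
\int_{a_j}\dd g=4s_j\bigl[\rho^3\,c_0(\theta)\,I_0^{(j)}+\rho^2R_1^{(j)}(\theta,\xi)+O(\rho)\bigr],\qquad c_0(\theta):=\tfrac{1}{2}(3\cos^2\theta-1),
\]
where $I_n^{(j)}:=\int_{a_j}k^n\,dk/R(k)$ are the standard $a$-periods of the genus-$1$ surface $\{w^2=R^2\}$.

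Two elementary properties of these periods drive the argument: reality of $R^2$ gives $I_n^{(j)}\in\ii\R$, the symmetry $k\mapsto -k$ (swapping $\Sigma_1\leftrightarrow\Sigma_2$) yields $I_n^{(1)}=(-1)^{n+1}I_n^{(2)}$, and $I_0^{(1)}\neq 0$ as the non-vanishing $a_1$-period of the holomorphic differential $dk/R$. Since $c_0(\theta)$ vanishes only at the two discrete values $\theta_\pm$ characterized by $\cos^2\theta_\pm=1/3$, the leading $\rho^3$-term of $F_j$ is nonzero for $\theta$ bounded away from $\theta_\pm$, giving $|F_j|\to\infty$ at rate $\rho^3$ uniformly in bounded $\xi$. (The isolated direction $\theta=\pi/2$, where $s_1=-s_2$, is automatically covered because $c_0(\pi/2)=-1/2\neq 0$.)

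The delicate case is the exceptional directions $\theta=\theta_\pm$, where $c_0$ has a simple zero ($c_0'(\theta_\pm)=\mp\sqrt{2}$). Here I would form the linear combination $F_1I_0^{(2)}-F_2I_0^{(1)}$: because $s_1=s_2$ near $\theta_\pm$ and $I_0^{(2)}=-I_0^{(1)}$, the $\rho^3$ contributions cancel identically, and using $I_1^{(1)}=I_1^{(2)}$ the $\rho^2$-leading term of this combination collapses to $-8s\,\tilde c_1(\theta)\,I_1^{(1)}\,I_0^{(1)}\,\rho^2/\ii$ with $\tilde c_1(\theta):=\tfrac{3}{2}\cos\theta\sin^2\theta$ and $\tilde c_1(\theta_\pm)=\pm 1/\sqrt{3}\neq 0$. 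The non-vanishing of $I_1^{(1)}$ follows by a direct parity computation on $\Sigma_1$: parametrizing $k=-B+\ii t$ with $t\in(-A,A)$ reduces $I_1^{(1)}/\ii$ to a real integral of an explicit function which, by expanding near $B=0$, is seen to be nonzero for $B>0$, hence nonzero throughout the parameter range of the theorem by analyticity. The main obstacle is precisely this step of handling the exceptional directions cleanly: individually $F_1$ or $F_2$ may have its $\rho^3$-coefficient vanish at $\theta_\pm$, and one must exploit the discrete symmetries to form a combination in which the $\rho^3$ cancellation is compensated by a non-vanishing $\rho^2$ contribution. Once uniform divergence of $|F|$ is established, the second assertion of the lemma follows at once: any sequence $(\xi_n,\alpha_n)$ with $F(\xi_n,\alpha_n)=0$ and $\xi_n$ bounded must have $\alpha_n$ bounded, and by the explicit formulas \eqref{mu1mu2} the zeros $\mu_1(\xi),\mu_2(\xi)$ stay bounded whenever $\alpha(\xi)$ does.
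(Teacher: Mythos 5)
Your proposal follows essentially the same route as the paper's proof: expand $\dd g$ for $|\alpha|$ large over the degenerate (genus-$1$) surface $w_1^2=(k-E_1)(k-\bar E_1)(k-E_2)(k-\bar E_2)$, observe that the $|\alpha|^3$ coefficient vanishes only at the two isolated directions with $\cos^2\theta=1/3$, and there pass to the combination $F_1I_0^{(2)}-F_2I_0^{(1)}=-I_0^{(1)}(F_1+F_2)$, in which the cubic terms cancel because $I_0^{(1)}=-I_0^{(2)}$ and the quadratic term survives because $I_1^{(1)}=I_1^{(2)}\neq0$; the paper does exactly this with the two ray parametrizations $\alpha_1=q\alpha_2$ and $\alpha_2=\tilde q\alpha_1$ in place of your single polar expansion (your version is marginally cleaner near $\arg\alpha\in\{0,\pi\}$, where the paper needs its second parametrization). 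Your coefficients $c_0(\theta)=\tfrac12(3\cos^2\theta-1)$ and $\tilde c_1(\theta)=\tfrac32\cos\theta\sin^2\theta$ agree with the paper's $2(2q^2-1)\sqrt{1+q^2}$ and $6q/\sqrt{1+q^2}$.

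One step does not hold up as written: your justification of $I_1^{(1)}\neq0$ ("nonzero for $B>0$ small, hence nonzero throughout the parameter range by analyticity") is a non sequitur — an analytic function of $B$ that is nonzero near $B=0$ can still vanish at isolated larger values of $B$, and the limit $B\to0$ is moreover degenerate since the two cuts $\Sigma_1,\Sigma_2$ collide there. The fact itself is true and has a one-line proof that you should use instead: since $k/w_1(k)=k^{-1}+\ord(k^{-3})$ at infinity (the four branch points sum to zero), deforming $a_1+a_2$ to a large circle gives $I_1^{(1)}+I_1^{(2)}=\frac{1}{\ii}\cdot 2\pi\ii=2\pi$, and the symmetry $k\mapsto-k$ of the symmetric configuration gives $I_1^{(1)}=I_1^{(2)}$, whence $I_1^{(1)}=\pi$. (The same residue computation gives $I_0^{(1)}+I_0^{(2)}=0$; $I_0^{(1)}\neq0$ because the space of holomorphic differentials in genus $1$ is one-dimensional, as you say.) A second, smaller point: the dichotomy "$\cos\theta\neq0\Rightarrow s_1=s_2$" is not uniform (for $\theta\to\pi/2$ with $\rho\cos\theta$ bounded the vertical cut $[\bar\alpha,\alpha]$ still separates the two loops); the clean statement is that whenever $\cos^2\theta$ is close to $1/3$ one has $|\alpha_1|\to\infty$, so the cut lies far to one side of both loops and $s_1=s_2$ there, while for $\cos^2\theta$ bounded away from $1/3$ each $F_j$ diverges individually and the signs are irrelevant. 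With these two repairs the argument is complete.
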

%-------------------%

%-------------------%
\begin{proof}
Let $w_1(k) = \sqrt{(k-E_1)(k-\bar{E}_1)(k-E_2)(k-\bar{E}_2)}$ with branch cuts along $[E_1, \bar{E}_1]$ and $[E_2, \bar{E}_2]$ and the branch fixed by the condition that $w_1(k) \sim k^2$ as $k \to \infty$. As $\alpha \to \infty$ along the ray $\alpha_1 = q \alpha_2$, $q \in \D{R}$, we have
\[
\dd g(k; \mathbf{x}) = \bigg[2(2q^2 -1)\sqrt{1 + q^2} \alpha_2^3  + \bigg(\frac{6qk}{\sqrt{1 + q^2}} + \xi q \sqrt{1 + q^2}\bigg)\alpha_2^2\bigg]\frac{\dd k}{w_1(k)}+\ord(\alpha_2),
\]
uniformly for $q$ and $\xi$ in bounded subsets of $\D{R}$ and for $k$ in compact subsets of $\D{C}\setminus\{E_1,\bar{E}_1,E_2,\bar{E}_2\}$.
Letting
\[
J_j\coloneqq\frac{1}{\ii} \int_{a_j} \frac{\dd k}{w_1(k)}, \quad
K_j\coloneqq\frac{1}{\ii} \int_{a_j} \frac{k\,\dd k}{w_1(k)}, \qquad j = 1,2,
\]
we find, for $j = 1,2$,
\[
F_j(\mathbf{x}) = 2(2q^2 -1)\sqrt{1 + q^2} \alpha_2^3 J_j  
+ \bigg(\frac{6qK_j}{\sqrt{1 + q^2}} + \xi q \sqrt{1 + q^2}J_j\bigg)\alpha_2^2 + \ord(\alpha_2),
\]
uniformly for $q$ and $\xi$ in bounded subsets of $\D{R}$. Using that
\[
J_1 = -J_2 \neq 0, \qquad K_1 = K_2 \neq 0,
\]
we infer that
\begin{subequations}
\begin{align}\label{F1F2plusminusa}
&F_1(\mathbf{x}) - F_2(\mathbf{x}) = 4(2q^2 -1)\sqrt{1 + q^2} \alpha_2^3 J_1 + 2\xi q \sqrt{1 + q^2}J_1 \alpha_2^2 +\ord(\alpha_2),\\
\label{F1F2plusminusb}
&F_1(\mathbf{x}) + F_2(\mathbf{x}) = \frac{12qK_1}{\sqrt{1 + q^2}} \alpha_2^2 +\ord(\alpha_2),
\end{align}
\end{subequations}
uniformly for $q$ and $\xi$ in bounded subsets of $\D{R}$.
Equation \eqref{F1F2plusminusa} implies that $|F(\xi, \alpha_1, \alpha_2)| \to \infty$ as $\alpha \to \infty$, uniformly for $q$ in compact subsets of $\D{R} \setminus \{\pm 1/\sqrt{2}\}$ and $\xi$ in bounded subsets of $\D{R}$. Equation \eqref{F1F2plusminusb} implies that $|F(\xi, \alpha_1, \alpha_2)| \to \infty$ as $\alpha \to \infty$, uniformly for $q$ in compact subsets of $\D{R} \setminus \{0\}$ and $\xi$ in bounded subsets of $\D{R}$. Combining these two conclusions, we find that $|F(\xi, \alpha_1, \alpha_2)| \to \infty$ as $\alpha \to \infty$ for $\arg \alpha \in [\epsilon, \pi - \epsilon]$. 

To show that $|F(\xi, \alpha_1, \alpha_2)| \to \infty$ as $\alpha \to \infty$ also for $\arg \alpha \in [0, \epsilon] \cup [\pi - \epsilon, \pi]$, we instead use the fact that, as $\alpha \to \infty$ along the ray $\alpha_2 = \tilde{q} \alpha_1$, $\tilde{q} \in \D{R}$, we have
\[
\dd g(k; \mathbf{x}) = 2(2-\tilde{q}^2)\sqrt{1 + \tilde{q}^2} |\alpha_1|^3 \frac{\dd k}{w_1(k)}  + \ord(\alpha_1^2),
\]
uniformly for $\tilde{q}$ and $\xi$ in bounded subsets of $\D{R}$ and $k$ in compact subsets of $\D{C}\setminus\{E_1,\bar{E}_1,E_2,\bar{E}_2\}$.

We conclude that $F(\xi, \alpha_1, \alpha_2) \to \infty$ as $\alpha\to \infty$, uniformly for $\xi$ in bounded subsets of $\D{R}$ and $\arg \alpha \in [0, \pi]$. The second statement follows because, by \eqref{mu1mu2}, $\mu_1$ and $\mu_2$ remain bounded whenever $\alpha$  and $\xi$ stay bounded.
\end{proof}
%-------------------%

It remains to show that the zero set $F = 0$ contains a curve which satisfies \eqref{behavioratxiE1} and \eqref{alphaexpansionE1} as $\xi \uparrow \xi_{E_1}$. The limits $\lim_{\xi \uparrow \xi_{E_1}}\mu_j(\xi) =\mu_j(\xi_{E_1})$, $j = 1,2$, are a consequence of \eqref{mu1mu2} if we can show that the zero set of $F$ contains a smooth curve $(\xi,\alpha_1(\xi),\alpha_2(\xi))$ which approaches the point 
\[
\mathbf{x}_0 \coloneqq (\xi_{E_1},\Re E_1,\Im E_1) \in\partial \C{W}
\] 
as $\xi\uparrow \xi_{E_1}$. To prove this, we will first show that $F$ has a continuous extension to $\mathbf{x}_0$ such that $F(\mathbf{x}_0) = 0$ and then apply a boundary version of the implicit function theorem at the point $\mathbf{x}_0$. The proof is complicated by the fact that the Riemann surface $\Sigma_\alpha$ degenerates to a genus zero surface as $\alpha$ approaches $E_1$. This implies that the partial derivatives $\partial_{\alpha_1}F_1$ and $\partial_{\alpha_2}F_1$ blow up like $\ln|\alpha-E_1|$ in this limit. Therefore, we cannot apply the implicit function theorem at the point $\mathbf{x}_0\in\partial\C{W}$ directly to $F$; instead we will introduce a function $\tilde{F}$, which is a modified version of $F$, and apply the implicit function theorem to this modified function.

We begin by establishing the behavior of $F$ and its first order partial derivatives as $\alpha\to E_1$. The analysis of the second component $F_2$ is easier than the analysis of $F_1$, because $F_2$ is nonsingular at $\alpha = E_1$. We therefore begin with $F_2$.

Let $B_R \subset \D{R}^3$ denote the open ball of radius $R > 0$ centered at $\mathbf{x}_0$. Let $L\subset\D{R}^3$ denote the line on which $\alpha = E_1$:
\[
L=\{(\xi,\Re E_1,\Im E_1)\mid\xi\in\D{R}\}.
\]
Let $\mathbf{x}_L = (\xi,\Re E_1,\Im E_1)$ denote the orthogonal projection of $\mathbf{x}=(\xi,\alpha_1,\alpha_2)$ onto $L$. Note that $\dist(\mathbf{x}, L) = |\alpha-E_1|$. By choosing $R>0$ sufficiently small, we may assume that $\bar{B}_R\setminus L\subset\C{W}$ and, say, $R < \min\{A,B,1\}/2$.

Let $\Sigma_0$ denote the genus $0$ Riemann surface with a single cut from $\bar{E}_2$ to $E_2$ defined by 
\[
w_0^2=(k-E_2)(k-\bar{E}_2).
\]
We view this as a two-sheeted cover of the complex plane such that $w_0(k)=\sqrt{(k-E_2)(k-\bar{E}_2)}\sim k$ as $k\to\infty$ on the upper sheet. 

%-------------------%
%:lem 6.5
%-------------------%
\begin{lemma}[Behavior of $F_2(\mathbf{x})$ as $\alpha\to E_1$]\label{F2lemma}
The function $F_2\colon B_R \setminus L \to\D{R}$ extends to a smooth function $B_R \to\D{R}$. Moreover, the following estimates hold uniformly for $\mathbf{x}\in B_R$:
\begin{subequations}\label{F2expansions}
\begin{align}\label{F2expansion}
F_2(\mathbf{x}) &=\ord(|\alpha-E_1|)
	\\ \label{dF2dxiexpansion}
\partial_{\xi}F_2(\mathbf{x}) &=\ord(|\alpha-E_1|),
	\\\label{dF2dalphajexpansion}	
\partial_{\alpha_j}F_2(\mathbf{x}) &= q_j(\xi) +\ord(|\alpha-E_1|),\qquad j = 1,2,
\end{align}
\end{subequations}
where $\accol{q_j(\xi)}_1^2$ are linear functions of $\xi\in\D{R}$ given by
\begin{align*}
& q_1(\xi) = -\pi\,\Im\bigg\{\frac{\C{Q}(\xi)}{\sqrt{B} \sqrt{E_2}} \bigg\},\qquad
q_2(\xi) =\pi\,\Re\bigg\{\frac{\C{Q}(\xi)}{\sqrt{B} \sqrt{E_2}} \bigg\},
\end{align*}
with
\begin{align}\label{X3def}
\C{Q}(\xi) \coloneqq -2\ii A^2+A (\xi -12 B)+2\ii B (4 B-\xi)
\end{align}
and the principal branch is used for $\sqrt{E_2}$. For $\xi =\xi_{E_1}$, it holds that $q_1(\xi_{E_1}) \neq 0$ and $q_2(\xi_{E_1}) \neq 0$.
\end{lemma}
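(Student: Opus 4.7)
The plan hinges on the observation that the cycle $a_2$ encloses only the cut $\Sigma_2=[\bar E_2,E_2]$ and stays at a positive distance from the other branch points $E_1,\bar E_1,\alpha,\bar\alpha$ throughout $B_R$: since we have taken $R<\min\{A,B,1\}/2$, the point $\alpha$ lies in a small disk around $E_1$ which is separated from a fixed tubular neighborhood of $\Sigma_2$. Fix once and for all a smooth oriented contour $a_2$ lying in this tube; then $\dd g(k;\B{x})$ is jointly smooth in $(k,\B{x})$ on $a_2\times B_R$, so differentiation under the integral sign yields the claimed smooth extension $F_2\colon B_R\to\D{R}$ together with integral representations of all of its partial derivatives.

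The next step is to prove that $F_2$ and $\partial_\xi F_2$ vanish identically on $L\cap B_R$, after which \eqref{F2expansion} and \eqref{dF2dxiexpansion} follow by Taylor expansion at $\B{x}_L$ in $(\alpha_1,\alpha_2)$. On $L$ the differential $\dd g$ degenerates on the upper sheet to
\[
\dd\tilde g(k)=\frac{4(k-\mu_1)(k-\mu_2)}{\sqrt{(k-E_2)(k-\bar E_2)}}\,\dd k,
\]
with $\mu_1,\mu_2$ the values of \eqref{mu1mu2} at $(\alpha_1,\alpha_2)=(-B,A)$, which coincide with the genus~$0$ zeros of \eqref{mu12bis}. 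Deforming $a_2$ out to a large circle encounters no poles, and the normalization $\dd\tilde g/\dd k=4k+\xi+\ord(k^{-2})$ forces the residue at $\infty$ to vanish; hence $\int_{a_2}\dd\tilde g=0$, i.e.\ $F_2|_L\equiv 0$. The same deformation applied to $\partial_\xi\dd g|_L$, whose leading behavior at $\infty$ is $1+\ord(k^{-2})$, gives $\partial_\xi F_2|_L\equiv 0$.

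It remains to compute $\partial_{\alpha_j}F_2$ on $L$. By \eqref{ddgdalpha} and the limit $w\to(k-E_1)(k-\bar E_1)w_0(k)$ with $w_0(k)\coloneqq\sqrt{(k-E_2)(k-\bar E_2)}\sim k$, one gets
\[
\partial_{\alpha_j}F_2(\B{x}_L)=\frac{1}{\ii}\int_{a_2}\frac{P^{*}_{1j}+kP^{*}_{2j}}{(k-E_1)(k-\bar E_1)\,w_0(k)}\,\dd k,\qquad P^{*}_{ij}\coloneqq P_{ij}(\xi,-B,A).
\]
This integrand is rational in $k$ divided by $w_0$, with simple poles only at $E_1,\bar E_1$ and decay $\ord(k^{-2})$ at infinity; Schwarz reflection (with the branch $w_0\sim k$ at $\infty$) gives $w_0(\bar E_1)=\overline{w_0(E_1)}$. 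Deforming $a_2$ out to infinity and collecting the two residues (which are complex conjugates of one another because $P^{*}_{ij}\in\D{R}$ and $E_1-\bar E_1\in\ii\D{R}$) produces a real quantity; substituting $w_0(E_1)^2=(E_1-E_2)(E_1-\bar E_2)=4B\bar E_2$ and the explicit polynomials from \eqref{Pijdef} reduces the result to precisely $q_j(\xi)$. Finally, the nonvanishing $q_j(\xi_{E_1})\neq 0$ is a direct algebraic check after substitution of $\xi_{E_1}=2(B+|E_1|)$, verifying that $\C{Q}(\xi_{E_1})/(\sqrt{B}\sqrt{E_2})$ is neither purely real nor purely imaginary. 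The main technical subtlety throughout is the consistent tracking of the branch of $w_0$ and the orientation of $a_2$ in the residue computation; once these conventions are pinned down, everything else is elementary algebra.
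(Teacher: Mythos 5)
Your proposal is correct and follows essentially the same route as the paper's proof: fixing the contour $a_2$ at positive distance from the degenerating branch points to get the smooth extension, deforming to infinity and using the normalization $\dd g/\dd k = 4k+\xi+\ord(k^{-2})$ to kill $F_2$ and $\partial_\xi F_2$ on $L$, and computing $\partial_{\alpha_j}F_2|_L$ via the residues of $(P_{1j}^*+kP_{2j}^*)/((k-E_1)(k-\bar E_1)w_0(k))$ at $E_1$ and $\bar E_1$. The final nonvanishing check you describe is equivalent to the paper's verification that $\Im\{\C{Q}(\xi_{E_1})^2\bar E_2\}\neq 0$.
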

%-------------------%

%-------------------%
\begin{proof}
In the limit as $\mathbf{x}\in B_R\setminus L$ approaches $L$, we have $\alpha\to E_1$ and $\bar{\alpha} \to\bar{E}_1$, so that the Riemann surface $\Sigma_\alpha$ degenerates to the genus zero surface $\Sigma_0$. With appropriate choices of the branches, we have
\[
F_2(\mathbf{x}) =\frac{1}{\ii}\int_{a_2} \frac{4(k-\mu_1(\mathbf{x}))(k-\mu_2(\mathbf{x}))\sqrt{(k-\alpha)(k-\bar\alpha)}}{\sqrt{(k-E_1)(k-\bar E_1)(k-E_2)(k-\bar E_2)}}\dd k.
\]
We see that the integrand is smooth as a function of $\mathbf{x} \in B_R$ and analytic as a function of $k$ for $k$ in a neighborhood of the contour $a_2$. This shows that $F_2\colon B_R\setminus L\to\D{R}$ extends to a smooth function $B_R\to\D{R}$. 

To prove \eqref{F2expansion}, we note that a Taylor expansion gives
\begin{subequations}\label{dgdkexpansion}
\begin{equation}\label{dgdkexpansion1}
\frac{\dd g}{\dd k}(k;\mathbf{x})=\frac{4(k-\mu_1(\mathbf{x}_L))(k-\mu_2(\mathbf{x}_L))}{w_0(k)}\left(1+\ord(|\alpha-E_1|)\right),
\end{equation}
uniformly for $\mathbf{x}\in B_R$ and $k$ on $a_2$. Similarly we also have
\begin{equation}\label{dgdkexpansion2}
\frac{\dd g}{\dd k}(k;\mathbf{x}) 
=\frac{4(k-\mu_1(\mathbf{x}))(k - \mu_2(\mathbf{x}))}{w_0(k)}	
\biggl\{1-\frac{\alpha-E_1}{2(k-E_1)}-\frac{\bar{\alpha} - \bar{E}_1}{2(k-\bar{E}_1)}+\ord(|\alpha-E_1|^2)\biggr\},
\end{equation}
\end{subequations}
uniformly for $\mathbf{x}\in B_R$ and $k$ on $a_2$. It follows from \eqref{dgdkexpansion1} that 
\[
F_2(\mathbf{x})=\frac{1}{\ii}\int_{a_2}\dd g(k;\mathbf{x})=\frac{1}{\ii}\int_{a_2}\frac{4(k-\mu_1(\mathbf{x}_L))(k-\mu_2(\mathbf{x}_L))}{w_0(k)}\dd k+\ord(|\alpha-E_1|).
\]
Deforming the contour to infinity and using that
\begin{align*}
\frac{4(k-\mu_1(\mathbf{x}_L))(k-\mu_2(\mathbf{x}_L))}{w_0(k)} 
&=4k - 4(\mu_1(\mathbf{x}_L) +\mu_2(\mathbf{x}_L) -B)\\
&\quad+\frac{4 (\mu_1(\mathbf{x}_L)-B) (\mu_2(\mathbf{x}_L)-B)-2 A^2}{k} +\ord(k^{-2})\\
&=4k +\xi +\ord(k^{-2}),\qquad k \to\infty,
\end{align*}
we find that the integral over $a_2$ vanishes. This proves \eqref{F2expansion}.

To derive the expansions of the first-order partial derivatives, we use \eqref{dgdkexpansion} to compute
\[
\partial_{\xi}\frac{\dd g}{\dd k}(k;\mathbf{x})=\partial_{\xi}  \frac{4(k-\mu_1(\mathbf{x}))(k-\mu_2(\mathbf{x}))(k-\alpha)(k-\bar{\alpha})}{w(k;\mathbf{x})}=X_0+\ord(|\alpha-E_1|)
\]
and, similarly,
\[
\partial_{\alpha_j}\frac{\dd g}{\dd k}(k;\mathbf{x})= X_j +\ord(|\alpha-E_1|),\qquad j = 1,2,
\]
where the error terms are uniform with respect to $k \in a_2$ and $\{X_j\}_0^2$ are short-hand notations for the expressions
\begin{align*}
& X_0 \coloneqq \frac{k-B}{w_0(k)},
	\\
& X_1 \coloneqq \frac{A^2 (-10 B+2 k+\xi )+2 B (B+k) (\xi -4 B)}{(k-E_1)(k-\bar{E}_1)w_0(k)},
	\\
& X_2 \coloneqq -\frac{A \left(2 A^2+4 B^2+B (12 k+\xi )-k \xi \right)}{(k-E_1)(k-\bar{E}_1)w_0(k)}.
\end{align*}
Consequently, deforming the contour to infinity and noting that the residue of $X_j$ at $k =\infty$ vanishes for each $j$, we obtain
\begin{align*}
\partial_{\xi}F_2(\mathbf{x})
& =\frac{1}{\ii} \int_{a_2} X_0 \dd k +\ord(|\alpha-E_1|) =\ord(|\alpha-E_1|),\\
\partial_{\alpha_1}F_2(\mathbf{x})
& =\frac{1}{\ii} \int_{a_2} X_1 \dd k +\ord(|\alpha-E_1|)
= -2\pi \bigg(\Res_{k = E_1} +\Res_{k =\bar{E}_1}\bigg) X_1+\ord(|\alpha-E_1|)\\
&=q_1(\xi) +\ord(|\alpha-E_1|),\\
\partial_{\alpha_2}F_2(\mathbf{x})
& =\frac{1}{\ii} \int_{a_2} X_2 \dd k +\ord(|\alpha-E_1|)
 = -2\pi \bigg(\Res_{k = E_1} +\Res_{k =\bar{E}_1}\bigg) X_2
+\ord(|\alpha-E_1|),\\
&=q_2(\xi) +\ord(|\alpha-E_1|),
\end{align*}
uniformly for $\mathbf{x} \in\bar{B}_R$. This proves \eqref{dF2dxiexpansion} and \eqref{dF2dalphajexpansion}.

In order to prove that $q_1(\xi_{E_1}) \neq 0$ and $q_2(\xi_{E_1}) \neq 0$, it is sufficient to verify that $\frac{\C{Q}^2}{B E_2}\notin\D{R}$. But evaluation at $\xi =\xi_{E_1}$ gives
\[
\C{Q}(\xi_{E_1}) \coloneqq 2 A \left(|E_2|-5 B\right)-4\ii B \left(|E_2|-B\right)-2\ii A^2
\]
and then a computation yields
\[
\Im\{\C{Q}^2 \bar{E}_2\} = 16 \big[4 A B^3 (|E_2|-B)+A^3 B (3 |E_2|-5 B)\big].
\]
The right-hand side is strictly positive for $A,B>0$. This proves that $q_j(\xi_{E_1}) \neq 0$ for $j = 1,2$ and completes the proof of the lemma.
\end{proof}
%-------------------%

We next consider the first component $F_1(\mathbf{x})$ for $(\xi,\alpha)$ near $(\xi_{E_1}, E_1)$. Since it is enough for our purposes, we will for simplicity restrict attention to $\alpha$ such that $\alpha_1 \geq \Re E_1$;  this will simplify the specification of some branches of square roots. As above, we let $R> 0$ be small. We recall that $\mathbf{x}_0 = (\xi_{E_1},\Re E_1,\Im E_1) \in L$ and let $S_R \subset \D{R}^3$ denote the open half-ball 
\[
S_R = B_R \cap \{\alpha_1 > \Re E_1\}.
\]
Square roots and logarithms are defined using the principal branch unless specified otherwise.

%-------------------%
%:lem 6.6
%-------------------%
\begin{lemma}[Behavior of $F_1(\mathbf{x})$ as $\alpha\to E_1$]\label{F1lemma}
As $\mathbf{x} \in\bar{S}_R \setminus L$ approaches the line $L$ (in other words, as $\alpha\to E_1$), $F_1(\mathbf{x})$ admits an asymptotic expansion to all orders of the form
\begin{align}\label{F1allorders}
F_1(\mathbf{x}) \sim \Im\bigg\{\sum_{n,m = 0}^\infty \big[c_{nm}(\xi) + d_{nm}(\xi) (\alpha-E_1) \ln(\alpha-E_1)\big]
(\alpha-E_1)^n(\bar{\alpha} - \bar{E}_1)^m\bigg\},
\end{align}
where $\{c_{nm}(\xi), d_{nm}(\xi)\}_{n,m=0}^\infty$ are smooth complex-valued functions of $\xi$. Moreover, the expansion \eqref{F1allorders} can be differentiated termwise with respect to $\alpha_1$, $\alpha_2$, and $\xi$. In particular, the following estimates are valid uniformly for $\mathbf{x}=(\xi,\alpha_1,\alpha_2) \in\bar{S}_R \setminus L$:
\begin{subequations}\label{F1expansions}
\begin{align}\label{F1expansion}
F_1(\mathbf{x}) &= f_0(\xi) +\ord\big(|\alpha-E_1|(1 + |\ln|\alpha-E_1||)\big),\\ 
\label{dF1dxiexpansion}
\partial_{\xi}F_1(\mathbf{x}) &= f_0'(\xi) +\ord\big(|\alpha-E_1|(1 + |\ln|\alpha-E_1||)\big),\\
\label{dF1dalpha1expansion}	
\partial_{\alpha_1}F_1(\mathbf{x})&=\Im\{d_{00}(\xi) \ln(\alpha-E_1)\} + f_1(\xi) +\ord\big(|\alpha-E_1|(1 + |\ln|\alpha-E_1||)\big),\\
\label{dF1dalpha2expansion}	
\partial_{\alpha_2}F_1(\mathbf{x})&=\Im\{\ii d_{00}(\xi) \ln(\alpha-E_1)\} + f_2(\xi) +\ord\big(|\alpha-E_1|(1 + |\ln|\alpha-E_1||)\big),
\end{align}
\end{subequations}
where 
\begin{itemize}
\item 
$f_0(\xi)$ is the linear real-valued function defined by
\begin{align}\label{f0def}
f_0(\xi) =  - 8 \sqrt{B} \big(\Im\sqrt{E_2}\big) (\xi - \xi_{E_1}).
\end{align}
\item 
$d_{00}(\xi)$ is the linear function of $\xi \in\D{R}$ given by
\begin{align}\label{d00def}
 d_{00}(\xi) =\frac{-\ii \overline{\C{Q}(\xi)}}{\sqrt{B} \sqrt{\bar{E}_2}}
\end{align}
with $\C{Q}(\xi)$ defined in \eqref{X3def}.
\item 
$\{f_j(\xi)\}_1^2$ are smooth real-valued functions of $\xi \in\D{R}$.
\end{itemize}
\end{lemma}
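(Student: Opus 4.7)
The plan is to collapse the contour $a_1$ onto the segment $[\bar E_1, E_1]$, identify the $\alpha=E_1$ limit as the smooth part $f_0$, and extract the logarithmic contributions by a local analysis near the endpoints of the segment.

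Since $\alpha_1>\Re E_1$, the cut $[\bar\alpha,\alpha]$ lies strictly to the right of $[\bar E_1, E_1]$, so $\sqrt{(k-\alpha)(k-\bar\alpha)}$ is single-valued in a neighborhood of the segment, while $R(k)\coloneqq\sqrt{(k-E_1)(k-\bar E_1)(k-E_2)(k-\bar E_2)}$ jumps sign across $[\bar E_1, E_1]$. Collapsing $a_1$ to a dogbone therefore yields
\[
F_1(\mathbf{x})=\frac{2}{\ii}\int_{\bar E_1}^{E_1}\frac{4(k-\mu_1)(k-\mu_2)\sqrt{(k-\alpha)(k-\bar\alpha)}}{R_+(k)}\,\dd k.
\]
Formally setting $\alpha=E_1$ cancels $\sqrt{(k-E_1)(k-\bar E_1)}$ against $R_+$ and leaves the genus-$0$ integral $f_0(\xi)=(2/\ii)\int_{\bar E_1}^{E_1}4(k-\mu_1)(k-\mu_2)/\sqrt{(k-E_2)(k-\bar E_2)}_+\,\dd k$, a linear function of $\xi$. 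It vanishes at $\xi=\xi_{E_1}$ because in the genus-$0$ sector $\xi\geq\xi_{E_1}$ the differential $\dd g$ is holomorphic in the region enclosed by $a_1$, so its $a_1$-period is zero. Its $\xi$-derivative is computed from $\partial_\xi\dd g|_{\alpha=E_1}=(k-B)\,\dd k/\sqrt{(k-E_2)(k-\bar E_2)}$ (which follows from the polynomial identity $k^3+Bk^2+(A^2-B^2)k-B|E_1|^2=(k-B)(k-E_1)(k-\bar E_1)$) and deformation to infinity, producing the coefficient $-8\sqrt{B}\,\Im\sqrt{E_2}$ and hence \eqref{f0def}.

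The heart of the proof is the local analysis near $k=E_1$. With $z=k-E_1$ and $\delta=\alpha-E_1$, the integrand takes the form $h(z,\delta,\bar\delta,\xi)\sqrt{(z-\delta)/z}$, where $h$ is smooth in $\xi$ and analytic in $(z,\delta,\bar\delta)$ near the origin (since $E_1-\bar E_1=2\ii A\neq 0$ keeps the factor $\sqrt{(z+2\ii A-\bar\delta)/(z+2\ii A)}$ analytic there). A convergent Taylor expansion of $h$ in $(z,\delta,\bar\delta)$ combined with termwise integration using the elementary identities
\[
\int_0^\rho z^n\sqrt{\frac{z-\delta}{z}}\,\dd z=A_n(\delta,\rho)+B_n\,\delta^{n+1}\ln\delta,\qquad n\in\D{Z}_{\geq 0},
\]
(with $A_n$ analytic in $\delta$ and $B_n\neq 0$ an explicit rational coming from the binomial series for $\sqrt{1-\delta/z}$) generates the asymptotic series \eqref{F1allorders} from the $E_1$-endpoint. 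The mirror analysis at $\bar E_1$ produces the conjugate series in $\bar\delta$, and the combination, multiplied by $2/\ii$ and projected onto $\D{R}$ by \eqref{intdg}, gives the $\Im\{\cdot\}$ and the mixed monomials $(\alpha-E_1)^n(\bar\alpha-\bar E_1)^m$. Collecting the $n=m=0$ contribution and evaluating $h(0,0,0,\xi)$ using \eqref{mu1mu2atxiE1} and \eqref{X3def} identifies $d_{00}$ as in \eqref{d00def}; termwise differentiability follows because truncating the Taylor series for $h$ at order $N$ leaves a remainder that is uniformly $\ord(|\delta|^{N+1}(1+|\ln|\delta||))$ on $\bar S_R$ even after differentiation, yielding the estimates \eqref{F1expansion}--\eqref{dF1dalpha2expansion}.

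The main obstacle is bookkeeping: the signs and principal branches of $R_+$, $\sqrt{(k-E_2)(k-\bar E_2)}_+$, and $\sqrt{(k-\alpha)(k-\bar\alpha)}$ (all compatible with $w\sim k^3$ at infinity on $\Sigma_\alpha$) must be tracked consistently so that both endpoint analyses combine into the correct signs in \eqref{f0def} and \eqref{d00def} and into the symmetry structure of \eqref{F1allorders}. Once these conventions are fixed at $\mathbf{x}_0$, the remainder of the proof is a mechanical expansion.
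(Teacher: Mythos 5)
Your overall strategy coincides with the paper's: collapse $a_1$ onto a contour through $E_1$ and $\bar E_1$, factor the integrand near $E_1$ into a part analytic in $(k,\alpha,\bar\alpha)$ times the singular factor $\sqrt{\alpha-k}/\sqrt{E_1-k}$, and generate the $(\alpha-E_1)^{n+1}\ln(\alpha-E_1)$ terms by integrating $(E_1-k)^{l-\frac12}\sqrt{\alpha-k}$ termwise — your "elementary identities" are exactly the paper's integrals $I_l(\alpha)$, established there by integration by parts and induction. The extraction of $d_{00}$ from the leading term and the termwise differentiability are handled at the same level of rigor as in the paper, so that part is fine (modulo the slip that \eqref{mu1mu2atxiE1} gives $\mu_j$ only at $\xi=\xi_{E_1}$, whereas $d_{00}(\xi)$ requires $\mu_j(\mathbf{x}_L)$ at general $\xi$).

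There is, however, a genuine gap in your determination of $f_0$. You correctly identify $f_0(\xi)$ as $(2/\ii)$ times the integral of the genus-$0$ integrand over the \emph{open segment} $[\bar E_1,E_1]$, but your justification that it vanishes at $\xi=\xi_{E_1}$ — "the differential is holomorphic in the region enclosed by $a_1$, so its $a_1$-period is zero" — is a non sequitur. The limiting genus-$0$ differential is holomorphic inside $a_1$ for \emph{every} $\xi$ (its only finite cut is $[\bar E_2,E_2]$), so if that argument were valid it would give $f_0\equiv 0$, which contradicts \eqref{f0def}. The point is that $f_0(\xi)$ is not the $a_1$-period of the limiting differential: as $\alpha\to E_1$ the cut $[\bar\alpha,\alpha]$ collides with $[\bar E_1,E_1]$ and pinches the loop, so the limit of the period is the one-sided segment integral, not the (trivially zero) period of the limit. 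You must actually evaluate that segment integral. The paper does so via an explicit antiderivative, obtaining $f_0(\xi)=16A\sqrt{B}\,\Re\sqrt{E_2}-8\xi\sqrt{B}\,\Im\sqrt{E_2}$ and then checking that its unique root is $\xi_{E_1}$. (A conceptual substitute for your argument does exist: by the Schwarz symmetry of the genus-$0$ $g$-function $g_0$, the segment integral equals $2\ii\,\Im g_0(E_1)$, which vanishes precisely when the infinite branch of $\Im g_0=0$ passes through $E_1$ — the defining property of $\xi_{E_1}$. But that is not what you wrote.) Relatedly, computing $f_0'(\xi)$ by "deformation to infinity" is not available for an open contour; one instead evaluates $\int_{[\bar E_1,E_1]}(k-B)\,\dd k/\sqrt{(k-E_2)(k-\bar E_2)}$ by antiderivative. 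Since \eqref{f0def} is what later guarantees $\partial_\xi\tilde{F}_1(\mathbf{x}_0)\neq 0$ and that $F$ vanishes on $L$ only at $\xi=\xi_{E_1}$, this step must be repaired, not asserted.
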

%-------------------%

%-------------------%
\begin{proof}
In order to derive \eqref{F1allorders}, we fix a large negative number $p< 0$. For $z_0, z_1 \in\D{C}$, we let $[z_0,z_1]$ denote the straight line segment from $z_0$ to $z_1$, and we let $[z_0,z_1]^+$ denotes its preimage in the upper sheet under the natural projection $\Sigma_\alpha\to\D{C}$. Deforming the contour and using the symmetry $\dd g(k) =\overline{\dd g(\bar{k})}$, we see that, for $\mathbf{x} \in\bar{S}_R \setminus L$,
\begin{equation}\label{F1pE1}
F_1(\mathbf{x}) =\frac{1}{\ii} \int_{a_1} \dd g
= -\frac{2}{\ii}\bigg(\int_{[p, E_1]^+} \dd g +\int_{[\bar{E}_1,p]^+} \dd g\bigg)
=\Im\bigg\{-4 \int_{[p, E_1]^+} \dd g\bigg\}.
\end{equation}	
Defining the function $h(k;\mathbf{x})$ for $k$ in a neighborhood of $[p, E_1]$ by
\[
h(k;\mathbf{x}) = - \frac{4(k-\mu_1(\mathbf{x}))(k-\mu_2(\mathbf{x})) \sqrt{\bar{\alpha} - k}}{\sqrt{\bar{E}_1 - k} \sqrt{(E_2-k)(\bar{E}_2-k)}},
\]
we have
\[
h(k;\mathbf{x}) =\frac{\sqrt{E_1 - k}}{\sqrt{\alpha - k}}\,\frac{\dd g}{\dd k}(k^+;\mathbf{x}) \quad \text{for $k \in [p, E_1]$}.
\]
Here and elsewhere in the proof, the principal branch is adopted for all square roots and logarithms. The function $h$ depends smoothly on $\mathbf{x} \in\bar{S}_R$ and is analytic for $k$ in a neighborhood of $[p, E_1]$. Defining $I_l(\alpha)$ by
\[
I_l(\alpha) \coloneqq \int_{[p_1, E_1]} (E_1 - k)^{l - \frac{1}{2}} \sqrt{\alpha - k}\,\dd k,\qquad l = 0,1,\dots,
\]
and employing the expansion 
\[
h(k;\mathbf{x}) \sim \sum_{n,m,l \geq 0} h_{nml}(\xi) (\alpha-E_1)^{n}(\bar{\alpha} - \bar{E}_1)^{m}(E_1 - k)^l,
\]
where $h_{nml}(\xi)$ are smooth functions, we infer that if $p_1 \in [p, E_1]$ is a point sufficiently close to $E_1$, then we have the expansion
\begin{align}         
\int_{[p_1, E_1]^+}\dd g(k;\mathbf{x})
&=\int_{[p_1, E_1]}h(k;\mathbf{x}) \frac{\sqrt{\alpha - k}}{\sqrt{E_1 - k}}\,\dd k\notag\\
\label{intdghnml}
&\sim\sum_{n, m, l \geq 0} h_{nml}(\xi) (\alpha-E_1)^n(\bar{\alpha} - \bar{E}_1)^mI_l(\alpha)
\end{align}
and this expansion can be differentiated termwise with respect to $\alpha_1$, $\alpha_2$, and $\xi$.

We claim that there exist complex coefficients $\{q_l\}_{l\geq 0}$ and $\{r_{lj}\}_{l,j\geq 0}$ such that
\begin{equation}\label{Ilalphaexpansion}
I_l(\alpha) \sim q_l (\alpha-E_1)^{l+1}\ln(\alpha-E_1) +\sum_{j=0}^\infty r_{lj} (\alpha-E_1)^j
\end{equation}
for each integer $l \geq 0$ as $\alpha\to E_1$. Indeed, the statement is true for $l = 0$ by direct computation. Moreover, an integration by parts gives, for $l \geq 1$, 
\begin{align*}
I_l(\alpha)
&= - \frac{2}{3}(E_1 - p_1)^{l-\frac{1}{2}}(\alpha - p_1)^{\frac{3}{2}}
 - \frac{2(l - \frac{1}{2})}{3} \int_{[p_1, E_1]} (E_1 - k)^{l - \frac{3}{2}} (\alpha -k)^{\frac{3}{2}}\dd k\\
&= - \frac{2}{3}(E_1 - p_1)^{l-\frac{1}{2}}(\alpha - p_1)^{\frac{3}{2}}
- \frac{2(l - \frac{1}{2})}{3} \big\{(\alpha -E_1)I_{l-1}(\alpha) + I_l(\alpha)\big\}.
\end{align*}
Solving for $I_l(\alpha)$, we obtain
\[
I_l(\alpha)
=\frac{1}{1+\frac{2}{3}(l-\frac{1}{2})}\bigg\{-\frac{2}{3}(E_1 - p_1)^{l-\frac{1}{2}}(\alpha- p_1)^{\frac{3}{2}}-\frac{2}{3}(l-\frac{1}{2})(\alpha -E_1)I_{l-1}(\alpha)\bigg\},
\]
and hence \eqref{Ilalphaexpansion} follows for all integers $l \geq 0$ by induction.

Equations \eqref{intdghnml} and \eqref{Ilalphaexpansion} imply that, as $\alpha\to E_1$,
\begin{equation} \label{minus4intdg}
-4\int_{[p, E_1]^+} \dd g(k;\mathbf{x})
\sim\sum_{n,m \geq 0} \big[c_{nm}(\xi) + d_{nm}(\xi) (\alpha-E_1) \ln(\alpha-E_1)\big](\alpha-E_1)^n(\bar{\alpha} - \bar{E}_1)^m,
\end{equation}
where $c_{nm}(\xi), d_{nm}(\xi)$ are smooth complex-valued functions of $\xi$ which are independent of $\alpha_1$ and $\alpha_2$, and the expansion can be differentiated termwise with respect to $\alpha_1$, $\alpha_2$, and $\xi$. The existence of the expansion \eqref{F1allorders} now follows from \eqref{F1pE1}.

The rest of the lemma follows from \eqref{F1allorders} if we can verify the expressions \eqref{f0def} and \eqref{d00def} for $f_0$ and $d_{00}$. To derive the expression \eqref{f0def} for $f_0$, we note that by \eqref{F1pE1} (see also \eqref{dgdkexpansion1}) 
\begin{align*}
f_0(\xi) 
&=\lim_{\alpha\to E_1} F_1(\mathbf{x})
= 2\ii\lim_{\alpha\to E_1} \bigg(\int_{[p, E_1]^+} +\int_{[\bar{E}_1, p]^+}\bigg) \dd g(k;\mathbf{x})\\ 
& = 2\ii\bigg(\int_{[p,E_1]^+}+\int_{[\bar{E}_1, p]^+}\bigg) \dd g(k;\mathbf{x}_L) 
= -2\ii\int_{[\bar{E}_1,E_1]} \frac{4(k-\mu_1(\mathbf{x}_L))(k-\mu_2(\mathbf{x}_L))}{\sqrt{(E_2-k)(\bar{E}_2-k)}} \dd k.
\end{align*}
Substituting in the expressions for $\mu_1(\mathbf{x}_L)$ and $\mu_2(\mathbf{x}_L)$ and integrating, we find
\begin{align*}
f_0(\xi) & = - 2\ii\int_{[\bar{E}_1,E_1]} \frac{2A^2 - (B-k)(4k +\xi)}{\sqrt{A^2 + (B-k)^2}} \dd k
	\\
& = - 2\ii\Big[(2B + 2k +\xi) \sqrt{A^2 + (B-k)^2}\Big]_{k=\bar{E}_1}^{E_1}
	\\
& = 16 A \sqrt{B} \Re\sqrt{E_2} - 8 \xi \sqrt{B} \Im\sqrt{E_2}.
\end{align*}
Observing that the definition \eqref{xiE1def} of $\xi_{E_1}$ can be rewritten as
\[
\xi_{E_1} =\frac{2A}{\tan(\frac{1}{2} \arctan\frac{A}{B})}= 2 A\frac{\Re\sqrt{E_2}}{\Im\sqrt{E_2}},
\]
the expression for $f_0$ in \eqref{f0def} follows.

We finally derive the expression \eqref{d00def} for $d_{00}(\xi)$. Using \eqref{minus4intdg} and then \eqref{ddgdalpha}, we see that
\begin{align*}
d_{00}(\xi) 
&=\lim_{\alpha\to E_1} \frac{-4\int_{[p, E_1]^+} \partial_{\alpha_1}\dd g(k;\mathbf{x})}{\ln(\alpha-E_1)}
=\lim_{\alpha\to E_1} \frac{-4\int_{[p, E_1]^+} \frac{P_{11}(\mathbf{x}) + k P_{21}(\mathbf{x})}{w(k)}\dd k}{\ln(\alpha-E_1)}.
\end{align*}
Consequently,	
\begin{align*}
d_{00}(\xi)
&=\lim_{\alpha\to E_1} \frac{4\int_{[p, E_1]} \frac{P_{11}(\mathbf{x}_L) + k P_{21}(\mathbf{x}_L)}{\sqrt{E_1 -k}\sqrt{\bar{E}_1 -k} \sqrt{E_2-k}\sqrt{\bar{E}_2-k} \sqrt{\alpha - k}\sqrt{\bar{E}_1 - k}}\dd k}{\ln(\alpha-E_1)}\\
&=\lim_{\alpha\to E_1} \frac{4\frac{P_{11}(\mathbf{x}_L) + E_1 P_{21}(\mathbf{x}_L)}{\sqrt{\bar{E}_1 - E_1} \sqrt{E_2-E_1}\sqrt{\bar{E}_2-E_1} \sqrt{\bar{E}_1 - E_1}}
\int_{[p, E_1]} \frac{\dd k}{\sqrt{E_1 -k}\sqrt{\alpha - k}}}{\ln(\alpha-E_1)}.	
\end{align*}
Using that
\begin{align*}
\int_{[p, E_1]} \frac{\dd k}{\sqrt{E_1 -k}\sqrt{\alpha - k}}
& =  -2 \ln\big(\sqrt{E_1 - k} +\sqrt{\alpha - k}\big)\bigg|_{k = p}^{E_1}\\
& =  -2 \ln\bigg(\frac{\sqrt{\alpha-E_1}}{\sqrt{E_1 - p} +\sqrt{\alpha - p}}\bigg),
\end{align*}
we find
\[
d_{00}(\xi) = -4\frac{P_{11}(\mathbf{x}_L) + E_1 P_{21}(\mathbf{x}_L)}{(\bar{E}_1 - E_1) \sqrt{E_2-E_1}\sqrt{\bar{E}_2-E_1}}
=\frac{2A^2 +\ii  A(12 B - \xi) + 2B(\xi - 4B)}{\sqrt{B} \sqrt{\bar{E}_2}},
\]
which proves \eqref{d00def}.
\end{proof}
%-------------------%

Lemmas \ref{F2lemma} and \ref{F1lemma} show that the smooth map $F\colon\bar{S}_R\setminus L \to\D{R}^2$ extends continuously to a map $\bar{S}_R \to\D{R}^2$ (i.e., $F$ can be continuously extended to the set where $\alpha = E_1$) and that on the line $L$ where $\alpha = E_1$ this extension is given by 
\[
F(\xi,\Re E_1,\Im E_1) =\begin{pmatrix}
- 8 \sqrt{B} \big(\Im\sqrt{E_2}\big) (\xi - \xi_{E_1}) \\ 0 \end{pmatrix}.
\]
In particular, $F(\xi,\Re E_1,\Im E_1)$ vanishes if and only if $\xi =\xi_{E_1}$. This suggests that the zero set of $F$ indeed contains a curve starting at the point $\mathbf{x}_0 = (\xi_{E_1},\Re E_1,\Im E_1)$. However, Lemma \ref{F1lemma} also implies that the extension of $F$ to $\bar{S}_R$ is not $C^1$, because the partial derivatives $\partial_{\alpha_j}F_1$, $j =1,2$, are singular as $\alpha\to E_1$. Thus, in order to apply the implicit function theorem, we will define a modification $\tilde{F}$ of $F$. The singular behavior of $\partial_{\alpha_j}F_1$ stems from the existence of a term proportional to $(\alpha-E_1)\ln(\alpha-E_1)$ in the expansion \eqref{F1allorders} of $F_1$. As motivation for the definition of $\tilde{F}$, we therefore consider the following simple example. 

%-------------------%
%:exa 6.7
%-------------------%
\begin{example}
Consider the function $f\colon(0,1)\to\D{R}$ defined by $f(x) = x\ln x$. Although $f(x)$ has a continuous extension to $x = 0$, the derivative $f'(x) = 1 +\ln x$ is singular at $x = 0$.
However, the modified function $\tilde{f}\colon(0,1) \to\D{R}$ defined by
\[
\tilde{f}(x) = f\bigg(\frac{x}{|\ln x|}\bigg) = -x +\frac{x\ln(|\ln x|)}{\ln x}
\]
is such that both $\tilde{f}(x)$ and its derivative $\tilde{f}'(x) = -1 +\frac{1 + (\ln{x} -1)\ln(|\ln x|)}{(\ln x)^2}$ extend continuously to $x = 0$.
\end{example}
%-------------------%

Employing the standard identification of $\D{C}$ with $\D{R}^2$, we can write $F(\xi,\alpha) \equiv F(\xi,\alpha_1,\alpha_2)$. Let $R > 0$ be small. We define the modified function $\tilde{F}\colon\bar{S}_R \setminus L \to\D{R}^2$ by
\begin{equation}\label{tildeFdef}
\tilde{F}(\xi,\alpha)=\begin{pmatrix} F_1(\xi,\varphi(\alpha)) \\ F_2(\xi,\varphi(\alpha))|\ln|\alpha-E_1||\end{pmatrix},
\end{equation}
where 
\begin{equation}\label{varphidef}
\varphi(\alpha)=E_1+\frac{\alpha-E_1}{|\ln|\alpha-E_1||}.
\end{equation}
There is an $r\in(0,R)$ such that $(\xi,\alpha) \mapsto (\xi,\varphi(\alpha))$ is a diffeomorphism from $\bar{S}_r \setminus L$ onto a subset of $\bar{S}_R \setminus L$. Then, since $F\colon\bar{S}_R\setminus L\to\D{R}^2$ is smooth, $\tilde{F}\colon\bar{S}_r\setminus L\to\D{R}^2$ is also smooth. The next lemma shows that $\tilde{F}$ extends to a $C^1$ map $\bar{S}_r\to\D{R}^2$.

%-------------------%
%:rem 6.8
%-------------------%
\begin{remark}
In addition to incorporating the dilation defined by $\varphi$, the definition of $\tilde{F}$ also includes a factor of $|\ln|\alpha-E_1||$ in the second component. This factor has been included in order to make the partial derivative $\partial \tilde{F}/\partial \alpha_2$ nonzero at $\mathbf{x}_0$ (so that we later can apply the implicit function theorem at $\mathbf{x}_0$). 
\end{remark}
%-------------------%

%-------------------%
%:lem 6.9
%-------------------%
\begin{lemma}\label{tildeFlemma}
The map $\tilde{F}\colon\bar{S}_r\setminus L\to\D{R}^2$ and its Jacobian matrix of first order partial derivatives
\[
D\tilde{F}(\mathbf{x}) =\begin{pmatrix}\partial_{\xi}\tilde{F}_1&\partial_{\alpha_1}\tilde{F}_1&\partial_{\alpha_2}\tilde{F}_1\\
\partial_{\xi}\tilde{F}_2&\partial_{\alpha_1}\tilde{F}_2&\partial_{\alpha_2}\tilde{F}_2\end{pmatrix}
\]
can be extended to continuous maps on $\bar{S}_r$. Moreover, this extension satisfies
\[
\tilde{F}(\mathbf{x}_0) = 0,\qquad
D\tilde{F}(\mathbf{x}_0) =\begin{pmatrix} f_0'(\xi_{E_1})  & -\Im d_{00}(\xi_{E_1}) & -\Re d_{00}(\xi_{E_1})
	\\
0 & q_1(\xi_{E_1}) & q_2(\xi_{E_1})
\end{pmatrix}.
\]
\end{lemma}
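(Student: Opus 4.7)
The plan is to derive the continuity of $\tilde F$ and its first-order partials from the asymptotic expansions in Lemmas \ref{F1lemma}--\ref{F2lemma} applied at the deformed point $(\xi,\varphi(\alpha))$, combined with the chain rule and explicit formulas for the Jacobian of $\varphi$. Set $\rho\coloneqq|\alpha-E_1|$, $u\coloneqq\alpha_1-\Re E_1$, $v\coloneqq\alpha_2-\Im E_1$, and $\eta\coloneqq\varphi(\alpha)-E_1=(\alpha-E_1)/|\ln\rho|$. A direct computation using $\partial_{\alpha_1}|\ln\rho|=-u/\rho^2$ and $\partial_{\alpha_2}|\ln\rho|=-v/\rho^2$ (valid for $\rho<1$) gives
\[
\begin{pmatrix}\partial_{\alpha_1}\Re\eta&\partial_{\alpha_2}\Re\eta\\\partial_{\alpha_1}\Im\eta&\partial_{\alpha_2}\Im\eta\end{pmatrix}=\frac{1}{|\ln\rho|}I+\frac{1}{\rho^2|\ln\rho|^2}\begin{pmatrix}u^2&uv\\uv&v^2\end{pmatrix},
\]
whose diagonal entries are $1/|\ln\rho|+O(1/|\ln\rho|^2)$ and whose off-diagonal entries are $O(1/|\ln\rho|^2)$. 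In addition, $\ln\eta=\ln(\alpha-E_1)-\ln|\ln\rho|$, so $\Re\ln\eta=-|\ln\rho|-\ln|\ln\rho|$ while $\Im\ln\eta=\arg(\alpha-E_1)$ stays bounded. Since the error terms in Lemmas \ref{F1lemma}--\ref{F2lemma} are uniform in $\mathbf{x}$ on compacta, all limits computed below will be uniform, which suffices for continuous extension to all of $\bar S_r$.

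The vanishing $\tilde F(\mathbf{x}_0)=0$ is immediate: \eqref{F1expansion} gives $\tilde F_1(\mathbf{x})=f_0(\xi)+O(\rho)\to f_0(\xi_{E_1})=0$, while \eqref{F2expansion} yields $F_2(\xi,\varphi(\alpha))=O(|\eta|)=O(\rho/|\ln\rho|)$ and hence $\tilde F_2(\mathbf{x})=|\ln\rho|F_2(\xi,\varphi(\alpha))=O(\rho)\to 0$. The $\xi$-partials are equally direct: \eqref{dF1dxiexpansion} gives $\partial_\xi\tilde F_1(\mathbf{x})=(\partial_\xi F_1)(\xi,\varphi(\alpha))\to f_0'(\xi_{E_1})$, and \eqref{dF2dxiexpansion} gives $\partial_\xi\tilde F_2(\mathbf{x})=|\ln\rho|(\partial_\xi F_2)(\xi,\varphi(\alpha))=O(\rho)\to 0$.

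For $\partial_{\alpha_j}\tilde F_1$, the chain rule together with \eqref{dF1dalpha1expansion}--\eqref{dF1dalpha2expansion} evaluated at $\varphi(\alpha)$ produces leading pieces of size $|\ln\rho|$, namely $(\partial_{\alpha_1}F_1)(\xi,\varphi(\alpha))=-\Im(d_{00}(\xi))|\ln\rho|+O(\ln|\ln\rho|)$ and $(\partial_{\alpha_2}F_1)(\xi,\varphi(\alpha))=-\Re(d_{00}(\xi))|\ln\rho|+O(\ln|\ln\rho|)$. Their contributions against the off-diagonal $O(1/|\ln\rho|^2)$ entries of the Jacobian of $\eta$ are $O(1/|\ln\rho|)\to 0$, while against the diagonal $1/|\ln\rho|$ entries they produce the finite limits $-\Im d_{00}(\xi_{E_1})$ for $\partial_{\alpha_1}\tilde F_1$ and $-\Re d_{00}(\xi_{E_1})$ for $\partial_{\alpha_2}\tilde F_1$. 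The subleading $f_l(\xi)$ pieces and the $O(|\eta|(1+|\ln|\eta||))=O(\rho)$ remainders contribute $O(1/|\ln\rho|)$ and vanish in the limit.

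The computation of $\partial_{\alpha_j}\tilde F_2$ is the main technical point, and it is where the factor $|\ln|\alpha-E_1||$ built into the definition \eqref{tildeFdef} of $\tilde F_2$ plays its role. Applying the product rule,
\[
\partial_{\alpha_j}\tilde F_2=|\ln\rho|\bigl[(\partial_{\alpha_1}F_2)(\xi,\varphi(\alpha))\,\partial_{\alpha_j}\Re\eta+(\partial_{\alpha_2}F_2)(\xi,\varphi(\alpha))\,\partial_{\alpha_j}\Im\eta\bigr]+F_2(\xi,\varphi(\alpha))\,\partial_{\alpha_j}|\ln\rho|.
\]
The diagonal part of the Jacobian of $\eta$ produces the expected $q_j(\xi)$ from \eqref{dF2dalphajexpansion}, but the off-diagonal part generates singular contributions of size $u(q_1(\xi)u+q_2(\xi)v)/(\rho^2|\ln\rho|)$ (for $j=1$); the second term generates, via the smooth Taylor expansion $F_2(\xi,\alpha)=q_1(\xi)u+q_2(\xi)v+O(|\alpha-E_1|^2)$, exactly the negative of these singular pieces, yielding a complete cancellation modulo $O(1/|\ln\rho|^2)$. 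This stronger Taylor expansion is the key ingredient replacing the pointwise bound \eqref{F2expansion} and is legitimate because $F_2$ extends smoothly to $B_R$ and vanishes on $L$ together with $\partial_\xi F_2$ by Lemma \ref{F2lemma}. With this substitution, all residuals are $O(1/|\ln\rho|)$ uniformly in $\xi$ near $\xi_{E_1}$, establishing the continuous extension of $\partial_{\alpha_j}\tilde F_2$ with boundary value $q_j(\xi_{E_1})$. The main obstacle is precisely the bookkeeping of this cancellation and the uniform control of the Taylor remainder, but once it is carried out the remaining entries of $D\tilde F(\mathbf{x}_0)$ follow the same pattern and produce exactly the Jacobian displayed in the lemma.
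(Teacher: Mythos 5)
Your proof is correct and follows the same route as the paper: compose the expansions of Lemmas \ref{F2lemma} and \ref{F1lemma} with $\varphi$, use the chain/product rule together with the explicit Jacobian of $\varphi$, and read off the uniform limits. The one place you diverge is the treatment of $\partial_{\alpha_j}\tilde F_2$, which you present as the main technical point requiring an exact cancellation between the off-diagonal chain-rule contribution $u(q_1u+q_2v)/(\rho^2|\ln\rho|)$ and the term $F_2(\xi,\varphi(\alpha))\,\partial_{\alpha_j}|\ln\rho|$, enabled by a first-order Taylor expansion of $F_2$ at $E_1$. That cancellation is real, but it is not needed, and your claim that the stronger Taylor expansion is "the key ingredient replacing \eqref{F2expansion}" is inaccurate: since $|u|,|v|\leq\rho$, each of the two terms you cancel is already individually $\ord(1/|\ln\rho|)$. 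Indeed, the pointwise bound \eqref{F2expansion} gives $F_2(\xi,\varphi(\alpha))=\ord(|\varphi(\alpha)-E_1|)=\ord(\rho/|\ln\rho|)$ — the extra $1/|\ln\rho|$ gained from the dilation \eqref{varphidef} — which against $\partial_{\alpha_j}|\ln\rho|=\ord(1/\rho)$ yields $\ord(1/|\ln\rho|)$; likewise the off-diagonal entries of $D\varphi$ are $\ord(1/|\ln\rho|^2)$, so after multiplication by $|\ln\rho|$ they contribute $\ord(1/|\ln\rho|)$. This is exactly how the paper argues, and it avoids both the cancellation bookkeeping and the need to control the Taylor remainder of $F_2$. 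Everything else in your argument (the vanishing of $\tilde F(\mathbf{x}_0)$, the $\xi$-derivatives, and the extraction of $-\Im d_{00}$ and $-\Re d_{00}$ from the $\ln(\alpha-E_1)$ singularity of $\partial_{\alpha_j}F_1$ against the $1/|\ln\rho|$ diagonal of $D\varphi$) matches the paper's proof.
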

%-------------------%

%-------------------%
\begin{proof}
The proof consists of long but straightforward computations using the Taylor expansions of Lemma \ref{F2lemma} and Lemma \ref{F1lemma}. 
Since
\[
\varphi(\alpha)-E_1 =\frac{\alpha-E_1}{|\ln|\alpha-E_1||},
\]
we find from the Taylor expansions \eqref{F2expansions} and \eqref{F1expansions} that
\begin{align*}
&\tilde{F}_1(\mathbf{x})=f_0(\xi) +\ord\bigg(\bigg|\frac{\alpha-E_1}{\ln|\alpha-E_1|}\bigg|\bigg(1 +\ln \bigg|\frac{\alpha-E_1}{\ln|\alpha-E_1|}\bigg|\bigg)\bigg),\\
&\partial_{\xi}\tilde{F}_1(\mathbf{x}) = f_0'(\xi) +\ord\bigg(\bigg|\frac{\alpha-E_1}{\ln|\alpha-E_1|}\bigg|\bigg(1 +\ln \bigg|\frac{\alpha-E_1}{\ln|\alpha-E_1|}\bigg|\bigg)\bigg),
	\\
& \tilde{F}_2(\mathbf{x}) =\ord(\alpha-E_1),
\qquad\partial_{\xi}\tilde{F}_2(\mathbf{x}) =\ord(\alpha-E_1),
\end{align*}
which shows that these functions have continuous extensions to $\bar{S}_r$. Write $\varphi(\alpha) =\varphi_1(\alpha) +\ii \varphi_2(\alpha)$. Using that
\[
\partial_{\alpha_1}\ln|\alpha-E_1| =\frac{\alpha_1 - \Re E_1}{|\alpha-E_1|^2},\qquad\partial_{\alpha_2}\ln|\alpha-E_1| =\frac{\alpha_2 - \Im E_1}{|\alpha-E_1|^2},
\]
we find
\begin{align*}
\partial_{\alpha_1}\varphi_1(\alpha) 
&=\frac{1}{|\ln|\alpha-E_1||} +\frac{(\alpha_1 - \Re E_1)^2}{|\ln|\alpha-E_1||^2|\alpha-E_1|^2}\\
&=\frac{1}{|\ln|\alpha-E_1||} +\ord\bigg(\frac{1}{|\ln|\alpha-E_1||^2}\bigg),\\
\partial_{\alpha_2}\varphi_2(\alpha) 
&=\frac{1}{|\ln|\alpha-E_1||} +\frac{(\alpha_2 - \Im E_1)^2}{|\ln|\alpha-E_1||^2|\alpha-E_1|^2}	\\
&=\frac{1}{|\ln|\alpha-E_1||} +\ord\bigg(\frac{1}{|\ln|\alpha-E_1||^2}\bigg),\\
\partial_{\alpha_2}\varphi_1(\alpha) 
&=\partial_{\alpha_1}\varphi_2(\alpha) =\frac{(\alpha_1 - \Re E_1)(\alpha_2 - \Im E_1)}{|\ln|\alpha-E_1||^2|\alpha-E_1|^2}
=\ord\bigg(\frac{1}{|\ln|\alpha-E_1||^2}\bigg).
\end{align*}
Hence, by \eqref{F1expansions},
\begin{align*}
\partial_{\alpha_1}\tilde{F}_1(\xi,\alpha) 
&=\partial_{\alpha_1}F_1(\xi,\varphi(\alpha))\partial_{\alpha_1}\varphi_1(\alpha)+\partial_{\alpha_2}F_1(\xi,\varphi(\alpha))\partial_{\alpha_1}\varphi_2(\alpha)\\
&=\Big[\Im\{d_{00}(\xi) \ln(\varphi(\alpha)-E_1)\} +\ord(1)\Big]\bigg[\frac{1}{|\ln|\alpha-E_1||} +\ord\bigg(\frac{1}{|\ln|\alpha-E_1||^2}\bigg)\bigg]\\
&\quad+\Big[\Im\{\ii d_{00}(\xi)\ln(\varphi(\alpha)-E_1)\} +\ord(1)\Big]\ord\bigg(\frac{1}{|\ln|\alpha-E_1||^2}\bigg)\\
&=\frac{\Im\{d_{00}(\xi) \ln(\alpha-E_1)\}}{|\ln|\alpha-E_1||}
+\ord\bigg(\frac{\ln|\ln|\alpha-E_1||}{\ln|\alpha-E_1|}\bigg)\\
&=-\Im d_{00}(\xi)+\ord\bigg(\frac{\ln|\ln|\alpha-E_1||}{\ln|\alpha-E_1|}\bigg)
\end{align*}
and
\begin{align*}
\partial_{\alpha_2}\tilde{F}_1(\xi,\alpha) 
&=\partial_{\alpha_1}F_1(\xi,\varphi(\alpha))\partial_{\alpha_2}\varphi_1(\alpha)+\partial_{\alpha_2}F_1(\xi,\varphi(\alpha))\partial_{\alpha_2}\varphi_2(\alpha)\\
&=\Big[\Im\accol{d_{00}(\xi)\ln(\varphi(\alpha)-E_1)}+\ord(1)\Big]\ord\bigg(\frac{1}{|\ln|\alpha-E_1||^2}\bigg)	\\
&\quad+\Big[\Im\{\ii d_{00}(\xi)\ln(\varphi(\alpha)-E_1)\} +\ord(1)\Big]\bigg[\frac{1}{|\ln|\alpha-E_1||} +\ord\bigg(\frac{1}{|\ln|\alpha-E_1||^2}\bigg)\bigg]\\
&=\frac{\Im\accol{\ii d_{00}(\xi)\ln(\alpha-E_1)}}{|\ln|\alpha-E_1||}+\ord\bigg(\frac{\ln|\ln|\alpha-E_1||}{\ln|\alpha-E_1|}\bigg)\\
&= -\Re d_{00}(\xi)+\ord\bigg(\frac{\ln|\ln|\alpha-E_1||}{\ln|\alpha-E_1|}\bigg).
\end{align*}
Similarly, by \eqref{F2expansions},
\begin{align*}
\partial_{\alpha_1}\tilde{F}_2(\xi,\alpha) 
&=\partial_{\alpha_1}F_2(\xi,\varphi(\alpha))\partial_{\alpha_1}\varphi_1(\alpha)|\ln|\alpha-E_1||+\partial_{\alpha_2}F_2(\xi,\varphi(\alpha))\partial_{\alpha_1}\varphi_2(\alpha)|\ln|\alpha-E_1||\\
&\quad+F_2(\xi,\varphi(\alpha))\bigg(-\frac{\alpha_1-\Re E_1}{|\alpha-E_1|^2}\bigg)\\
&=\bigg[q_1(\xi)+\ord\bigg(\frac{|\alpha-E_1|}{|\ln|\alpha-E_1||}\bigg)\bigg]\bigg[1+\ord\bigg(\frac{1}{|\ln|\alpha-E_1||}\bigg)\bigg]\\
&\quad+\bigg[q_2(\xi)+\ord\bigg(\frac{|\alpha-E_1|}{|\ln|\alpha-E_1||}\bigg)\bigg]\ord\bigg(\frac{1}{|\ln|\alpha-E_1||}\bigg)\\
&\quad+\ord\bigg(\frac{|\alpha-E_1|}{|\ln|\alpha-E_1||}\bigg)\bigg(-\frac{\alpha_1-\Re E_1}{|\alpha-E_1|^2}\bigg)\\
&=q_1(\xi)+\ord\bigg(\frac{1}{|\ln|\alpha-E_1||}\bigg)
\end{align*}
and
\begin{align*}
\partial_{\alpha_2}\tilde{F}_2(\xi,\alpha) 
&=\partial_{\alpha_1}F_2(\xi,\varphi(\alpha))\partial_{\alpha_2}\varphi_1(\alpha)\ln|\alpha-E_1|+\partial_{\alpha_2}F_2(\xi,\varphi(\alpha))\partial_{\alpha_2}\varphi_2(\alpha)\ln|\alpha-E_1|\\
&\quad+F_2(\xi,\varphi(\alpha))\bigg(-\frac{\alpha_2-\Im E_1}{|\alpha-E_1|^2}\bigg)\\
&=\bigg[q_1(\xi)+\ord\bigg(\frac{|\alpha-E_1|}{|\ln|\alpha-E_1||}\bigg)\bigg]\ord\bigg(\frac{1}{|\ln|\alpha-E_1||}\bigg)\\
&\quad+\bigg[q_2(\xi) +\ord\bigg(\frac{|\alpha-E_1|}{|\ln|\alpha-E_1||}\bigg)\bigg]\bigg[1+\ord\bigg(\frac{1}{|\ln|\alpha-E_1||}\bigg)\bigg]\\
&\quad+\ord\bigg(\frac{|\alpha-E_1|}{\ln|\alpha-E_1|}\bigg)\bigg(- \frac{\alpha_2-\Im E_1}{|\alpha-E_1|^2}\bigg)\\
&=q_2(\xi)+\ord\bigg(\frac{1}{|\ln|\alpha-E_1||}\bigg).
\end{align*}
The statements of the lemma follow from the above expansions.
\end{proof}
%-------------------%

Lemma \ref{tildeFlemma} implies that $\tilde{F}\colon\bar{S}_r\to\D{R}^2$ is a $C^1$ map such that $\tilde{F}(\mathbf{x}_0) = 0$ and 
\[
\det\begin{pmatrix}
\partial_{\xi}\tilde{F}_1&\partial_{\alpha_2}\tilde{F}_1\\
\partial_{\xi}\tilde{F}_2&\partial_{\alpha_2}\tilde{F}_2
\end{pmatrix}
=\det\begin{pmatrix} 
f_0'(\xi_{E_1})&-\Re d_{00}(\xi_{E_1})\\
0 & q_2(\xi_{E_1})
\end{pmatrix}
=-8\sqrt{B}\big(\Im\sqrt{E_2}\big)q_2(\xi_{E_1})\neq 0,
\]
where we have used the fact that $q_2(\xi_{E_1})\neq 0$ (see Lemma \ref{F2lemma}) in the last step. Hence we can apply the implicit function theorem to conclude that there exists a $\delta > 0$ and a $C^1$-curve 
\begin{align*}
&\gamma\colon\croch{\Re E_1,\Re E_1 +\delta}\to\bar{S}_r\\
&\alpha_1\mapsto\gamma(\alpha_1)=(\xi(\alpha_1),\alpha_1,\alpha_2(\alpha_1))
\end{align*}
such that $\gamma(\Re E_1)=\mathbf{x}_0$, the function $\tilde{F}$ vanishes identically on the image of $\gamma$, and
\[
\begin{pmatrix}\xi'(\alpha_1)\\ \alpha_2'(\alpha_1)\end{pmatrix}= -\begin{pmatrix}\partial_{\xi}\tilde{F}_1&\partial_{\alpha_2}\tilde{F}_1\\\partial_{\xi}\tilde{F}_2&\partial_{\alpha_2}\tilde{F}_2\end{pmatrix}^{-1}\begin{pmatrix}\partial_{\alpha_1}\tilde{F}_1\\ \partial_{\alpha_1}\tilde{F}_2\end{pmatrix}.
\]
The technical complication that $\mathbf{x}_0$ lies on the boundary of $\bar{S}_r$ can be overcome either by appealing to a boundary version of the implicit function theorem (see \cite{D1913}*{Theorem 5}) or by first constructing a $C^1$ extension of $\tilde{F}$ to an open neighborhood of $\mathbf{x}_0$ (the existence of such an extension follows, for example, from the Whitney extension theorem) and then applying the standard implicit function theorem. 

It follows from the definition \eqref{tildeFdef} of $\tilde{F}$ that $F$ vanishes on the image of the curve $\Phi \circ \gamma$, where $\Phi$ denotes the map $(\xi,\alpha) \mapsto (\xi,\varphi(\alpha))$ which is a bijection from $\bar{S}_r$ to a subset of $\bar{S}_R$. At the endpoint $\mathbf{x}_0$, a computation gives
\begin{align*}
\begin{pmatrix}\xi'(\Re E_1)\\ \alpha_2'(\Re E_1)\end{pmatrix}
&=-\begin{pmatrix} f_0'(\xi_{E_1})&-\Re d_{00}(\xi_{E_1})\\0 & q_2(\xi_{E_1})\end{pmatrix}^{-1}\begin{pmatrix}-\Im d_{00}(\xi_{E1})\\q_1(\xi_{E_1})\end{pmatrix}\\
&=\begin{pmatrix}\frac{-3 A^4+7A^2B\left(|E_2|-5 B\right)-8 B^3\left(|E_2|+3B\right)}{2B^2\left(9 A^2+8 B^2\right)}\\[1mm]
\frac{A (|E_2|-B)}{B (B+3 |E_2|)}
\end{pmatrix}.
\end{align*}
In particular, $\xi'(\Re E_1)<0$ and $\alpha_2'(\Re E_1)>0$.

We finally show \eqref{alphaexpansionE1}. Let $t \mapsto \gamma(t)$ be a parametrization of $\gamma$ such that $\gamma(0) =\mathbf{x}_0$. Since $\gamma$ is $C^1$, we have
\[
\gamma(t) =\mathbf{x}_0 + (at, bt, ct) +\osmall(t)\qquad t\downarrow 0,
\]
where $\gamma'(0) = (a,b,c)$ with $b>0$ is proportional to $(\xi'(\Re E_1), 1,\alpha_2'(\Re E_1))$; in particular, $a < 0$ and $c > 0$. Letting $w \coloneqq b +\ii c$, we find
\[
\Phi(\gamma(t)) 
=\mathbf{x}_0 +\bigg(at +\osmall(t),\frac{w t +\osmall(t)}{|\ln|wt +\osmall(t)||}\bigg)
=\mathbf{x}_0 +\bigg(at +\osmall(t),\frac{wt}{|\ln t|} +\osmall\bigg(\frac{t}{|\ln t|}\bigg)\bigg).
\]
Introducing a new parameter $s$ by $s=-at+\osmall(t)$, this becomes
\[
\Phi(\gamma(t)) 
=\mathbf{x}_0 +\bigg(-s,\frac{c_1s}{|\ln s|} +\osmall\bigg(\frac{s}{|\ln s|}\bigg)\bigg),\qquad s \downarrow 0,
\]
where
\[
c_1 \coloneqq -\frac{w}{a} = -\frac{1 +\ii  \alpha_2'(\Re E_1)}{\xi'(\Re E_1)}=\frac{2BE_1}{A^2 + 4\ii AB - 3B^2 - B|E_2|}
\]
satisfies $\Re c_1 >0$ and $\Im c_1 > 0$. In terms of the curve $\alpha(\xi)$ in \eqref{behavioratxiE1}, this can be expressed as (let $s =\xi_{E_1}-\xi$)
\[
\alpha(\xi) = E_1 + c_1 \frac{\xi_{E_1} - \xi}{|\ln(\xi_{E_1}- \xi)|} +\osmall\bigg(\frac{\xi_{E_1} - \xi}{|\ln(\xi_{E_1} - \xi)|}\bigg),\qquad \xi \uparrow \xi_{E_1},
\]
which proves \eqref{alphaexpansionE1}. This completes the proof of Theorem \ref{genus2existenceth}.
%-------------------%
\begin{acknowledgements*}
The authors are grateful to the two referees whose comments and suggestions have improved the manuscript. J.~Lenells acknowledges support from the G\"oran Gustafsson Foundation, the Ruth and Nils-Erik Stenb\"ack Foundation, the Swedish Research Council, Grant No.~2015-05430, and the European Research Council, Grant Agreement No.~682537.
\end{acknowledgements*}
%---------------------------------------------------------%
%:bib
%---------------------------------------------------------%
\begin{bibdiv}
%---------------------------------------------------------%
\begin{biblist}
%---------------------------------------------------------%
\bib{B1995}{article}{
   author={Bikbaev, R. F.},
   title={Complex Whitham deformations in problems with ``integrable
   instability''},
   language={Russian},
   journal={Teoret. Mat. Fiz.},
   volume={104},
   date={1995},
   number={3},
   pages={393--419},
   translation={
      journal={Theoret. and Math. Phys.},
      volume={104},
      date={1995},
      number={3},
      pages={1078--1097 (1996)},
   },
}
\bib{BMi19}{article}{
   author={Bilman, Deniz},
   author={Miller, Peter D.},
   title={A robust inverse scattering transform for the focusing nonlinear
   Schr\"{o}dinger equation},
   journal={Comm. Pure Appl. Math.},
   volume={72},
   date={2019},
   number={8},
   pages={1722--1805},
}
\bib{Bio18}{article}{
   author={Biondini, Gino},
   title={Riemann problems and dispersive shocks in self-focusing media},
   journal={Phys. Rev. E},
   volume={98},
   date={2018},
   number={5},
   pages={052220, 7},
}
\bib{BK14}{article}{
   author={Biondini, Gino},
   author={Kova\v ci\v c, Gregor},
   title={Inverse scattering transform for the focusing nonlinear
   Schr\"odinger equation with nonzero boundary conditions},
   journal={J. Math. Phys.},
   volume={55},
   date={2014},
   number={3},
   pages={031506, 22},
}
\bib{BM16}{article}{
   author={Biondini, Gino},
   author={Mantzavinos, Dionyssios},
   title={Universal nature of the nonlinear stage of modulational
   instability},
   journal={Phys. Rev. Lett.},
   volume={116},
   date={2016},
   number={4},
   pages={043902},
}
\bib{BM17}{article}{
   author={Biondini, Gino},
   author={Mantzavinos, Dionyssios},
   title={Long-time asymptotics for the focusing nonlinear Schr\"odinger
   equation with nonzero boundary conditions at infinity and asymptotic
   stage of modulational instability},
   journal={Comm. Pure Appl. Math.},
   volume={70},
   date={2017},
   number={12},
   pages={2300--2365},
}
\bib{BP1982}{article}{
   author={Boiti, M.},
   author={Pempinelli, F.},
   title={The spectral transform for the NLS equation with left-right
   asymmetric boundary conditions},
   journal={Nuovo Cimento B (11)},
   volume={69},
   date={1982},
   number={2},
   pages={213--227},
}
\bib{BKS11}{article}{
   author={Boutet de Monvel, Anne},
   author={Kotlyarov, Vladimir P.},
   author={Shepelsky, Dmitry},
   title={Focusing NLS equation: long-time dynamics of step-like
   initial data},
   journal={Int. Math. Res. Not. IMRN},
   date={2011},
   number={7},
   pages={1613--1653},
}
\bib{BLS20b}{article}{
   author={Boutet de Monvel, Anne},
   author={Lenells, Jonatan},
   author={Shepelsky, Dmitry},
   title={The focusing NLS equation with step-like oscillating
   background: the genus 3 sector},
   date={2020},
   status={preprint},
   eprint={https://arXiv.org/abs/2005.02822},
}
\bib{BLS20c}{article}{
   author={Boutet de Monvel, Anne},
   author={Lenells, Jonatan},
   author={Shepelsky, Dmitry},
   title={The focusing NLS equation with step-like oscillating
   background: asymptotics in a transition zone},
   date={2020},
   status={preprint},
   eprint={https://arXiv.org/abs/2006.01137},
}
\bib{BV07}{article}{
   author={Buckingham, Robert},
   author={Venakides, Stephanos},
   title={Long-time asymptotics of the nonlinear Schr\"odinger
   equation shock problem},
   journal={Comm. Pure Appl. Math.},
   volume={60},
   date={2007},
   number={9},
   pages={1349--1414},
}
\bib{D1913}{article}{
   author={Dederick, L. S.},
   title={Implicit functions at a boundary point},
   journal={Ann. of Math. (2)},
   volume={15},
   date={1913/14},
   number={1-4},
   pages={70--178},
}
\bib{DIZ}{article}{
   author={Deift, P. A.},
   author={Its, A. R.},
   author={Zhou, X.},
   title={Long-time asymptotics for integrable nonlinear wave equations},
   conference={
      title={Important developments in soliton theory},
   },
   book={
      series={Springer Ser. Nonlinear Dynam.},
      publisher={Springer, Berlin},
   },
   date={1993},
   pages={181--204},
}
\bib{DVZ94}{article}{
   author={Deift, P.},
   author={Venakides, S.},
   author={Zhou, X.},
   title={The collisionless shock region for the long-time behavior of
   solutions of the KdV equation},
   journal={Comm. Pure Appl. Math.},
   volume={47},
   date={1994},
   number={2},
   pages={199--206},
}
\bib{DZ93}{article}{
   author={Deift, P.},
   author={Zhou, X.},
   title={A steepest descent method for oscillatory Riemann-Hilbert
   problems. Asymptotics for the MKdV equation},
   journal={Ann. of Math. (2)},
   volume={137},
   date={1993},
   number={2},
   pages={295--368},
}
\bib{D14}{article}{
   author={Demontis, F.},
   author={Prinari, B.},
   author={van der Mee, C.},
   author={Vitale, F.},
   title={The inverse scattering transform for the focusing nonlinear
   Schr\"{o}dinger equation with asymmetric boundary conditions},
   journal={J. Math. Phys.},
   volume={55},
   date={2014},
   number={10},
   pages={101505, 40},
}
\bib{D1970}{book}{
   author={Duren, Peter L.},
   title={Theory of $H^{p}$ spaces},
   series={Pure and Applied Mathematics, Vol. 38},
   publisher={Academic Press, New York-London},
   date={1970},
   pages={xii+258},
}
\bib{FT87}{book}{
   author={Faddeev, Ludwig D.},
   author={Takhtajan, Leon A.},
   title={Hamiltonian methods in the theory of solitons},
   series={Classics in Mathematics},
   edition={Reprint of the 1987 English edition},
   note={Translated from the 1986 Russian original by Alexey G. Reyman},
   publisher={Springer, Berlin},
   date={2007},
   pages={x+592},
}
\bib{FLQ20}{article}{
   author={Fromm, Samuel},
   author={Lenells, Jonatan},
   author={Quirchmayr, Ronald},
   title={The defocusing nonlinear Schr\"odinger equation with steplike
   oscillatory initial data},
   status={in preparation},
}
\bib{GK12}{article}{
   author={Garnier, Josselin},
   author={Kalimeris, Konstantinos},
   title={Inverse scattering perturbation theory for the nonlinear
   Schr\"odinger equation with non-vanishing background},
   journal={J. Phys. A},
   volume={45},
   date={2012},
   number={3},
   pages={035202, 13},
}
\bib{J2015}{article}{
   author={Jenkins, Robert},
   title={Regularization of a sharp shock by the defocusing nonlinear
   Schr\"{o}dinger equation},
   journal={Nonlinearity},
   volume={28},
   date={2015},
   number={7},
   pages={2131--2180},
}
\bib{KMM2003}{book}{
   author={Kamvissis, Spyridon},
   author={McLaughlin, Kenneth D. T.-R.},
   author={Miller, Peter D.},
   title={Semiclassical soliton ensembles for the focusing nonlinear
   Schr\"{o}dinger equation},
   series={Annals of Mathematics Studies},
   volume={154},
   publisher={Princeton University Press, Princeton, NJ},
   date={2003},
   pages={xii+265},
}
\bib{KI78}{article}{
   author={Kawata, Tatsuo},
   author={Inoue, Hirohito},
   title={Inverse scattering method for the nonlinear evolution
   equations under nonvanishing conditions},
   journal={J. Phys. Soc. Japan},
   volume={44},
   date={1978},
   number={5},
   pages={1722--1729},
}
\bib{KL1937}{article}{
   author={Keldysch, M. V.},
   author={Lavrentieff, M. A.},
   title={Sur la repr\'{e}sentation conforme des domaines limites par des
   courbes rectifiables},
   language={French},
   journal={Ann. Sci. \'{E}cole Norm. Sup. (3)},
   volume={54},
   date={1937},
   pages={1--38},
}
\bib{Le17}{article}{
   author={Lenells, Jonatan},
   title={The Nonlinear Steepest Descent Method for Riemann--Hilbert
   Problems of Low Regularity},
   journal={Indiana Math. J.},
   volume={66},
   date={2017},
   number={4},
   pages={1287--1332},
}
\bib{Le18}{article}{
   author={Lenells, Jonatan},
   title={Matrix Riemann--Hilbert problems with jumps across Carleson
   contours},
   journal={Monatsh. Math.},
   volume={186},
   date={2018},
   number={1},
   pages={111--152},
}
\bib{Ma79}{article}{
   author={Ma, Yan Chow},
   title={The perturbed plane wave solutions of the cubic
   Schr\"odinger equation},
   journal={Stud. Appl. Math.},
   volume={60},
   date={1979},
   number={1},
   pages={43--58},
}
\bib{S1932}{article}{
   author={Smirnov, Vladimir I.},
   title={Sur les formules de Cauchy et de Green et quelques probl\`emes qui s'y rattachent},
   language={French},
   journal={Bull. Acad. Sci. URSS},
   volume={1932},
   date={1932},
   number={3},
   pages={337--372},
}
\bib{TV2010}{article}{
   author={Tovbis, Alexander},
   author={Venakides, Stephanos},
   title={Nonlinear steepest descent asymptotics for semiclassical limit of
   integrable systems: continuation in the parameter space},
   journal={Comm. Math. Phys.},
   volume={295},
   date={2010},
   number={1},
   pages={139--160},
}
%---------------------------------------------------------%
\end{biblist}
%---------------------------------------------------------%
\end{bibdiv}
%---------------------------------------------------------%
\end{document}